\documentclass[a4paper,11pt,reqno]{amsart}

\usepackage{a4wide}

\usepackage[utf8]{inputenc}

\PassOptionsToPackage{T1}{fontenc}
  \usepackage{fontenc}

\usepackage{amsmath}
\usepackage{amssymb}
\usepackage{amsthm}
\usepackage{fancyhdr} 
\usepackage{graphicx} 
\usepackage{enumerate}
\usepackage{cite}

\allowdisplaybreaks
\usepackage[toc,page]{appendix}
\usepackage{hyperref}    

\usepackage[all]{xy}
\usepackage{latexsym}
\usepackage{amscd}

\usepackage{xspace} 
\usepackage{calc} 
\usepackage{bm}

\usepackage{tikz-cd} 
\usepackage{tikz,tikz-3dplot}

\usepackage{float}

\usepackage{forest}

\definecolor{cof}{RGB}{219,144,71}
\definecolor{pur}{RGB}{186,146,162}
\definecolor{greeo}{RGB}{91,173,69}
\definecolor{greet}{RGB}{52,111,72}

\tdplotsetmaincoords{70}{110}

\newcommand{\R}{\mathbb R}
\newcommand{\Q}{\mathbb Q}
\newcommand{\Z}{\mathbb Z}
\newcommand{\N}{\mathbb N}
\newcommand{\C}{\mathbb C}
\newcommand{\CP}{\mathbb P}
\newcommand{\oCP}{\overline{\mathbb P}}
\newcommand{\T}{\mathbb T}

\newcommand{\Aa}{{\mathcal A}}
\newcommand{\Cc}{{\mathcal C}}
\newcommand{\Dd}{{\mathcal D}}
\newcommand{\Ff}{{\mathcal F}}

\newcommand{\Ss}{{\mathcal S}}
\newcommand{\Tt}{{\mathcal T}}
\newcommand{\Vv}{{\mathcal V}}
\newcommand{\Ee}{{\mathcal E}}
\newcommand{\Oo}{{\mathcal O}}
\newcommand{\Pp}{{\mathcal P}}
\newcommand{\Qq}{{\mathcal Q}}

\newcommand{\om}{\omega}

\DeclareMathOperator{\Ham}{Ham}
\DeclareMathOperator{\conv}{conv}
\DeclareMathOperator{\cone}{cone}
\DeclareMathOperator{\rank}{rank}

\DeclareMathOperator{\Ehr}{Ehr}
\DeclareMathOperator{\Pyr}{Pyr}
\DeclareMathOperator{\Bipyr}{Bipyr}
\DeclareMathOperator{\Pbipyr}{Pbipyr}

\newtheorem{theorem}{Theorem}[section]
\newtheorem{lemma}[theorem]{Lemma}
\newtheorem{thm}[theorem]{Theorem}
\newtheorem{proposition}[theorem]{Proposition}
\newtheorem{prop}[theorem]{Proposition}
\newtheorem{corollary}[theorem]{Corollary}

\newtheorem{definition}[theorem]{Definition}

\newtheorem{remark}[theorem]{Remark}
\newtheorem{rem}[theorem]{Remark}
\newtheorem{example}[theorem]{Example}

\begin{document}

\title[Contact rigidity/flexibility and Ehrhart theory of toric diagrams]
{Contact flexibility and rigidity for toric Gorenstein prequantizations and 
Ehrhart theory of toric diagrams}

\author[M.~Abreu]{Miguel Abreu}
\author[L.~Macarini]{Leonardo Macarini}
\author[A.~Rocha-Neves]{Ant\'{o}nio Rocha-Neves}

\address{Center for Mathematical Analysis, Geometry and Dynamical Systems,
Instituto Superior T\'ecnico, Universidade de Lisboa, 
Av. Rovisco Pais, 1049-001 Lisboa, Portugal}
\email{miguel.abreu@tecnico.ulisboa.pt, antonio.r.neves@tecnico.ulisboa.pt}

\address{IMPA, Estrada Dona Castorina, 110, Rio de Janeiro, 22460-320, Brazil}
\email{leonardo@impa.br}
  
\subjclass[2010]{52B20, 53D20, 53D35, 53D42} 
\keywords{Gorenstein toric contact manifolds; contact invariants; toric diagrams; Ehrhart theory;
Ehrhart determined polytopes.}

\thanks{MA was partially supported by Funda\c c\~ao para a Ci\^encia e Tecnologia (FCT), Portugal, 
through grants 2023.13969.PEX and UID/4459/2025.  LM was partially supported by FAPERJ and CNPq/Brazil. 
ARN was partially supported by Funda\c c\~ao Calouste Gulbenkian.}

\begin{abstract}
Gorenstein toric contact manifolds are good toric contact manifolds with zero first Chern class that
are completely determined by certain integral convex polytopes called toric diagrams. The Ehrhart
polynomial of these toric diagrams determines and is determined by the contact Betti numbers of 
the corresponding contact manifolds, i.e. the dimension of their cylindrical contact homology in each
degree. In this paper we look into the following natural question: to what extent do these contact 
invariants determine the Gorenstein toric contact manifold? Flexibility is the norm and we illustrate 
it with the family of Gorenstein toric contact manifolds that arise as the prequantization 
of monotone iterated $\CP^1$-bundles, i.e. monotone Bott manifolds. In each dimension, the Ehrhart polynomial 
of their toric diagrams is equal to the Ehrhart polynomial of the cross-polytope, corresponding to the 
monotone prequantization of  $\CP^1 \times \cdots \times \CP^1$, and we describe the unimodular 
classification of these toric diagrams. On the rigidity side, we will show that the primitive prequantization 
of $\CP^1 \times \cdots \times \CP^1$ is rigid, i.e. completely determined as a Gorenstein toric contact 
manifold by its contact Betti numbers. More precisely, in each dimension, we show that its toric diagram, 
which we name small cross-polytope, is the unique toric diagram with its particular Ehrhart polynomial. 
We will also prove a rigidity result for a family of Gorenstein toric contact manifolds that arise as the primitive 
prequantization of monotone $\CP^1$-bundles over $\CP^{n-1}$.
\end{abstract}

\maketitle

\section{Introduction}
\label{sec:intro}

Gorenstein toric contact manifolds are good toric contact manifolds with zero first Chern class. 
They are in $1$-$1$ correspondence with toric diagrams, i.e. integral simplicial polytopes with
unimodular facets (see Section~\ref{sec:Gorenstein}):
\[
\Dd \subset \R^n  \longleftrightarrow (N^{2n+1}_\Dd, \xi_\Dd) \,.
\]
The Ehrhart polynomial of $\Dd$ counts its number of rational points and is defined by
\[
L_{\mathcal{D}}(t) := \#(t\mathcal{D} \cap \mathbb{Z}^n), \forall t \in \mathbb{Z}_{> 0}\,,
\]
where $t\mathcal{D}=\{t \bm{x} \in \mathbb{R}^n: \bm{x} \in \mathcal{D}\}$. The Ehrhart polynomial
$L_{\mathcal{D}}(t)$ is indeed a degree $n$ polynomial in $t$ and, as proved in~\cite{AMM}, its
coefficients are contact invariants of $(N^{2n+1}_\Dd, \xi_\Dd)$. More precisely, if we write the Ehrhart
polynomial in the form
\[
L_{\Dd} (t) = \sum_{k=0}^n h^\ast_k (\Dd) \binom{t+n-k}{n}\,,
\]
we have that the non-negative coefficients $h^\ast_k (\Dd) \in \Z^+_0$ are the contact invariants of 
$(N_{\Dd}, \xi_{\Dd})$ given by
\[
h^\ast_{n-k} (\Dd) = cb_{2k} (N_{\Dd}, \xi_{\Dd}) - cb_{2(k-1)} (N_{\Dd}, \xi_{\Dd})\,,
\]
where $cb_{\ast} (N_{\Dd}, \xi_{\Dd})$ are the \emph{contact Betti numbers}, i.e.
\[
cb_{\ast} (N_{\Dd}, \xi_{\Dd}) = \rank HC_\ast (N_\Dd, \xi_\Dd)
\] 
where  $HC_\ast (N_\Dd, \xi_\Dd)$ is the cylindrical contact homology with grading given by the SFT degree.
Note that for toric contact manifolds we have that $cb_\ast (N_\Dd, \xi_\Dd) = 0$ whenever $\ast <0$ or 
$\ast$ is odd, which means in particular that the Ehrhart polynomial determines and is determined by
the contact Betti numbers.
\begin{remark} \label{rem:well-defined}
As explained in~\cite{AMM}, despite some foundational technical difficulties in proving that cylindrical contact
homology is a well defined contact invariant, these contact Betti numbers are indeed well defined contact
invariants in many contexts. These include Gorenstein toric contact manifolds that have crepant 
(i.e. with zero first Chern class) toric symplectic fillings and all the particular examples considered in this 
paper have this property.
\end{remark}

In this paper we address the following natural question: 
\vskip 0.3 cm
\noindent \emph{To what extent do these contact invariants determine the Gorenstein toric contact manifold or, 
equivalently, to what extent does the Ehrhart polynomial determine a toric diagram $\Dd$ 
up to integral translation and $GL(n,\Z)$ transformations (i.e. up to unimodular equivalence)?} 
\vskip 0.3 cm
\begin{definition} \label{def:rigid}
A Gorenstein toric contact manifold is said to be \emph{rigid} if it is completely determined, as a toric contact 
manifold, by its contact Betti numbers. Equivalently, $(N_\Dd, \xi_\Dd)$ is rigid if its toric diagram $\Dd$ is 
\emph{Ehrhart determined}, i.e. whenever $\Dd'$ is a toric diagram with  $L_{\Dd'} (t) = L_{\Dd} (t)$ we have 
that $\Dd'$ is unimodularly equivalent to $\Dd$. A Gorenstein toric contact manifold is said to be \emph{flexible} 
if it is not rigid. Two toric diagrams $\Dd$ and $\Dd'$ are said to be \emph{Ehrhart equivalent} if they have the
same Ehrhart polynomial, i.e. if $L_{\Dd} (t) = L_{\Dd'} (t)$.
\end{definition}

The Ehrhart polynomial $L_{\Dd} (t)$ determines the volume of $\Dd\subset\R^n$ through its $t^n$ coefficient.
When $n=1$ the volume is the only invariant of a toric diagram $\Dd\subset\R$, up to unimodular equivalence,
and so we have that all Gorenstein toric contact $3$-manifolds are rigid. These are $S^3$ with its standard 
contact structure (toric diagram $=[0,1]$), and the Lens spaces $L^3_{p+1} (1,p)$, $p\in\N$, with standard 
quotient contact structure coming from $S^3$ (toric diagram $=[0,p+1]$).

Any odd-dimensional sphere $S^{2n+1}$ with its standard contact structure is an example of a rigid Gorenstein
toric contact manifold. In fact, its toric diagram is the unimodular simplex
\[
\Dd = \conv \{ \bm{0}, \bm{e}_1, \ldots, \bm{e}_n \} \subset\R^n\,,\ \text{where $\bm{e}_1, \ldots, \bm{e}_n$ are
the canonical basis vectors of $\R^n$,}
\]
which is well-known to be Ehrhart determined.

Despite these examples, flexibility is the norm and this can be seen already when $n=2$. For example,
given $p\in\N$, the toric contact manifolds 
\[
(S^3 \times S^2, \xi_p)\ \text {with toric diagram} \ \Dd_p = \conv\{(0,0), (1,0),(0,1), (p+1,p+1)\}
\]
and
\[
S^\ast (L^3_{p+1}(1,q)) \ \text {with toric diagram} \ \Dd_{p,q} = \conv\{(0,0), (1,0),(p+1,q), (p+1,q+1)\}\,,
\]
for all $q\in\N$ coprime with $p+1$, where $S^\ast (L^3_{p+1}(1,q))\cong L^3_{p+1}(1,q) \times S^2$ is
the unit cosphere bundle of $L^3_{p+1}(1,q)$, have the same contact Betti numbers
\[
cb_0 = p\,,\ cb_2 = 2p+1 \ \text{and}\ cb_{2k} = 2p+2\,,\ \forall\, k\geq n\,.
\]
See~\cite{AM0, AMM0} for more information on these examples.

In this paper we will describe some families of flexible and rigid Gorenstein toric contact 
manifolds that arise as the smooth prequantization of monotone toric symplectic manifolds. 
These families are interesting both from the toric contact geometry and the Ehrhart theory 
points of view. 

\begin{remark} \label{rem:monotone}
Recall that a symplectic manifold $(M^{2n}, \omega)$ is said to be \emph{monotone} when
the first Chern class $c_1 (M)$ is a multiple of $[\omega] \in H^2 (M; \R)$. In the toric
context of this paper, that multiple is always positive and, by default, we will assume that it is equal
to $1$, i.e. a monotone toric symplectic manifold will mean $(M^{2n}, \omega)$ with $[\omega] = c_1 (M)$. 
Moreover, by default, the prequantization of a monotone toric symplectic manifold $(M^{2n}, [\omega] = c_1 (M))$ 
will be the toric contact manifold $(N^{2n+1}, \xi)$ obtained as the $S^1$-bundle over $M^{2n}$ with Chern class 
$\frac{1}{2\pi}[\omega] = \frac{1}{2\pi} c_1 (M) \in H^2 (M; \Z)$, with $\xi$ being the horizontal distribution of a 
connection $1$-form with curvature $\frac{1}{2\pi}\omega$.
\end{remark}

As we recall in Section~\ref{sec:Gorenstein}, if $(M^{2n}, [\omega] = c_1 (M))$ is
a monotone toric symplectic manifold, its moment polytope $\Pp \subset \R^n$ can be
translated to be a reflexive Delzant polytope and its prequantization $(N^{2n+1}, \xi)$ 
is a Gorenstein toric contact manifold with toric diagram $\Dd=-\Pp^\ast \subset \R^n$, 
where $\Pp^\ast$ is the polar dual of $\Pp$:
\begin{align*}
\mathcal{P}^*:=\{ \bm{x} \in \mathbb{R}^n : \bm{x} \cdot \bm{y} \leq 1,  \forall \bm{y} \in \mathcal{P} \} \,.
\end{align*}
For example, when $M^{2n} = \CP^1 \times \cdots \times \CP^1$ with symplectic form $\omega$
such that $[\omega] = c_1(\CP^1 \times \cdots \times \CP^1)$, we have that $\Pp = [-1, 1]^n \subset \R^n$ 
is the (hyper-)cube and $\Dd = \diamond_n \subset \R^n$ is the cross-polytope:
\[
\diamond_n := \{(x_1, \ldots, x_n) \in \mathbb{R}^n : |x_1|+ \cdots +|x_n| \leq 1\} \quad\text{(see Figure~\ref{fig:0}).}
\]
\begin{figure}[]
	\centering
	\begin{tikzpicture}[scale=3, line join=bevel, tdplot_main_coords]
		\coordinate (e1) at (0,0,1);
		\coordinate (e2) at (1,0,0);
		\coordinate (e3) at (0,1,0);
		\coordinate (m1) at (0,0,-1);
		\coordinate (m2) at (-1,0,0);
		\coordinate (m3) at (0,-1,0);
	
		\draw (e2) -- (m2);
		
		\draw [fill opacity=0.2,fill=black, draw=none] (e2) -- (e3) -- (m2) -- (m3) -- cycle;

		\draw [dashed] (e1) -- (m2);
		\draw (e1) -- (m3);
		\draw (e2) -- (m1);
		\draw (e2) -- (m3);
		\draw (e3) -- (m1);
		\draw  [dashed] (e3) -- (m2);
		\draw (e1) -- (e3);
		\draw (e1) -- (e2);
		\draw (e2) -- (e3);
		\draw (m1) -- (m3);
		\draw  [dashed] (m1) -- (m2);
		\draw  [dashed] (m2) -- (m3);
		
	\end{tikzpicture}
	\caption{Cross-polytopes in dimensions $1$, $2$ and $3$.}
	\label{fig:0}
\end{figure}
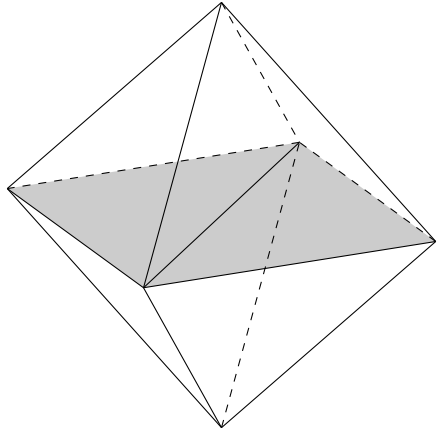
The cross-polytope is not Ehrhart determined and our first family arises from a particular class of
toric diagrams that are Ehrhart equivalent to the cross-polytope (see Section~\ref{sec:ppyr}). 
These turn out to correspond exactly to the Gorenstein toric contact manifolds that arise as prequantizations 
of monotone Bott manifolds $B_n$ (see Section~\ref{sec:FBm}). Bott manifolds are iterated $\CP^1 = S^2$-bundles 
of the form
\[
B_n \xrightarrow{\pi_{n}} B_{n-1} \xrightarrow{\pi_{n-1}} \cdots \xrightarrow{\pi_{2}} B_1 = \CP^1 \xrightarrow{\pi_{1}}
B_0 = \{\text{point}\}
\]
where
\[
B_i = \CP (\xi_i \oplus \C) \ \text{with}\ \xi_{i} \rightarrow B_{i-1}\ \text{a $\C$-line bundle.}
\]
\begin{proposition} \label{prop:FanoBott}
Let $(N^{2n+1}, \xi)$ be the prequantization of a monotone Bott manifold $(B^{2n}, [\omega] = c_1 (B^{2n}))$,  with
reflexive moment polytope $\Pp\subset\R^n$, and let $\Dd=-\Pp^\ast \subset \R^n$ be its toric diagram. Then
\[
h^\ast_k (\Dd) = \binom{n}{k}\,,\ k=0,\ldots,n\,.
\]
In particular, all these $(N^{2n+1}, \xi)$ are flexible and the corresponding toric diagrams provide a family of polytopes that are 
Ehrhart equivalent to the cross-polytope, i.e. for any such toric diagram $\Dd\subset\R^n$ we have that
\[
L_\Dd (t) = L_\diamond (t) = \sum_{k=0}^n \binom{n}{k} \binom{t+n-k}{n}\,.
\]
\end{proposition}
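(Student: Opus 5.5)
The plan is to identify $h^\ast(\Dd)$ with the $h$-vector of the normal fan of $\Pp$, and then to read that $h$-vector off the Betti numbers of the Bott manifold.

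\textbf{Step 1 (a unimodular triangulation of $\Dd$).} Since $\Pp$ is a reflexive Delzant polytope, $\Dd=-\Pp^\ast$ is a reflexive simplicial polytope. The origin is its only interior lattice point, and every facet lies at lattice distance $1$ from $\bm{0}$. The facets are unimodular simplices, because their vertices are the primitive normals of the facets of $\Pp$ meeting at a vertex, and these form a $\Z$-basis by the Delzant condition. Hence, for each facet $F$, the simplex $\conv(\{\bm{0}\}\cup F)$ is a unimodular $n$-simplex. Together these simplices give a unimodular triangulation $T$ of $\Dd$, namely the cone over the boundary complex $\partial\Dd$. This simplicial complex is exactly the normal fan $\Sigma$ of $\Pp$, that is, the fan of $B^{2n}$.

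\textbf{Step 2 ($h^\ast$ equals the toric $h$-vector).} For a lattice polytope with a unimodular triangulation $T$, I will invoke the standard result (Stanley, Betke--McMullen) that $h^\ast(\Dd)$ equals the $h$-vector of $T$. For a cone over a simplicial sphere, the $h$-vector of $T$ coincides with the $h$-vector of $\partial\Dd$. One can check this directly by counting lattice points in $t\Dd$ as a disjoint union of relatively open unimodular cones over the faces of $\partial\Dd$ truncated at height $t$: a $(j-1)$-dimensional face contributes $\binom{t-1}{j}$ points. One then rewrites the result in the basis $\binom{t+n-k}{n}$. Since $\partial\Dd$ is the fan of $B^{2n}$, the classical fact that $h_k(\Sigma)=b_{2k}(B^{2n})$ for smooth projective toric varieties gives
\[
h^\ast_k(\Dd)=b_{2k}(B^{2n}).
\]

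\textbf{Step 3 (Betti numbers of Bott manifolds).} By the Leray--Hirsch theorem, applied inductively to the $\CP^1$-bundles $B_i=\CP(\xi_i\oplus\C)\to B_{i-1}$, the Poincaré polynomial is $(1+t^2)^n$. Hence $b_{2k}=\binom nk$, which proves $h^\ast_k(\Dd)=\binom nk$. When all $\xi_i$ are trivial we get $B=(\CP^1)^n$ and $\Dd=\diamond_n$. Therefore every such $\Dd$ has $L_\Dd=L_\diamond=\sum_k\binom nk\binom{t+n-k}{n}$.

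\textbf{Step 4 (flexibility, $n\ge 2$).} It remains to exhibit a monotone Bott manifold whose toric diagram is not unimodularly equivalent to $\diamond_n$. For $n=2$, take the Hirzebruch surface $\CP(\Oo(1)\oplus\C)\to\CP^1$, i.e.\ $\CP^2$ blown up at a point, which is Fano. Its diagram is $\conv\{(1,0),(0,1),(-1,1),(0,-1)\}$. This is not equivalent to $\diamond_2$, since the polar duals (a trapezoid versus a square) have non-isomorphic normal fans. For general $n$, take its product with $(\CP^1)^{n-2}$; the resulting fan is not isomorphic to that of $(\CP^1)^n$.

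The main obstacle is Step 2, the identification of $h^\ast$ with the $h$-vector. I would either cite it or give the short lattice-point count by open faces sketched above. The rest is routine bookkeeping with binomial identities.
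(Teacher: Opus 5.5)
Your proof is correct and is essentially the paper's argument. The paper deduces the proposition from the identity $h^\ast_k(\Dd)=\dim H^{2k}(M;\Q)$ (equation~(\ref{eq:Betti}), cited from \cite{B} and \cite{AMM}) together with $\dim H^{2k}(B^{2n};\Q)=\binom{n}{k}$. Your Steps~1--2 prove that identity directly via the unimodular triangulation coned from the origin, which is the same argument the paper uses for $\Tt_k$ in Section~\ref{sec:bipysimplex}. Your Step~4 makes explicit the flexibility claim that the paper leaves implicit, and you are right to restrict it to $n\geq 2$, since for $n=1$ the diagram $[-1,1]$ is rigid.

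There is one slip in your optional direct count. The relatively open cone over a unimodular $(j-1)$-face, truncated at height $\leq t$, contains $\binom{t}{j}$ lattice points, not $\binom{t-1}{j}$. With $\binom{t-1}{j}$ you would get $L_{[-1,1]}(t)=2t-1$ instead of $2t+1$.
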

\noindent In Section~\ref{sec:ppyr} we will describe and enumerate, up to unimodular equivalence, all these 
toric diagrams. Note that when $n\geq 3$ these are not all the toric diagrams that are Ehrhart equivalent
to the cross-polytope. Hence, when $n\geq 3$ there are Gorenstein toric contact manifolds with the same 
contact Betti numbers of a monotone prequantizattion of a monotone Bott manifold that are not any of those 
manifolds. 

Our second family arises from the fact that the first Chern class of the trivial Bott manifold 
$\CP^1 \times \cdots \times \CP^1$ is divisible by 2 and we can consider an integral symplectic form 
$\omega$ with $[\omega] = c_1(\CP^1 \times \cdots \times \CP^1)/2$. The corresponding prequantization 
is still a Gorenstein toric contact manifold, whose toric diagram is what we call in this paper a small 
cross-polytope, see Figure~\ref{fig:1} and Section~\ref{sec:smallcrosspolytopes}. As we prove in Subsection~\ref{ssec:determined},
small cross-polytopes turn out to be Ehrhart determined and so we have the following theorem.
\begin{theorem} \label{thm:small}
 In any dimension $n\in\N$, the prequantization $(N^{2n+1}, \xi)$ of $\CP^1 \times \cdots \times \CP^1$
 with symplectic form $\omega$ such that $[\omega] = c_1(\CP^1 \times \cdots \times \CP^1)/2$ is a rigid 
 Gorenstein toric contact manifold.
\end{theorem}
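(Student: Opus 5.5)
By Definition~\ref{def:rigid} and the correspondence $\Dd\leftrightarrow(N_\Dd,\xi_\Dd)$, Theorem~\ref{thm:small} is equivalent to a statement about polytopes: the toric diagram $\Dd_n$ of this prequantization, the small cross-polytope, is Ehrhart determined. I would prove that in three stages.

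\emph{Step 1: identify $\Dd_n$ and compute $L_{\Dd_n}$.} Taking the class $c_1/2$ instead of $c_1$ replaces the cube $[-1,1]^n$ by $[-\tfrac12,\tfrac12]^n$. I expect that, after a unimodular change of coordinates, $\Dd_n$ becomes a cross-polytope whose $2n$ vertices are all its lattice points. Its normalized volume should be half that of $\diamond_n$, namely $2^{n-1}$, because the prequantization is a double quotient. The $n=1$ case checks out: we get $S^3$ with diagram $[0,1]$. I would then compute $h^\ast(\Dd_n)$ directly, either by a half-open unimodular triangulation or from the contact homology of the prequantization. The goal is to record the data the Ehrhart polynomial fixes:
\begin{itemize}
\item the number of lattice points, $L(1)=2n$;
\item the number of interior lattice points, $(-1)^nL(-1)$, which I expect to be $0$;
\item the normalized volume;
\item the normalized boundary volume, from the $t^{n-1}$ coefficient.
\end{itemize}

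\emph{Step 2: translate the invariants into combinatorics for an arbitrary Ehrhart-equivalent toric diagram $\Dd'$.} Since every facet of $\Dd'$ is a unimodular simplex, the normalized boundary volume equals the number of facets $f_{n-1}(\Dd')$. So $\Dd'$ is a simplicial polytope whose vertices are among its $2n$ lattice points and whose number of facets equals that of $\Dd_n$. I would then use the Dehn--Sommerville relations for simplicial polytopes together with these counts, and also with $h^\ast_n(\Dd')=0$ and $h^\ast_1(\Dd')=2n-n-1$. The aim is to force $\Dd'$ to have exactly $2n$ vertices and no non-vertex lattice points. Then, via a $g$-theorem or lower-bound-type argument, the aim is to force the face lattice of $\Dd'$ to be that of the cross-polytope, i.e.\ the $2n$ vertices split into $n$ antipodal pairs with no edge inside a pair.

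\emph{Step 3: rigidify the lattice embedding.} I would fix one facet of $\Dd'$ and move it by a unimodular map to $\conv\{\bm e_1,\dots,\bm e_n\}$. The $n$ opposite vertices are then determined, one at a time, by three constraints:
\begin{itemize}
\item each adjacent facet is unimodular;
\item $\Dd'$ is convex;
\item $\Dd'$ has no lattice points besides its vertices.
\end{itemize}
The natural way to organize this is induction on $n$. The section of $\Dd'$ through an antipodal pair and its link, or equivalently the projection along the line joining an antipodal pair, should be an $(n-1)$-dimensional small cross-polytope by the induction hypothesis. The remaining freedom is then a single vertex, which the volume fixes.

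I expect the main obstacle to be Step~2: showing that the Ehrhart data forces the cross-polytope combinatorics. Face counts alone do not determine a simplicial polytope with $2n$ vertices. If the Dehn--Sommerville argument is insufficient, I would first try to use the lattice constraints (no interior points, and the unimodularity of facets meeting at a vertex) to show that each vertex has a unique non-neighbour. I would then run the induction on the vertex figure.
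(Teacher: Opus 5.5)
Your reduction to Ehrhart determinedness of $S_n$ is right, and so are the preliminary counts. In fact they are immediate: $h^\ast_n=0$ means $\Dd'$ has no interior lattice points, and unimodular faces carry no extra boundary points, so $\Dd'$ has exactly $2n$ lattice points, all of them vertices, and $2^n$ facets.

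The gap is in Steps 2 and 3, where the actual work lies. For a toric diagram the Ehrhart polynomial determines the $f$-vector, since the boundary count is $\sum_k f_k\binom{t-1}{k}$. But the $f$-vector does not determine the face lattice. For $n=3$, the octahedron and the stacked simplicial polytope with degree sequence $3,3,4,4,5,5$ both have $f=(6,12,8)$. So no Dehn--Sommerville, $g$-theorem or lower-bound argument can force cross-polytope combinatorics. Your fallback (each vertex has a unique non-neighbour) is the right target, but you state it without a mechanism, and none of the invariants you chose to record is the one that makes it work.

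Step 3 presupposes this combinatorics, and its induction fails as written. Projecting $S_n$ along the line through the antipodal pair $\{\bm{0},\bm{e}_n\}$ is exactly the map $\varphi$ of Lemma~\ref{teo:3}. Its image is $\diamond_{n-1}$, with that pair collapsed to an interior lattice point, not $S_{n-1}$.

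The missing idea is the Gorenstein structure at dilation $2$. Since $h^\ast=(1+z)^{n-1}$ is palindromic of degree $n-1$, $\Dd'$ is Gorenstein of index $2$. So $2\Dd'$ has a unique interior lattice point $\bm{c}$, and $2\Dd'-\bm{c}$ is reflexive.

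\begin{itemize}
\item For vertices $\bm{v},\bm{w}$ of $\Dd'$, the point $\bm{v}+\bm{w}$ is a lattice point of $2\Dd'$. If $\{\bm{v},\bm{w}\}$ is not an edge, simpliciality means $\bm{v},\bm{w}$ lie on no common proper face. Then $\bm{v}+\bm{w}$ is interior and must equal $\bm{c}$. Hence each vertex has at most one non-neighbour.
\item The lattice points of $\partial(2\Dd')$ are just the doubled vertices and the edge midpoints. So $L(2)=2n^2+1=1+2n+f_1$ gives $f_1=2n(n-1)$, i.e.\ exactly $n$ non-edges. These must therefore form a perfect matching $\bm{v}_i+\bm{w}_i=\bm{c}$.
\item A facet $F$ contains exactly one vertex of each pair; normalize it to $\{\bm{0},\bm{e}_1,\ldots,\bm{e}_{n-1}\}$. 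Since $\bm{c}$ is at lattice distance $1$ from $2F$, a reflection and a shear fixing $x_n=0$ make $\bm{c}=\bm{e}_n$. The remaining vertices $\bm{c}-\bm{v}_i$ are then forced to be those of $S_n$.
\end{itemize}

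This is the paper's argument, and it needs no induction.
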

\begin{remark}
Small cross-polytopes share some interesting combinatorial properties with cross-polytopes. See 
Section~\ref{sec:smallcrosspolytopes}, in particular Theorem~\ref {teo:4}, which implies that the 
contact manifold $(N^{2n+1}, \xi)$ of Theorem~\ref{thm:small} has a crepant toric symplectic filling, 
and its final Subsection~\ref{ssec:remarks}.

\end{remark}
\begin{figure}[]
	\centering
	\begin{tikzpicture}[scale=3, line join=bevel, tdplot_main_coords]
		\coordinate (e1) at (0,0,0);
		\coordinate (e2) at (1,0,0);
		\coordinate (e3) at (0,1,0);
		\coordinate (m1) at (0,0,1);
		\coordinate (m2) at (-1,0,1);
		\coordinate (m3) at (0,-1,1);

		\draw [fill opacity=0.2,fill=black,draw=none] (e1) -- (e2) -- (e3) -- cycle;
		\draw [fill opacity=0.2,fill=black] (m1) -- (m2) -- (m3) -- cycle;		
		\draw [dashed] (e1) -- (m2);
		\draw [dashed] (e1) -- (m3);
		\draw (e2) -- (m1);
		\draw (e2) -- (m3);
		\draw (e3) -- (m1);
		\draw (e3) -- (m2);
		\draw [dashed] (e1) -- (e3);
		\draw [dashed] (e1) -- (e2);
		\draw (e2) -- (e3);
		\draw (m1) -- (m3);
		\draw (m1) -- (m2);
		\draw (m2) -- (m3);
		
	\end{tikzpicture}
	\caption{Small cross-polytope in dimension $3$.}
	\label{fig:1}
\end{figure}
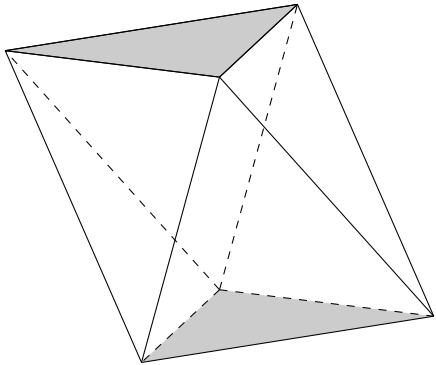

Our third family arises again from primitive prequantizations of monotone toric symplectic manifolds with
non-primitive first Chern class. The examples in this family are not rigid, but the family itself is rigid in the
sense that it contains all Gorenstein toric contact manifolds with these particular contact invariants. For 
any $n\geq 2$, the monotone toric symplectic manifolds that give rise to this family are
\[
M^{2n}_k = \CP (\xi_k \oplus \C) \ \text{where}\ \xi_k \to \CP^{n-1} \ \text{is a $\C$-line bundle
with $c_1 (\xi_k) = k \in \Z$.}
\]
When $0\leq k < n$ we have that $M^{2n}_k$ is monotone, i.e. it has a toric symplectic form $\omega_k$ with
$[\omega_k] = c_1 (M_k^{2n})$. When in addition $k \equiv n \pmod 2$ we have 
that $c_1 (M_k^{2n})$ is divisible by $2$. The Gorenstein toric contact manifolds $(N^{2n+1}_k, \xi)$ 
in this family are the prequantizations of these $(M^{2n}_k, \omega)$ with 
$[\omega] = c_1 (M_k^{2n}) /2$ and $0\leq k < n$, $k \equiv n \pmod 2$. As we will show in 
Section~\ref{sec:bipysimplex}, the Ehrhart polynomial of the corresponding toric diagrams $\Dd_k \subset \R^n$ 
is independent of $k$ and given by
\[
L_{\Dd_k} (t) = \sum_{j=0}^{n-1} \binom{t+n-j}{n}\,,\ \text{i.e. $h^\ast_0 = \cdots = h^\ast_{n-1}=1$ and $h^\ast_n = 0$.}
\]
Moreover, we have the following rigidity theorem (cf. Theorem~\ref{thm:bipysimplex2}).
\begin{theorem} \label{thm:bipysimplex1}
Let $(N^{2n+1}_\Ss, \xi_\Ss)$ be a Gorenstein toric contact manifold determined by a toric diagram $\Ss\subset\R^n$
such that
\[
L_{\Ss} (t) = \sum_{j=0}^{n-1} \binom{t+n-j}{n}\,.
\]
Then $\Ss$ is unimodularly equivalent to $\Dd_k$ for a unique $k\in\Z$ satisfying $0\leq k < n$, $k \equiv n \pmod 2$,
and $(N^{2n+1}_\Ss, \xi_\Ss)$ is isomorphic as a toric contact manifold to the corresponding $(N^{2n+1}_k, \xi)$.
\end{theorem}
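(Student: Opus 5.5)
The plan is to pin down the combinatorics of $\Ss$ completely from the numbers $h^\ast_k(\Ss)$, reduce it to a single integer affine relation among its lattice points, and then read off $k$.

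\textbf{Step 1: lattice points and combinatorial type.} From $h^\ast_n(\Ss)=0$ the diagram $\Ss$ has no interior lattice points. From $h^\ast_1(\Ss)=\#(\Ss\cap\Z^n)-n-1=1$ it has exactly $n+2$ lattice points. Its normalized volume is $\sum_k h^\ast_k=n$. Every boundary lattice point lies in a unimodular simplex facet and is therefore a vertex. Hence $\Ss$ is a lattice polytope whose lattice points are exactly its $n+2$ vertices $v_0,\dots,v_{n+1}$.

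Such a point set carries a unique primitive integer affine relation
\[
\sum_{i\in I}a_iv_i=\sum_{j\in J}b_jv_j,\qquad a_i,b_j>0,\qquad \sum_{i\in I} a_i=\sum_{j\in J} b_j .
\]
If some vertex were outside $I\cup J$, then $\Ss$ would be a pyramid over an $(n-1)$-polytope with $n+1$ vertices. That base would be a non-simplex facet, which is impossible. So $I\sqcup J$ is all $n+2$ vertices, and $\Ss$ is combinatorially the free sum of a $p$-simplex and a $q$-simplex, with $|I|=p+1$, $|J|=q+1$ and $p+q=n$. Its facets are the sets $\Ss\setminus\{v_i,v_j\}$ with $i\in I$ and $j\in J$.

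\textbf{Step 2: volumes from the relation.} Let $\Delta_i$ denote the $n$-simplex obtained by deleting $v_i$. By Cramer's rule, $\mathrm{vol}(\Delta_i)=c\,a_i$ (respectively $c\,b_j$) for a common constant $c$. The two triangulations $\{\Delta_i\}_{i\in I}$ and $\{\Delta_j\}_{j\in J}$ both cover $\Ss$, which gives $c\sum_I a_i=c\sum_J b_j=n$.

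Unimodularity of a facet $\Ss\setminus\{v_i,v_j\}$ says its vertices affinely generate the lattice of its hyperplane. From this I expect to show $c=1$, since the lattice is generated by the facet together with $v_i$ or $v_j$, whose heights above it are coprime. This gives $\sum a_i=\sum b_j=n$.

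\textbf{Step 3: forcing the weights.} This is the main obstacle. I would compute $L_\Ss$ using a half-open decomposition of the triangulation $\{\Delta_i\}_{i\in I}$. Each $\Delta_i$ with $a_i>1$ contributes to $h^\ast$ through the box points of its fundamental parallelepiped. These box points are determined by the cyclic group $\Z^n/\Lambda(\Delta_i)\cong\Z/a_i$, which is generated by the image of $v_i$. The resulting heights are the values $\sum_\ell\{m\lambda_\ell/a_i\}$, $m=1,\dots,a_i-1$, where the $\lambda_\ell$ are the relation coefficients.

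I then match the resulting $h^\ast$-polynomial with $1+t+\dots+t^{n-1}$. The two requirements are that there be no height-$0$ extra contributions and at most one point at each height. I expect these to force one side, say $I$, to have all $a_i=1$. Then $p+1=n$, so $p=n-1$ and $q=1$, and $J=\{w_1,w_2\}$ with $b_1+b_2=n$. The box points of the simplex missing $w_1$ then sit at heights that must fill exactly the slots $1,\dots,n-1$ together with those coming from $w_2$. If this computation is hard to control, I would instead use the Ehrhart reciprocity constraint, with the boundary count determined by the facets, to rule out $q\ge2$.

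\textbf{Step 4: normal form and identification.} With the relation $v_1+\dots+v_n=b_1w_1+b_2w_2$ and $b_1+b_2=n$, choose a unimodular facet, say $\{v_1,\dots,v_{n-1},w_1\}$. Unimodular facets let me place $v_1,\dots,v_{n-1},w_1$ at $\bm{e}_1,\dots,\bm{e}_{n-1},\bm{0}$ at height $0$, with $w_2$ at height $1$. Solving the relation for $v_n$ then gives explicit integral coordinates in terms of $b_1,b_2$.

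Integrality, together with unimodularity of the remaining facets, should force $b_1\equiv b_2 \pmod 2$. Setting $k=|b_1-b_2|$ gives $0\le k<n$ and $k\equiv n\pmod 2$, and the resulting polytope is exactly $\Dd_k$ as constructed in Section~\ref{sec:bipysimplex}. Uniqueness of $k$ holds because $k$ is read off from the primitive affine relation, which is a unimodular invariant. The isomorphism $(N_\Ss,\xi_\Ss)\cong(N_k,\xi)$ then follows from the one-to-one correspondence between toric diagrams and Gorenstein toric contact manifolds.
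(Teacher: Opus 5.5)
Your outline follows the paper's proof. Lattice points $=$ the $n+2$ vertices; every vertex lies in the unique circuit, so $\Ss$ is a free sum of a $p$-simplex and a $q$-simplex; the split is forced to be $\{n,2\}$; volumes force unit coefficients on the base; a unimodular simplex gives the normal form $\Dd_k$; $k$ is read off from the invariant relation; and the contact statement follows from Theorem~\ref{thm:diagram}.

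\textbf{The gap is Step~3.} You leave it as an expectation, and the box-point analysis is unnecessary. What is missing is the facet count.
\begin{itemize}
\item Every facet is a unimodular $(n-1)$-simplex, so the coefficient of $t^{n-1}$ in $L_\Ss(t)$ is $\frac12\sum_F \mathrm{Vol}_{n-1}(F)=f_{n-1}/(2(n-1)!)$.
\item In $\sum_{j=0}^{n-1}\binom{t+n-j}{n}$ the same coefficient is $n/(n-1)!$, so $f_{n-1}=2n$.
\item Your reciprocity fallback gives the same thing: index $2$ yields $(1+z)h^\ast_\Ss(z)=h_{\partial\Ss}(z)=(1+\cdots+z^p)(1+\cdots+z^q)$, which you evaluate at $z=1$.
\item The facets are the $(p+1)(q+1)$ complements of pairs. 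So $(p+1)(q+1)=2n$ and $p+q=n$ give $(p-1)(p-n+1)=0$, i.e.\ $\{|I|,|J|\}=\{n,2\}$.
\item Your Step~2 identity $\sum_{i\in I}a_i=n$ is then a sum of $n$ positive integers, so all $a_i=1$ at once. This is exactly the paper's observation that the $n$ simplices obtained by deleting one base vertex have integer normalized volumes $\ge 1$ summing to $n$.
\end{itemize}

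\textbf{Step~2: your reason for $c=1$ is circular.} Over the facet $\Ss\setminus\{v_i,v_j\}$, the heights of $v_j$ and $v_i$ are $\mathrm{vol}(\Delta_i)=c\,a_i$ and $\mathrm{vol}(\Delta_j)=c\,b_j$. Their gcd is therefore a multiple of $c$, so asserting they are coprime assumes what you want. Use counting instead: $c$ is a positive integer with $c\sum_I a_i=c\sum_J b_j=n$, $\sum_I a_i\ge|I|$, $\sum_J b_j\ge|J|$ and $|I|+|J|=n+2$. Hence $c\le 2n/(n+2)<2$. Alternatively, get $c=1$ after Step~3 as above.

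\textbf{Step~4: integrality imposes nothing.} In your normal form, $v_n=b_2w_2-\sum_{i<n}\bm{e}_i$ is integral for every integer $b_2$. Also, $b_1\equiv b_2 \pmod 2$ is the wrong condition: since $b_1+b_2=n$, it would force $n$ even, which already fails for $n=3$, $k=1$. The parity $k=|b_1-b_2|\equiv b_1+b_2=n \pmod 2$ is automatic from $b_1,b_2\in\Z$. In the paper it appears as integrality of the last coordinate $\frac{n+k}{2}$ of $\phi(\bm{v}_n)$. To land on the paper's $\Dd_k$, send the apex with the smaller coefficient to $\bm{a}_n$, the other apex to $\bm{a}_{n+1}$, and $v_1,\dots,v_{n-1}$ to $\bm{a}_1,\dots,\bm{a}_{n-1}$.
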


\begin{remark} \label{rem:rem}
All the rigidity examples considered in Theorems~\ref{thm:small} and~\ref{thm:bipysimplex1} arise from primitive
prequantizations of monotone toric symplectic manifolds with divisible (non-primitive) first Chern class. In other 
words, their minimal Chern number is bigger than $1$, in fact equal to $2$ in all these examples. It would be 
interesting to find rigid examples without this feature but we do not know if they exist.
\end{remark}

\vskip .3cm
\noindent {\bf Organization of the paper. } 
Section~\ref{sec:tsm} contains a very brief description of toric symplectic manifolds and their Delzant moment polytopes. 
Section~\ref{sec:FBm} introduces monotone Bott manifolds and reviews some recent results on their classification. 
Section~\ref{sec:Gorenstein} contains a brief description of Gorenstein toric contact manifolds and their toric diagrams. 
Contact Betti numbers and their values for the main examples in this paper are the subject of Section~\ref{sec:invariants}. 
The second part of the paper is devoted to Ehrhart theory of toric diagrams. 
Section~\ref{sec:polyEhrhart} contains some background, main examples and basic combinatorial results. 
Section~\ref{sec:smallcrosspolytopes} is devoted to small cross-polytopes, their Ehrhart theory and the proof of
Theorem~\ref{thm:small}.
Section~\ref{sec:monotone} proves a useful result in the Ehrhart theory of toric diagrams that arise from
prequantizations of divisible reflexive Delzant polytopes and Remark~\ref{rem:main} shows how it can be
translated as a known result regarding their cylindrical contact homology.
Section~\ref{sec:ppyr} is devoted to the double-pyramid construction of toric diagrams Ehrhart equivalent to
cross-polytopes and their enumeration up to unimodular equivalence. Finally, Section~\ref{sec:bipysimplex}
introduces the toric diagrams of primitive prequantizations of monotone $\CP^1$-bundles over $\CP^{n-1}$ and 
proves Theorem~\ref{thm:bipysimplex1}.
 
\vskip .3cm
\noindent {\bf Acknowledgements.}
The first and third author are grateful to Sinai Robins for pointing out reference~\cite{R-V}. 
The third author acknowledges the tool Gemini for useful assistance.

\section{Toric symplectic manifolds}
\label{sec:tsm}

A \emph{toric symplectic manifold} is a closed symplectic manifold $(M, \omega)$ of dimension $2n$ 
equipped with an effective hamiltomian $n$-torus action, i.e.
\[
\tau: \T^n \cong \R^{n} / 2\pi \Z^n \hookrightarrow \Ham (M, \omega)\,.
\]
The $n$ hamiltonian functions $(h_1, \ldots, h_n)$ generating these $n$ commuting circle actions
give rise to the \emph{moment map}
\[
\bm{\mu} : M \to \R^n\,,\ \bm{\mu} = (h_1, \ldots, h_n)\,.
\]
The Atiyah-Guillemin-Sternberg convexity theorem implies that the moment map image, $\Pp:= \bm{\mu} (M)$, 
is the convex hull of the images of the fixed points  of the action, i.e. $\Pp = \conv (\bm{\mu}(M^{\T^n}))$, and
can be described as a finite intersection of half-spaces:
\[
\Pp = \cap_{j=1}^d \left\{ \bm{x}\in\R^n\,:\ \bm{x}\cdot\bm{\nu}_j + \lambda_j \geq 0 \right\}\,,
\]
where $\bm{\nu}_j \in\Z^n$ is the primitive integral interior normal to the $j$-th facet of $P$ and
$\lambda_j \in \R$ determines its position relative to the origin. 
$\Pp\subset\R^n$ is called the \emph{moment polytope} of the toric symplectic manifold $(M, \omega, \tau)$.
It is a Delzant polytope and a complete invariant of $(M, \omega, \tau)$ \cite{D}. 
Note that the hamiltonian functions $(h_1, \ldots, h_n)$ are only determined up 
to constants and hence the moment polytope $\Pp$ is only determined up to translation in $\R^n$.

The real numbers $\lambda_j$, $j=1,\ldots, d$, can be used to determine the cohomology class of the 
symplectic form $\omega$ and we have that $[\omega] = c_1 (M)$, i.e.  $(M, \omega, \tau)$ is
a monotone toric symplectic manifold, if and only if there is a translation of $\Pp$ such that all the 
$\lambda_j$'s are equal to $1$, i.e.
\[
\Pp = \cap_{j=1}^d \left\{ \bm{x}\in\R^n\,:\ \bm{x}\cdot\bm{\nu}_j + 1 \geq 0 \right\}\,.
\]

\begin{example} \label{ex:2-sphere}
When $n=1$ there is only one toric symplectic manifold: the $2$-sphere $S^2 = \CP^1$ equipped with an area form
and circle action given by rotation around an axis. For the monotone area form, with total area $4\pi$, we
have that 
\[
\Pp = [-1,1] = \left\{x\in\R\,:\ -x+1\geq 0\right\} \cap  \left\{x\in\R\,:\ x+1\geq 0\right\}\,.
\]
\end{example}

\begin{example} \label{ex:hirzebruch}
Hirzebruch surfaces are examples of toric symplectic $4$-manifolds ($n=2$) that can be constructed as $\CP^1$-bundles
over $\CP^1$. More precisely, the Hirzebuch surface $H_k$, $k\in\Z$, is given by
\[
H_k := \CP (\Oo (k) \oplus \C) \to \CP^1 = S^2\,.
\]
The primitive, integral, interior normals to the facets of its moment polytope can be listed as the columns of the matrix
\[
\begin{bmatrix}
1 & 0 & -1 & 0  \\
0 & 1 & k & -1 \\
\end{bmatrix}\,.
\]
See Figure~\ref{fig1} with a moment polytope for $H_2$.
\begin{figure}[ht]
\includegraphics[width=4in, height=2.4in]{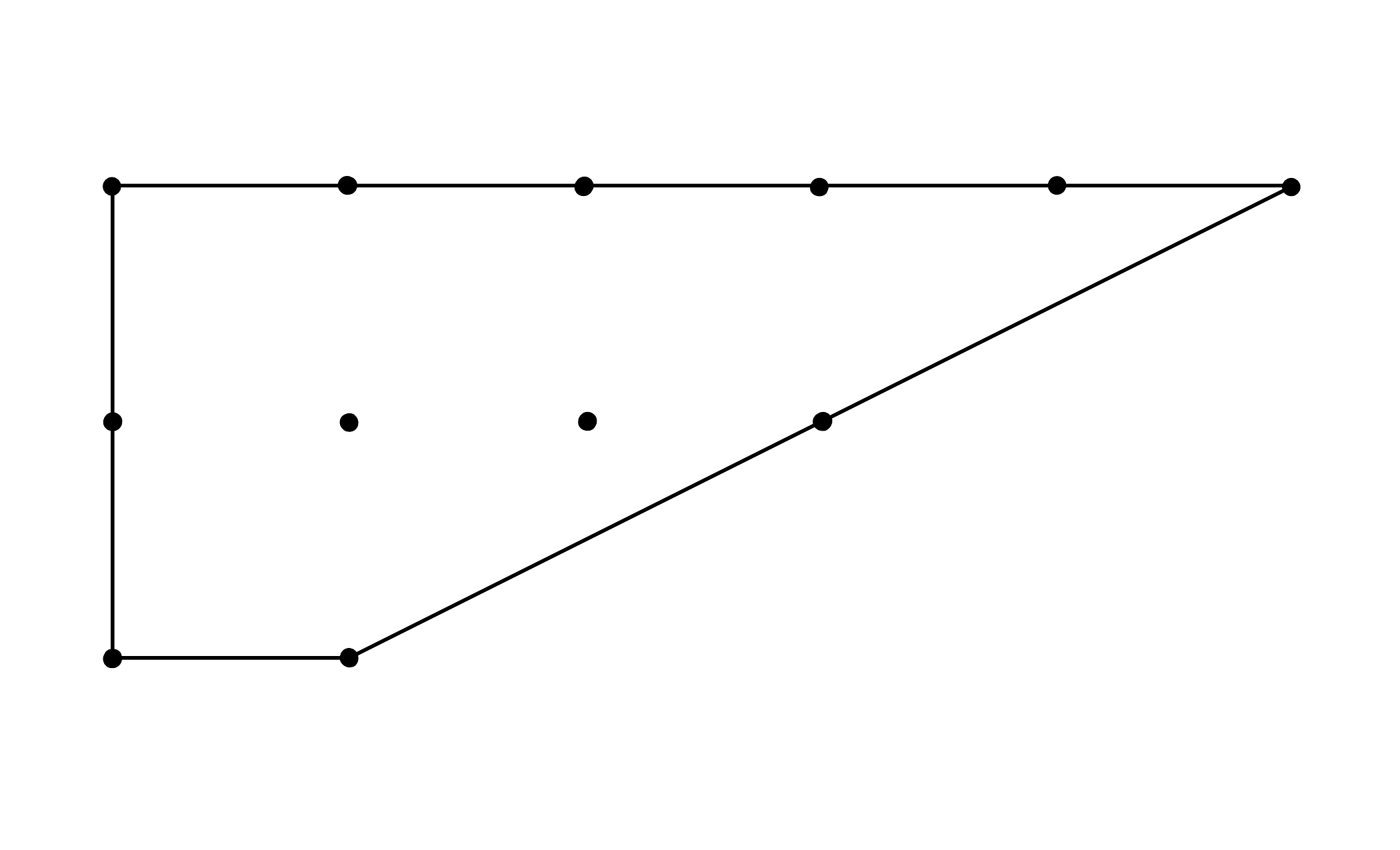}
\caption{A moment polytope for $H_2$.}
\label{fig1}
\end{figure}

There are only two diffeomorphism types of Hirzebruch surfaces: all even $H_{2k}$ are diffeomorphic to $S^2 \times S^2$,
while all odd $H_{2k-1}$ are diffeomorphic to $S^2 \tilde{\times} S^2 =$ the nontrivial $S^2$-bundle over $S^2$.

There are only two monotone Hirzebruch surfaces: $H_0$ and $H_1 \cong H_{-1}$. In fact, the complete list of monotone
toric symplectic $4$-manifolds is
\[
H_0 \cong \CP^1 \times \CP^1\,,\ \CP^2\,,\ H_1 \cong \CP^2 \# \oCP^2 \cong S^2 \tilde{\times} S^2\,,\  \CP^2 \# 2\oCP^2\ \text{and}\ 
 \CP^2 \# 3\oCP^2\,.
\]
See Figure~\ref{fig2} with their moment polytopes.
\begin{figure}[ht]
\includegraphics[width=4in, height=2.4in]{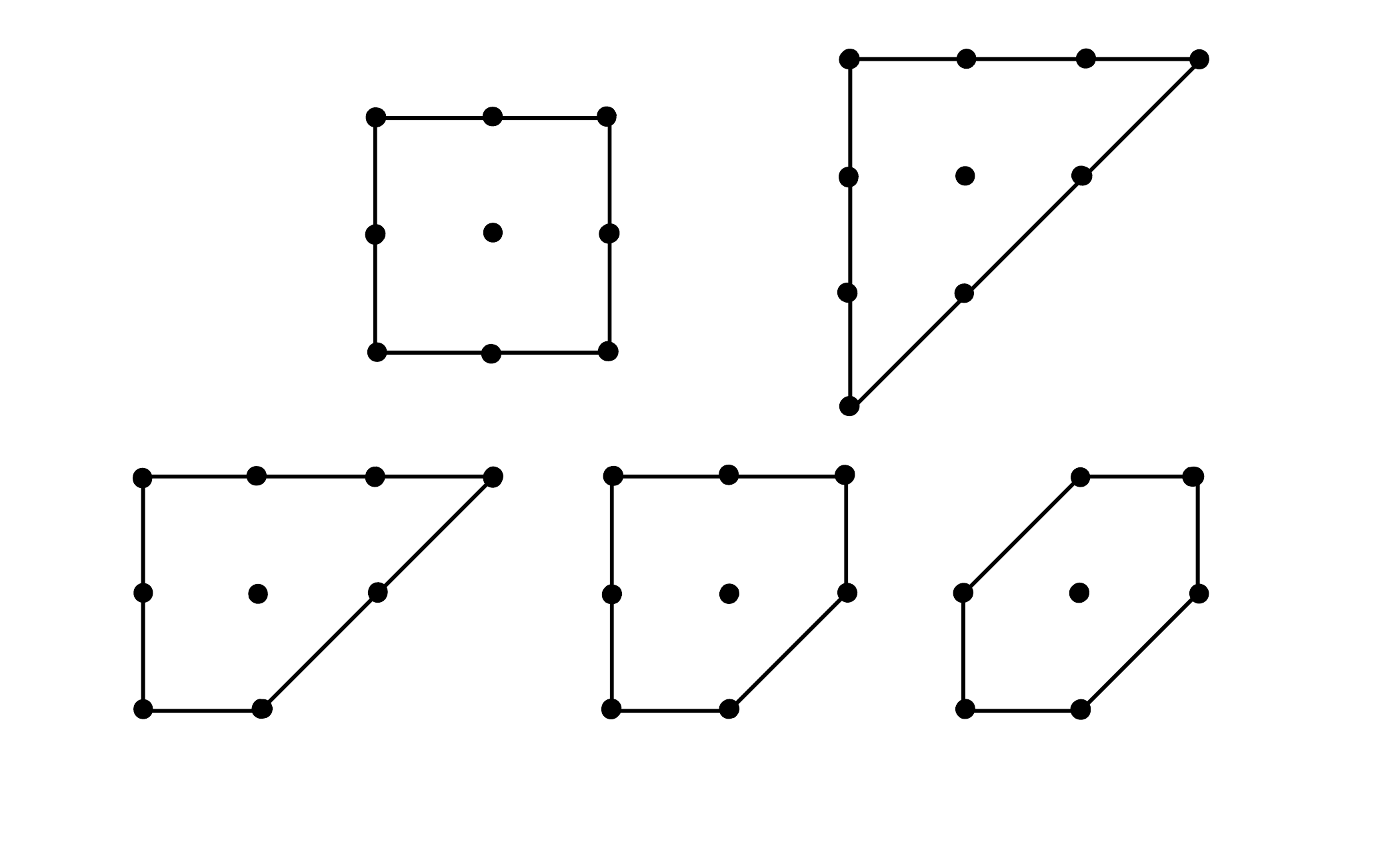}
\caption{Moment polytopes for the monotone toric symplectic $4$-manifolds: 
$H_0 \cong \CP^1 \times \CP^1$ and $\CP^2$ on the top row, 
$H_1 \cong \CP^2 \# \oCP^2 \cong S^2 \tilde{\times} S^2$, $\CP^2 \# 2\oCP^2$ and $\CP^2 \# 3\oCP^2$ on the bottom row.}
\label{fig2}
\end{figure}
\end{example}

\section{Monotone Bott manifolds as toric symplectic manifolds}
\label{sec:FBm}

Bott manifolds, introduced in~\cite{GK}, iterate the construction of Hirzebruch surfaces.
More precisely, they are iterated $\CP^1 = S^2$-bundles of the form
\[
B_n \xrightarrow{\pi_{n}} B_{n-1} \xrightarrow{\pi_{n-1}} \cdots \xrightarrow{\pi_{2}} B_1 = \CP^1 \xrightarrow{\pi_{1}}
B_0 = \{\text{point}\}
\]
where
\[
B_i = \CP (\xi_i \oplus \C) \ \text{with}\ \xi_{i} \rightarrow B_{i-1}\ \text{a $\C$-line bundle.}
\]
At each stage, we always have that $H^\ast (B_i ; \Z) \cong H^\ast ((S^2)^i ; \Z)$ as graded abelian groups (but not as rings),
hence $\rank H^{2k} (B_i;\Z) = \binom{i}{k}$. In particular, $\rank H^{2} (B_i;\Z) = i$ and so the Chern class $c_1(\xi_i)$ of
each $\C$-line bundle $\xi_{i} \rightarrow B_{i-1}$ is determined by $i-1$ integers $(a_{i,1}, \ldots, a_{i,i-1})$.

All Bott manifolds are toric symplectic manifolds determined by moment polytopes that are combinatorially equivalent to
hypercubes. For a Bott manifold $B_n$ of dimension $2n$, the $2n$ primitive, integral, interior normals of the corresponding
moment polytope are determined by the Chern classes $c_1(\xi_i)$, $i=2, \ldots, n$. In an appropriate integral basis of 
$\Z^n \subset \R^n$, these normals are given by the $n$ coordinate basis vectors, i.e. the colums of the $(n\times n)$ 
identity matrix $I_n$, and the $n$ ``opposite'' vectors given by the colums of the lower triangular matrix
\[
L(B_n) = 
\begin{bmatrix}
-1 & \  & \  & \  & \  \\
a_{2,1} & -1  & \  & \  & \  \\
a_{3,1}  & a_{3,2}  & -1  & \  & \  \\
\vdots  & \vdots   & \ddots   & \ddots  & \  \\ 
a_{n,1}  & a_{n,2}  & \cdots  & a_{n,n-1}  & -1 \\
\end{bmatrix} \,.
\]

Necessary and sufficient conditions on $L(B_n)$ so that $B_n$ is Fano, i.e. so that $c_1 (B_n) > 0$, were given in~\cite{S}. 
From the point of view of toric symplectic geometry, these are precisely the monotone Bott manifolds. 
\begin{theorem}[\!\!\cite{S}] \label{thm:S}
$B_n$ is a monotone toric symplectic manifod if and only if $L(B_n)$ is such that for any colum $j\in\{1,\ldots,n-1\}$ 
one of the folowing holds:
\begin{itemize}
\item[(1)] $a_{j+1,j} = \cdots = a_{n,j} = 0$;
\item[(2)] $\exists\ q\in\{j+1, \ldots,n\}\,:\ a_{q,j} =1$ and $a_{i,j} = 0$ for all $i\in\{j+1, \ldots,n\}\setminus\{q\}$;
\item[(3)] $\exists\ q\in\{j+1, \ldots,n\}\,:\ a_{q,j} = -1$, $a_{i,j} = 0$ for all $i\in\{j+1,\ldots, q-1\}$ and $a_{i,j} = a_{i,q}$
for all $i\in\{q+1,\ldots, n\}$.
\end{itemize}
\end{theorem}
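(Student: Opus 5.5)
The statement is Theorem~\ref{thm:S}, the characterization from~\cite{S} of monotone (Fano) Bott manifolds in terms of $L(B_n)$. My plan is a direct toric argument. The criterion is that a smooth complete toric variety is Fano exactly when its fan is the normal fan of a reflexive polytope. Equivalently, the anticanonical polytope
\[
\Pp=\{\bm x:\ \bm x\cdot\bm\nu_j+1\ge 0\ \forall j\}
\]
must have its normal fan equal to the given fan. Concretely, for each maximal cone the unique linear functional $\bm u_\sigma$ with $\bm u_\sigma\cdot\bm\nu=-1$ on the rays $\bm\nu$ of $\sigma$ must satisfy $\bm u_\sigma\cdot\bm\nu>-1$ for every ray not in $\sigma$. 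This is strict convexity of the piecewise-linear anticanonical support function.

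The Bott fan has rays $\bm e_i$ and $\bm v_i$, where $\bm v_i$ is the $i$-th column of $L(B_n)$, so that $\bm v_i=-\bm e_i+\sum_{k>i}a_{k,i}\bm e_k$. Its maximal cones are obtained by choosing one of $\{\bm e_i,\bm v_i\}$ for each $i$. Since the primitive collections are exactly the pairs $\{\bm e_i,\bm v_i\}$, Batyrev's criterion gives a cleaner route. The variety is Fano if and only if, for each $i$, the vector $\bm e_i+\bm v_i$ lies in some cone $\sigma$ and its coefficient sum in the ray generators of $\sigma$ is $<2$, i.e. equal to $0$ or $1$.

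The key step is to compute, for each column $j$, the primitive relation. We have $\bm e_j+\bm v_j=\sum_{k>j}a_{k,j}\bm e_k$. We then rewrite this vector in the unique cone containing it, processing indices in increasing order. A positive coefficient $a_{k,j}$ keeps $\bm e_k$. A negative coefficient at the first index $q$ is replaced using $-\bm e_q=\bm v_q-\sum_{k>q}a_{k,q}\bm e_k$, and the process continues with the remaining coordinates.

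The degree of the relation is the total sum of coefficients, and we need it to be $0$ or $1$. We then do case analysis.
\begin{itemize}
\item Degree $0$: the only possibility is the empty relation, which is case (1).
\item Degree $1$ with a single $\bm e_q$: this gives case (2).
\item Degree $1$ with a single $\bm v_q$: this requires that the first nonzero entry be $a_{q,j}=-1$ and that, after substitution, the remainder $\sum_{k>q}(a_{k,j}-a_{k,q})\bm e_k$ vanish. That is exactly case (3).
\end{itemize}
Conversely, each of (1)–(3) clearly gives degree $\le1$.

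The main obstacle is showing that any other configuration forces degree $\ge2$. The possibilities are several positive entries, a first negative entry of size $\le-2$, or a nonzero residual after substituting $\bm v_q$. The trouble is that the iterated rewriting can in principle cancel coefficients. I would handle this by induction on $n-j$. The rewriting stays inside the sub-fan of indices $>q$, which is itself a Bott fan, and Fano-ness of the subtower $B_n\to\cdots$ restricted to those coordinates lets us assume the criterion holds for later columns.
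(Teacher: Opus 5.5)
The paper gives no proof of this statement; it quotes it from \cite{S}. Your route through Batyrev's criterion for the primitive collections $\{\bm{e}_j,\bm{v}_j\}$ is correct, and your three bullets together with the converse already form a complete proof. The ``main obstacle'' in your last paragraph does not exist.

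The Bott fan is complete and unimodular. Hence $\bm{e}_j+\bm{v}_j=\sum_{k>j}a_{k,j}\bm{e}_k$ has a \emph{unique} expression $\sum_i\lambda_i\bm{u}_i$ with $\bm{u}_i\in\{\bm{e}_i,\bm{v}_i\}$ and $\lambda_i\in\Z_{\ge 0}$, and the degree of the relation is $\sum_i\lambda_i$. So degree $\le 1$ means exactly that $\bm{e}_j+\bm{v}_j$ is either $0$ or a single ray generator $\bm{e}_q$ or $\bm{v}_q$. In the latter case $q>j$, since the vector vanishes in coordinates $\le j$ while $\bm{e}_q$ and $\bm{v}_q$ do not vanish in coordinate $q$. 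Comparing coordinates, these three possibilities are literally conditions (1), (2) and (3). The claim that any other configuration forces degree $\ge 2$ is therefore just the contrapositive of what your bullets already establish.

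Your worry about cancellations is also unfounded within your own rewriting algorithm. The degree it outputs is the sum, over the processed indices $q$, of $|c_q|$, where $c_q$ is the current $q$-th coefficient at the moment $q$ is processed. A substitution $-\bm{e}_q=\bm{v}_q-\sum_{k>q}a_{k,q}\bm{e}_k$ can change or cancel coefficients at indices not yet processed. It never decreases what has already been accumulated. So degree $\le 1$ means at most one index is processed with a nonzero coefficient, that coefficient is $\pm 1$, and everything else is zero, including the remainder after substituting $\bm{v}_q$. This is exactly (1), (2) or (3).

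You can therefore delete the induction on $n-j$ and the appeal to Fano-ness of sub-towers. That appeal is true, since the fiber of $B_n\to B_q$ has trivial normal bundle and so is Fano by adjunction, but it is not needed.
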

\noindent A toric cohomological rigidity result for Fano Bott manifolds, i.e. for monotone Bott manifolds, was proved 
in~\cite{CLMP-1}.
\begin{theorem}[\!\!\cite{CLMP-1}]
Given two monotone Bott manifolds $B$ and $B'$, the following are equivalent:
\begin{itemize}
\item[(1)] $B\cong B'$ as toric manifolds (i.e. equivariantly diffeomorphic);
\item[(2)] $H^\ast (B;\Z) \cong_{c_1} H^\ast (B';\Z)$, i.e. $c_1$-preserving graded ring isomorphism;
\item[(3)] $L(B)$ can be transformed into $L(B')$ using certain operations.
\end{itemize}
\end{theorem}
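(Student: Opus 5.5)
The plan is to prove the three implications $(1)\Rightarrow(2)$, $(3)\Rightarrow(1)$ and $(2)\Rightarrow(3)$. Only the last one should require real work. First I will fix the standard description of the data. Let $\bm{\nu}_1,\ldots,\bm{\nu}_{2n}$ be the facet normals: the columns of $I_n$ and of $L(B_n)$. By the Danilov--Jurkiewicz presentation,
\[
H^\ast(B_n;\Z)\cong \Z[x_1,\ldots,x_n]\big/\big(x_i\,(x_i - \textstyle\sum_{j<i} a_{i,j}x_j)\ ,\ i=1,\ldots,n\big),
\]
with the sign conventions adjusted to the normals above. In this presentation the divisor classes are $D_i=x_i$ and $D_{n+i}=x_i-\sum_{j<i}a_{i,j}x_j$. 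Hence $c_1(B_n)=\sum_{k=1}^{2n}D_k = 2\sum_i x_i-\sum_{i>j}a_{i,j}x_j$.

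For $(1)\Rightarrow(2)$, an equivariant diffeomorphism of toric manifolds identifies the moment polytopes, and hence the fans, up to $GL(n,\Z)$ and a relabeling of the facets. It therefore induces a graded ring isomorphism sending divisor classes to divisor classes. Since $c_1$ is the sum of all the divisor classes, it is preserved.

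For $(3)\Rightarrow(1)$, I would check each elementary operation allowed on $L(B)$ directly. These operations are, roughly, the following:
\begin{itemize}
\item simultaneous permutations of stages that keep the matrix lower triangular;
\item swapping a normal $\bm{e}_j$ with its opposite vector (the ``flip'' of the $j$-th $\CP^1$-fiber);
\item the replacement in case (3) of Theorem~\ref{thm:S}, where a column equal to $-\bm{e}_q$ plus the $q$-th column is traded.
\end{itemize}
For each one I would write down an explicit matrix $A\in GL(n,\Z)$ carrying the old set of $2n$ normals bijectively onto the new one, preserving the cone structure (pairs $\{\bm{e}_j,\text{$j$-th column of }L\}$ never span a cone). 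By Delzant's theorem this gives an equivariant isomorphism.

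The main obstacle is $(2)\Rightarrow(3)$. I would argue by induction on $n$, using the monotonicity constraints of Theorem~\ref{thm:S} to control the ring. The key invariant is the set of \emph{primitive square-zero} classes $\{y\in H^2 : y^2=0,\ y \text{ primitive}\}$, together with the values $\langle c_1 y^{\,\cdot},\,\cdot\rangle$ obtained by evaluating against $c_1$. For a Fano Bott matrix, the three column types in Theorem~\ref{thm:S} should give an explicit description of these classes. Columns of type (1) give square-zero generators $x_j$ that are ``free''. Type (2) gives $x_j$ with $x_j^2=0$ but pairing nontrivially with $x_q$. Type (3) gives square-zero combinations such as $2x_j - x_q$ or $x_j+x_{n+q}$-like classes.

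I expect that a $c_1$-preserving isomorphism must map the last variable $x_n$, which is the fiber class of $B_n\to B_{n-1}$, to one of finitely many such square-zero classes. After applying one of the operations in (3), I can assume it maps $x_n$ to $x_n$. Quotienting by $x_n$ then gives a $c_1$-preserving isomorphism of the cohomology rings of the bases $B_{n-1},B'_{n-1}$, using that $c_1(B_n)$ restricts to $c_1(B_{n-1})$ plus the fiber contribution. Induction then applies. The delicate point, which I would attack first, is exactly this normalization step. It requires a case analysis over which of (1), (2), (3) the relevant columns fall into, and this is where the monotonicity hypothesis (and not merely ring isomorphism) is essential.
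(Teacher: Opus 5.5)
The paper does not prove this statement; it quotes it from \cite{CLMP-1}. I can therefore only judge your plan on its own terms.

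\textbf{The easy directions.} These are essentially fine once the operations in (3) are pinned down, for instance as reorderings of stages compatible with triangularity, and replacing $\bm{e}_j$ by the $j$-th column of $L(B)$ as a basis vector. These realize the changes of basis of a Bott fan. One caveat on $(1)\Rightarrow(2)$: your argument works for isomorphisms of toric manifolds, which is what \cite{CLMP-1} uses. A bare equivariant diffeomorphism only matches the rays up to sign, so it sends divisor classes to $\pm$ divisor classes. Complex conjugation on one factor of $\CP^1\times\CP^1$, intertwined with inversion of that circle, pulls $c_1$ back to a different class. So preservation of $c_1$ is not automatic from your argument.

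\textbf{The genuine gap: $(2)\Rightarrow(3)$.} This is the whole content of the theorem, and you leave it as an expectation. Worse, the normalization you propose cannot work as stated.

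\emph{$x_n$ is not square-zero.} In your own presentation $x_n^2=\sum_{j<n}a_{n,j}\,x_jx_n$. Square-free monomials form a $\Z$-basis of $H^\ast(B_n;\Z)$ (Leray--Hirsch), so this is nonzero as soon as the last row of $L(B)$ has a nonzero off-diagonal entry. Already for $H_1$ one has $x_2^2=x_1x_2$. A ring isomorphism preserves squares, so $\varphi(x_n)$ is never a square-zero class, and your invariant gives no handle on it. Relatedly, whether $x_j^2=0$ is governed by the $j$-th \emph{row} of $L(B)$, not by the column type in Theorem~\ref{thm:S}. So your dictionary between the three column types and square-zero classes is also off. (Also, $x_n$ is a section class, not the fiber class.)

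\emph{$c_1$ does not descend.} Suppose you could arrange $\varphi(x_n)=x_n'$. Under $H^\ast(B_n)/(x_n)\cong H^\ast(B_{n-1})$, $x_j\mapsto x_j$, the class $c_1(B_n)$ goes to $c_1(B_{n-1})-\sum_{j<n}a_{n,j}x_j$. The induced isomorphism of base rings is therefore $c_1$-preserving only if the last rows already correspond, which is what you want to prove. So the induction hypothesis cannot be invoked.

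\emph{The forgotten row is never recovered.} Nothing in the plan reconstructs the row that the quotient discards. Even an equivalence of the smaller matrices would not give one between $L(B)$ and $L(B')$.

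\textbf{A more promising version.} Quotient instead by a genuinely square-zero class such as $x_1$. Then $H^\ast(B)/(x_1)$ is the cohomology of the Fano Bott manifold obtained by deleting the first row and column of $L(B)$, and $c_1(B)$ reduces exactly to its first Chern class. But the substance still has to be supplied. First, classify the primitive square-zero classes of a Fano Bott cohomology ring. Second, show that $c_1$-compatibility forces $\varphi(x_1)$ to be a class that some operation carries to $x_1'$. Third, show that the first columns, which are constrained by Suyama's trichotomy, then correspond up to operations. None of this appears in the proposal.
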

\noindent Moreover, in~\cite{CLMP-2} using a result of~\cite{HK}, an algorithm is provided to 
enumerate all toric isomorphism classes of monotone Bott manifolds of any given $\C$-dimension $n$ 
and implemented it up to $n=10$. The result is shown in the following table, which also includes 
the number of monotone toric symplectic manifolds following their classification in~\cite{O}.
\[
\begin{array}{|c|c|c|c|c|c|c|c|c|c|c|}
\hline
\dim_\C & 1 & 2 & 3 & 4 & 5 & 6 & 7 & 8 & 9 & 10 \\
\hline
\# \ \text{monotone Bott} & 1 & 2 & 5 & 13 & 37 & 111 & 345 & 1105 & 3624 & 12099 \\
\hline
\# \ \text{monotone toric} & 1 & 5 & 18 & 124 & 866 & 7622 & 72256 & 749892 & 8229721 & ? \\
\hline
\end{array}
\]

\begin{remark}
In Section~\ref{sec:ppyr} we present an alternative approach to this enumeration of all
toric isomorphism classes of monotone Bott manifolds.
\end{remark}

\begin{example}
The five monotone Bott manifolds in $\C$-dimension $3$ can be listed in the following way:
\[
\text{1)} \ \CP^1 \times \CP^1 \times \CP^1\,,\ 
L = 
\begin{bmatrix}
-1 & 0 & 0  \\
0 & -1 & 0 \\
0 & 0 & -1 \\
\end{bmatrix}
\qquad 
\text{2)} \ \CP^1 \times H_1 \,,\ 
L = 
\begin{bmatrix}
-1 & 0 & 0  \\
0 & -1 & 0 \\
0 & 1 & -1 \\
\end{bmatrix}
\]
\[
\text{3)} \ \CP (\Oo(1,-1)\oplus \C) \to \CP^1 \times \CP^1\,,\ 
L = 
\begin{bmatrix}
-1 & 0 & 0  \\
0 & -1 & 0 \\
1 & -1 & -1 \\
\end{bmatrix}
\]
\[
\text{4)} \ \CP (\Oo(1,1)\oplus \C) \to \CP^1 \times \CP^1\,,\ 
L = 
\begin{bmatrix}
-1 & 0 & 0  \\
0 & -1 & 0 \\
1 & 1 & -1 \\
\end{bmatrix}
\]
\[
\text{5)} \ \CP (\Oo(0,1)\oplus \C) \to H_1\,,\ 
L = 
\begin{bmatrix}
-1 & 0 & 0  \\
1 & -1 & 0 \\
0 & 1 & -1 \\
\end{bmatrix}
\]

See Figure~\ref{fig3} with their moment polytopes.
\begin{figure}[ht]
\includegraphics[width=4in, height=2.4in]{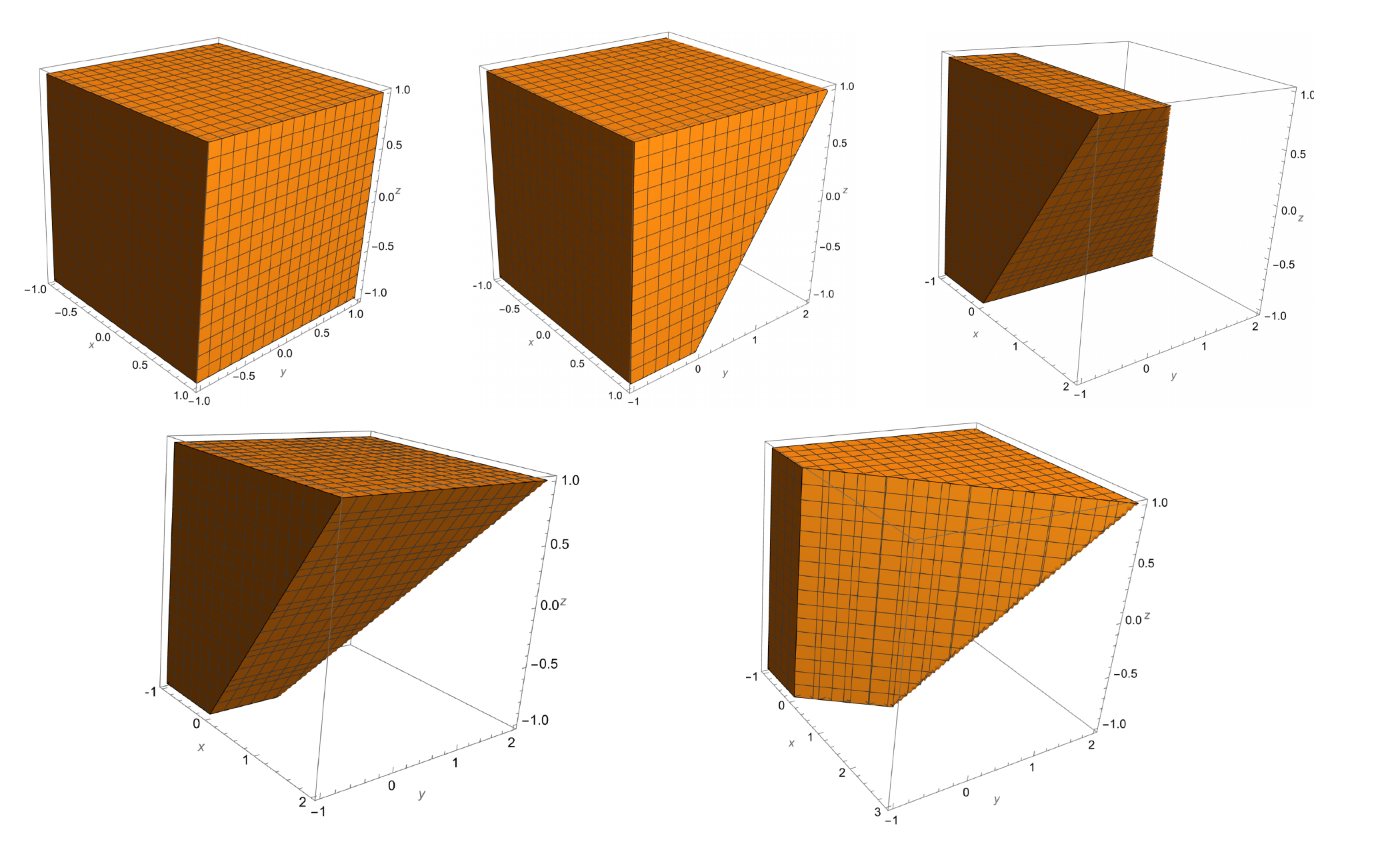}
\caption{Moment polytopes for the five monotone Bott $6$-manifolds: 
$\CP^1 \times \CP^1 \times \CP^1$, $\CP^1 \times H_1$ and $\CP (\Oo(1,-1)\oplus\C) \to \CP^1 \times \CP^1$ on the top row, 
$\CP (\Oo(1,1)\oplus\C) \to \CP^1 \times \CP^1$ and $\CP (\Oo(0,1)\oplus\C) \to H_1$ on the bottom row.}
\label{fig3}
\end{figure}
\end{example}

\section{Gorenstein toric contact manifolds}
\label{sec:Gorenstein}

In this section we will briefly recall the $1$-$1$ correspondence between Gorenstein toric 
contact manifolds, i.e. good toric contact manifolds (in the sense of~\cite{L}) with zero first 
Chern class, and toric diagrams (defined below). We follow closely the presentations in~\cite{AM, AMM}, 
which contain further details.

Via symplectization, there is a $1$-$1$ correspondence between co-oriented contact manifolds
and symplectic cones, i.e. triples $(W,\om,X)$ where $(W,\om)$ is a connected symplectic manifold
and $X$ is a vector field, the Liouville vector field, generating a proper $\R$-action
$\rho_t:W\to W$, $t\in\R$, such that $\rho_t^\ast (\om) = e^{t} \om$. A closed symplectic cone is a 
symplectic cone $(W,\om,X)$ for which the corresponding contact manifold $N = W/\R$ is closed.

A toric contact manifold is a closed contact manifold of dimension $2n+1$ equipped with an effective 
hamiltonian action of the standard torus of dimension $n+1$: $\T^{n+1} = \R^{n+1} / 2\pi\Z^{n+1}$. 
Also via symplectization, toric contact manifolds are in $1$-$1$ correspondence with toric symplectic cones, 
i.e. closed symplectic cones $(W,\om,X)$ of dimension $2(n+1)$ equipped with an effective $X$-preserving 
hamiltonian $\T^{n+1}$-action, with moment map $\bm{\mu} : W \to \R^{n+1}$ such that $\bm{\mu} (\rho_t (w)) = e^{t} \bm{\mu} (w)$, 
for all $w\in W$ and $t\in\R$. Its moment cone is defined to be $C:= \bm{\mu}(W) \cup \{ 0\} \subset \R^{n+1}$.

A toric contact manifold is {\it good} if its toric symplectic cone has a moment cone with the following properties.
\begin{definition} \label{def:good}
A cone $\Cc\subset\R^{n+1}$ is \emph{good} if it is strictly convex and there exists a minimal set 
of primitive vectors $\bm{\nu}_1, \ldots, \bm{\nu}_d \in \Z^{n+1}$, with 
$d\geq n+1$, such that
\begin{itemize}
\item[(i)] $\Cc = \bigcap_{j=1}^d \{\bm{x}\in\R^{n+1}\mid \bm{x}\cdot\bm{\nu}_j \geq 0\}$.
\item[(ii)] Any codimension-$k$ face of $\Cc$, $1\leq k\leq n$, is the intersection of exactly $k$ facets whose set of 
normals can be completed to an integral basis of $\Z^{n+1}$.
\end{itemize}
The primitive vectors $\bm{\nu}_1, \ldots, \bm{\nu}_d \in \Z^{n+1}$ are called the defining normals of the good cone $\Cc\subset\R^{n+1}$.
\end{definition}
The analogue for good toric contact manifolds of Delzant's classification theorem for closed toric
symplectic manifolds is the following result (see~\cite{L}).
\begin{theorem} \label{thm:good}
For each good cone $\Cc\subset\R^{n+1}$ there exists a unique closed toric symplectic cone
$(W_{\Cc}, \om_{\Cc}, X_{\Cc}, \bm{\mu}_{\Cc})$ with moment cone $\Cc$.
\end{theorem}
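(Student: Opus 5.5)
The plan is to follow Lerman's proof, which is the contact analogue of Delzant's theorem, in two parts. Existence comes from a symplectic reduction of $\C^d\setminus\{0\}$. Uniqueness comes from a local normal form plus a sheaf-theoretic gluing argument over the moment cone.

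\emph{Existence.} Let $\bm{\nu}_1,\ldots,\bm{\nu}_d\in\Z^{n+1}$ be the defining normals of $\Cc$. Define $\beta:\R^d\to\R^{n+1}$ by $\beta(\bm{e}_j)=\bm{\nu}_j$.
\begin{itemize}
\item Surjectivity: any codimension-$n$ face of $\Cc$ (a ray) lies in $n$ facets whose normals complete to an integral basis. Strict convexity then forces $\beta(\Z^d)=\Z^{n+1}$.
\item The kernel: $\beta$ therefore induces a surjection of tori $\T^d\to\T^{n+1}$ whose kernel $K$ is a connected subtorus of dimension $d-n-1$.
\item The reduction: the standard $\T^d$-action on $\C^d$ has homogeneous quadratic moment map $\bm{z}\mapsto\frac12(|z_1|^2,\ldots,|z_d|^2)$. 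Let $\mu_K$ be its restriction to $K$, with level set $Z=\mu_K^{-1}(0)\setminus\{0\}$, and set $W_\Cc := Z/K$.
\item The residual structure: $W_\Cc$ inherits the $\T^{n+1}=\T^d/K$-action. The radial dilation $\bm{z}\mapsto e^{t/2}\bm{z}$ commutes with the torus and scales $\om$ by $e^t$, so it descends to a Liouville vector field $X_\Cc$. The induced moment map is homogeneous of degree one, and its image is $\beta^\ast$-dual to the positive orthant, namely $\Cc$.
\item Freeness: the stabilizer in $K$ of a point of $Z$ is governed by the face of $\Cc$ containing its image. Condition (ii) of Definition~\ref{def:good} (normals completing to a basis) is exactly what makes $K$ act freely, so $W_\Cc$ is smooth. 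The case of the vertex $0$ is excluded by removing the origin.
\item Closedness: $W_\Cc/\R$ is identified with $(Z\cap S^{2d-1})/K$. This is compact because $\mu_K^{-1}(0)\cap S^{2d-1}$ is compact.
\end{itemize}

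\emph{Uniqueness.} Let $(W,\om,X,\bm{\mu})$ be any closed toric symplectic cone with moment cone $\Cc$.
\begin{itemize}
\item Local isomorphism: by the equivariant symplectic slice theorem (local normal form near orbits of a torus action), over each point of $\Cc\setminus\{0\}$ the cone $W$ is isomorphic, over a small neighbourhood $U$ in $\Cc$, to $W_\Cc$ over $U$. The isomorphism is $\T^{n+1}$-equivariant and intertwines the moment maps. Condition (ii) guarantees that every orbit type is the standard one $\C^k\times T^\ast\T^{n+1-k}$, so these local models exist and agree.
\item The Liouville field: because $X$ is characterized by $\bm{\mu}\circ\rho_t=e^t\bm{\mu}$ together with $\T$-invariance, any moment-preserving equivariant symplectomorphism automatically intertwines the $\R$-actions. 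Alternatively, one can average over the $\R$-action to make the local isomorphisms $\R$-equivariant, working on $(\Cc\setminus\{0\})/\R_{>0}$, a compact polytope.
\item Gluing: the local isomorphisms define a torsor for the sheaf $\mathcal{G}$ of $\T$-equivariant, moment-preserving symplectic automorphisms over $\Cc\setminus\{0\}$. Following Lerman (and Haefliger–Salem / Boucetta–Molino), there is an exact sequence $0\to\Z^{n+1}\times\mathbb{R}\to \mathcal{C}^\infty\to\mathcal{G}\to 0$ of sheaves (a locally affine term in place of constants). The torsor class lies in $H^1(\Cc\setminus\{0\};\mathcal{G})$. This group embeds into $H^2$ of a locally constant sheaf, modulo fine-sheaf terms.
\item Vanishing: since $\Cc\setminus\{0\}$ deformation retracts onto a compact convex cross-section, it is contractible. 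Hence $H^1(\mathcal{G})=0$, and the local isomorphisms glue to a global one.
\end{itemize}

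I expect the main obstacle to be this last step. Concretely, it requires identifying the sheaf $\mathcal{G}$ correctly near lower-dimensional faces, where functions must be smooth in the $|z_j|^2$ coordinates, and checking that it is soft up to the locally constant part, while keeping track of the compatibility with the $\R$-action.
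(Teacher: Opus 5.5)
Your outline follows Lerman's strategy (the paper gives no proof and simply cites \cite{L}), but the uniqueness half has a genuine gap at the $\R$-action.

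It is false that a $\T$-equivariant symplectomorphism intertwining the moment maps automatically intertwines the Liouville fields. Take $W=\C^{n+1}\setminus\{0\}$ with $\rho_t(z)=e^{t/2}z$ and $\bm{\mu}(z)=\frac12(|z_1|^2,\ldots,|z_{n+1}|^2)$. The map $\psi(z)=(e^{i|z_1|^2/2}z_1,z_2,\ldots,z_{n+1})$ is the time-one map of the invariant Hamiltonian $\frac18|z_1|^4$. It is $\T^{n+1}$-equivariant, symplectic and satisfies $\bm{\mu}\circ\psi=\bm{\mu}$, yet $\psi(e^{t/2}z)\neq e^{t/2}\psi(z)$.

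In general, for such a $\phi\colon W\to W'$ the field $Y=\phi_*X-X'$ satisfies $L_Y\om=0$ and $d\bm{\mu}(Y)=0$. So $Y$ is an invariant field along the orbits with $\iota_Y\om=d(h\circ\bm{\mu})$, and nothing forces $h$ to be constant. Your alternative of averaging over the $\R$-action is not available, since $\R$ is not compact.

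Consequently, the sheaf you glue with is the automorphism sheaf of toric symplectic manifolds; its kernel $\Z^{n+1}\times\R$ consists of integral affine functions. Vanishing of its $H^1$ only gives $W\cong W_\Cc$ as toric symplectic manifolds, not as symplectic cones. You need a correction step:
\begin{itemize}
\item Let $E=\sum_j x_j\partial_{x_j}$ be the Euler field. Composing with the time-one flow of $g\circ\bm{\mu}$ changes $\phi_*X$ by the orbit field with coefficients $\pm\nabla(Eg-g)$.
\item So it suffices to solve $(E-1)g=\pm h$. This is a linear ODE along each ray, solvable smoothly with $g=0$ on a cross-section $\Delta=\Cc\cap\{\langle x,v\rangle=1\}$, where $v$ is in the interior of the dual cone.
\item The function $h$ exists globally because $\Cc\setminus\{0\}$ is star-shaped about any interior point.
\end{itemize}
Equivalently, glue with the subsheaf $\mathcal{G}^{\R}$ of automorphisms commuting with $\rho_t$. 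These are time-one flows of Hamiltonians homogeneous of degree one, so they form a sheaf on the compact polytope $\Delta$ with
\[
0\to\Z^{n+1}\to C^\infty_\Delta\to\mathcal{G}^{\R}\to 0
\]
(the kernel is integral linear functions only, with no constants), and hence $H^1(\Delta;\mathcal{G}^{\R})\cong H^2(\Delta;\Z^{n+1})=0$. In that version the local models from the slice theorem must first be made $\R$-equivariant by the same correction on a small conic box, then extended by $\rho_t\circ\phi\circ\rho_{-t}$.

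You also need the fibers of $\bm{\mu}$ to be single orbits, so that $W/\T\cong\Cc\setminus\{0\}$. This follows, for example, from convexity for the proper map $\bm{\mu}\colon W\to\{\langle x,v\rangle>0\}$.

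In the existence part, your claim that strict convexity forces $\beta(\Z^d)=\Z^{n+1}$ is false. The $n$ normals at a ray only span a saturated rank-$n$ sublattice. For the lens space $L^3_{p+1}(1,p)$ of the introduction, the cone has normals $(0,1)$ and $(p+1,1)$. Then $\beta(\Z^2)$ has index $p+1$, the kernel of $\T^2\to\T^2$ is $K\cong\Z_{p+1}$ rather than a connected torus, and $W_\Cc=(\C^2\setminus\{0\})/\Z_{p+1}$. In general $K$ is a compact abelian group with $\pi_0(K)\cong\Z^{n+1}/\beta(\Z^d)$.

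Nothing else in your construction uses connectedness. The stabilizer in $K$ of $z\in Z$ is $K\cap\T^J$, where $J$ is the set of vanishing coordinates, and condition (ii) makes it trivial regardless. So simply drop that claim and keep $K$ as the full, possibly disconnected, kernel.
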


The Chern classes of a co-oriented contact manifold can be canonically identified with the 
Chern classes of the tangent bundle of the associated symplectic cone. The following proposition 
gives a moment cone characterization for zero first Chern class, i.e. for the toric contact manifold 
to be Gorenstein.
\begin{prop} \label{prop:c_1}
Let $(W_{\Cc}, \om_{\Cc}, X_{\Cc}, \bm{\mu}_{\Cc})$ be a good toric symplectic cone with $c_1 (TW_C) = 0$. 
Then there exists an integral basis of $\T^{n+1}$ for which the defining normals $\bm{\nu}_1,\ldots,\bm{\nu}_d \in \Z^{n+1}$ 
of the corresponding moment cone $\Cc\subset\R^{n+1}$ are of the form
\[
\bm{\nu}_j = (\bm{v}_j, 1)\,,\ \bm{v}_j \in \Z^n\,,\ j=1,\ldots,d\,.
\]
\end{prop}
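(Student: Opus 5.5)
The plan is to translate $c_1(TW_\Cc)=0$ into the existence of a single integral linear functional that takes the value $1$ on every defining normal, and then change basis so that this functional becomes the last coordinate. The toric symplectic cone $W_\Cc$ is obtained by symplectic reduction of $\C^d$ by the subtorus $K\subset\T^d$, the kernel of the map $\T^d\to\T^{n+1}$ induced by $e_j\mapsto\bm{\nu}_j$. This is the standard Delzant/Lerman construction underlying Theorem~\ref{thm:good}. In this model the equivariant (and ordinary) cohomology of $W_\Cc$ is generated by the classes $D_j$ of the toric divisors $\bm{\mu}^{-1}(F_j)$, where $F_j$ are the facets. The usual toric formula gives $c_1(TW_\Cc)=\sum_j D_j$.

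\textbf{Step 1: the linear relations among the $D_j$.} Because $W_\Cc$ is noncompact, I would compute $H^2(W_\Cc;\Z)$ from the reduction $W_\Cc = (\mu_K^{-1}(0)\setminus\{0\})/K$. This gives $H^2(W_\Cc;\Z)\cong \mathrm{Hom}(\pi_1 K,\Z)$ modulo torsion considerations, i.e. $\Z^d$ modulo the image of the dual map $(\Z^{n+1})^*\to(\Z^d)^*$, $u\mapsto (u\cdot\bm{\nu}_j)_j$. For good cones, $\mu_K^{-1}(0)\setminus\{0\}$ is connected and simply connected, and $H^2$ is torsion-free. To see this, note that the complement of the coordinate subspaces removed has codimension at least $2$ in the relevant sense, and condition (ii) of Definition~\ref{def:good} makes $K$ act freely. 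Hence $\sum_j D_j=0$ holds exactly when there is $u\in\Z^{n+1}$ with $u\cdot\bm{\nu}_j=1$ for all $j$.

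\textbf{Step 2: the change of basis.} Given such a $u$, the vector $u$ is primitive, since $u\cdot\bm{\nu}_1=1$. Therefore $u$ can be completed to a basis of the dual lattice. Choose $A\in GL(n+1,\Z)$ whose last row is $u$. Replacing the normals by $A\bm{\nu}_j$ corresponds to reparametrizing $\T^{n+1}$ by the automorphism $A$, which is a change of integral basis of the torus. After this change every normal has last coordinate $1$, so $\bm{\nu}_j=(\bm{v}_j,1)$.

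\textbf{Main obstacle.} The hard part is Step 1: identifying $c_1=\sum_j D_j$ and proving that vanishing of this class in integral cohomology forces an integral $u$. Without that, one only gets a rational relation, or a relation up to torsion. I would handle this by showing $\pi_1(W_\Cc)$ is determined by the lattice generated by the $\bm{\nu}_j$. For a good cone this reduces to the Lerman computation. It then follows that $H^2(W_\Cc;\Z)$ is exactly the cokernel described in Step 1, with no torsion in the relevant part, so that $\sum_j D_j=0$ lifts to the integral identity $u\cdot\bm{\nu}_j=1$.
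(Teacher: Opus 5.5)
The paper does not prove this proposition; it is quoted from \cite{AM, AMM}. Your overall route is the standard one: the reduction model $W_\Cc=Z/K$ with $Z=\mu_K^{-1}(0)\setminus\{0\}$, the formula $c_1(TW_\Cc)=\sum_j D_j$, computing $H^2(W_\Cc;\Z)$ from $K$, and then completing $u$ to a basis. Step~2 is fine.

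The trouble is in Step~1, precisely at the point you call the main obstacle. $H^2(W_\Cc;\Z)$ is \emph{not} torsion-free for good cones in general, and $\mathrm{Hom}(\pi_1K,\Z)$ is the wrong group. The subgroup $K=\ker(\T^d\to\T^{n+1})$ is in general disconnected, with $\pi_0(K)\cong\Z^{n+1}/\mathcal{L}\cong\pi_1(W_\Cc)$, where $\mathcal{L}$ is the lattice spanned by the $\bm{\nu}_j$. By universal coefficients, the torsion of $H^2(W_\Cc;\Z)$ is dual to this finite group, so what you wrote is consistent only when $\mathcal{L}=\Z^{n+1}$.

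Take the paper's own example $L^3_{p+1}(1,p)$, with normals $(0,1),(p+1,1)$. Here $K\cong\Z_{p+1}$ is finite, so $\mathrm{Hom}(\pi_1K,\Z)=0$, whereas $H^2(W_\Cc;\Z)\cong\Z_{p+1}$. For the good cone with normals $(0,1),(3,2)$, $c_1(TW_\Cc)$ is a nonzero torsion class and only the rational solution $u=(-1/3,1)$ exists. So working ``modulo torsion'' yields only the $\Q$-Gorenstein condition, and trying to prove ``no torsion in the relevant part'' would fail. The torsion is exactly where the integrality of $u$ is detected; it is not an obstruction to be removed.

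The repair goes as follows.
\begin{itemize}
\item Since $K$ acts freely on $Z$, $H^2(W_\Cc;\Z)\cong H^2_K(Z;\Z)$.
\item Since $Z$ is connected with $H^1(Z;\Z)=0$ (it is even $2$-connected), the Serre spectral sequence of $Z\to Z\times_K EK\to BK$ shows that the edge map $H^2(BK;\Z)\to H^2(W_\Cc;\Z)$ is injective.
\item For any compact abelian Lie group, $H^2(BK;\Z)\cong\mathrm{Hom}(K,S^1)$ via Chern classes of associated line bundles.
\item Pontryagin duality applied to $1\to K\to\T^d\to\T^{n+1}\to 1$ (onto because the normals span $\R^{n+1}$) gives
\[
\mathrm{Hom}(K,S^1)\cong(\Z^d)^*\big/\{(u\cdot\bm{\nu}_1,\ldots,u\cdot\bm{\nu}_d)\,:\,u\in\Z^{n+1}\}\,,
\]
torsion included. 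Under this, $D_j$ is the image of the restriction to $K$ of the $j$-th coordinate character.
\end{itemize}
Hence $\sum_j D_j=0$ forces $(1,\ldots,1)=(u\cdot\bm{\nu}_j)_j$ for an integral $u$. Injectivity is all you need, and no torsion-freeness enters.

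A minor point: the ordinary cohomology of $W_\Cc$ is not generated by the $D_j$; for example, $H^{2n+1}(W_\Cc;\Z)\cong\Z$. Only the degree-$2$ statement holds, and only it is used.
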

\noindent Under this condition, we will encode the Gorenstein toric contact manifold  whose symplectization is 
$(W_{\Cc}, \om_{\Cc}, X_{\Cc}, \bm{\mu}_{\Cc})$ by the polytope 
\[
\Dd = \conv(\bm{v}_1, \ldots, \bm{v}_d)\subseteq \R^n\,.
\]
\begin{proposition} \label{prop:diagram}
$\Dd$ is a toric diagram (cf. Definition~\ref{def:diagram}).
\end{proposition}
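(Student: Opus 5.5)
The plan is to identify the cone over $\Dd$ with the dual cone of $\Cc$, and then to translate the goodness conditions of Definition~\ref{def:good} into properties of the faces of $\Dd$. I take Definition~\ref{def:diagram} to require that $\Dd$ be an $n$-dimensional lattice polytope which is simplicial and whose facets are unimodular $(n-1)$-simplices. I also expect it to require that the $\bm{v}_j$ are exactly the vertices of $\Dd$, and I will prove that as well.

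First, put $\bm{\nu}_j=(\bm{v}_j,1)$ as in Proposition~\ref{prop:c_1}. By (i) of Definition~\ref{def:good}, $\Cc$ is the dual of the cone $\Cc^\vee:=\cone(\bm{\nu}_1,\ldots,\bm{\nu}_d)$, and hence $\Cc^\vee$ is the dual of $\Cc$. Since every $\bm{\nu}_j$ has last coordinate $1$, $\Cc^\vee$ is the cone over $\Dd\times\{1\}$, and the faces of $\Dd$ correspond bijectively and dimension-preservingly to the proper nonzero faces of $\Cc^\vee$ (dimension $k$ for $\Dd$, $k+1$ for the cone).

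Next come three consequences of the hypotheses.
\begin{enumerate}
\item Strict convexity of $\Cc$ is equivalent to $\Cc^\vee$ being full-dimensional, so $\dim\Dd=n$.
\item Each $\bm{\nu}_j$ defines a facet of $\Cc$. So $\bm{\nu}_j$ is an extremal ray of $\Cc^\vee$, which means $\bm{v}_j$ is a vertex of $\Dd$. Here I use that the set of normals is minimal, so no $\bm{\nu}_j$ is redundant and none lies in the cone of the others.
\item Faces of $\Cc$ and of $\Cc^\vee$ correspond by inclusion-reversing duality $F\mapsto F^\perp\cap\Cc^\vee$, with $\dim F+\dim F^*=n+1$. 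The face dual to $F$ is the cone spanned by the normals $\bm{\nu}_j$ of the facets of $\Cc$ containing $F$.
\end{enumerate}

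Apply (3) to a ray $F$ of $\Cc$, which is a codimension-$n$ face. Its dual is a facet of $\Cc^\vee$. By (ii) of Definition~\ref{def:good}, $F$ lies in exactly $n$ facets of $\Cc$, so this facet of $\Cc^\vee$ is spanned by exactly $n$ vectors $\bm{\nu}_{j_1},\ldots,\bm{\nu}_{j_n}$. Hence the corresponding facet of $\Dd$ is the simplex $\conv(\bm{v}_{j_1},\ldots,\bm{v}_{j_n})$, and so $\Dd$ is simplicial. Moreover, (ii) says these $n$ vectors can be completed to a $\Z$-basis of $\Z^{n+1}$.

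The remaining step is to turn this completion property into unimodularity of the facet. Since all the $\bm{\nu}_{j_i}$ have last coordinate $1$, the differences $\bm{v}_{j_i}-\bm{v}_{j_1}$ together with $\bm{\nu}_{j_1}$ span the same lattice as the $\bm{\nu}_{j_i}$. So the $\bm{\nu}_{j_i}$ extend to a basis of $\Z^{n+1}$ exactly when they span a saturated sublattice. I will then show this happens exactly when the edge vectors $\bm{v}_{j_i}-\bm{v}_{j_1}$ span a saturated rank-$(n-1)$ sublattice of $\Z^n$, i.e.\ when the simplex has normalized volume $1$ in its affine lattice, which is the definition of unimodular.

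I expect this lattice-theoretic equivalence to be the only real obstacle, and only a mild one. Saturation can be checked via the gcd of the maximal minors. Subtracting the first column from the others and then expanding along the row of $1$'s shows that the maximal minors of the $(n+1)\times n$ matrix $[\bm{\nu}_{j_i}]$ coincide, up to sign, with the maximal minors of the $n\times(n-1)$ matrix of edge vectors. So one gcd equals $1$ if and only if the other does, which gives the equivalence.

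Everything else is standard cone duality.
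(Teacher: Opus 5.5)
Your argument is correct, and it is the unpacking of Remark~\ref{rem:diagram} that the paper leaves to \cite{AM,AMM}; the paper gives no proof of its own. Rays of $\Cc$ are dual to facets of $\cone(\Dd\times\{1\})$, goodness makes each such facet spanned by exactly $n$ normals $(\bm{v}_{j_i},1)$ that extend to a $\Z$-basis, and the last coordinate $1$ turns this into unimodularity of the facet $\conv(\bm{v}_{j_1},\ldots,\bm{v}_{j_n})$. One small correction in the final step: after the column reduction, the maximal minor that omits the last row is $\det[\bm{v}_{j_1}\,|\,E]$, where $E$ is the matrix of edge vectors. This is not itself a maximal minor of $E$, so the minors do not literally coincide. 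It is, however, an integer combination of them by cofactor expansion along the first column, so the two gcds still agree and your conclusion stands.
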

\begin{rem} \label{rem:diagram}
The conditions imposed on toric diagrams $\Dd=\conv(v_1, \ldots, v_d)$ are equivalent to the corresponding cone 
with normals $\bm{\nu}_j=(\bm{v}_j, 1)$, $j=1,\ldots,d$, being good.
\end{rem}
\begin{thm} \label{thm:diagram}
For each toric diagram $\Dd\subset\R^n$ there exists a unique Gorenstein toric contact manifold $(N_{\Dd}, \xi_{\Dd})$ of 
dimension $2n+1$.
\end{thm}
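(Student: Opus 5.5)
The plan is to reduce the statement to Lerman's classification, Theorem~\ref{thm:good}, through the cone construction already described in this section. Given a toric diagram $\Dd=\conv(\bm{v}_1,\ldots,\bm{v}_d)\subset\R^n$ (Definition~\ref{def:diagram}), with $\bm{v}_1,\ldots,\bm{v}_d$ its vertices, I will set $\bm{\nu}_j=(\bm{v}_j,1)\in\Z^{n+1}$ and define
\[
\Cc_\Dd:=\bigcap_{j=1}^d\{\bm{x}\in\R^{n+1}\mid \bm{x}\cdot\bm{\nu}_j\geq 0\}\,.
\]
By Remark~\ref{rem:diagram}, the toric diagram conditions are exactly what is needed for $\Cc_\Dd$ to be a good cone with defining normals $\bm{\nu}_1,\ldots,\bm{\nu}_d$. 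I will sketch why this holds rather than just cite it. The vectors $\bm{\nu}_j$ are primitive, since their last coordinate is $1$. Strict convexity of $\Cc_\Dd$ follows because $\Dd$ is full-dimensional, so the $\bm{\nu}_j$ span $\R^{n+1}$. The set of normals is minimal because every $\bm{v}_j$ is a vertex of $\Dd$. Finally, the faces of $\Cc_\Dd$ correspond, with dimensions reversed, to the faces of the cone over $\Dd\times\{1\}$. A codimension-$k$ face of $\Cc_\Dd$ therefore corresponds to a $(k-1)$-face $F$ of $\Dd$. Since $\Dd$ is simplicial, $F$ has exactly $k$ vertices. Unimodularity of the facets gives that the lifted vertices $(\bm{v},1)$, $\bm{v}\in F$, form part of a $\Z$-basis of $\Z^{n+1}$. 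For a facet this is the statement that the lifted vertex set is a lattice basis of the sublattice it spans, and it passes to subfaces. This verifies condition (ii) of Definition~\ref{def:good}.

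Theorem~\ref{thm:good} then gives a unique closed toric symplectic cone $(W_\Cc,\om_\Cc,X_\Cc,\bm{\mu}_\Cc)$ with moment cone $\Cc_\Dd$. I will take $(N_\Dd,\xi_\Dd)$ to be its contact quotient $W_\Cc/\R$ with the induced $\T^{n+1}$-action. This is a good toric contact manifold of dimension $2n+1$.

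It remains to check that $c_1(\xi_\Dd)=0$. The argument is the converse of Proposition~\ref{prop:c_1}. The vector $\bm{e}_{n+1}^\ast$ pairs to $1$ with every defining normal. In the Lerman/Delzant-type reduction $W_\Cc=Z/K$, with $Z\subset\C^d$ and $K=\ker(\Z^d\to\Z^{n+1},\ \bm{e}_j\mapsto\bm{\nu}_j)$ realized as a subtorus of $\T^d$, this means the character $z_1\cdots z_d$ of $\T^d$ is trivial on $K$. Since $c_1(TW_\Cc)$ is the image of $\sum_j[D_j]$, i.e.\ the class of that character, it vanishes.

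Uniqueness will follow from the same correspondence. Suppose $(N,\xi)$ is a Gorenstein toric contact manifold realizing $\Dd$, in the sense that Proposition~\ref{prop:c_1} and Proposition~\ref{prop:diagram}, in a suitable integral basis, attach $\Dd$ to it. Then its moment cone has defining normals $(\bm{v}_j,1)$ with $\bm{v}_j$ the vertices of $\Dd$, so the moment cone is $\Cc_\Dd$. Lerman's uniqueness then identifies its symplectization, and hence $(N,\xi)$, with the one constructed above. Changing $\Dd$ by an integral translation or a $GL(n,\Z)$ transformation corresponds to an element of $GL(n+1,\Z)$ preserving the last coordinate of the normals, so the construction is well defined on unimodular classes.

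I expect the main obstacle to be the face-by-face verification that unimodular facets of a simplicial $\Dd$ give condition (ii) in every codimension, including the identification of faces of $\Cc_\Dd$ with faces of $\Dd$. If Remark~\ref{rem:diagram} is taken as given, the remaining steps are formal.
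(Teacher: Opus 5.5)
Your proposal is correct and is essentially the argument the paper relies on without writing it out: the paper states Theorem~\ref{thm:diagram} as a recollection from \cite{AM, AMM}, resting on Remark~\ref{rem:diagram}, Lerman's Theorem~\ref{thm:good} and the characterization of $c_1=0$ behind Proposition~\ref{prop:c_1}, and your sketch fills in those steps in more detail than the paper does. Two small slips do not affect the conclusion. First, completability in condition (ii) is best seen directly: if $\bm{w}_i-\bm{u}$, $\bm{w}_i\in F$, form a basis of $\Z^n$, then the vectors $(\bm{w}_i,1)$ together with $(\bm{u},1)$ form a basis of $\Z^{n+1}$, whereas ``a basis of the sublattice it spans'' holds for any independent set. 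Second, in Lerman's model $K$ must be $\ker(\T^d\to\T^{n+1})$, which can be disconnected (e.g.\ finite for the lens spaces with diagram $[0,p+1]$), although your character argument still works because $z_1\cdots z_d$ is pulled back from $\T^{n+1}$.
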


The main examples in the context of this paper are the so-called prequantizations of monotone toric symplectic 
manifolds. 
\begin{prop} \label{prop: prequant}
Let $(M^{2n}, \omega)\,,\ [\omega]=c_1(M)$, be a monotone toric symplectic manifold with moment polytope
\[
\Pp = \cap_{j=1}^d \left\{ \bm{x}\in\R^n\,:\ \bm{x}\cdot\bm{\nu}_j + 1 \geq 0 \right\}\,.
\]
Then
\[
\Dd = \conv (\bm{\nu}_1, \ldots, \bm{\nu}_d)  \subset \R^n
\]
is a toric diagram and the Gorenstein toric contact manifold $(N^{2n+1}_{\Dd}, \xi_{\Dd})$ is the prequantization of
$(M^{2n}, \omega)$.
\end{prop}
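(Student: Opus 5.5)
Two things have to be shown: (a) $\Dd=\conv(\bm{\nu}_1,\ldots,\bm{\nu}_d)$ is a toric diagram, i.e. an integral simplicial polytope with unimodular facets; (b) the associated Gorenstein toric contact manifold is the prequantization. Both follow from the Delzant property of $\Pp$ together with the normalization $\lambda_j=1$.

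For (a), I will use polar duality. Since all $\lambda_j=1$, the origin lies in the interior of $\Pp$, and $\Pp=\{\bm{x}:\bm{x}\cdot(-\bm{\nu}_j)\le 1\}$, so $\Pp^\ast=\conv(-\bm{\nu}_1,\ldots,-\bm{\nu}_d)$ and $\Dd=-\Pp^\ast$. This agrees with the introduction. Minimality of the facet description makes every $\bm{\nu}_j$ a vertex of $\Dd$. Faces of $\Dd$ correspond inclusion-reversingly to faces of $\Pp$, and the facets of $\Dd$ correspond to the vertices of $\Pp$. Because $\Pp$ is Delzant, each vertex lies on exactly $n$ facets, whose normals $\bm{\nu}_{j_1},\ldots,\bm{\nu}_{j_n}$ form a $\Z$-basis. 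The dual facet of $\Dd$ is therefore the simplex $\conv(\bm{\nu}_{j_1},\ldots,\bm{\nu}_{j_n})$, so $\Dd$ is simplicial with integral vertices. Moreover, this facet lies on the hyperplane $\{\bm{y}:\bm{y}\cdot(-\bm{p})=1\}$, where $\bm{p}$ is the vertex of $\Pp$. Since its vertices form a lattice basis, the simplex $\conv(\bm{0},\bm{\nu}_{j_1},\ldots,\bm{\nu}_{j_n})$ is unimodular, and so the facet is unimodular in its affine lattice at height $1$. These are exactly the conditions in the definition of a toric diagram, which by Remark~\ref{rem:diagram} is equivalent to goodness of the cone with normals $(\bm{\nu}_j,1)$. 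I could also check goodness directly: codimension-$k$ faces of that cone correspond to $(k-1)$-faces of $\Dd$, and $(\bm{\nu}_{j_1},1),\ldots,(\bm{\nu}_{j_n},1),(\bm{0},1)$ is a basis of $\Z^{n+1}$.

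For (b), I will identify the prequantization's symplectic cone. The prequantization $N$ is the circle bundle with Chern class $[\omega]/2\pi$, and its symplectization is $W=N\times\R$. This carries a $\T^{n+1}=\T^n\times S^1$ action, where the extra circle is the fibre action, whose Hamiltonian on the cone is the radial coordinate. The $\T^n$ action lifts to $N$ because $[\omega]$ is integral, and after fixing the lift via the normalization the moment map of $\T^{n+1}$ is $r(\bm{\mu}(x),1)$. The moment cone is then $\Cc=\{s(\bm{x},1):\bm{x}\in\Pp,s\ge0\}$, i.e. the cone over $\Pp\times\{1\}$. Its defining inequalities are $(\bm{x},s)\cdot(\bm{\nu}_j,1)\ge 0$, which are homogenizations of $\bm{x}\cdot\bm{\nu}_j+1\ge0$. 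These normals have last coordinate $1$, so Theorem~\ref{thm:good} and Theorem~\ref{thm:diagram} identify $(N,\xi)$ with $(N_\Dd,\xi_\Dd)$.

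The main obstacle is justifying the lifted $\T^{n+1}$-moment map, namely that the constants $\lambda_j=1$ correspond exactly to the lift compatible with the Chern class $c_1(M)$. I would handle this via the standard fact that the toric symplectic cone over $\Pp\times\{1\}$ reduces, at level $1$ of the last circle, back to $(M,\omega)$ with Euler class $[\omega]$. Uniqueness in Theorem~\ref{thm:good} then gives the identification.
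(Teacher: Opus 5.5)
Your proposal is correct and follows essentially the argument the paper relies on. The paper states this proposition as background from \cite{AM,AMM} without a separate proof. The symplectized prequantization has moment cone the cone over $\Pp\times\{1\}$, whose normals $(\bm{\nu}_j,\lambda_j)=(\bm{\nu}_j,1)$ already lie at height one, so the matrix $T$ of Definition~\ref{def:preq} is the identity; the toric-diagram property is then the Delzant condition read through $\Dd=-\Pp^\ast$, as in the remark following the proposition. Your polar-duality check of unimodular facets, and your verification that $(\bm{\nu}_{j_1},1),\ldots,(\bm{\nu}_{j_n},1),(\bm{0},1)$ is a $\Z$-basis (which also gives the freeness of the fibre circle used in your reduction argument), fill in the details the paper leaves to the references.
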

\begin{remark}
In fact, $\Pp \subset\R^n$ is a reflexive polytope and $\Dd = -\Pp^\ast \subset \R^n$, where $\Pp^\ast$ is its polar dual 
polytope.
\end{remark}
\begin{example}
Figure~\ref{fig4}  has the toric diagrams of the prequantization of the two monotone Bott manifolds when $n=2$:
$H_0 \cong \CP^1 \times \CP^1$ and $H_1 \cong \CP^2 \# \oCP^2$. Figure~\ref{fig5}  has the toric diagrams of the prequantization of the five monotone Bott manifolds when $n=3$:
$\CP^1 \times \CP^1 \times \CP^1$, $\CP^1 \times H_1$, $\CP (\Oo(1,-1)\oplus\C) \to \CP^1 \times \CP^1$, 
$\CP (\Oo(1,1)\oplus\C) \to \CP^1 \times \CP^1$ and $\CP (\Oo(0,1)\oplus\C) \to H_1$.
\begin{figure}[ht]
\includegraphics[width=2in, height=1.2in]{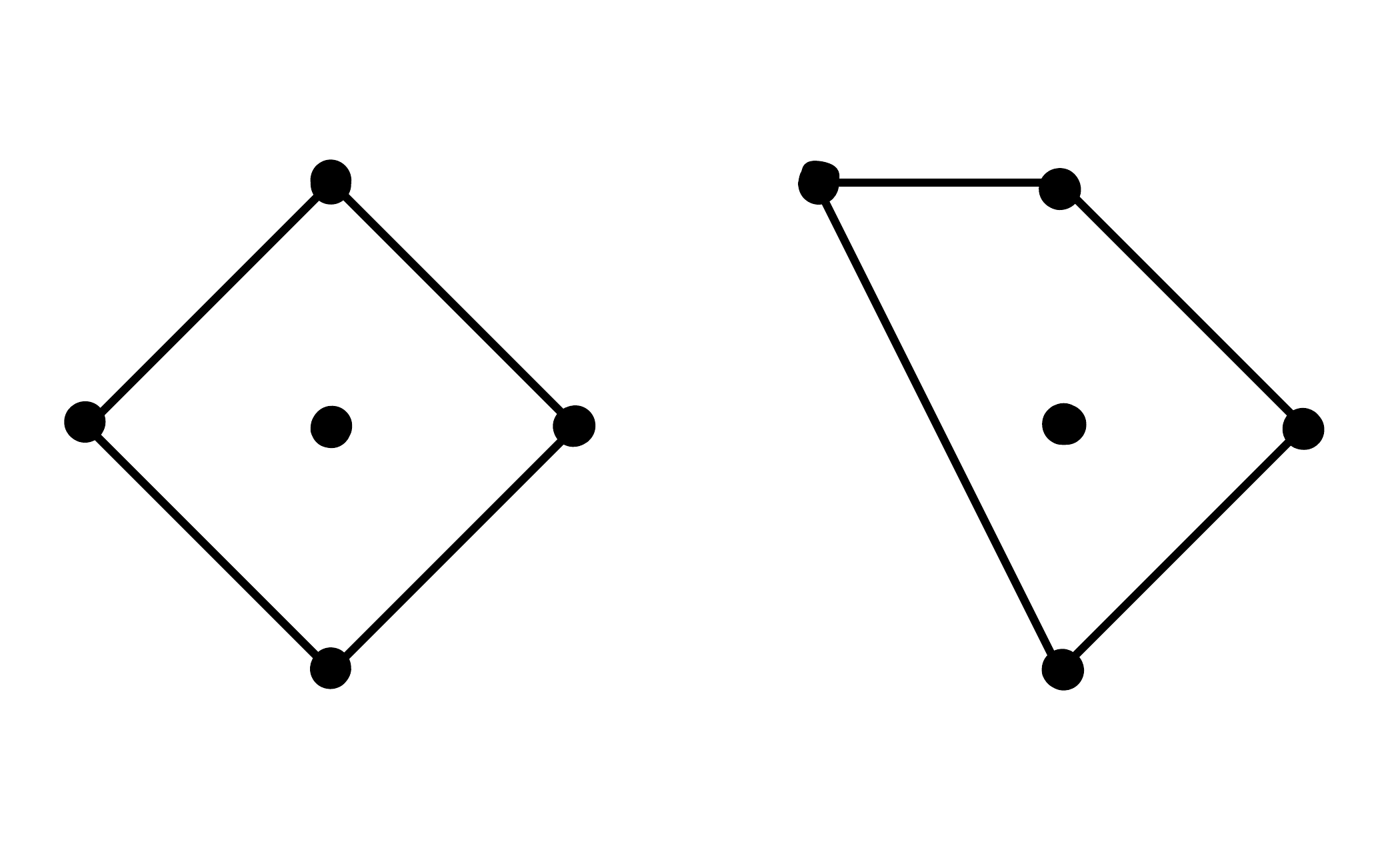}
\caption{Toric diagrams of the prequantization of the two monotone Bott manifolds when $n=2$:
$H_0 \cong \CP^1 \times \CP^1$ and $H_1 \cong \CP^2 \# \oCP^2$.}
\label{fig4}
\end{figure}
\begin{figure}[ht]
\includegraphics[width=4in, height=2.4in]{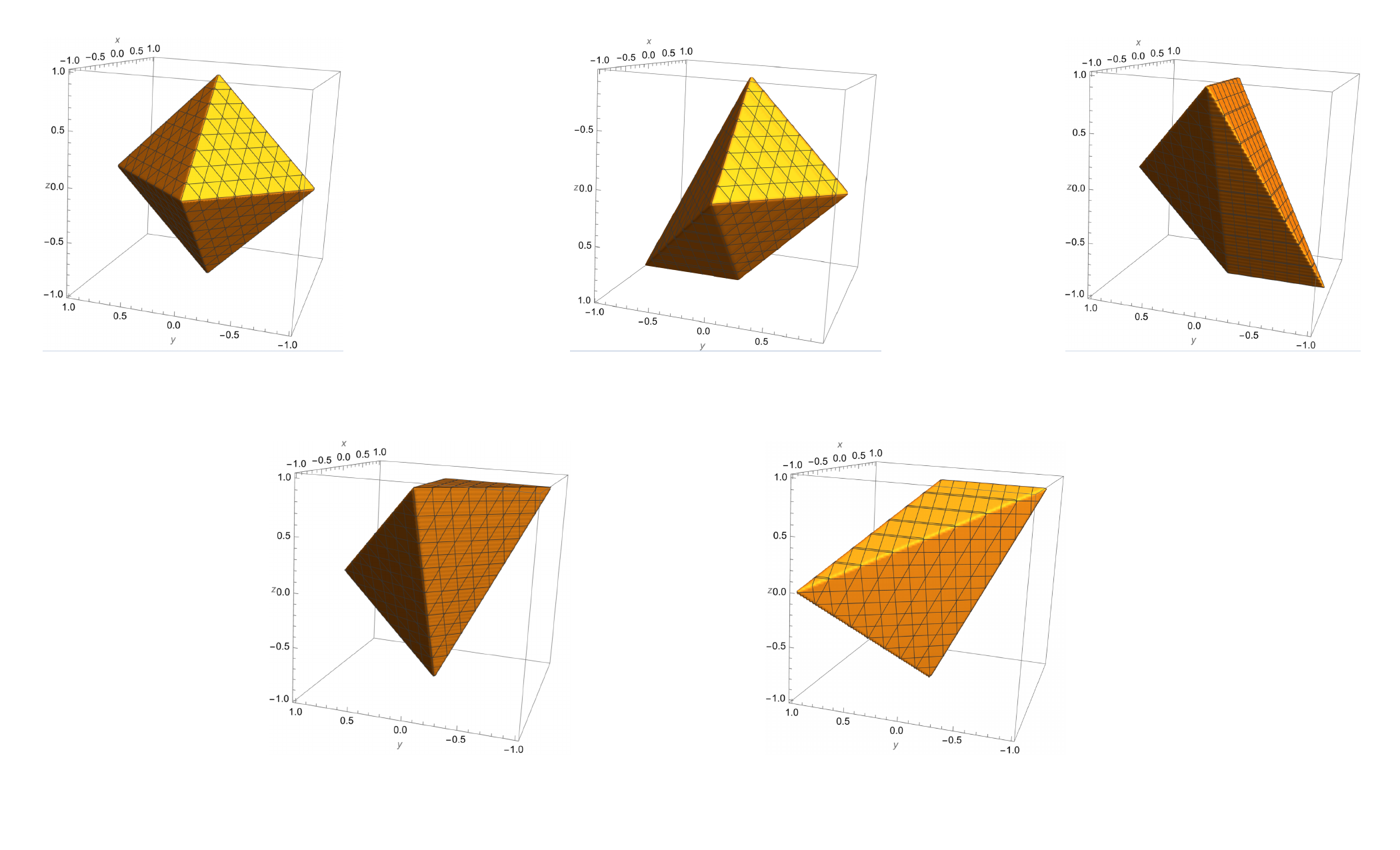}
\caption{Toric diagrams of the prequantization of the five monotone Bott manifolds when $n=3$:
$\CP^1 \times \CP^1 \times \CP^1$, $\CP^1 \times H_1$ and $\CP (\Oo(1,-1)\oplus\C) \to \CP^1 \times \CP^1$ on the top row, 
$\CP (\Oo(1,1)\oplus\C) \to \CP^1 \times \CP^1$ and $\CP (\Oo(0,1)\oplus\C) \to H_1$ on the bottom row.}
\label{fig5}
\end{figure}
\end{example}

\section{Contact invariants}
\label{sec:invariants}

As already described in the introduction, the coefficients of the Ehrhart polynomial of a toric diagram $\Dd\subset\R^n$
are contact invariants of the corresponding toric contact manifold $(N_{\Dd}, \xi_{\Dd})$. In particular, if we write the Ehrhart
polynomial in the form
\[
L_{\Dd} (t) = \sum_{k=0}^n h^\ast_k (\Dd) \binom{t+n-k}{n}\,,
\]
we have that the non-negative coefficients $h^\ast_k (\Dd) \in \Z^+_0$ are the contact invariants of $(N_{\Dd}, \xi_{\Dd})$ 
given by
\[
h^\ast_{n-k} (\Dd) = cb_{2k} (N_{\Dd}, \xi_{\Dd}) - cb_{2(k-1)} (N_{\Dd}, \xi_{\Dd})\,,
\]
where $cb_{\ast} (N_{\Dd}, \xi_{\Dd})$ are the \emph{contact Betti numbers}, i.e.
\[
cb_{\ast} (N_{\Dd}, \xi_{\Dd}) = \rank HC_\ast (N_\Dd, \xi_\Dd) \ \text{where  $HC_\ast (N_\Dd, \xi_\Dd)$ 
is the cylindrical contact homology.}
\] 
Recall that $HC_\ast (N_\Dd, \xi_\Dd) = 0$ whenever $\ast <0$ or $\ast$ is odd.

For the prequantization of monotone toric symplectic manifolds $(M^{2n}, \omega)\,,\ [\omega]=c_1(M)$, with reflexive 
moment polytope $\Pp \subset \R^n$ and toric diagram $\Dd= -\Pp^\ast \subset \R^n$, a theorem in~\cite{B} implies 
that 
\begin{equation} \label{eq:Betti}
h^\ast_k (\Dd) = \dim H^{2k} (M;\Q) \,.
\end{equation}
See~\cite{AMM} for a direct toric proof and recall that $H^{\text{odd}} (M; \Q) = 0$ for any toric symplectic manifold $M$.

\subsection{Contact invariants of the prequantization of monotone Bott manifolds}
\label{ssec:preqFanoBott}

For the prequantization of monotone Bott manifolds, we have that $\dim H^{2k} (B^{2n}; \Q) = \binom{n}{k}$,
$k=0,\ldots,n$, and so~(\ref{eq:Betti}) implies Proposition~\ref{prop:FanoBott}, i.e. the toric diagram $\Dd\subset\R^n$
of the Gorenstein prequantization $(N^{2n+1}, \xi)$ of any monotone Bott manifold $(B^{2n}, [\omega] = c_1)$ is such that
\[
h^\ast_k (\Dd) = \binom{n}{k}\,,\ k=0,\ldots,n\,.
\]
This implies that the non-zero contact Betti numbers of any such $(N^{2n+1}, \xi)$ are given by
\[
cb_{2k} (N^{2n+1}, \xi) = \sum_{j=0}^k  \binom{n}{j}\,,\ k=0, \ldots, n\,, \ \text{and} \ cb_{2k} (N^{2n+1}, \xi) = 2^n\,,\ \forall k>n\,.
\]
\begin{equation*}
\begin{array}{|c|c|c|c|c|c|c|cl} \hline
  \ast =  & \quad   0 \quad  & \quad 2 \quad & \quad 4 \quad & \quad \cdots \quad & \quad 2(n-1) \quad & \quad 2n \quad & \text{even}\, > 2n
\\ 
\hline 
cb_\ast (N^{2n+1},\xi) &   1   & 1+n & \frac{n(n+1)}{2} +1 & \cdots & 2^n -1 & 2^n &  2^n
\\
\hline
\end{array}
\end{equation*}
\begin{remark} 
It follows from Theorem~\ref{thm:S} that these toric diagrams are precisely the ones described and enumerated in 
Section~\ref{sec:ppyr}.
\end{remark}

\subsection{Contact invariants of the primitive prequantization of $\CP^1 \times \dots \times \CP^1$}
\label{ssec:primitive1}

The first Chern class of the trivial Bott manifold $\CP^1 \times \dots \times \CP^1$ is divisible by $2$ and we 
can consider an integral symplectic form $\omega$ with $[\omega] = c_1 /2$. The corresponding prequantization 
is still a Gorenstein toric contact manifold, whose toric diagram is what we call in this paper a small cross-polytope.
As we will show in Section~\ref{sec:smallcrosspolytopes}, the small cross-polytope $S_n \subset \R^n$ is such that
\[
h^\ast_k (S_n) = \binom{n-1}{k}\,,\ k=0,\ldots,n-1\,, \ \text{and}\ h^\ast_n (S_n) = 0\,.
\]
This implies that the non-zero contact Betti numbers of this Gorenstein toric contact manifold 
$(N^{2n+1}_{S_n}, \xi_{S_n})$ are given by
\[
cb_{2k} (N^{2n+1}_{S_n}, \xi_{S_n}) = \sum_{j=0}^{k-1}  \binom{n-1}{j}\,,\ k=1, \ldots, n\,, \ \text{and} \ 
cb_{2k} (N^{2n+1}_{S_n}, \xi_{S_n}) = 2^{n-1}\,,\ \forall k>n\,.
\]
\begin{equation*}
\begin{array}{|c|c|c|c|c|c|c|cl} \hline
  \ast =  & \quad   0 \quad  & \quad 2 \quad & \quad 4 \quad & \quad \cdots \quad & \quad 2(n-1) \quad & \quad 2n \quad & \text{even}\, > 2n
\\ 
\hline 
cb_\ast (N^{2n+1}_{S_n}, \xi_{S_n}) &   0   & 1 & n & \cdots & 2^{n-1} -1 & 2^{n-1} &  2^{n-1}
\\
\hline
\end{array}
\end{equation*}
Theorem~\ref{thm:small}, proved in Section~\ref{sec:smallcrosspolytopes}, means that any other Gorenstein toric contact manifold 
of dimension $2n+1$ with these contact Betti numbers is equivariantly contactomorphic to $(N^{2n+1}_{S_n}, \xi_{S_n})$.

\subsection{Contact invariants of the primitive prequantization of $\CP(\xi_k \oplus \C) \to \CP^{n-1}$}
\label{ssec:primitive2}

When $0\leq k < n$ and $ k\equiv n \pmod 2$, we have that 
\[
M^{2n}_k = \CP (\xi_k \oplus \C)\,, \ \text{where}\ \xi_k \to \CP^{n-1} \ \text{is a $\C$-line bundle
with $c_1 (\xi_k) = k$,}
\]
is a monotone manifold with $c_1(M_k^{2n})$ divisible by $2$. In fact, up to translation, the moment
polytope of $(M^{2n}_k, [\omega_k] = c_1(M_k^{2n}))$ is the reflexive Delzant polytope $\Pp_k \subset \R^n$
\[
\mathcal{P}_k := \text{conv}\left( \{(n+k)\triangle_{n-1} - \bm{1}\} \times \{-1\}, \{(n-k)\triangle_{n-1} - \bm{1}\}  \times \{1\} \right)\,,
\]
with hyperplane description given by:
\begin{itemize}
\item $x_i +1 \geq 0, \forall i \in \{1, \dots, n \}$;
\item $- x_n + 1 \geq 0$;
\item $-\sum_{i=1}^{n-1} x_i- k x_n  +1 \geq 0$.
\end{itemize}
The length of its edges is $n+k$ on the bottom facet, $n-k$ on the top facet and $2$ on all lateral facets, i.e. the greatest
common divisor of the length of all edges is $2$ which implies that $c_1(M_k^{2n})/2$ is a primitive integral cohomology class
and $\Pp_1 / 2$ is the moment polytope of $(M^{2n}_k, [\omega] = c_1(M_k^{2n})/2)$.
See Figure~\ref{fig:bpsimplex} with $\Pp_1$ and $\Pp_1 / 2$ in dimension $n=3$.

\begin{figure}[]
\centering
\begin{minipage}{.5\textwidth}
	\centering
	\begin{tikzpicture}[scale=1.5, line join=bevel, tdplot_main_coords]
		\coordinate (e1) at (-1,-1,-1);
		\coordinate (e2) at (3,-1,-1);
		\coordinate (e3) at (-1,3,-1);
		\coordinate (m1) at (-1,-1,1);
		\coordinate (m2) at (1,-1,1);
		\coordinate (m3) at (-1,1,1);
		
		\draw [dashed] (e1) -- (e3);
		\draw [dashed] (e1) -- (e2);
		\draw (e2) -- (e3);
		\draw (m1) -- (m3);
		\draw (m1) -- (m2);
		\draw (m2) -- (m3);
		\draw [dashed] (e1) -- (m1);
		\draw (e2) -- (m2);
		\draw (e3) -- (m3);
		
		\draw [fill opacity=0.2,fill=black, draw=none] (e1) -- (e2) -- (e3) -- cycle;
		\draw [fill opacity=0.2,fill=black, draw=none] (m1) -- (m2) -- (m3) -- cycle;		
				
	\end{tikzpicture}
\end{minipage}%
\begin{minipage}{.5\textwidth}
	\centering
\begin{tikzpicture}[scale=1.5, line join=bevel, tdplot_main_coords]
		\coordinate (e1) at (0,0,0);
		\coordinate (e2) at (2,0,0);
		\coordinate (e3) at (0,2,0);
		\coordinate (m1) at (0,0,1);
		\coordinate (m2) at (1,0,1);
		\coordinate (m3) at (0,1,1);
		
		\draw [dashed] (e1) -- (e3);
		\draw [dashed] (e1) -- (e2);
		\draw (e2) -- (e3);
		\draw (m1) -- (m3);
		\draw (m1) -- (m2);
		\draw (m2) -- (m3);
		\draw [dashed] (e1) -- (m1);
		\draw (e2) -- (m2);
		\draw (e3) -- (m3);
		
		\draw [fill opacity=0.2,fill=black, draw=none] (e1) -- (e2) -- (e3) -- cycle;
		\draw [fill opacity=0.2,fill=black, draw=none] (m1) -- (m2) -- (m3) -- cycle;		
				
	\end{tikzpicture}
\end{minipage}

\caption{$\Pp_1$ and $\Pp_1 /2$ in dimension $n=3$.}
\label{fig:bpsimplex}
\end{figure}

As we will show in Section~\ref{sec:bipysimplex}, the Gorenstein prequantization $(N^{2n+1}_k,\xi)$ of 
$(M^{2n}_k, \omega)$, with $[\omega] = c_1 (M_k^{2n}) /2$, has toric diagram $\Dd_k \subset \R^n$ with
\[
h^\ast_0 (\Dd_k) = \cdots = h^\ast_{n-1} (\Dd_k) =1 \  \text{and}\ h^\ast_n (\Dd_k) = 0\,.
\]
This implies that its contact Betti numbers are
\begin{equation*}
\begin{array}{|c|c|c|c|c|c|c|cl} \hline
  \ast =  & \quad   0 \quad  & \quad 2 \quad & \quad 4 \quad & \quad \cdots \quad & \quad 2(n-1) \quad & \quad 2n \quad & \text{even}\, > 2n
\\ 
\hline 
cb_\ast (N^{2n+1}_k) &   0   & 1 & 2 & \cdots & n-1 & n &  n
\\
\hline
\end{array}
\end{equation*}
Theorem~\ref{thm:bipysimplex1}, proved in Section~\ref{sec:bipysimplex}, means that any other Gorenstein toric contact manifold 
of dimension $2n+1$ with these contact Betti numbers is equivariantly contactomorphic to $(N^{2n+1}_k,\xi)$ for a unique $k\in\Z$ 
with $0\leq k < n$ and $k\equiv n \pmod 2$.

\section{Polytopes and Ehrhart theory}
\label{sec:polyEhrhart}

In this section we introduce the language of polytopes and Ehrhart theory. We also define the classes of polytopes 
that are most relevant for toric symplectic and contact geometry: Delzant polytopes, cones over them and toric 
diagrams. This section provides a quick review of combinatorial results that will be used in the paper. Proofs and 
technical details can be found in the indicated references.

\subsection{Polytopes, cones and polar duality}
\label{ssec:PCPD}

Here we give a quick introduction to polyhedra in $\mathbb{R}^n$. For a thorough presentation of this subject see 
for example~\cite{Z}.

\begin{definition}[Polytope]
A polytope $\mathcal{P} \subset \mathbb{R}^n$ is the convex hull of a finite set of points in $\mathbb{R}^n$, i.e.,
\begin{align*}
\mathcal{P} &=\conv \{\bm{v}_1, \bm{v}_2, \dots, \bm{v}_m\} \\
&=\left\{ \lambda_1 \bm{v}_1 + \lambda_2 \bm{v}_2 + \dots + \lambda_m \bm{v}_m : 
\sum_{i=1}^m \lambda_i = 1 \text{ and } \lambda_i \geq 0, i=1,\dots,m \right\}.
\end{align*}
This is commonly referred as the vertex or $\mathcal{V}$-representation of $\mathcal{P}$. Depending on the context, 
it might be better to deal with the hyperplane or $\mathcal{H}$-representation of a polytope in which we define a 
polytope by the intersection of a finite set of halfspaces delimited by hyperplanes, i.e.,
\begin{align*}
\mathcal{P} &=\{\bm{x} \in \mathbb{R}^n : A \bm{x} \leq  \bm{b}\} \\
&= \bigcap_{i=0}^d \left\{ \bm{x} \in \mathbb{R}^n : \bm{a}_i \cdot  \bm{x}   \leq b_i \right\}
\end{align*}
where $A \in \mathbb{R}^{d \times n}$ and $b \in \mathbb{R}^d$. This intersection is always a polyhedron but, 
since it might be unbounded, it is not always a polytope. A polytope is a bounded polyhedron.
	
All polytopes can be expressed in both representations, although the problem of algorithmically converting 
between them is non-trivial. 

The interior of a polytope $\mathcal{P}$ will be denoted by $\mathcal{P}^\circ$.
\end{definition}

We call a polytope \emph{integral} if all its vertices lie on the lattice $\mathbb{Z}^n$. 
An hyperplane 
\[
H=\{\bm{x} \in \mathbb{R}^n : \bm{a} \cdot \bm{x} =b\}
\]
is a supporting hyperplane of a polytope $\mathcal{P}$ if
\[
\mathcal{P} \subset \{\bm{x} \in \mathbb{R}^n : \bm{a} \cdot \bm{x} \leq b\} \text{ or } \mathcal{P} \subset \{\bm{x} \in \mathbb{R}^n : 
\bm{a} \cdot \bm{x} \geq b\}.
\]

\begin{definition}[Face]
A set $F \subset \mathbb{R}^n$ is face of a polytope $\mathcal{P}$ if
\begin{align*}
F=\mathcal{P} \cap \mathcal{H}
\end{align*}
for some supporting hyperplane $\mathcal{H}$.
\end{definition}

The dimension of a set $S \subset \mathbb{R}^n$ equals the dimension of the affine span generated by the points in $S$. 
If $S$ is a polytope and $\dim S=n$ we refer to $S$ as a $n$-polytope.

A $n$-polytope $\Pp$ has faces of every dimension between $-1$ and $n$, in particular:
\begin{itemize}
    \item $\mathcal{P}$ has a single face of dimension $n$, which is itself;
    \item $\mathcal{P}$ has faces of dimension $n-1$, called facets;
    \item $\mathcal{P}$ has faces of dimension $1$, called edges;
    \item $\mathcal{P}$ has faces of dimension $0$, called vertices;
    \item $\mathcal{P}$ has a single face of dimension $-1$, the $\emptyset$.
\end{itemize}
Throughout the text we will refer to the set of faces ordered by the (usual) inclusion as the face poset. 
We will say that two polytopes are combinatorially equivalent if their face posets are isomorphic.

A useful construction on a polytope is $\cone(\mathcal{P})$ that we define now.
\begin{definition}[Cone over a polytope]
For a polytope $\mathcal{P} \subset\R^n$
\[
\cone(\mathcal{P})=\{\bm{x} \in \mathbb{R}^n : \bm{x}=\alpha \bm{y}, \bm{y} \in \mathcal{P} \text{ and } \alpha \in \mathbb{R}_{\geq 0} \}.
\]	
\end{definition}

We define the polar dual of a polytope $\mathcal{P} \subset \mathbb{R}^n$.
\begin{definition}[Polar Duality]\label{def:polar}
For a polytope $\mathcal{P} \subset \mathbb{R}^n$ its polar dual
\begin{align*}
\mathcal{P}^*=\{ \bm{x} \in \mathbb{R}^n : \bm{x} \cdot \bm{y} \leq 1,  \forall \bm{y} \in \mathcal{P} \}
\end{align*}
is also a polytope.	
\end{definition}
\noindent If $0 \in \mathcal{P}$ then $\mathcal{P}^{**}=\mathcal{P}$. The polar dual has several interesting properties. 
Namely it suffices to check the inequalities in Definition~\ref{def:polar} for the vertices of $\mathcal{P}$ i.e. the facets of $\mathcal{P}^*$ 
are defined by the vertices of $\mathcal{P}$. This gives rise to a useful property of polar duality, it induces a bijection between 
faces of dimension $i$ in $\mathcal{P}$ and faces of dimension $n-i-1$ in $\mathcal{P}^*$. Stretching this reasoning a bit more, 
one can conclude that the face posets of $\mathcal{P}$ and $\mathcal{P}^*$ are anti-isomorphic.
\begin{definition}[Reflexive polytope]\label{def:reflexive}
An integral polytope $\mathcal{P}$ is said to be reflexive if $\mathcal{P}^*$ is also integral.
\end{definition}

Lastly, we cover the definition of simple and simplicial polytope.
\begin{definition}[Simple polytope]\label{def:simple}
A $n$-polytope is simple if every vertex is contained in $n$ edges.
\end{definition}
\begin{definition}[Simplicial polytope]\label{def:simplicial}
A $n$-polytope is simplicial if every facet is a $n-1$ simplex.
\end{definition}
\begin{lemma}
$\mathcal{P}$ is simple $\Leftrightarrow$ $\mathcal{P}^*$ is simplicial.
\end{lemma}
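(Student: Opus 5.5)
We must prove: $\Pp$ is simple if and only if $\Pp^\ast$ is simplicial. We assume, as is implicit in the surrounding discussion, that $\Pp$ is an $n$-polytope with $\bm{0}$ in its interior, so that $\Pp^{\ast}$ is a polytope and $\Pp^{\ast\ast}=\Pp$. The plan is to use only the inclusion-reversing bijection between faces of $\Pp$ and faces of $\Pp^\ast$ recalled after Definition~\ref{def:polar}. Under this bijection a face of dimension $i$ corresponds to a face of dimension $n-i-1$. In particular:
\begin{itemize}
\item a vertex $\bm{v}$ of $\Pp$ corresponds to the facet $F_{\bm{v}}=\{\bm{x}\in\Pp^\ast : \bm{x}\cdot\bm{v}=1\}$ of $\Pp^\ast$;
\item an edge of $\Pp$ corresponds to a ridge, i.e. an $(n-2)$-face, of $\Pp^\ast$;
\item $\bm{v}$ lies on an edge $e$ if and only if the ridge corresponding to $e$ is contained in $F_{\bm{v}}$.
\end{itemize}

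\textbf{Step 1: counting.} By the third point, the number of edges of $\Pp$ through $\bm{v}$ equals the number of ridges of $\Pp^\ast$ lying in $F_{\bm{v}}$. The ridges of $\Pp^\ast$ contained in $F_{\bm{v}}$ are exactly the facets of the $(n-1)$-polytope $F_{\bm{v}}$. Indeed, by the anti-isomorphism of face posets, the faces of $\Pp^\ast$ contained in $F_{\bm{v}}$ are precisely the faces of $F_{\bm{v}}$. So the number of edges through $\bm{v}$ equals the number of facets of $F_{\bm{v}}$.

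\textbf{Step 2: simplices.} An $(n-1)$-polytope always has at least $n$ facets. It has exactly $n$ facets if and only if it is a simplex. One direction is clear. For the converse, suppose it has exactly $n$ facets. Then its polar, taken in the affine hull about an interior point, has exactly $n$ vertices in dimension $n-1$. Such a polytope must be a simplex, and the polar of a simplex is a simplex.

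\textbf{Step 3: conclusion.} Since every vertex lies on at least $n$ edges, $\Pp$ is simple exactly when every vertex has exactly $n$ edges. By Steps 1 and 2, this holds exactly when every facet $F_{\bm{v}}$ of $\Pp^\ast$ is an $(n-1)$-simplex, i.e. when $\Pp^\ast$ is simplicial. Every facet of $\Pp^\ast$ arises as some $F_{\bm{v}}$, because the bijection is onto.

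The main obstacle is rigorously justifying the face-poset anti-isomorphism, in particular the identification of faces of $F_{\bm{v}}$ with faces of $\Pp^\ast$ inside it. The paper takes this as a standard fact (see~\cite{Z}), so I would cite it rather than reprove it.
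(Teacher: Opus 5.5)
Your proof is correct and follows the route the paper implicitly takes. The paper gives no separate proof: it states the lemma right after recalling that polar duality is an inclusion-reversing bijection between $i$-faces of $\Pp$ and $(n-i-1)$-faces of $\Pp^\ast$ (deferring to \cite{Z}), and your Steps 1--3 unpack exactly that anti-isomorphism, with the tacit hypothesis that $\bm{0}$ lies in the interior of $\Pp$ correctly made explicit.
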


\subsection{Ehrhart theory}
\label{ssec:Ehrhart}

In this subsection we define the basics of Ehrhart theory that will be used in the paper. For a more detailed exposition 
and proofs see for example~\cite{BR}.

Ehrhart theory studies functions that count the number of integer points in the (integer) dilates of a polytope $\mathcal{P}$. 
Formally, we are interested in the function
\[
L_{\mathcal{P}}(t) := \#(t\mathcal{P} \cap \mathbb{Z}^n), \forall t \in \mathbb{Z}_{> 0}
\]
where $t\mathcal{P}=\{t \bm{x} \in \mathbb{R}^n: \bm{x} \in \mathcal{P}\}$.
\begin{theorem}[Ehrhart's Theorem]
If $\mathcal{P}$ is an integral $n$-polytope then $L_{\mathcal{P}}(t)$ is polynomial in $t$ with degree exactly $n$. 
\end{theorem}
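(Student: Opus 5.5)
The plan is the classical proof of Ehrhart's Theorem: triangulate, reduce to simplices, and count lattice points in the cone over a simplex. First I will triangulate $\mathcal{P}$ into integral $n$-simplices whose vertices are among the vertices of $\mathcal{P}$; this is possible for any polytope, for instance by a pulling triangulation. Inclusion--exclusion over the faces of the triangulation writes $L_{\mathcal{P}}(t)$ as an alternating sum of the counting functions $L_{\Delta^\circ}(t)$ of the relatively open simplices $\Delta^\circ$ of all dimensions. Each relatively open face lies in the dilates exactly once, so these relatively open simplices partition $t\mathcal{P}$. It therefore suffices to show that for an integral $k$-simplex $\Delta$ the function $t\mapsto \#(t\Delta^\circ\cap\Z^n)$ is a polynomial of degree $k$. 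Then only the $n$-dimensional pieces contribute to $t^n$, and their leading coefficients sum to $\operatorname{vol}(\mathcal{P})>0$.

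For a simplex $\Delta=\conv\{\bm{v}_0,\ldots,\bm{v}_k\}$ I will pass to the cone over $\Delta\times\{1\}$ in $\R^{n+1}$. Put $\bm{w}_i=(\bm{v}_i,1)$. Every lattice point of $\cone(\Delta\times\{1\})$ can be written uniquely as $\bm{p}+\sum m_i\bm{w}_i$ with $m_i\in\Z_{\ge0}$ and $\bm{p}$ a lattice point of the half-open parallelepiped $\Pi=\{\sum\lambda_i\bm{w}_i: 0\le\lambda_i<1\}$; this uses that the $\bm{w}_i$ are linearly independent. Since $\Pi$ is bounded, it contains finitely many lattice points. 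Their last coordinates lie in $\{0,\ldots,k\}$, and I let $h^\ast_j$ be the number of them at height $j$.

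Reading off the lattice points at height $t$ gives the generating function
\[
\sum_{t\ge0}L_\Delta(t)z^t=\frac{\sum_{j=0}^k h^\ast_j z^j}{(1-z)^{k+1}}.
\]
Expanding $(1-z)^{-(k+1)}=\sum_t\binom{t+k}{k}z^t$ yields
\[
L_\Delta(t)=\sum_j h^\ast_j\binom{t+k-j}{k},
\]
which is a polynomial in $t$, valid for all $t\ge0$ since $j\le k$. Its leading coefficient is $\sum_j h^\ast_j/k!$, and this is positive because $h^\ast_0=1$ (the origin lies in $\Pi$). The relative interior is handled the same way using the parallelepiped $\{0<\lambda_i\le1\}$, or by inclusion--exclusion over the faces of $\Delta$, with lower-dimensional faces giving lower-degree polynomials. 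Summing over the triangulation, the degree-$n$ terms come only from the full-dimensional simplices and all have positive leading coefficients, so no cancellation occurs and $\deg L_{\mathcal{P}}=n$.

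The main obstacle is the bookkeeping of the decomposition. I must ensure the half-open or relatively-open pieces partition $t\mathcal{P}\cap\Z^n$ exactly and uniformly in $t$. Taking the relative interiors of all faces of the triangulation does this cleanly, because dilation preserves the face structure.
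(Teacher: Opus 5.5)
The paper does not prove this classical result itself but only points to \cite{BR}. Your argument is essentially the standard proof given there and is correct: you triangulate, decompose $t\mathcal{P}$ into dilates of relatively open simplices, count lattice points of the cone over each simplex via the half-open fundamental parallelepiped, and use positive leading coefficients to rule out cancellation in degree $n$. One wording slip: since the relatively open simplices partition $t\mathcal{P}$, $L_{\mathcal{P}}(t)$ is a plain sum of the $L_{\sigma^\circ}(t)$, not an alternating one (the alternating sum arises only with closed simplices and inclusion--exclusion), though either version yields the same conclusion.
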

\noindent From now on, we will refer to $L_\mathcal{P}$ as the Ehrhart polynomial of $\mathcal{P}$.

The Ehrhart series of a polytope is the generating function whose coefficients are given by the Ehrhart polynomial, i.e.,
\[
\Ehr_{\mathcal{P}}(z):=1+\sum_{t \geq 1} L_{\mathcal{P}}(t) z^t \,.
\]
In fact, we can rewrite this series as a rational function of a special form, which leads to the next theorem and corollary.
\begin{theorem}[Stanley's non negativity theorem]\label{thm:stanley}
For an integral $n$-polytope
\[
\Ehr_{\mathcal{P}}(z)=\frac{h^*_n z^n + h^*_{n-1} z^{n-1} + \dots + h^*_1 z + 1}{(1-z)^{n+1}}\,,
\]
where $h_1^*, h_2^*, \dots, h_n^*$ are non negative integers.
\end{theorem}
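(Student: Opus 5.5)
The plan is to prove Stanley's theorem in two stages: first the rational form of the series, which is purely algebraic, and then nonnegativity, which is the real content. Both stages run through the cone over $\mathcal{P}$ placed at height one and its integer points graded by the last coordinate.

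\textbf{Setup: coning.} Put $\mathcal{P}$ at height one, $\widetilde{\mathcal{P}} = \mathcal{P}\times\{1\}\subset\R^{n+1}$, and let $C=\cone(\widetilde{\mathcal{P}})$. An integer point of $C$ at height $t$ is exactly a point of $t\mathcal{P}\cap\Z^n$. So
\[
\Ehr_{\mathcal{P}}(z)=\sum_{\bm{m}\in C\cap\Z^{n+1}} z^{m_{n+1}}\,,
\]
where the constant term $1$ accounts for the origin.

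\textbf{Stage 1: the rational form.} By Ehrhart's Theorem $L_{\mathcal{P}}$ is a polynomial of degree $n$. Any polynomial $p$ of degree $n$ satisfies
\[
\sum_{t\ge0}p(t)z^t=\frac{q(z)}{(1-z)^{n+1}}\,,\qquad \deg q\le n\,,
\]
because the series $\sum_{t\ge 0}\binom{t+n-k}{n}z^t$ equal $z^k/(1-z)^{n+1}$ and these binomials form a basis of the polynomials of degree at most $n$. Note $L_{\mathcal{P}}(0)=1$, so the series matches $\Ehr_{\mathcal{P}}$. Setting $z=0$ then gives $h^*_0=1$. This stage also shows the $h^*_k$ are integers, since $L_{\mathcal{P}}$ is integer-valued on $\Z_{\ge0}$.

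\textbf{Stage 2: nonnegativity via a triangulation.} Triangulate $\mathcal{P}$ into integral simplices $\Delta_1,\dots,\Delta_r$ using only its vertices; this is always possible. The cones over the simplices triangulate $C$. To write $C\cap\Z^{n+1}$ as a disjoint union of half-open simplicial cones, choose a generic point $\bm{w}$ in the interior of $C$. For each simplicial cone, remove exactly those facets that $\bm{w}$ lies beyond; this is the standard half-open decomposition, and the overlaps along common faces cancel exactly.

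Consider one simplicial cone with primitive generators $\bm{u}_0,\dots,\bm{u}_n$ at height $1$, namely the vertices of $\Delta_j$ lifted to height one. Every integer point of the half-open cone is uniquely
\[
\bm{m}=\bm{p}+\sum_i c_i\bm{u}_i\,,\qquad c_i\in\Z_{\ge0}\,,
\]
where $\bm{p}$ ranges over the integer points of the corresponding half-open fundamental parallelepiped $\Pi_j=\{\sum\lambda_i\bm{u}_i\}$. Here $\lambda_i\in[0,1)$, or $\lambda_i\in(0,1]$ for the removed facets.

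Each such cone therefore contributes $\left(\sum_{\bm{p}\in\Pi_j\cap\Z^{n+1}}z^{p_{n+1}}\right)/(1-z)^{n+1}$. The numerator is a polynomial with nonnegative integer coefficients, and its degree is at most $n$ because the heights of points in $\Pi_j$ are less than or equal to $n+1$, with equality impossible unless all $\lambda_i=1$. That extreme point can be excluded, or else handled by the genericity of $\bm{w}$, since not all facets are removed. Summing over $j$ gives $h^*(z)$ as a sum of polynomials with nonnegative coefficients, so every $h^*_k\ge0$.

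\textbf{Main obstacle.} The delicate step is the half-open decomposition: checking that the chosen visibility rule partitions $C\cap\Z^{n+1}$ with no double counting, and that the parallelepiped heights stay at most $n$. I would establish the partition by a line-segment argument from $\bm{w}$: each point $\bm{x}$ of $C$ lies in a unique half-open cone, namely the one containing $\bm{x}+\epsilon(\bm{w}-\bm{x})$ for small $\epsilon>0$.
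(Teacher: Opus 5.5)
The paper gives no proof of its own here and simply cites Beck--Robins~\cite{BR}; your argument is the standard proof found there, and it is correct: you cone $\mathcal{P}$ at height one, triangulate, write the cone as a disjoint union of half-open simplicial cones via a generic interior point $\bm{w}$, and count lattice points in fundamental parallelepipeds. Two minor precisions: no cone has all its facets removed because $\bm{w}$ has positive last coordinate, not because of genericity (if $\bm{w}$ were beyond every facet of a simplicial cone $K_j\subset\{x_{n+1}\ge 0\}$, then $-\bm{w}$ would lie in its interior), and since Stage~2 already produces the rational form with numerator of degree at most $n$, you never need to invoke $L_{\mathcal{P}}(0)=1$ in Stage~1.
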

\begin{corollary}
\[
L_{\mathcal{P}}(t) = {t+n \choose n} + h_1^* {t+n-1 \choose n} + \dots + h_{n-1}^* {t+1 \choose n} + h_n^* {t \choose n}
\]
and so the coefficients $h_i^*$ are in fact the coefficients of the Ehrhart polynomial of $\mathcal{P}$ in the basis of the 
$n$-degree polynomials given by
\[
\left\{ {t+i\choose n}, i\in 0,\dots, n \right\}.
\]
\end{corollary}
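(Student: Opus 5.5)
This corollary comes directly from expanding the rational form in Theorem~\ref{thm:stanley} as a power series in $z$ and reading off the coefficient of $z^t$. The plan is to use the binomial series
\[
\frac{1}{(1-z)^{n+1}} = \sum_{m\geq 0} \binom{m+n}{n} z^m ,
\]
which is the standard negative binomial expansion. Writing $h^*_0 = 1$, we multiply it by the numerator $\sum_{k=0}^n h^*_k z^k$. The coefficient of $z^t$ in the product is then $\sum_{k=0}^{\min(t,n)} h^*_k \binom{t-k+n}{n}$.

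For $k>t$ with $k\leq n$, the polynomial $\binom{t+n-k}{n} = \frac{(t+n-k)(t+n-k-1)\cdots(t-k+1)}{n!}$ vanishes. This is because $t+n-k\geq 0$ and $t-k+1\leq 0$, so $0$ is one of the factors. We may therefore extend the sum to all $k=0,\ldots,n$ without changing its value.

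Comparing with $\Ehr_\Pp(z) = 1 + \sum_{t\geq 1} L_\Pp(t) z^t$ then gives
\[
L_\Pp(t) = \sum_{k=0}^n h^*_k \binom{t+n-k}{n} \quad \text{for all } t\geq 1,
\]
which is the displayed formula. At $t=0$ the right-hand side also equals $1$, consistent with the constant term.

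For the second assertion, note that $\binom{t+i}{n}$, $i=0,\ldots,n$, are $n+1$ polynomials of degree exactly $n$ in $t$. They are linearly independent, since evaluating at $t=-1,-2,\ldots,-n$ and at $t=0$ gives a triangular system. Indeed, $\binom{t+i}{n}$ vanishes at $t=-i,\ldots,n-1-i$, so in suitable orderings its values have nonzero diagonal and zeros on one side. Hence they form a basis of the polynomials of degree at most $n$. Since $L_\Pp$ is a polynomial by Ehrhart's theorem and agrees with $\sum h^*_k\binom{t+n-k}{n}$ on infinitely many integers, the two coincide as polynomials, and the $h^*_i$ are its unique coordinates in this basis.

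The only step needing care is the vanishing of the extra terms for $t<k$ and the linear-independence check; both are routine.
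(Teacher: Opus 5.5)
Your proof is correct and is the derivation the paper intends: the corollary is stated without proof as an immediate consequence of Theorem~\ref{thm:stanley}, obtained by expanding $(1-z)^{-(n+1)}=\sum_{m\ge 0}\binom{m+n}{n}z^m$ and reading off the coefficient of $z^t$. Your additional checks, namely the vanishing of the terms with $t<k\le n$ and the triangular evaluation argument for the linear independence of the $\binom{t+i}{n}$, correctly fill in the routine details.
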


The following theorem gives a particularly interesting characterization of reflexive polytopes.
\begin{theorem}\label{thm:reflexive}
The following conditions are equivalent:
\begin{enumerate}
\item $\mathcal{P}$ is reflexive;
\item $L_{\mathcal{P}^\circ} (t+1) = L_{\mathcal{P}} (t), \forall t\in \mathbb{Z}_{> 0}$, i.e., every lattice point 
is in $\partial k \mathcal{P}$ for a single $k \in \mathbb{Z}_{\geq 0}$;
\item $h_k^*=h^*_{n-k}$ for every $0 \leq k \leq \frac{n}{2}$.
\end{enumerate}
The fact that $(1) \Leftrightarrow (3)$ is often referred to as Hibi's palindromic theorem.
\end{theorem}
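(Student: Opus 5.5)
We will assume, as is implicit in Definition~\ref{def:reflexive}, that $\mathcal{P}$ is a full-dimensional integral polytope. For the reflexivity discussion we also assume the origin lies in its interior, so that $\mathcal{P}^*$ is a bounded polytope. The key reformulation is this. Write $\mathcal{P}=\bigcap_i\{\bm{x} : \bm{a}_i\cdot\bm{x}\le b_i\}$ with $\bm{a}_i\in\Z^n$ primitive and $b_i>0$. Since the vertices of $\mathcal{P}$ are integral, each $b_i$ is a positive integer. The vertices of $\mathcal{P}^*$ are the points $\bm{a}_i/b_i$. Hence $\mathcal{P}$ is reflexive exactly when every $b_i=1$, i.e. every facet lies at lattice distance $1$ from the origin. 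The plan is to prove $(1)\Rightarrow(2)\Rightarrow(1)$ directly from this description, and to prove $(2)\Leftrightarrow(3)$ using Ehrhart--Macdonald reciprocity together with Theorem~\ref{thm:stanley}.

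\emph{$(1)\Rightarrow(2)$.} Suppose all $b_i=1$ and let $\bm{x}\in\Z^n$. Then $k:=\max_i \bm{a}_i\cdot\bm{x}$ is an integer. If $\bm{x}\neq 0$, then $k\ge 1$, because $0$ is interior and $\bm{x}$ lies in some $t\mathcal{P}$. It follows that $\bm{x}\in\partial(k\mathcal{P})$, and $\bm{x}$ lies in the interior of $(k+1)\mathcal{P}$. Consequently $t\mathcal{P}\cap\Z^n=(t+1)\mathcal{P}^\circ\cap\Z^n$, which gives $L_{\mathcal{P}^\circ}(t+1)=L_{\mathcal{P}}(t)$.

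\emph{$(2)\Leftrightarrow(3)$.} Reciprocity $L_{\mathcal{P}^\circ}(t)=(-1)^nL_{\mathcal{P}}(-t)$, combined with the rational form of Theorem~\ref{thm:stanley}, gives
\[
\sum_{t\ge1}L_{\mathcal{P}^\circ}(t)z^t=\frac{h^*_n z+h^*_{n-1}z^2+\dots+h^*_0z^{n+1}}{(1-z)^{n+1}}.
\]
Condition (2), which also holds at $t=0$ since both sides are polynomials, is equivalent to $\sum_{t\ge1}L_{\mathcal{P}^\circ}(t)z^t=z\,\Ehr_{\mathcal{P}}(z)$. Comparing numerators, this holds if and only if $h^*_{n-k}=h^*_k$ for all $k$.

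\emph{$(2)\Rightarrow(1)$.} This is the step requiring a small geometric argument, and I expect it to be the main obstacle. Evaluating at $t=0$ gives $L_{\mathcal{P}^\circ}(1)=1$. So $\mathcal{P}$ has a unique interior lattice point, and after an integral translation we may take it to be the origin. Then $t\mathcal{P}\subset((t+1)\mathcal{P})^\circ$, so the equality of counts in (2) upgrades to an equality of lattice point sets:
\[
t\mathcal{P}\cap\Z^n=((t+1)\mathcal{P})^\circ\cap\Z^n .
\]
Now suppose some facet $F$, with inequality $\bm{a}\cdot\bm{x}\le b$, had $b\ge2$. Since $\bm{a}$ is primitive, the hyperplane $\{\bm{a}\cdot\bm{x}=(t+1)b-1\}$ contains a full-rank affine lattice. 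For $t$ large, the slice of $((t+1)\mathcal{P})^\circ$ on this hyperplane, lying just inside the relative interior of $(t+1)F$, contains a ball of radius growing with $t$. Hence it contains a lattice point $\bm{y}$. But $\bm{a}\cdot\bm{y}=(t+1)b-1>tb$, so $\bm{y}\notin t\mathcal{P}$, which contradicts the set equality above. Therefore all $b_i=1$ and $\mathcal{P}$ is reflexive. The remaining care is to make the ``large ball'' claim precise: the slice near a relative-interior point of $F$, scaled by $t+1$, contains a ball of radius proportional to $t$, and this exceeds the covering radius of the affine lattice on that hyperplane.
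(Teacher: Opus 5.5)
Your proof is correct. The paper gives no proof of this theorem: it is quoted as background, with a general reference to \cite{BR}. Your argument is essentially the standard textbook one. It rests on two ingredients. The first is the characterization of reflexivity by every facet lying at lattice distance $1$ from the origin, which gives $(1)\Rightarrow(2)$ by integrality of $\bm{a}_i\cdot\bm{x}$ and $(2)\Rightarrow(1)$ by a lattice point of a large dilate on the hyperplane $\bm{a}\cdot\bm{x}=(t+1)b-1$. The second is Ehrhart--Macdonald reciprocity in the form $\sum_{t\ge1}L_{\mathcal{P}^\circ}(t)z^t=(-1)^{n+1}\Ehr_{\mathcal{P}}(1/z)$, which gives $(2)\Leftrightarrow(3)$.

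Your write-up also makes explicit a point the statement glosses over. Conditions (2) and (3) are invariant under integral translation while (1) is not, so $(2)\Rightarrow(1)$ only holds after moving the unique interior lattice point to the origin. You handle this correctly.

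Two minor remarks:
\begin{enumerate}
\item In $(1)\Rightarrow(2)$, the reason $k\ge1$ for $\bm{x}\ne\bm{0}$ is boundedness of $\mathcal{P}$ (no nonzero $\bm{x}$ satisfies all $\bm{a}_i\cdot\bm{x}\le0$), not that $\bm{x}$ lies in some $t\mathcal{P}$. You do not actually need $k\ge1$ at all, since integrality alone gives $\max_i\bm{a}_i\cdot\bm{x}\le t\iff\max_i\bm{a}_i\cdot\bm{x}<t+1$.
\item The interior generating function you write down can be checked directly in the binomial basis of the Corollary. Use $(-1)^n\binom{-t+n-k}{n}=\binom{t+k-1}{n}$ and $\sum_{t\ge1}\binom{t+k-1}{n}z^t=z^{n+1-k}/(1-z)^{n+1}$ for $0\le k\le n$.
\end{enumerate}
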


\subsection{Delzant polytopes, toric diagrams and prequantizations}
\label{ssec: Delzant}

Let us introduce two classes of polytopes that are relevant in the context of toric symplectic and contact geometry.
\begin{definition}[Delzant polytope]
A Delzant polytope is a simple $n$-polytope such that for every vertex the $n$ primitive edge vectors form a basis of 
$\mathbb{Z}^n$.
\end{definition}
\begin{definition}[Toric Diagrams] \label{def:diagram}
A toric diagram is a simplicial $n$-polytope such that the $n$ vertices of each facet can be translated to form a basis of 
$\mathbb{Z}^n$.
\end{definition}

We will now define the prequantization combinatorial construction that relates certain Delzant polytopes with toric diagrams.
\begin{definition}[Prequantization]\label{def:preq}
Start with a $n$-dimensional integral Delzant polytope $\mathcal{P} \subset \mathbb{R}^n$ defined by the $d$ hyperplanes 
$\bm{x} \cdot \bm{\nu}_i \geq -\lambda_i$, $\bm{\nu}_i \in \mathbb{Z}^n$, $\lambda_i \in \mathbb{Z}$ for $i \in 1, \dots, d$.
\begin{enumerate}
\item Consider the cone over $\mathcal{P} \times \{1\} \subset \R^n \times \R = \R^{n+1}$. This cone is defined by the hyperplanes 
$\bm{x} \cdot (\bm{\nu}_i , \lambda_i)\geq 0$, therefore for each facet of the cone the primitive inner normal vector is 
$(\bm{\nu}_i , \lambda_i) \in \mathbb{Z}^{n+1}$.
\item Apply a linear transformation $T\in GL(n, \Z)$ to all the normals such that $ T(\bm{\nu}_i, \lambda_i) = (\bm{v}_i, 1)$ where 
$\bm{v}_i \in \mathbb{Z}^{n}$.
\end{enumerate}
When such a $T$ exists, the $n$-polytope $\mathcal{D}=\text{conv}(\bm{v}_1, \dots, \bm{v}_{d})$ is a toric diagram. 
\end{definition}
\noindent The following theorem gives a necessary and sufficient condition for the existence of such a linear transformation $T$.
\begin{theorem}\label{thm:preq_are_gorenstein}
The transformation $T$ exists iff $r \mathcal{P}$ is reflexive for some $r \in \mathbb{Z}_{>0}$. In that case, 
$r \mathcal{D}$ is also reflexive.
\end{theorem}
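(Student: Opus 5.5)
The plan is to reduce the existence of $T$ to a linear Diophantine condition on the normals, and then read off both reflexivity statements from it. Throughout, $T$ is understood in $GL(n+1,\Z)$, acting on the normals $(\bm{\nu}_i,\lambda_i)\in\Z^{n+1}$.

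\emph{Reduction.} A matrix $T\in GL(n+1,\Z)$ with $T(\bm{\nu}_i,\lambda_i)=(\bm{v}_i,1)$ for all $i$ exists if and only if there is an integral functional $\ell=(\bm{w},c)\in\Z^n\times\Z$ with
\[
\bm{w}\cdot\bm{\nu}_i+c\,\lambda_i=1 \quad \text{for all } i=1,\dots,d .
\]
One direction is immediate: $\ell$ is the last row of $T$. For the other, such an $\ell$ takes the value $1$ somewhere, so it is primitive. Hence it can be completed to a unimodular matrix having $\ell$ as its last row, and this matrix is the required $T$.

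\emph{Reflexive dilate $\Rightarrow$ existence of $T$.} Suppose $r\Pp$ is reflexive for some $r\in\Z_{>0}$. After an integral translation by a lattice point $\bm{u}$ (the unique interior lattice point), its polar dual is integral with the origin in its interior. This means every facet inequality has the form $\bm{y}\cdot\bm{a}\geq -1$ with $\bm{a}$ integral. Since $\bm{\nu}_i$ is primitive and normal to the $i$-th facet, we must have $\bm{a}=\bm{\nu}_i$. Comparing with $r\Pp=\{\bm{x}\cdot\bm{\nu}_i\geq -r\lambda_i\}$ gives $\bm{u}\cdot\bm{\nu}_i+r\lambda_i=1$. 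So $\ell=(\bm{u},r)$ works.

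\emph{Existence of $T$ $\Rightarrow$ reflexive dilate.} Conversely, given $\ell=(\bm{w},c)$, we have for $\bm{x}\in c\Pp$ that $(\bm{x}-\bm{w})\cdot\bm{\nu}_i\geq -1$. Therefore
\[
c\Pp-\bm{w}=\{\bm{y}:\bm{y}\cdot\bm{\nu}_i\geq -1\},
\]
which is integral, and its dual $\conv(-\bm{\nu}_i)$ is integral. Thus $c\Pp$ is reflexive, provided $c>0$.

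The main (small) obstacle is exactly showing that $c>0$. I would use the Minkowski relation $\sum_i a_i\bm{\nu}_i=0$, where $a_i>0$ are the facet volumes, and an interior point $\bm{p}$ of $\Pp$, so that $\bm{p}\cdot\bm{\nu}_i+\lambda_i>0$ for every $i$. Pairing the relation with $\bm{w}$ gives
\[
\sum_i a_i = c\sum_i a_i\lambda_i .
\]
Pairing it with $\bm{p}$ gives
\[
\sum_i a_i\lambda_i=\sum_i a_i(\bm{p}\cdot\bm{\nu}_i+\lambda_i)>0 .
\]
Since $\sum_i a_i>0$, it follows that $c>0$. This also rules out $c\le 0$, including $c=0$, so we may take $r=c$.

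\emph{Reflexivity of $r\Dd$.} Take $r=c$ and regard $\Dd$, via $T$, as the slice $\{\ell=1\}$ of the dual cone spanned by the normals $(\bm{\nu}_i,\lambda_i)$. The point $(\bm{0},1)/r$ lies on this slice. Write $T(\bm{0},1)=(r\bm{v}_0,r)$; note that $r\bm{v}_0$ is integral. Each vertex $\bm{p}$ of $\Pp$ gives the integral functional $(\bm{p},1)$. This functional is $\geq 0$ on all normals and vanishes exactly on the normals of the facets through $\bm{p}$, so it defines a facet of $\Dd$. In $T$-coordinates this facet is $\bm{\alpha}\cdot\bm{v}+\beta\geq 0$, where $(\bm{\alpha},\beta)=T^{-\mathsf T}(\bm{p},1)$ is integral. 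Its value at the centre is $(\bm{p},1)\cdot(\bm{0},1)/r=1/r$. Translating and scaling, $r\Dd-r\bm{v}_0$ has facet inequalities $\bm{\alpha}\cdot\bm{y}\geq -1$ with $\bm{\alpha}$ integral. Hence its polar dual has integral vertices $-\bm{\alpha}$, and $r\Dd$ is reflexive.
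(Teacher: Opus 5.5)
Your proof is correct. The paper does not argue this itself. It refers to Section~7.3 of \cite{HNP}, noting that prequantization is their dual-cone construction, so the statement becomes an instance of the duality of Gorenstein cones. What you wrote is a self-contained, elementary version of exactly that duality, with $C:=\cone(\Pp\times\{1\})$:
\begin{enumerate}
\item you reduce the existence of $T$ to a height functional $\ell=(\bm{w},c)$ equal to $1$ on every generator $(\bm{\nu}_i,\lambda_i)$ of the dual cone $C^\vee$;
\item you identify this functional as $\ell=(\bm{u},r)$, where $\bm{u}$ is the interior point of $r\Pp$;
\item you swap the roles of $\ell$ and $(\bm{0},1)$ to get reflexivity of $r\Dd$, reading off its interior lattice point $r\bm{v}_0$ from $T(\bm{0},1)=(r\bm{v}_0,r)$.
\end{enumerate}
The citation buys brevity and places the result in the general theory. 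Your route buys explicitness, and it settles something the statement leaves implicit. The functional $\ell$ is unique, because the $(\bm{\nu}_i,\lambda_i)$ span $\R^{n+1}$; your identity $c=\sum_i a_i/\sum_i a_i\lambda_i$ also pins down $c$ directly. Hence the $r$ for which $r\Dd$ is reflexive is the same $r$ as for $\Pp$, whatever $T$ is chosen. Positivity of $c$ can also be seen in one line: $\ell$ is positive on $C^\vee\setminus\{0\}$, so $\ell$ lies in the interior of $C$, where the last coordinate is positive.

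One step deserves an explicit line. You conclude that \emph{all} facet inequalities of $r\Dd-r\bm{v}_0$ have the form $\bm{\alpha}\cdot\bm{y}\geq -1$ with $\bm{\alpha}$ integral. For this you need every facet of $\Dd$ to come from a vertex of $\Pp$, not just each vertex of $\Pp$ to give a facet. This follows from
\[
\cone\{(\bm{\nu}_i,\lambda_i)\}=C^\vee=\{\bm{z}:\bm{z}\cdot(\bm{p},1)\geq 0 \text{ for all vertices } \bm{p} \text{ of } \Pp\},
\]
which holds because $C$ is generated by the vectors $(\bm{p},1)$.
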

\noindent We point to Section 7.3 in~\cite{HNP}, keeping in mind that although this text never refers to the prequantization this 
construction is tantamount to their dual cone.

From now on, we will refer to an integral polytope $\mathcal{P}$ such that $r\mathcal{P}$ is reflexive for some 
$r \in \mathbb{Z}_{>0}$ as a \emph{Gorenstein} polytope of index $r$. In fact, reflexive polytopes are Gorenstein of index 
$1$ and one is able to produce a theorem in the spirit of \autoref{thm:reflexive} but for Goresntein polytopes.
\begin{theorem}\label{thm:gorenstein}
The following conditions are equivalent:
\begin{enumerate}
\item $\mathcal{P}$ is Gorenstein of index $r$;
\item $L_{\mathcal{P}^\circ} (t) = L_{\mathcal{P}} (t-r), \forall t \in \mathbb{Z}_{>r}$ or 
$L_{\mathcal{P}} (t)= \#\{ \partial t \mathcal{P} \cap \mathbb{Z}^n \} + L_{\mathcal{P}} (t-r)$;
\item $h_k^*=h^*_{n+1-r-k}$ for every $0 \leq k \leq n+1-r$.
\end{enumerate}
\noindent Note that $(3)$ implies that the non zero coefficients of $h^*$ are still a palindrome even though their center of 
symmetry has changed.
\end{theorem}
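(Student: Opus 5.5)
The plan is to prove $(1)\Rightarrow(2)\Rightarrow(1)$ geometrically, and then to connect $(2)$ and $(3)$ through generating functions and Ehrhart--Macdonald reciprocity. Throughout, ``$r\Pp$ reflexive'' means that after an integral translation $r\Pp-\bm{m}$ is reflexive. Equivalently, $\bm{m}\in\Z^n$ is the unique interior lattice point of $r\Pp$, and $r\Pp-\bm{m}=\{\bm{y}: \bm{a}_i\cdot\bm{y}\le 1\}$ with primitive integral normals $\bm{a}_i$.

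\emph{$(1)\Rightarrow(2)$.} Write $\Pp=\{\bm{a}_i\cdot\bm{x}\le c_i\}$. Reflexivity of $r\Pp-\bm{m}$ means exactly $rc_i-\bm{a}_i\cdot\bm{m}=1$ for every $i$. Since $\bm{a}_i$ is integral, a lattice point $\bm{x}$ lies in $(t\Pp)^\circ$ iff $\bm{a}_i\cdot\bm{x}\le tc_i-1$ for all $i$. Substituting $tc_i-1=(t-r)c_i+\bm{a}_i\cdot\bm{m}$, this says precisely $\bm{a}_i\cdot(\bm{x}-\bm{m})\le (t-r)c_i$ for all $i$. So $\bm{x}\mapsto\bm{x}-\bm{m}$ is a bijection $(t\Pp)^\circ\cap\Z^n\to (t-r)\Pp\cap\Z^n$, which gives $L_{\Pp^\circ}(t)=L_\Pp(t-r)$. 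The second form of $(2)$ is the same statement rewritten, since $L_\Pp(t)=L_{\Pp^\circ}(t)+\#(\partial t\Pp\cap\Z^n)$.

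\emph{$(2)\Rightarrow(1)$.} Both sides of $(2)$ are polynomials, so the identity holds for all integers $t$. At $t=r$ it gives $L_{\Pp^\circ}(r)=L_\Pp(0)=1$, where $L_{\Pp^\circ}$ denotes the polynomial, which by reciprocity counts interior lattice points at every positive $t$. Hence $r\Pp$ has a unique interior lattice point $\bm{m}$. Because $\bm{m}$ is interior, $\bm{m}+(t-r)\Pp\subset (t\Pp)^\circ$, so translation by $\bm{m}$ injects $(t-r)\Pp\cap\Z^n$ into $(t\Pp)^\circ\cap\Z^n$. By $(2)$ this injection is a bijection for all $t>r$.

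Now suppose some facet had $d_i:=rc_i-\bm{a}_i\cdot\bm{m}\ge 2$. The facet $\{\bm{a}_i\cdot\bm{x}=c_i\}\cap\Pp$ is integral and $\bm{a}_i$ is primitive, so for large $t$ the slice $\{\bm{a}_i\cdot\bm{x}=tc_i-1\}\cap(t\Pp)^\circ$ is a large $(n-1)$-dimensional region of an integral lattice hyperplane and contains a lattice point $\bm{x}$. Then $\bm{a}_i\cdot(\bm{x}-\bm{m})=(t-r)c_i+d_i-1>(t-r)c_i$, so $\bm{x}$ is not in the image of the bijection, a contradiction. Hence every $d_i=1$, which says $r\Pp-\bm{m}$ is reflexive. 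I expect this step to be the main obstacle: one must make rigorous the existence of lattice points on the hyperplane one step inside a dilated facet. This should follow because the facet's affine lattice has full rank $n-1$ and the slice contains dilates of a fixed open $(n-1)$-ball.

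\emph{$(2)\Leftrightarrow(3)$.} Reciprocity, with Theorem~\ref{thm:stanley}, gives
\[
\sum_{t\ge1}L_{\Pp^\circ}(t)z^t=\frac{h^*_n z+h^*_{n-1}z^2+\cdots+h^*_0z^{n+1}}{(1-z)^{n+1}}.
\]
Assume $(2)$; we have already shown it implies $(1)$, hence $L_{\Pp^\circ}(t)=0$ for $t<r$ and $L_{\Pp^\circ}(r)=1$. With this, $(2)$ holds for all $t\ge1$, and it becomes the series identity $\sum_{t\ge1}L_{\Pp^\circ}(t)z^t=z^r\Ehr_\Pp(z)$. Comparing numerators gives
\[
h^*_n z+\cdots+h^*_0 z^{n+1}=z^r(h^*_0+\cdots+h^*_nz^n).
\]
Reading off coefficients yields $h^*_k=h^*_{n+1-r-k}$ for $0\le k\le n+1-r$, together with $h^*_k=0$ for $k>n+1-r$. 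Conversely, assuming $(3)$ is read as this full numerator identity (palindromy including the vanishing of $h^*_k$ for $k>n+1-r$), the same computation run backwards gives $\sum_{t\ge1}L_{\Pp^\circ}(t)z^t=z^r\Ehr_\Pp(z)$, hence $(2)$.
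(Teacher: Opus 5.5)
The paper gives no proof of this statement. It records it as background, the index-$r$ analogue of Theorem~\ref{thm:reflexive}, so there is no argument to compare against and your proof stands on its own.

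Your proof is correct and follows the standard route:
\begin{itemize}
\item the lattice-distance identity $rc_i-\bm{a}_i\cdot\bm{m}=1$ gives $(1)\Rightarrow(2)$;
\item the injection $\bm{x}\mapsto\bm{x}+\bm{m}$, plus a lattice point of $(t\Pp)^\circ$ on the level $\bm{a}_i\cdot\bm{x}=tc_i-1$, gives $(2)\Rightarrow(1)$;
\item reciprocity turns $(2)$ into $\sum_{t\ge1}L_{\Pp^\circ}(t)z^t=z^r\Ehr_\Pp(z)$, i.e.\ $(3)$.
\end{itemize}
The step you were worried about is fine. Take $\bm{y}$ in a small relatively open ball inside the relative interior of the facet $F_i$, and $\bm{w}\in\Pp^\circ$. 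The points $t\bigl((1-\lambda)\bm{y}+\lambda\bm{w}\bigr)$ with $\lambda=1/\bigl(t(c_i-\bm{a}_i\cdot\bm{w})\bigr)$ lie in $(t\Pp)^\circ$ on that hyperplane. They fill an $(n-1)$-ball whose radius grows linearly in $t$. Since $\bm{a}_i$ is primitive, the integer points of the hyperplane form a nonempty translate of a rank-$(n-1)$ lattice, so they have bounded covering radius.

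There are two remarks.

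First, in $(2)\Rightarrow(3)$ you assert without proof that (1) forces $L_{\Pp^\circ}(t)=0$ for $0<t<r$. This does not follow from the polynomial identity alone: for even $n$ it only gives $L_{\Pp^\circ}(t)=L_{\Pp^\circ}(r-t)$. The fix is one line. An interior lattice point $\bm{x}$ of $t\Pp$ would give the distinct interior lattice points $\bm{x}+(r-t)\bm{v}$ of $r\Pp$, for $\bm{v}$ ranging over the vertices of $\Pp$. This contradicts the uniqueness of $\bm{m}$.

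Second, your strengthened reading of (3) is necessary, not just convenient. Taken literally, (3) omits $h^*_k=0$ for $k>n+1-r$, and then it is not equivalent to (1). For instance, the reflexive triangle $\conv\{(1,0),(0,1),(-1,-1)\}$ has $h^*(z)=1+z+z^2$. It therefore satisfies the literal (3) with $n=r=2$, since $h^*_0=h^*_1$. Yet its dilate by $2$ has four interior lattice points, so it is not Gorenstein of index $2$. The degree-plus-palindrome form you prove is exactly how the paper later applies the theorem. For example, the proof of Theorem~\ref{thm:bipysimplex2} uses degree $n-1$ plus palindromy to conclude index $2$.
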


We end this section with a basic example that illustrates what we discussed so far.
\begin{example}[Basic Example] \label{ex:hcube}
The (hyper-)cube $[-1, 1]^n$ is Delzant and reflexive for every $n$. The prequantization of this polytope 
has as toric diagram the cross-polytope 
\[
\diamond_n := \{(x_1, \ldots, x_n) \in \mathbb{R}^n : |x_1|+ \cdots +|x_n| \leq 1\} \quad\text{(see Figure~\ref{fig:0})}
\]
and we have that
\[
 L_{\diamond_n} (t) = \sum_{k=0}^{n} {n\choose k} {t-k+n \choose n}\,,\qquad
 \Ehr_{\diamond_n} (z) = \frac{(1+z)^n}{(1-z)^{n+1}} \,.
\]
\end{example}

\section{Small Cross-Polytopes}
\label{sec:smallcrosspolytopes}

This section is devoted to the prequantization of the unit cube: $\square_n := [0,1]^n$. 
This is related to our previous Example~\ref{ex:hcube}, the prequantization of twice the 
unit cube:
\[\begin{tikzcd}
	{[-1,1]^n=2\square_n} && {\diamond_n} \\
	\\
	{[0,1]^n=\square_n} && {S_n}
	\arrow["{\text{preq.}}", "{\text{- dual}}"', from=1-1, to=1-3]
	\arrow["{/2}", from=1-1, to=3-1]
	\arrow["{\text{preq.}}", from=3-1, to=3-3]
\end{tikzcd}\]
\noindent There are several relations between $\diamond_n$ and $S_n$ and in this section we outline 
some of them.

\subsection{Construction and basic properties} \label{ssec:smallcrosspolytopes}

We define the $n$-dimensional \emph{small cross-polytope} $S_n$ as the toric diagram of the prequantization 
of the unit cube $\square_n$. Note that this is possible since $2 \cdot \square_n$ is reflexive (and Delzant). 
The \emph{small} in the name comes from the fact that $\square_n = [0,1]^n$ is a small version of the 
(hyper-)cube $[-1, 1]^n$ whose prequantization has the cross-polytope $\diamond_n$ as toric diagram.

The $(n+1)$-dimensional cone over $[0,1]^n$ has the set of primitive inner lattice normal vectors
\[
\{ (\bm{e}_i, 0) \mid i \in 1,...,n \} \cup \{ (-\bm{e}_i, 1) \mid i \in 1,...,n \} \,.
\]
We then apply the unimodular (lattice preserving) transformation given by
\[
\begin{bmatrix}
	0 & 1 & 0 & \dots  & 0 \\
	0 & 0 & 1 & \ddots  & \vdots \\
	\vdots & \vdots & \ddots & \ddots & 0 \\
	0 & 0 & \dots & 0 & 1 \\
	1 & 1 & \dots & 1  & 2
\end{bmatrix}
\]
in order to map all the normals to the hyperplane $x_{n+1}=1$, concluding that the $2n$ vertices of a small cross-polytope are
\[
\{(0,...,0), (0,...,0,1) \} \cup \{ (\bm{e}_i, 0) \mid i \in 1,...,n-1 \} \cup \{ (-\bm{e}_i, 1) \mid i \in 1,...,n-1 \} \,.
\]
In particular, $S_n$ consists of the convex hull of a standard $(n-1)$-simplex in the $x_n=0$ hyperplane and the symmetric of 
the standard $(n-1)$-simplex in the $x_n=1$ hyperplane. These two simplices are shaded in Figure~\ref{fig:1}.
\begin{theorem}
Small cross-polytopes have the same face structure (face lattice) as cross-polytopes.
\end{theorem}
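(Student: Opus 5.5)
The plan is to go through polar duality rather than working directly with $S_n$. By construction, $S_n$ is the toric diagram of the prequantization of $\square_n$. Its vertices $\bm{v}_i$ are the images under a unimodular map of the facet normals of $\cone(\square_n\times\{1\})$. The facets of $\square_n$ correspond bijectively to these normals, and hence to the vertices of $S_n$: the facet $\{x_i=0\}$ gives $\bm{e}_i$ (read as $\bm{0}$ when $i=1$, and shifted accordingly), and $\{x_i=1\}$ gives the corresponding $-\bm{e}_i$-type vertex.

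\textbf{Claim.} A set of vertices of $S_n$ spans a proper face if and only if the corresponding facets of $\square_n$ have nonempty common intersection. Granting this, the face lattice of $S_n$ is anti-isomorphic to that of $\square_n$. By the remarks after Definition~\ref{def:polar}, the face lattice of $\diamond_n=-([-1,1]^n)^*$ is also anti-isomorphic to that of $[-1,1]^n$, which is combinatorially the same as $\square_n$. This gives the theorem.

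To prove the claim directly and concretely, I would identify the facets of $S_n$ explicitly. Label the vertices of $S_n$ as pairs $\{p_i,q_i\}$, $i=1,\dots,n$, where $p_i,q_i$ correspond to the two opposite facets $x_i=0$ and $x_i=1$ of the cube. I will show that for every choice $\epsilon\in\{p,q\}^n$ the simplex $\conv(\epsilon_1,\dots,\epsilon_n)$ is a facet, i.e. lies on a supporting hyperplane with all other vertices strictly on one side. The supporting functional should come from the vertex $\bm{w}_\epsilon$ of $\square_n\times\{1\}$ (transported by the inverse transpose of the unimodular matrix). Indeed, $\bm{w}_\epsilon$ satisfies $\langle \bm{w}_\epsilon,\nu\rangle=0$ exactly for the $n$ normals $\nu$ of the facets through that cube vertex, and $\langle \bm{w}_\epsilon,\nu\rangle>0$ for the other $n$. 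Since all $\nu_j=(\bm{v}_j,1)$, the functional $\bm{v}\mapsto\langle \bm{w}_\epsilon,(\bm{v},1)\rangle$ is affine on $\R^n$. It vanishes on the chosen $n$ vertices and is positive on the remaining ones, so it cuts out exactly that simplex as a facet.

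Conversely, every facet of $S_n$ arises this way. The $2^n$ simplices constructed above are facets, and together their boundary complex is the boundary of the cross-polytope combinatorially. A pseudomanifold argument then shows there are no other facets: each ridge lies in exactly two of these simplices, so they form a closed $(n-1)$-sphere equal to $\partial S_n$. Alternatively, a facet's normal functional gives a ray in the dual cone, and the dual cone is $\cone(\square_n\times\{1\})$, whose rays are exactly the vertices $\bm{w}_\epsilon$. I would use this second argument since it is cleaner. Once the facets are the $2^n$ sets containing one element from each pair $\{p_i,q_i\}$, all of which are simplices, the faces of $S_n$ are precisely the subsets containing no pair $\{p_i,q_i\}$. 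This is exactly the face lattice of $\diamond_n$ under $p_i\mapsto\bm{e}_i$, $q_i\mapsto-\bm{e}_i$.

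The main obstacle is making the duality step rigorous: checking that the cone generated by the $(\bm{v}_j,1)$ has dual exactly $\cone(\square_n\times\{1\})$ after the unimodular change. This is standard because the facet normals of a polyhedral cone generate its dual cone, and the linear map only acts by transpose-inverse. It should also be checked that no $\bm{v}_j$ lies in the convex hull of the others; the strict positivity above handles this, since each $\bm{v}_j$ is cut off by some $\bm{w}_\epsilon$.
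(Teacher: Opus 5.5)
Your proof is correct, but it takes a different route from the paper's. The paper gives no separate argument for this statement. In the remark that follows, it records the $\mathcal{H}$-representation $0\le\sum_{i\in A}x_i+x_n\le t$ ($A\subseteq\{1,\dots,n-1\}$), from which one reads off that each facet contains exactly one vertex from each pair $\{\bm{0},\bm{e}_n\}$, $\{\bm{e}_i,\bm{e}_n-\bm{e}_i\}$. The structural reason appears later, in Lemma~\ref{lemma:0}. Before the final unimodular normalisation, the diagram of $[0,2]^n$ (unimodularly equivalent to $\diamond_n$) has vertices $(\bm{e}_i,0),(-\bm{e}_i,2)$, and that of $\square_n$ has vertices $(\bm{e}_i,0),(-\bm{e}_i,1)$. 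These differ by the linear isomorphism $\mathrm{diag}(1,\dots,1,\tfrac12)$, so $S_n$ is an affine (though not unimodular) image of $\diamond_n$. Equivalently, $S_n$ is centrally symmetric about $\tfrac12\bm{e}_n$ with $n$ linearly independent half-diagonals, and the theorem is immediate.

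You instead prove, via cone duality, the general fact that the diagram of a prequantized Delzant polytope is combinatorially polar to it. That is more machinery than this statement needs, but it buys two things. First, it applies verbatim to any Delzant $\Pp$ with $r\Pp$ reflexive. Second, it justifies the paper's $\mathcal{H}$-representation, including completeness of the facet list (your extreme-ray argument), which the remark only asserts. Write $\bm{w}_\epsilon=(\bm{c},1)$ with $\bm{c}\in\{0,1\}^n$, $A=\{i-1: c_i=1,\ i\ge 2\}$ and $A^c=\{1,\dots,n-1\}\setminus A$. Then your functional $\bm{v}\mapsto\langle T^{-t}\bm{w}_\epsilon,(\bm{v},1)\rangle$ is $\sum_{i\in A}v_i+v_n$ if $c_1=0$ and $1-\sum_{i\in A^c}v_i-v_n$ if $c_1=1$. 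These are exactly the paper's $2^n$ inequalities at $t=1$.

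One clause you should add: the zero set of each functional is an $(n-1)$-simplex because the $n$ cube normals at a vertex are linearly independent. Hence the corresponding $\bm{v}_j$, all at height $1$ after applying $T$, are affinely independent.
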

\begin{remark}{($\mathcal{H}$-representation)}
The vertex set of each facet has $n$ vertices, $k$ vertices of the top simplex and the complement $n-k$ vertices of the bottom simplex. 
The $\mathcal{H}$-representation of $t S_n$ is given by
\[
0 \leq \sum_{i \in A} x_i + x_n \leq t 
\]
where $A$ ranges over the $2^{n-1}$ subsets of $\{1,2,...,n-1\}$.
\end{remark}
\noindent A somewhat tedious manipulation of this $\mathcal{H}$-representation would enable us to conclude that
\begin{align*}
	L_{S_n}(t)&=\sum_{k=0}^{n-1} {n-1\choose k} \sum_{i=k}^{t} {t-i+n-k-1 \choose n-k-1} {i \choose k}\\
	 &=  \sum_{k=0}^{n-1} {n-1\choose k} {t-k+n \choose n}.
\end{align*}
We refrain from presenting this proof because it hides away the connection with cross-polytopes that seems 
to only appear in the Ehrhart polynomial of the very last step. This connection will become evident in the next 
two subsections, in which we prove the following (equivalent) theorem with two different geometric constructions.
\begin{theorem} \label{teo:2}
\[
\Ehr_{S_n} (z)=\frac{(1+z)^{n-1}}{(1-z)^{n+1}}\,.
\]
\end{theorem}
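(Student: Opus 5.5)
The plan is to compute $L_{S_n}(t)$ directly by slicing $tS_n$ along the last coordinate $x_n$, using the $\mathcal{H}$-representation of $tS_n$ from the Remark above. I will then package the count as a bivariate generating function, which collapses to the claimed rational function on the diagonal. The main point is that, once the height is fixed, the facet inequalities decouple into two independent $\ell^1$-type constraints, one on the positive parts and one on the negative parts of $\bm{y}=(x_1,\dots,x_{n-1})$.

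\textbf{Step 1 (slices).} Fix $t\geq 0$ and an integer height $x_n=j$. Every lattice point of $tS_n$ has $0\le j\le t$: the inequalities with $A=\emptyset$ read $0\le x_n\le t$. By the Remark, the slice is cut out by
\[
-j\le \sum_{i\in A} y_i \le t-j \quad\text{for all } A\subseteq\{1,\dots,n-1\}.
\]
I will check that this family of inequalities is equivalent to the pair of conditions
\[
\sum_i \max(y_i,0)\le t-j, \qquad \sum_i \max(-y_i,0)\le j.
\]
The choice $A=\{i: y_i>0\}$ maximizes the sum over $A$, and $A=\{i:y_i<0\}$ minimizes it. Hence, with $a=t-j$ and $b=j$, the number of lattice points in the slice is
\[
N(a,b)=\#\Big\{\bm{y}\in\Z^{n-1}: \textstyle\sum_i y_i^+\le a,\ \sum_i y_i^-\le b\Big\},
\]
and $L_{S_n}(t)=\sum_{a+b=t}N(a,b)$. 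The only place needing care is this reduction from $2^{n-1}$ inequalities to two, and I regard it as the main (mild) obstacle. It relies on trusting the stated $\mathcal{H}$-representation; alternatively, one can verify it directly from the vertex list, since the facets are spanned by $k$ top and $n-k$ bottom vertices.

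\textbf{Step 2 (generating function).} Each coordinate $y_i$ is zero, or positive with value $m\ge1$, or negative with value $-m$, $m\ge1$. So the generating function of lattice points weighted by $u^{\sum y_i^+}v^{\sum y_i^-}$ is
\[
\Big(1+\frac{u}{1-u}+\frac{v}{1-v}\Big)^{n-1}.
\]
The inequalities $\le a$ and $\le b$ (as opposed to equalities) are accounted for by multiplying by the slack factors $\frac{1}{(1-u)(1-v)}$. This gives
\[
\sum_{a,b\ge0}N(a,b)\,u^av^b=\frac{1}{(1-u)(1-v)}\Big(1+\frac{u}{1-u}+\frac{v}{1-v}\Big)^{n-1}.
\]

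\textbf{Step 3 (diagonal).} Since $L_{S_n}(t)=\sum_{a+b=t}N(a,b)$, setting $u=v=z$ yields $\Ehr_{S_n}(z)$; the $t=0$ term equals $1$, consistent with the definition. The computation is
\[
\Ehr_{S_n}(z)=\frac{1}{(1-z)^2}\Big(\frac{1+z}{1-z}\Big)^{n-1}=\frac{(1+z)^{n-1}}{(1-z)^{n+1}}.
\]
This also matches the closed formula $L_{S_n}(t)=\sum_k\binom{n-1}{k}\binom{t-k+n}{n}$ stated before the theorem. It exhibits the connection with $\diamond_{n-1}$: the factor $\frac{(1+z)^{n-1}}{(1-z)^{n}}$ is exactly $\Ehr_{\diamond_{n-1}}$, multiplied by one extra factor $\frac1{1-z}$.
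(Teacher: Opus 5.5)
Your proof is correct, and it takes a genuinely different route from the paper's.

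\textbf{The paper's routes.} The paper gives two arguments.
\begin{itemize}
\item It projects \emph{along} $x_n$ and asserts (Lemma~\ref{teo:3}) that each fiber over a lattice point of $\R^{n-1}$ contains as many lattice points as the corresponding fiber of $\Pyr(\diamond_{n-1})$. It then quotes $\Ehr_{\Pyr(\Pp)}=\frac{1}{1-z}\Ehr_{\Pp}$ and the cross-polytope series.
\item Its second argument inducts on $S_n=\Pbipyr(S_{n-1},(0,\dots,0),(0,\dots,0,1))$, using $\Ehr_{\Pbipyr}=\frac{1+z}{1-z}\Ehr_{\Pp}$ (Theorem~\ref{teo:6}).
\end{itemize}

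\textbf{Your route.} You slice \emph{transversally} at height $x_n=j$. There the facet inequalities decouple into $\sum_i y_i^+\le t-j$ and $\sum_i y_i^-\le j$. The count then factors over coordinates, and the specialization $u=v=z$ finishes the job.

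\textbf{What each approach buys.} Your argument is self-contained. It needs neither Lemma~\ref{teo:3} nor Theorem~\ref{teo:6}, both of which the paper states without detailed proof, and it needs no external Ehrhart series for pyramids or cross-polytopes. It is essentially the $\mathcal H$-representation computation the paper declined to present as ``tedious'', made painless by the bivariate generating function. It also makes the cross-polytope connection visible through the per-coordinate factor $\frac{1+z}{1-z}$, contrary to the paper's remark that a direct computation hides it. The paper's routes give structural information in exchange: a fiberwise equality stronger than equality of series, and the pseudo-bipyramid recursion. That recursion is reused for the unimodular triangulation (Theorem~\ref{teo:4}) and in Section~\ref{sec:ppyr}.

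\textbf{Removing the one unverified input.} You can drop your reliance on the paper's unproved $\mathcal H$-representation. Since $tS_n=\conv\big((t\triangle_{n-1})\times\{0\}\cup(-t\triangle_{n-1})\times\{t\}\big)$, its slice at height $j$ is the Minkowski sum $(t-j)\triangle_{n-1}+j(-\triangle_{n-1})$. Any decomposition $\bm y=\bm p-\bm q$ with $\bm p,\bm q\ge\bm 0$ satisfies $\bm y^+\le\bm p$ and $\bm y^-\le\bm q$. Taking $\bm p=\bm y^+$, $\bm q=\bm y^-$ for the converse, this sum is exactly $\{\bm y:\sum_i y_i^+\le t-j,\ \sum_i y_i^-\le j\}$, which is your Step~1.
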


\subsection{Pyramids over cross-polytopes} \label{ssec:pyr}

By projecting $S_n$ into the hyperplane $x_n=0$ one gets a $(n-1)$-dimensional cross-polytope $\diamond_{n-1}$, 
see Figure~\ref{fig:2}. Furthermore, both the origin and the point $(0,...,0,1)$ get mapped to the origin, in fact, this is 
exactly what happens when projecting the pyramid over a cross-polytope $\Pyr (\diamond_{n-1})$. This is the motivation 
behind the following Lemma that generalizes this observation to integer dilations of these polytopes.
\begin{lemma} \label{teo:3}
If $\varphi: \mathbb{R}^n \longrightarrow \mathbb{R}^{n-1}$ is the projection defined by 
\[
\varphi(\bm{x})=(x_1, ..., x_{n-1})\,,
\]
then
\[
\#\{\bm{y} \in S_n \mid \bm{x}=\varphi (\bm{y}) \} = \#\{\bm{y} \in \emph{Pyr}(\diamond_{n-1}) \mid \bm{x}=\varphi (\bm{y}) \}
\]
holds for every lattice point $\bm{x} \in \mathbb{Z}^{n-1}$.
\end{lemma}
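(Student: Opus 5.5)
We read the statement as a count of lattice points: for each $\bm{x}\in\Z^{n-1}$, compare the number of $\bm{y}\in S_n\cap\Z^n$ with $\varphi(\bm{y})=\bm{x}$ to the same count for $\Pyr(\diamond_{n-1})$. We prove it directly for every dilate $tS_n$ and $t\Pyr(\diamond_{n-1})$, $t\in\Z_{>0}$, since that is what the Ehrhart application needs. Here $\Pyr(\diamond_{n-1})=\conv\left(\diamond_{n-1}\times\{0\},\ (0,\dots,0,1)\right)$. The fibres of $\varphi$ over a point of $\R^{n-1}$ are vertical lines, and both polytopes are convex. So each fibre is a vertical segment, and the plan is to compute its endpoints explicitly in both cases and check that they contain the same number of integers.

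For $tS_n$ we use the $\mathcal{H}$-representation from the Remark above: $0\le \sum_{i\in A}x_i+x_n\le t$ for all $A\subseteq\{1,\dots,n-1\}$. Fix $\bm{x}=(x_1,\dots,x_{n-1})$ and set $P=\sum_i\max(x_i,0)$ and $N=\sum_i\max(-x_i,0)$, so that $P+N=|x_1|+\cdots+|x_{n-1}|=:\|\bm{x}\|_1$. Over all subsets $A$, the sum $\sum_{i\in A}x_i$ ranges between a minimum of $-N$ (take $A$ to be the negative coordinates) and a maximum of $P$ (take the positive coordinates). Hence the fibre is exactly
\[
\{x_n\in\R : N\le x_n\le t-P\}.
\]
Since $N,P\in\Z$, it contains $t-\|\bm{x}\|_1+1$ lattice points when $\|\bm{x}\|_1\le t$ and none otherwise.

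For $t\Pyr(\diamond_{n-1})$, the point $(\bm{x},h)$ lies in the dilated pyramid if and only if $0\le h\le t$ and $\bm{x}\in (t-h)\diamond_{n-1}$. The latter condition is $\|\bm{x}\|_1\le t-h$. The fibre is therefore $\{h\in\R : 0\le h\le t-\|\bm{x}\|_1\}$, which again contains $t-\|\bm{x}\|_1+1$ integers when $\|\bm{x}\|_1\le t$ and none otherwise. The two counts agree for every $\bm{x}\in\Z^{n-1}$ and every $t$.

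The only step needing care is justifying that the $\mathcal{H}$-representation of $tS_n$ is exactly the stated family of $2^{n+1}$ inequalities. That family comes from two inequalities for each of the $2^{n-1}$ sets $A$; some of them, such as those for $A=\emptyset$, give the bounds $0\le x_n\le t$ and may be redundant. We would verify it quickly: every vertex of $S_n$ satisfies all the inequalities, and each facet (spanned by $k$ top and $n-k$ bottom vertices) is cut out by one of them with equality. Alternatively, we could avoid the $\mathcal{H}$-description by writing $tS_n=\bigcup_{s\in[0,t]}\bigl((t-s)\Delta\times\{0\}+s(-\Delta)\times\{1\}\bigr)$, where $\Delta$ is the standard $(n-1)$-simplex, and computing the fibre directly. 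We expect the $\mathcal{H}$-route to be the shortest.

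Once the fibre counts agree, summing over $\bm{x}\in\Z^{n-1}$ gives $L_{S_n}(t)=L_{\Pyr(\diamond_{n-1})}(t)$. This is the intended use toward Theorem~\ref{teo:2}.
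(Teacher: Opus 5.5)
Your argument is correct. The paper itself gives no proof of Lemma~\ref{teo:3}. It only records the motivating observation that $\varphi(S_n)=\diamond_{n-1}$, with both $\bm{0}$ and $\bm{e}_n$ lying over the origin, as for $\Pyr(\diamond_{n-1})$. It then uses the lemma immediately to conclude $\Ehr_{S_n}=\Ehr_{\Pyr(\diamond_{n-1})}$. That step needs the fibrewise count of \emph{lattice} points for \emph{every} dilate $t$, which the printed statement omits. Your reading is the right one, and your computation supplies the missing proof.

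It also explains the paper's remark that the lemma is stronger than equality of Ehrhart series. Over each $\bm{x}\in\Z^{n-1}$ the fibre of $tS_n$ is $[N,\,t-P]$ and that of $t\Pyr(\diamond_{n-1})$ is $[0,\,t-\|\bm{x}\|_1]$. So the two fibres are translates of each other by the integer $N$. Compare this with the paper's aside that a ``tedious manipulation'' of the $\mathcal{H}$-representation gives $L_{S_n}$ as a double sum. That sum slices by horizontal hyperplanes $x_n=i$. Your slicing by vertical lines is what produces the direct comparison with the pyramid.

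The only input you take on trust is the $\mathcal{H}$-representation from the Remark, which the paper does not prove either. Your facet check would work, but it needs the precise facets, not just ``$k$ top and $n-k$ bottom vertices''. A facet takes one vertex from each pair $\{(\bm{e}_i,0),(-\bm{e}_i,1)\}$ and one from $\{\bm{0},\bm{e}_n\}$.

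Your alternative route is cleaner and self-contained. The slice of $tS_n$ at height $h\in[0,t]$ is $(t-h)\Delta-h\Delta$. For $a,b>0$ one has $a\Delta-b\Delta=\{\bm{x}: P(\bm{x})\le a,\ N(\bm{x})\le b\}$: write $\bm{x}=a\bm{y}-b\bm{z}$ with $y_i=\max(x_i,0)/a$ and $z_i=\max(-x_i,0)/b$, and the degenerate cases are immediate. This gives $N\le h\le t-P$ at once.

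Two cosmetic slips. The family has $2\cdot 2^{n-1}=2^n$ inequalities, not $2^{n+1}$. None of them is redundant: the two for $A=\emptyset$ cut out the facets $t(\Delta\times\{0\})$ and $t(-\Delta\times\{1\})$.
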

\begin{proof}[Proof of Theorem \ref{teo:2}]
Lemma~\ref{teo:3} implies that $\Ehr_{\Pyr (\diamond_{n-1})}=\Ehr_{S_n}$. Since 
\[
\Ehr_{\Pyr (\mathcal{P})} = \frac{1}{1-z} \Ehr (\mathcal{P}) \quad\text{and}\quad \Ehr_{\diamond_{n-1}}=\frac{(1+z)^{n-1}}{(1-z)^{n}}
\] 
(see for example Theorems 2.4 and 2.7 in~\cite{BR}),
Theorem~\ref{teo:2} follows directly. Lemma~\ref{teo:3} is, however, a stronger condition than just the equality of Ehrhart series.
\end{proof}

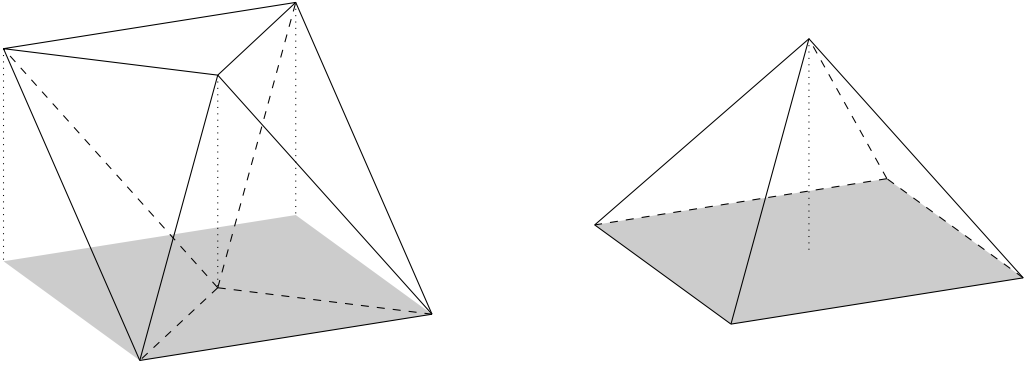
\begin{figure}[]
	\centering
	\begin{minipage}{.5\textwidth}
		\centering
		\begin{tikzpicture}[scale=3, line join=bevel, tdplot_main_coords]
			\coordinate (e1) at (0,0,0);
			\coordinate (e2) at (1,0,0);
			\coordinate (e3) at (0,1,0);
			\coordinate (m1) at (0,0,1);
			\coordinate (m2) at (-1,0,1);
			\coordinate (m3) at (0,-1,1);
			
			\coordinate (c1) at (-1, 0, 0);
			\coordinate (c2) at (0, -1, 0);
			
			\draw [fill opacity=0.2,fill=black, draw=none] (e2) -- (c2) -- (c1) -- (e3) -- cycle;
			\draw (m1) -- (m2) -- (m3) -- cycle;
			\draw [dashed] (e1) -- (m2);
			\draw [dashed] (e1) -- (m3);
			\draw (e2) -- (m1);
			\draw (e2) -- (m3);
			\draw (e3) -- (m1);
			\draw (e3) -- (m2);
			\draw [dashed] (e1) -- (e3);
			\draw [dashed] (e1) -- (e2);
			\draw (e2) -- (e3);
			
			\draw [dotted] (m2) -- (c1);
			\draw [dotted] (m3) -- (c2);
			\draw [dotted] (m1) -- (e1);
			
		\end{tikzpicture}
	\end{minipage}%
	\begin{minipage}{.5\textwidth}
		\centering
		\begin{tikzpicture}[scale=3, line join=bevel, tdplot_main_coords]
			\coordinate (e1) at (0,0,0);
			\coordinate (e2) at (1,0,0);
			\coordinate (e3) at (0,1,0);
			\coordinate (p) at (0,0,1);
			
			\coordinate (c1) at (-1, 0, 0);
			\coordinate (c2) at (0, -1, 0);
			
			\draw [fill opacity=0.2,fill=black,draw=none] (e2) -- (c2) -- (c1) -- (e3) -- cycle;
			\draw (e2) -- (c2);
			\draw [dashed] (c1) -- (c2);
			\draw (e2) -- (e3);
			\draw [dashed] (e3) -- (c1);
			\draw (e2) -- (p);
			\draw (e3) -- (p);
			\draw [dashed] (c1) -- (p);
			\draw (c2) -- (p);
			
			\draw [dotted] (p) -- (e1);
			
		\end{tikzpicture}
	\end{minipage}
	\caption{Projections of $S_3$ and $\Pyr (\diamond_2)$ to the hyperplane $x_d=0$.}
	\label{fig:2}
\end{figure}

\subsection{Pseudo-bipyramids over small cross-polytopes} \label{ssec:ppyr}

Cross-polytopes are often seen inductively as bipyramids over lower dimensional cross-polytopes. 
Small cross-polytopes share a similar property. The usual definition of a bypiramid is essentially the 
free sum with $[-1, 1]$, however, since $\bm{0} \in \partial S_n$, this would then imply that some facets 
of $\Bipyr (S_n)$ are no longer unimodular simplices, which in turn means it cannot be a toric diagram. 
This motivates the definition of a more general construction, namely the pseudo-bipyramids.
\begin{definition}{(Pseudo-bipyramid)} \label{def:ppyr}
Given a polytope $\mathcal{P}$ we define its pseudo-bipyramid as ${\Pbipyr} (\mathcal{P}, \bm{s}_1, \bm{s}_2)$ as
\[
\conv (\{(\bm{s}_1, 1), (\bm{s}_2, -1) \} \cup \{ (\bm{x}, 0) \mid \bm{x} \in \mathcal{P} \} ).
\]
where $\bm{s}_1, \bm{s}_2$ are lattice points of $\mathcal{P}$ (we will refer to them as special).
\end{definition}
\begin{lemma} \label{lemma:simplicial}
To create toric diagrams from other (lower dimensional) toric diagrams, $\bm{s}_1$ and $\bm{s}_2$ 
should not lie on the same facet of $\mathcal{P}$.
\end{lemma}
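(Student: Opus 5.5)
The plan is to read Lemma~\ref{lemma:simplicial} as the claim that, for a toric diagram $\Pp\subset\R^{n}$ and lattice points $\bm{s}_1,\bm{s}_2\in\Pp$, the pseudo-bipyramid $\Qq=\Pbipyr(\Pp,\bm{s}_1,\bm{s}_2)\subset\R^{n+1}$ can be a toric diagram only if no facet of $\Pp$ contains both $\bm{s}_1$ and $\bm{s}_2$. The proof will be by contradiction: assume some facet $F$ of $\Pp$ contains both special points, and show that $\Qq$ then has a non-simplicial facet, or a facet that is not a unimodular simplex.

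First I will describe the facets of $\Qq$ lying above a facet $F$ of $\Pp$. Write $F=\Pp\cap\{\bm{a}\cdot\bm{x}=b\}$ with $\Pp\subset\{\bm{a}\cdot\bm{x}\le b\}$. I look for a supporting hyperplane of $\Qq$ of the form $\bm{a}\cdot\bm{x}+c\,x_{n+1}=b$. Such a hyperplane must contain $F\times\{0\}$ and satisfy the inequality at the two apexes, which means
\[
\bm{a}\cdot\bm{s}_1+c\le b \quad\text{and}\quad \bm{a}\cdot\bm{s}_2-c\le b .
\]
If $\bm{s}_1,\bm{s}_2\in F$, then $\bm{a}\cdot\bm{s}_1=\bm{a}\cdot\bm{s}_2=b$, so these inequalities force $c\le 0\le c$, that is, $c=0$. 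The hyperplane $\bm{a}\cdot\bm{x}=b$ then contains both apexes $(\bm{s}_1,1)$ and $(\bm{s}_2,-1)$, as well as $F\times\{0\}$.

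Next I will check that $G:=\Qq\cap\{\bm{a}\cdot\bm{x}=b\}$ is a facet of $\Qq$. It contains the $(n-1)$-dimensional set $F\times\{0\}$ together with points at heights $\pm1$, so it is $n$-dimensional. Its vertices include the $n$ vertices of $F$ and the two apexes, so it has at least $n+2$ vertices. This requires the apexes to be vertices of $\Qq$, which holds because they are the only points of $\Qq$ at heights $\pm1$. A simplex of dimension $n$ has exactly $n+1$ vertices, so $G$ is not a simplex. Hence $\Qq$ is not simplicial and, by Definition~\ref{def:diagram}, cannot be a toric diagram.

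The step I expect to need the most care is the dimension and vertex count of $G$, especially when $\bm{s}_1=\bm{s}_2$. In that case $G$ is $\conv(F\times\{0\}, (\bm{s},1), (\bm{s},-1))$, which still has $n+2$ vertices: $(\bm{s},0)$ lies in the interior of the segment joining the apexes, so it is not one of them, and the $n$ vertices of $F$ are distinct from the two apexes. Conversely, it is worth remarking, though this is not needed for the lemma, that when no facet contains both special points, every facet of $\Qq$ is a cone from one apex over a facet of $\Pp$ or over a sub-simplex of it. Unimodularity of those facets then follows from unimodularity of the facets of $\Pp$, because the apex sits at height $\pm1$. This is the sense in which the condition in Lemma~\ref{lemma:simplicial} is the right one.
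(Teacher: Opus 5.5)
Your route is the paper's: the hyperplane $\bm{a}\cdot\bm{x}=b$ through $F\times\{0\}$ also contains both apexes, and it cuts out a facet $G$ of $\Qq$ that is too big to be a simplex. For $\bm{s}_1\neq\bm{s}_2$ this works. Your checks are more careful than the paper's one line: that $c=0$ is forced, that $\dim G=n$, that the apexes are vertices, and the count $n+2$, where the paper writes $n+3$. You should also check that the points $(\bm{x},0)$, $\bm{x}\in V(F)$, are vertices of $G$. For $(\bm{s}_1,0)$, take the affine function $\beta_1-\varepsilon x_{n+1}$ on that hyperplane, where $\beta_1$ is the barycentric coordinate of $\bm{s}_1$ on $F$ and $0<\varepsilon<1$. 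It is uniquely maximized at $(\bm{s}_1,0)$ precisely because $\bm{s}_2\neq\bm{s}_1$.

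The gap is the case $\bm{s}_1=\bm{s}_2=\bm{s}$, which you single out and get wrong. Because $\Pp$ is a toric diagram, $F$ is a unimodular simplex whose only lattice points are its vertices. So $\bm{s}$ is a vertex of $F$, and $(\bm{s},0)$ is one of your $n$ points $(\bm{x},0)$. As you observe, $(\bm{s},0)$ is the midpoint of $(\bm{s},1)$ and $(\bm{s},-1)$, so it is \emph{not} a vertex of $G$. Hence $G$ has exactly $n+1$ vertices, namely $(\bm{s},\pm1)$ and the other $n-1$ vertices of $F$, and it is an $n$-simplex. Concretely, $\Pbipyr([-1,1],1,1)$ is the triangle with vertices $(-1,0),(1,1),(1,-1)$. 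Likewise $\Pbipyr(\diamond_2,\bm{e}_1,\bm{e}_1)$ is the simplicial bipyramid with apices $(0,\pm1,0)$ over the triangle with vertices $(1,0,\pm1),(-1,0,0)$. So the claim ``$\Qq$ is not simplicial'' is false here.

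The lemma still holds, but you must invoke the unimodularity part of Definition~\ref{def:diagram}. The edge of $G$ joining $(\bm{s},1)$ to $(\bm{s},-1)$ contains the lattice point $(\bm{s},0)$ in its relative interior, which is impossible in a unimodular simplex (equivalently, $G$ has normalized volume $2$ in its lattice hyperplane). The paper's proof argues only non-simpliciality and has the same omission, so this repair is needed there too.
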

\begin{proof} 
If $\bm{s}_1$ and $\bm{s}_2$ were to lie on the same facet $F$ of $\mathcal{P}$ then the facet of 
$\text{P-bipyr}(\mathcal{P}, \bm{s}_1, \bm{s}_2)$ with vertices $(\bm{s}_1, 1), (\bm{s}_2, -1)$ and 
$ \{(\bm{x},0) \mid \bm{x} \in V(F)\}$ would not be simplicial since it would have $n+3$ vertices and 
$\text{P-bipyr}(\mathcal{P}, \bm{s}_1, \bm{s}_2)$ is of dimension $n+1$.
\end{proof}
\noindent From an Ehrhart theory perspective, this construction is equivalent to the usual bipyramids: we are essentially 
joining two pyramids that share the same base. This yields the following theorem.
\begin{theorem} \label{teo:6}
$\Ehr_{\Pbipyr (\mathcal{P}, \bm{s}_1, \bm{s}_2)}=\frac{1+z}{1-z} \Ehr_\mathcal{P}$
\end{theorem}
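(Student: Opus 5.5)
The plan is to split the pseudo-bipyramid $Q := \Pbipyr(\mathcal{P}, \bm{s}_1, \bm{s}_2)$ along the hyperplane $x_{n+1}=0$ into two pyramids, and count lattice points slice by slice. Write $Q^+ = Q\cap\{x_{n+1}\ge 0\}$ and $Q^- = Q\cap\{x_{n+1}\le 0\}$. First I would check that $Q^+ = \conv(\{(\bm{s}_1,1)\}\cup \mathcal{P}\times\{0\})$, and symmetrically for $Q^-$. A point of $Q$ is a convex combination $\alpha(\bm{s}_1,1)+\beta(\bm{s}_2,-1)+\gamma(\bm{p},0)$ with $x_{n+1}=\alpha-\beta\ge 0$. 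Rewrite it as $(\alpha-\beta)(\bm{s}_1,1)+\beta\bigl((\bm{s}_1+\bm{s}_2),0\bigr)+\gamma(\bm{p},0)$. Since $\bm{s}_1,\bm{s}_2\in\mathcal{P}$, the term $\beta(\bm{s}_1+\bm{s}_2,0)$ equals $2\beta$ times the midpoint, which lies in $\mathcal{P}\times\{0\}$. Hence the point lies in the pyramid. It follows that $Q^+\cap Q^- = \mathcal{P}\times\{0\}$.

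Next I would count lattice points of $tQ$ by height $j\in\{-t,\dots,t\}$. For $j\ge 0$ the slice of $tQ^+$ at height $j$ is $j\bm{s}_1+(t-j)\mathcal{P}$. Because $\bm{s}_1$ is a lattice point, this is a lattice translate of $(t-j)\mathcal{P}$, so it contains $L_{\mathcal{P}}(t-j)$ lattice points (with $L_{\mathcal{P}}(0)=1$). The same holds for $j\le 0$ using $\bm{s}_2$. Therefore
\[
L_Q(t) = 2\sum_{i=0}^{t} L_{\mathcal{P}}(i) - L_{\mathcal{P}}(t)\,,
\]
where the subtracted term corrects for the height-$0$ slice $t\mathcal{P}$ being counted twice.

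Finally I would pass to generating functions. Multiplying the partial-sum identity by $z^t$ and summing over $t\ge 0$ gives
\[
\Ehr_Q(z) = \frac{2}{1-z}\Ehr_\mathcal{P}(z) - \Ehr_\mathcal{P}(z) = \frac{1+z}{1-z}\Ehr_\mathcal{P}(z)\,.
\]
This is the same as the pyramid formula $\Ehr_{\Pyr(\mathcal{P})}=\frac{1}{1-z}\Ehr_\mathcal{P}$ cited earlier, used twice, with the shared base removed once.

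The only delicate step is the decomposition of $Q$ into the two pyramids, i.e. that the slices are exactly the translated dilates. This is where the hypothesis that $\bm{s}_1,\bm{s}_2$ are lattice points of $\mathcal{P}$ is used, both for the convexity argument and for the integrality of the translation. The facet condition of Lemma~\ref{lemma:simplicial} is not needed for the Ehrhart identity.
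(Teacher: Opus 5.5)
Your argument is correct and follows the same route as the paper, which justifies the theorem only by remarking that $\Pbipyr(\mathcal{P},\bm{s}_1,\bm{s}_2)$ is two lattice pyramids glued along the common base $\mathcal{P}\times\{0\}$, together with the pyramid formula $\Ehr_{\Pyr(\mathcal{P})}=\frac{1}{1-z}\Ehr_{\mathcal{P}}$. Your convexity check (that the two halves are exactly the pyramids with apices $(\bm{s}_1,1)$ and $(\bm{s}_2,-1)$) and your slice count via lattice translates supply precisely the details the paper leaves implicit.
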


By projecting $S_n$ to the hyperplane $x_1=0$ we get $S_{n-1}$. Figure~\ref{fig:3} shows this construction with 
the special vertices highlighted. We should also notice that instead of $x_1=0$ we could have chosen any of the 
hyperplanes $x_2=0$, ..., $x_{n-1}=0$.
\begin{lemma}
The small cross-polytope $S_n$ (up to reordering coordinates) can be constructed as
\[
\Pbipyr (S_{n-1}, (0,...,0), (0,...,0,1))
\]
\end{lemma}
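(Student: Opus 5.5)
The plan is to compare vertex sets directly, since both sides are convex hulls of finitely many lattice points. Recall from Subsection~\ref{ssec:smallcrosspolytopes} that the vertices of $S_n$ are
\[
\bm{0},\quad \bm{e}_n,\quad \bm{e}_i\ (1\leq i\leq n-1),\quad -\bm{e}_i+\bm{e}_n\ (1\leq i\leq n-1).
\]
I will single out the coordinate $x_1$ (any $x_j$ with $j\leq n-1$ works by symmetry) and sort these vertices by the value of $x_1$. Exactly one vertex, $\bm{e}_1$, has $x_1=1$, and exactly one, $-\bm{e}_1+\bm{e}_n$, has $x_1=-1$. All the others have $x_1=0$: these are $\bm{0}$, $\bm{e}_n$, $\bm{e}_i$ and $-\bm{e}_i+\bm{e}_n$ for $2\leq i\leq n-1$.

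Next I reorder coordinates so that $x_1$ becomes the last one, i.e.\ I write points as $(x_2,\ldots,x_n;x_1)$. This is a unimodular permutation. The vertices with $x_1=0$ then become $(\bm{0},0)$, $((0,\ldots,0,1),0)$, $(\bm{e}_i,0)$ and $((-\bm{e}_i,1),0)$ for $1\leq i\leq n-2$, with $\bm{e}_i$ now the standard basis of $\R^{n-1}$. By the vertex description above applied in dimension $n-1$, these are exactly the vertices of $S_{n-1}\times\{0\}$. The vertex $\bm{e}_1$ becomes $(\bm{0},1)$, and $-\bm{e}_1+\bm{e}_n$ becomes $((0,\ldots,0,1),-1)$.

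Since $\conv(\{(\bm{x},0)\mid \bm{x}\in S_{n-1}\})$ equals the convex hull of the vertices of $S_{n-1}\times\{0\}$, it follows that
\[
S_n=\conv\bigl(\{(\bm{0},1),((0,\ldots,0,1),-1)\}\cup S_{n-1}\times\{0\}\bigr)=\Pbipyr\bigl(S_{n-1},(0,\ldots,0),(0,\ldots,0,1)\bigr)
\]
up to the coordinate permutation. This is precisely Definition~\ref{def:ppyr} with $\bm{s}_1=\bm{0}$ and $\bm{s}_2=(0,\ldots,0,1)$, both of which are lattice points (indeed vertices) of $S_{n-1}$.

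As a consistency check with Lemma~\ref{lemma:simplicial}, I will verify that $\bm{s}_1$ and $\bm{s}_2$ lie on no common facet of $S_{n-1}$. By the $\mathcal{H}$-representation, every facet of $S_{n-1}$ is of the form $\sum_{i\in A}x_i+x_{n-1}=0$ or $\sum_{i\in A}x_i+x_{n-1}=1$. On every such linear form, $\bm{0}$ takes the value $0$ and $(0,\ldots,0,1)$ takes the value $1$, so no facet contains both points. There is no real obstacle here; the only care needed is the bookkeeping of the coordinate reindexing.
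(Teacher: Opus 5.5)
Your proof is correct and is essentially the paper's own argument: the paper justifies the lemma only by projecting $S_n$ onto the hyperplane $x_1=0$, where the two vertices $\bm{e}_1$ and $-\bm{e}_1+\bm{e}_n$ land on the special points $\bm{0}$ and $\bm{e}_n$, and your sorting of the vertices by $x_1$ followed by the cyclic coordinate permutation is a careful write-up of that observation. Your check that $\bm{0}$ and $(0,\ldots,0,1)$ share no facet of $S_{n-1}$ is also correct, though it is not needed for the statement.
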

\begin{remark} \label{remark:2}
The choice of special vertices when constructing $S_n$ from $S_{n-1}$ does not really matter 
in the case of small cross-polytopes. There are only $n-1$ choices of pairs that do not share a 
facet and all of them yield the same polytope (up to unimodular equivalence).
\end{remark}
This construction not only proves Theorem~\ref{teo:2} once again but also enables us to effortlessly 
prove various inductive properties. The following theorem is an example.
\begin{theorem} \label{teo:4}
Small cross-polytopes admit a unimodular triangulation.
\end{theorem}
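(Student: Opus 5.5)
We argue by induction on $n$, using the pseudo-bipyramid description $S_n \cong \Pbipyr(S_{n-1}, \bm{0}, \bm{e}_{n-1})$ from the preceding lemma. For $n=1$, $S_1=[0,1]$ is itself a unimodular simplex. For $n=2$, $S_2=\conv\{(0,0),(1,0),(0,1),(-1,1)\}$ has normalized area $h^\ast_0+h^\ast_1=2$, and the diagonal from $(0,0)$ to $(0,1)$ splits it into two unimodular triangles. So assume $\Tt_{n-1}$ is a unimodular triangulation of $S_{n-1}\subset\R^{n-1}$.

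The key observation is general. Let $\Qq$ be a lattice polytope with a unimodular triangulation $\Tt$, and let $\bm{p}\in\Z^{n}$ lie at lattice height $\pm1$ over the hyperplane $\R^{n-1}\times\{0\}$. Then the cones $\conv(\sigma\cup\{\bm{p}\})$, for $\sigma\in\Tt$, form a unimodular triangulation of the pyramid $\conv(\Qq\times\{0\}\cup\{\bm{p}\})$. Indeed, each such simplex has normalized volume equal to (height)$\times$(normalized volume of $\sigma$) $=1$, and coning a triangulation of the base from an apex off its affine hull always gives a triangulation of the pyramid.

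Now
\[
S_n \cong \Pbipyr(S_{n-1},\bm{s}_1,\bm{s}_2) = \Pyr_+ \cup \Pyr_-,
\]
where $\Pyr_+=\conv(S_{n-1}\times\{0\}\cup\{(\bm{s}_1,1)\})$ and $\Pyr_-=\conv(S_{n-1}\times\{0\}\cup\{(\bm{s}_2,-1)\})$. These two pyramids lie in opposite closed half-spaces $x_n\ge0$ and $x_n\le0$. To see that their union is the whole convex hull, note that any point of the convex hull with $x_n\ge0$ is a convex combination of $(\bm{s}_1,1)$, $(\bm{s}_2,-1)$ and base points. Since $(\bm{s}_2,-1)$ has negative height, we can rewrite it via the segment to $(\bm{s}_1,1)$: the point where that segment crosses $x_n=0$ is $(\tfrac{\bm{s}_1+\bm{s}_2}{2},0)$, which lies in $S_{n-1}$ by convexity. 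This is exactly the Ehrhart-theoretic "two pyramids over a common base" statement behind Theorem~\ref{teo:6}.

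Coning $\Tt_{n-1}$ from each apex gives unimodular triangulations of $\Pyr_+$ and $\Pyr_-$. These agree on the common base $S_{n-1}\times\{0\}$, because both restrict to $\Tt_{n-1}$ there. Their union is therefore a unimodular triangulation of $S_n$, and since $GL(n,\Z)$ and coordinate permutations preserve unimodularity, this completes the induction.

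The only step needing care is that the union of the two pyramids is all of $S_n$, with intersection exactly the base. This follows from the convexity argument above together with $\bm{s}_1,\bm{s}_2\in S_{n-1}$. Alternatively, one can use the explicit $\mathcal H$-representation of $S_n$: the region $x_n\ge0$ of $S_n$ has apex $(\bm{s}_1,1)$ over the slice $x_n=0$, and that slice equals $S_{n-1}$.
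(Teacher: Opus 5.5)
Your proof is correct and follows the paper's argument. Both induct via $S_n\cong\Pbipyr(S_{n-1},\bm{0},\bm{e}_{n-1})$ and cone each simplex of a unimodular triangulation of $S_{n-1}$ from the two apices $(\bm{0},1)$ and $(\bm{e}_{n-1},-1)$, which sit at lattice height $\pm1$; you additionally check that the two pyramids cover $S_n$ and meet exactly in the common base, a step the paper leaves implicit.
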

\begin{proof}
By induction, the base case is trivial, for the inductive step note that for any $(n-1)$-simplex $\Delta$ in the unimodular 
triangulation of $S_{n-1}$ then $\conv (\Delta, (\bm{0},1))$ and $\conv (\Delta, (\bm{0},1,-1))$ are unimodular $n$-simplices. 
Taking these simplices over all $\Delta$ forms a unimodular triangulation of $S_n$.
\end{proof}

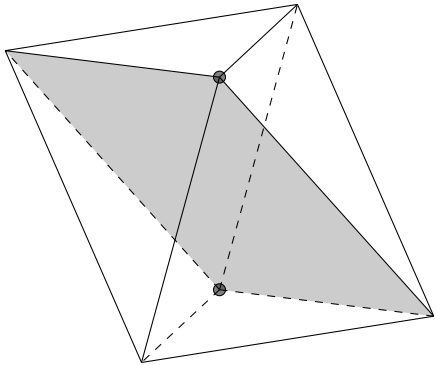
\begin{figure}[]
	\centering
	\begin{tikzpicture}[scale=3, line join=bevel, tdplot_main_coords]
		\coordinate (e1) at (0,0,0);
		\coordinate (e2) at (1,0,0);
		\coordinate (e3) at (0,1,0);
		\coordinate (m1) at (0,0,1);
		\coordinate (m2) at (-1,0,1);
		\coordinate (m3) at (0,-1,1);
		
		\draw [fill opacity=0.2,fill=black, draw=none] (e1) -- (m3) -- (m1) -- (e3) -- cycle;
		\draw (m1) -- (m2) -- (m3) -- cycle;
		\draw [dashed] (e1) -- (m2);
		\draw [dashed] (e1) -- (m3);
		\draw (e2) -- (m1);
		\draw (e2) -- (m3);
		\draw (e3) -- (m1);
		\draw (e3) -- (m2);
		\draw [dashed] (e1) -- (e3);
		\draw [dashed] (e1) -- (e2);
		\draw (e2) -- (e3);
		
		\draw [fill opacity=0.5,fill=black] (e1) circle (0.75pt);
		\draw [fill opacity=0.5,fill=black] (m1) circle (0.75pt);
		
	\end{tikzpicture}
	\caption{Projection of $S_3$ to the hyperplane $x_1=0$.}
	\label{fig:3}
\end{figure}

\subsection{Small cross-polytopes are Ehrhart determined} \label{ssec:determined}

\begin{theorem}
Let $\mathcal{D}$ be a toric diagram. If the Ehrhart polynomial of $\mathcal{D}$ has $h^*$-polynomial 
$h^*_\mathcal{D}(z) = (1+z)^{n-1}$, then $\mathcal{D}$ is unimodularly equivalent to the small cross polytope.
\end{theorem}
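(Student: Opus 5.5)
We read off the combinatorial data encoded in $h^*_\Dd(z)=(1+z)^{n-1}$ and use it to pin down the vertices. Since $h^*_n=0$, the interior of $\Dd$ contains no lattice points: $h^*_n$ equals $\#(\Dd^\circ\cap\Z^n)$. Since $h^*_1=n-1$, the count $h^*_1=\#(\Dd\cap\Z^n)-n-1$ gives exactly $2n$ lattice points. The normalized volume is $h^*(1)=2^{n-1}$. Finally, the $h^*$-vector is palindromic with centre shifted, so Theorem~\ref{thm:gorenstein} with $r=2$ tells us $\Dd$ is Gorenstein of index $2$. Equivalently, $2\Dd$ is reflexive up to translation, with a unique interior lattice point $\bm c$. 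Thus every facet of $\Dd$ sits at lattice distance $1/2$ from $\bm c$.

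Next we make $\Dd$ concrete. Each facet is a unimodular simplex, so the cone over each facet from $\bm c$ has normalized volume $1/2$ times the facet's. Hence $\Dd$ has exactly $2^n$ facets, since $2^{n-1}=\#\text{facets}/2$.

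Write $\bm c=\bm p/2$ with $\bm p\in\Z^n$ not in $2\Z^n$. The index-$2$ condition says that for each facet normal $\bm u_F$ (primitive, with $\bm u_F\cdot\bm x\le b_F$ on $\Dd$) we have $b_F-\bm u_F\cdot\bm c=1/2$. In particular $\bm u_F\cdot\bm p$ is odd for every facet.

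The vertex count is pinned down by the lattice-point count. There are $2n$ lattice points in total, none interior. A simplicial polytope with $2^n$ facets needs at least $2n$ vertices, with equality only for the cross-polytope combinatorics, via the lower bound theorem or a direct count. So I will argue that every lattice point is a vertex, there are no lattice points on boundary edges or in higher faces, and the combinatorics is that of $\diamond_n$. This yields $n$ pairs of ``opposite'' vertices $\{\bm a_i,\bm b_i\}$.

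Then we normalize. The midpoints $(\bm a_i+\bm b_i)/2$ should all equal $\bm c$, or at least project to it suitably. Choose a unimodular simplex facet containing $\bm a_1,\dots,\bm a_n$ and map it to $\{\bm 0,\bm e_1,\dots,\bm e_{n-1}\}$ together with one extra vertex. Using that every facet is unimodular, we determine the opposite vertices step by step: each $\bm b_i$ forms unimodular facets with $\bm a_j$, $j\neq i$. These constraints force each coordinate to lie in $\{0,\pm1\}$, and the oddness condition on $\bm u_F\cdot \bm p$ fixes signs. The expected outcome is the vertex set of $S_n$ listed in Subsection~\ref{ssec:smallcrosspolytopes}.

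An alternative, perhaps cleaner, route is induction using Theorem~\ref{teo:6}. Find a pair of opposite vertices $\bm a,\bm b$ and a lattice hyperplane $H$ containing all other vertices such that $\bm a,\bm b$ lie at heights $\pm1$. Then $\Dd=\Pbipyr(\Dd\cap H,\bm s_1,\bm s_2)$. Its base has $h^*=(1+z)^{n-2}$ and is again a toric diagram, so by induction it is $S_{n-1}$. Remark~\ref{remark:2} then reduces the remaining choices to a single class, once we check that the projections $\bm s_1,\bm s_2$ of $\bm a,\bm b$ are lattice points not on a common facet, as required by Lemma~\ref{lemma:simplicial}.

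The main obstacle is the combinatorial step: proving that $\Dd$ has cross-polytope combinatorics, and producing the height function $H$ with all non-$\{\bm a,\bm b\}$ vertices at height $0$. Knowing $2n$ boundary lattice points and $2^n$ facets is not obviously enough. I would first try to exploit the index-$2$ structure: the linear functional $\bm u_F-\bm u_{F'}$ for two facets $F,F'$ sharing a ridge. This functional is integral, vanishes on the ridge, and equals $\pm1$ on the two remaining vertices. The aim is to show it gives exactly such a hyperplane $H$.
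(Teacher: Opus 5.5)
\textbf{There is a genuine gap.} Your preliminary counts are correct: $2n$ lattice points, all of them vertices because proper faces of a toric diagram are unimodular simplices; Gorenstein of index $2$; $2^n$ facets; $\bm u_F\cdot\bm p$ odd. But the step on which both of your routes depend is never proved, and the justification you give for it is false. A simplicial $n$-polytope with $2n$ vertices and $2^n$ facets need not be combinatorially a cross-polytope. You do not need any inequality to get the vertex count, since the lattice-point count already gives exactly $2n$ vertices. The Lower Bound Theorem bounds the number of facets from below in terms of the number of vertices, with equality only for stacked polytopes, so it cannot single out $\diamond_n$. Already for $n=3$, every simplicial $3$-polytope with $6$ vertices has $2\cdot 6-4=8$ facets. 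Stacking a tetrahedron onto a face of a triangular bipyramid gives one with a vertex of degree $3$, so it is not an octahedron.

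Consequently, in your first route the opposite pairs $\{\bm a_i,\bm b_i\}$ and the midpoint property you say ``should'' hold are assumed rather than derived. The ``step by step'' coordinate analysis cannot start without them. In your second route the hyperplane $H$ is never produced. The functional $\bm u_F-\bm u_{F'}$ is only a heuristic. After subtracting $b_F-b_{F'}$ so that it vanishes on the common ridge, its values on the two remaining vertices are $\pm$ the normalized volume of $\conv(F\cup F')$. Nothing you have shown forces this to be $1$, or forces all other vertices into its zero set. As you concede at the end, this is the whole difficulty.

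What is missing is a second Ehrhart evaluation combined with a parity argument; this is exactly how the paper proceeds. Let $\bm p$ be the unique interior lattice point of $2\Dd$ (your $\bm p$, so $\bm p/2$ is the centre of $\Dd$).

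\begin{enumerate}
\item \emph{Edge count.} Every face of dimension $\ge 2$ is a unimodular simplex, so its second dilate has no relative-interior lattice points. Hence the lattice points of $2\Dd$ are $\bm p$, the $2n$ doubled vertices, and one midpoint per edge. Comparing with $L_\Dd(2)=\sum_{i=0}^{n-1}\binom{n-1}{i}\binom{n+2-i}{n}=2n^2+1$ gives exactly $2n(n-1)$ edges, i.e.\ exactly $n$ non-adjacent pairs of vertices.
\item \emph{Parity.} In $2\Dd-\bm p$ all vertices $2\bm v-\bm p$ are congruent mod $2$, so the midpoint of any two of them is a lattice point. If the two vertices are non-adjacent, they lie on no common proper face because $\Dd$ is simplicial. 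So this midpoint is interior and hence equals $\bm 0$.
\item \emph{Perfect matching.} Every non-adjacent pair therefore satisfies $\bm v+\bm w=\bm p$, and each vertex has at most one such partner. So the $n$ non-edges form a perfect matching of antipodal pairs.
\end{enumerate}

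The rest is then short. A facet, being a simplex, contains exactly one vertex of each pair. Map it unimodularly to $\{\bm 0,\bm e_1,\dots,\bm e_{n-1}\}$. Then $p_n=\pm1$, because $2F$ is at lattice distance $1$ from $\bm p$ by reflexivity of $2\Dd-\bm p$. Reflect and shear so that $\bm p=\bm e_n$. Now $\bm w_i=\bm e_n-\bm v_i$ gives exactly the vertex set of $S_n$. No coordinate-by-coordinate analysis, oddness argument, or induction is needed.
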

\begin{proof}
The degree $n-1$ of $h^*_\mathcal{D}(z)$ indicates that $\mathcal{D}$ is Gorenstein of index $2$. Thus, the 
dilated polytope $2\mathcal{D}$ contains a unique strictly interior lattice point $c \in \mathbb{Z}^n$. 
		
Evaluating the Ehrhart polynomial at $t=1$ gives the total number of lattice points in $\mathcal{D}$. 
Because $\mathcal{D}$ is Gorenstein of index $2$, its interior is empty of lattice points. Furthermore, 
since $\mathcal{D}$ is a toric diagram, its proper faces are unimodular simplices, meaning they contain 
no boundary lattice points other than their actual vertices. Thus, the lattice points of $\mathcal{D}$ are exactly 
its vertices, giving $v = h^*_1 + d + 1 = 2d$ vertices.
		
Next, we count the lattice points in $2\mathcal{D}$. For a unimodular $k$-simplex $\Delta$, the number of strictly 
interior lattice points in its $t$-th dilation is given by the standard discrete volume formula 
$L_{\Delta^\circ}(t) = \binom{t-1}{k}$. At a dilation of $t=2$, any face of dimension $k \ge 2$ contains exactly 
$\binom{1}{k} = 0$ interior points. Therefore, the lattice points in $2\mathcal{D}$ consist solely of the center $\bm{c}$, 
the $2n$ vertices, and the strict midpoints of its $e$ edges.  Evaluating the Ehrhart polynomial at $t=2$ gives
\[ 
L_{\mathcal{D}}(2) = \sum_{i=0}^n \binom{n-1}{i} \binom{n+2-i}{n} = 2n^2 + 1 \ \text{lattice points.}
\]
Equating this to our structural geometric count $1 + 2n + e$ yields exactly $e = 2n(n-1)$ edges.
		
Define the shifted polytope $\mathcal{T} := 2\mathcal{D} - \bm{c}$, which places the unique interior point at the origin 
$\mathbf{0}$. Its vertices are $\bm{v'}_i = 2\bm{v}_i - \bm{c}$, implying $\bm{v'}_i \equiv - \bm{c} \pmod 2$. Because 
all $2d$ vertices of $\mathcal{T}$ share the same parity, the midpoint of any pair $\bm{u}, \bm{w} \in \text{Vert}(\mathcal{T})$ 
is an integer lattice point. 
		
If $\bm{u}$ and $\bm{w}$ do not form an edge in $\mathcal{T}$, their connecting segment crosses the interior. 
Since $\bm{0}$ is the only interior lattice point, their midpoint must be $\bm{0}$, forcing $\bm{w} = - \bm{u}$. 
		
We now look at the $1$-skeleton (the graph of vertices and edges) of $\mathcal{T}$. A complete graph on 
$2n$ vertices has $\binom{2n}{2} = 2n^2 - n$ edges. Since $\mathcal{T}$ has $2n(n-1) = 2n^2 - 2n$ edges, 
it is missing exactly $n$ edges (non-adjacent pairs). Since a vertex can have at most one geometric antipode 
crossing the origin, these $n$ missing edges must be strictly disjoint, perfectly pairing all $2n$ vertices into 
geometric opposites.
		
We now translate this central symmetry back to $\mathcal{D}$. The opposite vertices in $\mathcal{T}$ satisfy 
$\bm{w'}_i = - \bm{v'}_i$. Substituting our coordinate shift, we get $(2\bm{w}_i - \bm{c}) = -(2\bm{v}_i - \bm{c})$. 
Rearranging this gives $\bm{v}_i + \bm{w}_i = \bm{c}$. Geometrically, this confirms that the midpoint of any 
opposite pair $(\bm{v}_i, \bm{w}_i)$ in $\mathcal{D}$ is exactly $\frac{\bm{c}}{2}$. Thus, the $2n$ vertices of 
$\mathcal{D}$ consist of $n$ pairs symmetrically arranged around the half-integer center $\frac{\bm{c}}{2}$.
		
Let $F$ be a facet of $\mathcal{D}$ with vertices $\bm{v}_1, \dots, \bm{v}_n$. Let $\bm{e}_1, \dots, \bm{e}_n$ 
denote the standard basis of $\mathbb{R}^n$. Because $F$ is a unimodular simplex, an affine unimodular 
transformation maps $F$ onto the hyperplane $x_d = 0$ such that its vertices are 
$\bm{v}_i = \bm{e}_i$ for $1 \le i \le n-1$, and $\bm{v}_n = \bm{0}$. 
		
Since the lattice distance from $\bm{c}$ to $2F$ is exactly $1$, the $n$-th coordinate of $\bm{c}$ must be $\pm 1$. 
After possibly applying a reflection across $x_n=0$ we can guarantee it is $1$. We apply a further unimodular 
shear parallel to $x_n = 0$, which preserves $F$ and maps $\bm{c}$ exactly to the $n$-th basis vector $\bm{e}_n$. 
		
The remaining $n$ vertices of $\mathcal{D}$ are now rigidly determined by the symmetry $\bm{w}_i = \bm{c} - \bm{v}_i$. 
For $1 \le i \le n-1$, we obtain $\bm{w}_i = \bm{e}_n - \bm{e}_i$, and for the origin, $\bm{w}_n = \bm{e}_n - \bm{0} = \bm{e}_n$. 
This exact configuration of vertices, the basis vectors $\bm{e}_1, \dots, \bm{e}_{n-1}$ and origin $\bm{0}$, alongside 
their shifted symmetrics $\bm{e}_n - \bm{e}_1, \dots, \bm{e}_n - \bm{e}_{n-1}$ and $\bm{e}_n$ is precisely the standard 
construction of the small cross polytope, proving uniqueness up to unimodular equivalence.
\end{proof}

\subsection{Final remarks}  \label{ssec:remarks}

The Ehrhart polynomial of a cross-polytope satisfies two curious properties: (i) it is symmetric in
$n$ and $t$, and (ii) all its roots have real part equal to $-1/2$ \cite{BCKV, KPT}. The Ehrhart polynomial 
of a small cross-polytope is not symmetric in $n$ and $t$, but is does satisfy an interesting property 
analogous to (ii).
\begin{theorem}
All the roots of the Ehrhart polynomial of a small cross-polytope have real part equal to $-1$.
\end{theorem}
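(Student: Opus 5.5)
The plan is to write the Ehrhart polynomial of $S_n$ in closed form and then locate its roots directly. By Theorem~\ref{teo:2} (or via Lemma~\ref{teo:3}), $\Ehr_{S_n}=\frac{1}{1-z}\Ehr_{\diamond_{n-1}}$. This says
\[
L_{S_n}(t)=\sum_{j=0}^{t} L_{\diamond_{n-1}}(j).
\]
The Ehrhart polynomial of the cross-polytope has the known closed form $L_{\diamond_m}(t)=\sum_{k}\binom{m}{k}\binom{t}{k}2^k$. It also satisfies the symmetry $L_{\diamond_m}(-1-t)=(-1)^m L_{\diamond_m}(t)$, which is Ehrhart--Macdonald reciprocity for a reflexive polytope. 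I would first show that $L_{S_n}$ has the analogous symmetry about $-1$. Concretely, $h^*_{S_n}(z)=(1+z)^{n-1}$ is palindromic of degree $n-1=n+1-2$, so $S_n$ is Gorenstein of index $2$ (Theorem~\ref{thm:gorenstein}). Reciprocity then gives
\[
L_{S_n}(-t)=(-1)^nL_{S_n^\circ}(t)=(-1)^nL_{S_n}(t-2),
\]
that is, $L_{S_n}(-1-s)=(-1)^nL_{S_n}(-1+s)$. The roots are therefore symmetric under $s\mapsto -s$ about $-1$. This symmetry alone does not force them onto the line $\operatorname{Re}t=-1$, so a genuine argument is needed.

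The main step is to relate $L_{S_n}$ to cross-polytope polynomials whose roots are already known to lie on $\operatorname{Re}t=-1/2$ \cite{BCKV, KPT}. From $\Ehr_{S_n}=\frac{(1+z)^{n-1}}{(1-z)^{n+1}}$ and $\Ehr_{\diamond_{n}}=\frac{(1+z)^{n}}{(1-z)^{n+1}}$ we get $(1+z)\Ehr_{S_n}=\Ehr_{\diamond_n}$. In terms of polynomials this reads
\[
L_{S_n}(t)+L_{S_n}(t-1)=L_{\diamond_n}(t).
\]
A cleaner identity is better suited to a root argument, so I would pass to the variable $s=t+1$. Since $\frac{1}{(1-z)^{n+1}}\cdot\frac{(1+z)^{n-1}}{1}$ is $\frac{1}{(1-z)^2}$ times $\Ehr_{\diamond_{n-1}}(z)(1-z)^{-0}\cdot\frac{1}{1-z}$ reorganised, I expect an expression of the form
\[
L_{S_n}(t)=\tfrac{1}{c}\bigl(a(t)L_{\diamond_{n-1}}(t)+b(t)L_{\diamond_{n-1}}(t-1)\bigr)
\]
with linear $a,b$. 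I would derive it from the three-term recurrence for the cross-polytope polynomials, namely the Meixner--Pollaczek / Krawtchouk-type recurrence $(m+1)P_{m+1}(t)=(2t+1)P_m(t)+mP_{m-1}(t)$ with $P_m=L_{\diamond_m}$. Telescoping $\sum_{j\le t}L_{\diamond_{n-1}}(j)$ should then yield
\[
L_{S_n}(t)=\frac{t+1}{n}\,L_{\diamond_{n-1}}(t+?)\ \text{-type formula.}
\]
I would verify this against small $n$ using $L_{S_2}(t)=(t+1)^2$ and $L_{S_3}(t)=\frac{(t+1)(2t^2+4t+3)}{3}$. The second is $\frac{t+1}{3}\bigl(2(t+1)^2+1\bigr)$, which has roots $-1$ and $-1\pm i/\sqrt2$ and confirms the target. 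The guess to prove is
\[
L_{S_n}(t)=\frac{t+1}{n}\,Q_{n-1}\bigl((t+1)^2\bigr),
\]
where $Q$ is such that, after substituting $t=-1+iy$, the polynomial becomes a real orthogonal-type polynomial in $y$.

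The final step is to identify $L_{S_n}(-1+iy)$, up to a constant, with a polynomial in $y$ belonging to a family of orthogonal polynomials with respect to a positive weight on $\R$, for instance continuous Hahn or Meixner--Pollaczek polynomials. Such polynomials have only real zeros, which gives $\operatorname{Re}t=-1$. I expect to do this by deriving a three-term recurrence in $n$ for $L_{S_n}$ from the generating function
\[
\sum_n L_{S_n}(t)x^n,
\]
which is computable from $\sum_n h^*$-data as a rational function in $x$ with $t$-dependent exponents, analogous to $\sum_m L_{\diamond_m}(t)x^m=\frac{(1+x)^t}{(1-x)^{t+1}}$. I would then check Favard-type positivity of the recurrence coefficients after the substitution $t=-1+iy$. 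This identification, meaning finding the right generating function and checking that the recurrence has the positive form, is the main obstacle. If it resists, the fallback is an interlacing argument in the style of \cite{BCKV}, which proves by induction on $n$ that the zeros of $L_{S_n}(-1+iy)$ and $L_{S_{n-1}}(-1+iy)$ are real and interlace.
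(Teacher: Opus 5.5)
The identities you actually establish are correct: $L_{S_n}(t)=\sum_{j=0}^{t}L_{\diamond_{n-1}}(j)$, the index-$2$ reciprocity $L_{S_n}(-1-s)=(-1)^nL_{S_n}(-1+s)$, the relation $L_{S_n}(t)+L_{S_n}(t-1)=L_{\diamond_n}(t)$, and the cross-polytope recurrence. But, as you note yourself, these only make the zero set symmetric about $t=-1$. The step that is supposed to put the zeros \emph{on} the line is never carried out. The closed form is left as a ``$L_{\diamond_{n-1}}(t+?)$-type formula'', the generating function in $n$ is not computed, and Favard positivity is deferred as ``the main obstacle''. As it stands this is a plan, not a proof.

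Moreover, your guessed shape $L_{S_n}(t)=\frac{t+1}{n}Q_{n-1}\bigl((t+1)^2\bigr)$ is false for even $n$. Since $h^*_n(S_n)=0$, one has $L_{S_n}(-1)=0$. For even $n$ your reciprocity makes $L_{S_n}$ even in $s=t+1$, so $L_{S_n}(t)/(t+1)$ is odd in $s$. Already $L_{S_2}(t)/(t+1)=t+1$ is not a polynomial in $(t+1)^2$; you only tested $n=3$.

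The paper avoids all of this. By Theorem~\ref{teo:2}, the numerator $h^*(z)=(1+z)^{n-1}$ of $\Ehr_{S_n}$ has all its zeros on the unit circle, and the main result of \cite{R-V} then places the zeros of $L_{S_n}$ on $\operatorname{Re}t=-1$.

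Your route can still be completed with elementary steps. Summing the geometric series in $\sum_{n\ge1}L_{S_n}(t)x^n=x\sum_{j=0}^{t}(1+x)^j(1-x)^{-j-1}$ gives exactly
\[
\sum_{n\ge 1}L_{S_n}(t)\,x^n=\frac12\Bigl(\Bigl(\frac{1+x}{1-x}\Bigr)^{t+1}-1\Bigr).
\]
Differentiating in $x$ yields $nL_{S_n}(t)=(t+1)G_{n-1}(t)$, where $\sum_{m}G_m(t)x^m=(1+x)^t(1-x)^{-t-2}$, i.e.\ $G_{n-1}=\sum_{m=0}^{n-1}L_{\diamond_m}$. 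This is the correct version of your guessed $\frac{t+1}{n}(\cdots)$ formula. For $g=(1+x)^t(1-x)^{-t-2}$, the identity $(1-x^2)g'=2(t+1+x)g$ gives $(m+1)G_{m+1}=2(t+1)G_m+(m+1)G_{m-1}$.

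Now set $t=-1+iy$ and $G_m=i^m\tilde G_m(y)$. This produces real polynomials with $\tilde G_{m+1}=\frac{2y}{m+1}\tilde G_m-\tilde G_{m-1}$; the $\tilde G_m$ are the Meixner--Pollaczek polynomials $P^{(1)}_m(y;\pi/2)$. Their monic versions satisfy $p_{m+1}=y\,p_m-\frac{m(m+1)}{4}p_{m-1}$, which is Favard form with positive coefficients, so they have only real, simple zeros. Since $L_{S_n}(-1+iy)=\frac{i^n y}{n}\tilde G_{n-1}(y)$, every zero of $L_{S_n}$ has real part $-1$.

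Completed this way, your argument is self-contained and gives more than the statement: simplicity of the non-real zeros and interlacing in $n$. The paper's proof is instead a one-line appeal to a general theorem.
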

\begin{proof} Due to Theorem~\ref{teo:2}, the main result in~\cite{R-V} implies this result.
\end{proof}

%
%
%

In~\cite{BHW} the following theorem is proved:
\begin{theorem}
If for some integral polytope $\mathcal{P}$ all the complex roots of $L_\mathcal{P}$ have real part $-1/2$, then $\mathcal{P}$ is reflexive.
\end{theorem}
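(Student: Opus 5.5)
This is the theorem of~\cite{BHW}, and the plan below follows its original argument. We combine two facts: Ehrhart--Macdonald reciprocity, and a symmetry of the roots forced by the hypothesis. Together they yield the palindromic criterion of Theorem~\ref{thm:reflexive}.

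First, we use that $L_\Pp$ has real coefficients, so its roots are closed under complex conjugation. If every root $\alpha$ satisfies $\mathrm{Re}\,\alpha=-1/2$, then $-1-\alpha=-1/2-i\,\mathrm{Im}\,\alpha=\bar\alpha$ is also a root, with the same multiplicity. Hence the multiset of roots is invariant under the involution $\alpha\mapsto -1-\alpha$. Comparing leading coefficients and factorizations gives
\[
L_\Pp(-1-t)=(-1)^n L_\Pp(t).
\]

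Second, we invoke Ehrhart--Macdonald reciprocity, $L_\Pp(-t)=(-1)^n L_{\Pp^\circ}(t)$ (see~\cite{BR}). Substituting $t\mapsto t+1$ in the identity above gives $(-1)^nL_{\Pp^\circ}(t+1)=L_\Pp(-t-1)=(-1)^nL_\Pp(t)$. Therefore
\[
L_{\Pp^\circ}(t+1)=L_\Pp(t)
\]
for all $t$, which is condition~(2) of Theorem~\ref{thm:reflexive}.

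Finally, Theorem~\ref{thm:reflexive} yields reflexivity, understood up to an integral translation, since reflexivity requires the interior lattice point to be placed at the origin.

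The main subtlety is this last step: the implication $(2)\Rightarrow(1)$ is Hibi's theorem, which the paper quotes, so we may simply assume it. If a direct argument were wanted, one would proceed as follows. Taking $t=0$ shows $\Pp^\circ$ contains exactly one lattice point; translate it to the origin. Then show that each facet lies at lattice distance $1$ from it. The count $L_{\Pp^\circ}(t+1)=L_\Pp(t)$ forces every lattice point of $(t+1)\Pp^\circ$ to lie in $t\Pp$; a facet inequality $\bm a\cdot\bm x\le b$ with $b\ge2$ would produce a lattice point on the hyperplane $\bm a\cdot \bm x = tb+1$ strictly inside $(t+1)\Pp$ but outside $t\Pp$, giving a contradiction.
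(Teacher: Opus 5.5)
Your argument is correct and is the standard proof from~\cite{BHW}, which the paper cites without reproducing. The root symmetry $\alpha\mapsto -1-\alpha=\bar\alpha$ gives $L_\Pp(-1-t)=(-1)^nL_\Pp(t)$, Ehrhart--Macdonald reciprocity turns this into $L_{\Pp^\circ}(t+1)=L_\Pp(t)$, and Hibi's criterion (Theorem~\ref{thm:reflexive}) then yields reflexivity up to a lattice translation, for $\Pp$ full-dimensional with $n=\dim\Pp$ as is implicitly assumed. In your optional direct argument, a lattice point on the hyperplane $\bm a\cdot\bm x=tb+1$ inside $(t+1)\Pp^\circ$ is only guaranteed once $t$ is large enough; this suffices, since the counting identity holds for every $t$.
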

\noindent Can we infer a similar property for the polytopes such that all the roots have real part $-1$?

\section{On prequantizations of Gorenstein Delzant polytopes}
\label{sec:monotone}

In this section we consider a construction similar to the one in the previous one, but now for a 
general reflexive Delzant $n$-polytope $\mathcal{P}$ such that $\mathcal{P}/r$ is integral for some $r \in \mathbb{Z}_{>0}$.
\[\begin{tikzcd}
	{\mathcal{P}} && {\mathcal{D}} \\
	\\
	{\mathcal{P}/r} && {\mathcal{D}'}
	\arrow["{\text{preq.}}", "{\text{- dual}}"', from=1-1, to=1-3]
	\arrow["{/r}", from=1-1, to=3-1]
	\arrow["{\text{preq.}}", from=3-1, to=3-3]
\end{tikzcd}\]
Our main result in this context is the following theorem.
\begin{theorem} \label{thm:main}
\[
\Ehr_{\mathcal{D}}(z)=\frac{1-z^r}{1-z}\Ehr_{\mathcal{D}'}(z) \,.
\]
\end{theorem}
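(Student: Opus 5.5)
Concretely, the two toric diagrams are related by a single linear map. The facet normals of $\mathcal{P}$ are $\bm{\nu}_j$, with $\mathcal{P}=\{\bm{x}\cdot\bm{\nu}_j+1\geq 0\}$. Then $\mathcal{P}/r=\{\bm{x}\cdot\bm{\nu}_j + 1/r\geq 0\}$, and after the integral translation by $\bm{p}:=\frac{1}{r}(\text{a lattice point})$ that makes $\mathcal{P}/r$ integral, its cone has primitive normals $(\bm{\nu}_j,\lambda_j')$. Here $\lambda_j'=1/r-\bm{p}\cdot\bm{\nu}_j\in\Z$. The point $\bm{q}:=-r\bm{p}$ is then the unique interior lattice point of $\mathcal{P}$ (the origin after translation), and $\lambda'_j=(1+\bm{q}\cdot\bm{\nu}_j)/r$.

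The plan is to work at the level of cones rather than polytopes. Recall that $\Ehr_{\mathcal{D}}(z)=\sum_{\bm{m}\in \cone(\mathcal{D}\times\{1\})\cap\Z^{n+1}} z^{m_{n+1}}$. Under the transformation $T$ of Definition~\ref{def:preq}, $\cone(\mathcal{D}\times\{1\})$ is the cone spanned by the normals of the cone over $\mathcal{P}$, i.e.\ the dual cone $C^\vee$. The grading $m_{n+1}$ corresponds to pairing with the dual vector $T^{*}\bm{e}_{n+1}$, which is the apex-shifted interior point $(\bm{0},1)$ of the cone over $\mathcal{P}$, because $(\bm{\nu}_j,1)\cdot(\bm{0},1)=1$. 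So
\[
\Ehr_{\mathcal{D}}(z)=\sum_{\bm{m}\in \sigma\cap\Z^{n+1}} z^{\langle \bm{m},\bm{u}\rangle}\,,
\]
where $\sigma=\cone\{(\bm{\nu}_j,1)\}$ and $\bm{u}=(\bm{0},1)$. Similarly, $\Ehr_{\mathcal{D}'}$ is the same sum over $\sigma'=\cone\{(\bm{\nu}_j,\lambda'_j)\}$, graded by the unique vector $\bm{u}'$ with $\langle(\bm{\nu}_j,\lambda'_j),\bm{u}'\rangle=1$; this is $\bm{u}'=(\bm{p},1)$ up to the translation.

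Next I relate the two cones. The map $A(\bm{y},s)=(\bm{y},(s+\bm{q}\cdot\bm{y})/r)$ sends $(\bm{\nu}_j,1)$ to $(\bm{\nu}_j,\lambda'_j)$, so $A(\sigma)=\sigma'$. However, $A$ is not unimodular. The inverse $A^{-1}(\bm{y},s')=(\bm{y},rs'-\bm{q}\cdot\bm{y})$ is integral, so $A^{-1}$ maps $\Z^{n+1}$ onto the index-$r$ sublattice $\Lambda=\{(\bm{y},s): s\equiv -\bm{q}\cdot\bm{y}\pmod r\}$, and $\sigma'\cap\Z^{n+1}\cong\sigma\cap\Lambda$. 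The gradings match: $\langle A(\bm{m}),\bm{u}'\rangle=\langle \bm{m},\bm{u}\rangle/r$, which I will check from the defining property on the generators. Hence
\[
\Ehr_{\mathcal{D}'}(z^r)=\sum_{\bm{m}\in\sigma\cap\Lambda} z^{\langle \bm{m},\bm{u}\rangle}\,.
\]

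The key step is a decomposition of $\sigma\cap\Z^{n+1}$ into $r$ cosets. I claim the points $k\bm{w}$, $k=0,\dots,r-1$, represent $\Z^{n+1}/\Lambda$, where $\bm{w}=(\bm{0},1)$ has grading $1$; this holds because $\bm{w}$ generates $\Z^{n+1}/\Lambda\cong\Z/r$. I further claim that every $\bm{m}\in\sigma\cap\Z^{n+1}$ can be written uniquely as $\bm{m}=\bm{m}_0+k\bm{w}$ with $\bm{m}_0\in\sigma\cap\Lambda$ and $0\le k<r$. This would give
\[
\Ehr_{\mathcal{D}}(z)=(1+z+\dots+z^{r-1})\,\Ehr_{\mathcal{D}'}(z^r)\,.
\]

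At this point there is a mismatch: the statement has $\Ehr_{\mathcal{D}'}(z)$, not $\Ehr_{\mathcal{D}'}(z^r)$, so the grading identification must be rechecked. The correct $\bm{u}'$ satisfies $\langle A\bm{m},\bm{u}'\rangle=\langle\bm{m},\bm{u}\rangle$ exactly, because $\mathcal{D}'$ also sits at height $1$ and the vertices correspond. With that, the formula reads $\Ehr_{\mathcal{D}}=\frac{1-z^r}{1-z}\Ehr_{\mathcal{D}'}$ as stated. This is consistent with the small cross-polytope case $r=2$, where it gives $(1+z)$ times $(1+z)^{n-1}$.

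The main obstacle is the uniqueness and containment in the decomposition: I need $\bm{m}-k\bm{w}\in\sigma$ for the appropriate $k<r$. Shifting by $\bm{w}$ lowers each facet value $\langle \bm{m},\cdot\rangle$ on the dual side by a fixed amount, so I must show that the residue condition forces enough slack. I would try to prove this facet by facet, using that the facets of $\sigma$ are unimodular (the Delzant condition) and that the unique interior point $\bm{q}$ gives every facet value modulo $r$ the same shift. As a fallback, I would compare $h^*$-vectors via a half-open unimodular decomposition of $\sigma$ along a fan triangulation, checking the identity cone by cone.
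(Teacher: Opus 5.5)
Your route is genuinely different from the paper's and it works. As written, however, the decisive containment $\bm m-k\bm w\in\sigma$ is only announced. Your idea that every facet value has the same residue modulo $r$ is the right one, but unimodularity of facets and the Delzant condition play no role. The step closes in one line once you write the facets of $\sigma$.

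Since $\sigma^\vee=\cone(\mathcal{P}\times\{1\})$, one has $\sigma=\{(\bm y,s):\ell_v(\bm y,s):=s+\bm p_v\cdot\bm y\ge 0\}$, with $\bm p_v$ running over the vertices of $\mathcal{P}$. Integrality of $\mathcal{P}/r+\bm p$ says precisely that $\bm p_v\equiv\bm q\pmod r$ for all $v$.

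So for $\bm m=(\bm y,s)\in\Z^{n+1}$, every $\ell_v(\bm m)$ is congruent modulo $r$ to $s+\bm q\cdot\bm y$. That residue is exactly the label $k\in\{0,\dots,r-1\}$ of the coset $\bm m+\Lambda$, while $\ell_v(\bm w)=1$. For $\bm m\in\sigma$, each $\ell_v(\bm m)$ is then a non-negative integer congruent to $k$, hence at least $k$, so $\bm m-k\bm w\in\sigma\cap\Lambda$. The reverse inclusion $(\sigma\cap\Lambda)+k\bm w\subset\sigma$ holds because $\bm w\in\sigma$. Thus $\sigma\cap\Z^{n+1}=\bigsqcup_{k=0}^{r-1}\bigl((\sigma\cap\Lambda)+k\bm w\bigr)$, the theorem follows, and no fallback is needed.

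Two corrections:
\begin{enumerate}
\item Your $\bm q$ is not the interior lattice point of $\mathcal{P}$; that point is $\bm 0$ in your normalization. $\bm q$ is the common residue of the vertices of $\mathcal{P}$ modulo $r$.
\item The grading vector for $\sigma'$ is $\bm u'=(-\bm q,r)=r(\bm p,1)$, not $(\bm p,1)$, which pairs to $1/r$ with the generators. With the correct $\bm u'$, the identity $\langle A\bm m,\bm u'\rangle=\langle\bm m,\bm u\rangle$ is immediate, so the detour through $\Ehr_{\mathcal{D}'}(z^r)$ should simply be deleted.
\end{enumerate}

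The paper instead proves the equivalent identity $L_{\mathcal{D}}(t)=\sum_{i=0}^{r-1}L_{\mathcal{D}'}(t-i)$ by induction on $t$. It combines three ingredients:
\begin{enumerate}
\item reflexivity of $\mathcal{D}$ (Theorem~\ref{thm:reflexive});
\item the index-$r$ Gorenstein property of $\mathcal{D}'$ (Theorems~\ref{thm:preq_are_gorenstein} and~\ref{thm:gorenstein}, imported from the literature);
\item Lemma~\ref{lemma:1}, which equates the boundary lattice-point counts of $t\mathcal{D}$ and $t\mathcal{D}'$ because both share a face lattice whose proper faces are unimodular simplices.
\end{enumerate}

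Your coset decomposition is self-contained and bijective. It uses neither the Gorenstein characterization nor unimodularity of faces, hence not the Delzant condition. It therefore proves the identity for any reflexive $\mathcal{P}$ whose vertices are all congruent modulo $r$. It also exhibits the factor $1+z+\dots+z^{r-1}$ directly as the $r$ translates by $k\bm w$ of the lattice points of $\sigma\cap\Lambda\cong\sigma'\cap\Z^{n+1}$. The paper's argument is shorter given the cited Ehrhart-theoretic facts, but it relies on both polytopes being toric diagrams.
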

\begin{corollary}\label{cor:1}
Let $\mathcal{P}$ be a Gorenstein polytope of index $s$ such that $\mathcal{P}/r$ is integral for some 
$r \in \mathbb{Z}_{>0}$. Let $\mathcal{D}$ be the toric diagram of the prequantization of $\mathcal{P}$ and 
$\mathcal{D}'$ the one of $\mathcal{P}/r$. We then have that
\[
\Ehr_{\mathcal{D}}(z)=\frac{1-z^{rs}}{1-z^s}\Ehr_{\mathcal{D}'}(z).
\]
\end{corollary}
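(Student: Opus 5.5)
The plan is to deduce the corollary from Theorem~\ref{thm:main} by applying it twice to the same reflexive polytope, with two different divisors, and then dividing.

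\textbf{Setup.} Since $\mathcal{P}$ is Gorenstein of index $s$, the polytope $\mathcal{Q}:=s\mathcal{P}$ is reflexive. I will assume, as in Theorem~\ref{thm:main}, that $\mathcal{P}$ is Delzant; then so is $\mathcal{Q}$, since dilation preserves the Delzant condition. By Theorem~\ref{thm:preq_are_gorenstein} (with dilation factor $1$), $\mathcal{Q}$ has a prequantization. Let $\mathcal{D}_{\mathcal{Q}}$ denote its toric diagram, i.e. $-\mathcal{Q}^\ast$.

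\textbf{Two applications of Theorem~\ref{thm:main}.}
\begin{enumerate}
\item Take divisor $s$. Then $\mathcal{Q}/s=\mathcal{P}$ is integral, and its prequantization has toric diagram $\mathcal{D}$. Theorem~\ref{thm:main} gives
\[
\Ehr_{\mathcal{D}_{\mathcal{Q}}}(z)=\frac{1-z^{s}}{1-z}\Ehr_{\mathcal{D}}(z).
\]
\item Take divisor $rs$. Then $\mathcal{Q}/(rs)=\mathcal{P}/r$ is integral by hypothesis, and its prequantization has toric diagram $\mathcal{D}'$. Theorem~\ref{thm:main} gives
\[
\Ehr_{\mathcal{D}_{\mathcal{Q}}}(z)=\frac{1-z^{rs}}{1-z}\Ehr_{\mathcal{D}'}(z).
\]
\end{enumerate}

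\textbf{Conclusion.} Equate the two right-hand sides. The factor $(1-z^s)/(1-z)=1+z+\dots+z^{s-1}$ has constant term $1$, so it is invertible in $\mathbb{Q}[[z]]$ and we may divide by it. This yields
\[
\Ehr_{\mathcal{D}}(z)=\frac{1-z^{rs}}{1-z^{s}}\Ehr_{\mathcal{D}'}(z).
\]

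\textbf{Main point to check.} The substantive step is making sure that the toric diagrams produced by these applications are the $\mathcal{D}$ and $\mathcal{D}'$ of the statement. This holds because the prequantization depends only on the polytope being prequantized ($\mathcal{P}$, respectively $\mathcal{P}/r$), not on how it was obtained. The transformation $T$ in Definition~\ref{def:preq} is unique only up to unimodular equivalence, but Ehrhart series are invariant under unimodular maps, so this ambiguity does not affect the identity.
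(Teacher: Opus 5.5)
Your proof is correct and essentially the same as the paper's: both apply Theorem~\ref{thm:main} twice to the toric diagram of the reflexive polytope $s\mathcal{P}=rs(\mathcal{P}/r)$, once with divisor $s$ and once with divisor $rs$, and then equate the two expressions. Your remarks about the implicit Delzant hypothesis and the invertibility of $1+z+\dots+z^{s-1}$ in $\mathbb{Q}[[z]]$ simply make explicit what the paper leaves tacit.
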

\begin{proof}{(Corollary)}
Let $\mathcal{T}$ be the toric diagram of the prequantization of the reflexive polytope 
$s\mathcal{P}=sr(\mathcal{P}/r)$. Applying \autoref{thm:main} twice, once to $\mathcal{P}$ 
and once to $\mathcal{P}/r$, we have that
\[
\frac{1-z^s}{1-z}\Ehr_{\mathcal{D}}(z)=\Ehr_{\mathcal{T}}(z)=\frac{1-z^{rs}}{1-z}\Ehr_{\mathcal{D}'}(z)
\]
which proves the corollary. Note that $\mathcal{D}'$ is Gorenstein of index $rs$.
\end{proof}
\begin{remark} \label{rem:main}
It is interesting, although direct, to check that the $h^*$ polynomial of $\mathcal{D}$ is exactly the sum of $r$ copies of the $h^*$ polynomial of $\mathcal{D}'$ such that each one has a shift of $s$ indices to the right. 
More precisely,
\[
h^*_{\mathcal{D}}(z)=\left(1+z^s+z^{2s}+\dots+z^{s(r-1)}\right)h^*_{\mathcal{D}'}(z)\,.
\]
Furthermore, we note that the case $s=1$ is of particular interest. In this case, we have a reflexive toric diagram $\mathcal{D}$ 
and toric diagram $\mathcal{D}'$ that is Gorenstein of index $r$. The previous expression can be stated as
\[
\sum_{j=x-r+1}^{x} h_j^*(\mathcal{D}')= h_x^*(\mathcal{D})\,.
\]
We assume throughout that $h_x^*(\mathcal{D}')= h_x^*(\mathcal{D})=0$ for $x<0$ and for $x>n$. Taking consecutive differences 
yields
\[
h_{x-r}^*(\mathcal{D}') - h_x^*(\mathcal{D}') = h_{x-1}^*(\mathcal{D}) - h_x^*(\mathcal{D})\,.
\]
Let $x = n-m+rk+r$ and apply the palindromic property $h_y^*(\mathcal{D}) = h_{n-y}^*(\mathcal{D})$ to the right hand side to get
\[
h_{n-m+rk}^*(\mathcal{D}') - h_{n-m+r(k+1)}^*(\mathcal{D}') = h_{m-rk-r+1}^*(\mathcal{D}) - h_{m-rk-r}^*(\mathcal{D})\,.
\]
Summing both sides over $k \ge 0$ we have
\[
h_{n-m}^*(\mathcal{D}') = \sum_{k \ge 0} \left( h_{m-rk-r+1}^*(\mathcal{D}) - h_{m-rk-r}^*(\mathcal{D}) \right)\,,
\]
recognizing the left hand side as a telescoping sum. In terms of contact Betti numbers, and
since $cb_{2i}(N_{\mathcal{D}'}, \xi_{\mathcal{D}'})=0$ for all $i<0$, we have that
\[
cb_{2i}(N_{\mathcal{D}'}, \xi_{\mathcal{D}'}) = \sum_{m=0}^i (cb_{2m}(N_{\mathcal{D}'}, \xi_{\mathcal{D}'}) - 
cb_{2(m-1)}(N_{\mathcal{D}'}, \xi_{\mathcal{D}'})) = \sum_{m=0}^i h_{n-m}^*(\mathcal{D}')\,.
\]
Substituting the expression for $h_{n-m}^*(\mathcal{D}')$, exchanging the order of summation, and evaluating the inner 
telescoping sum, we get
\begin{align*}
cb_{2i}(N_{\mathcal{D}'}, \xi_{\mathcal{D}'}) &= \sum_{m=0}^i \sum_{k \ge 0} 
\left( h_{m-rk-r+1}^*(\mathcal{D}) - h_{m-rk-r}^*(\mathcal{D}) \right) \\
&= \sum_{k \ge 0} h_{i-rk-r+1}^*(\mathcal{D})\,.
\end{align*}
Recall that $cb_{2i}(N_{\mathcal{D}'}, \xi_{\mathcal{D}'})=\rank HC_{2i}(N_{\mathcal{D}'}, \xi_{\mathcal{D}'})$ 
and $h_{i}^*(\mathcal{D})=\dim H^{2i}(M_\Pp; \mathbb{Q})$, where $M_\Pp$ is the toric symplectic manifold with 
reflexive Delzant polytope $\Pp = - \Dd^\ast$ (cf. equation~(\ref{eq:Betti})). Thus, in this toric context, 
Theorem~\ref{thm:main} is the Ehrhart theory version of 
\[
HC_{2i}(N_{\mathcal{D}'}, \xi_{\mathcal{D}'} )= \bigoplus_{k \geq 0} H^{2(i-rk-r+1)}(M_\mathcal{P}; \mathbb{Q})\,,
\]
which is exactly Corollary~1.9 in~\cite{AMM} and also follows from a more general result in~\cite{B}.
\end{remark}

\subsection{Proof of Theorem~\ref{thm:main}}

Let $\mathcal{P}$ be a reflexive Delzant polytope such that $\mathcal{P}/r$ is integral. 
Let us determine the toric diagrams of the prequantization of  $\mathcal{P}$ and $\mathcal{P}/r$.

Translate $\mathcal{P}$ by a lattice vector such that $\bm{0} \in \mathcal{V}(\mathcal{P})$. 
For the sake of this proof, we will only perform the first step of Definition~\ref{def:preq}, letting 
our toric diagram (a $n$-polytope) remain contained in a hyperplane of $\mathbb{R}^{n+1}$. 
This clearly does not affect the integer points contained in the polytope compared to the final 
toric diagram, since the only missing step is a unimodular transformation.

If $\mathcal{P}$ is defined by the $d$ hyperplanes $\bm{x} \cdot \bm{\nu}_i \geq -\lambda_i$, with 
$\bm{\nu}_i \in \mathbb{Z}^n$ and $\lambda_i \in \mathbb{Z}$ for $i = 1, \dots, d$, then the 
vertices of $\mathcal{D}$ are $(\bm{\nu}_i, \lambda_i)$. Furthermore, since $\mathcal{P}/r$ is 
defined by the hyperplanes $\bm{x} \cdot \bm{\nu}_i \geq -\lambda_i/r$, the vertices of $\mathcal{D}'$ 
are $(\bm{\nu}_i, \lambda_i/r)$, which are integer by hypothesis.

Note that $\bm{\nu}_i$ are the vertices of the projection of both these polytopes onto the hyperplane 
normal to $(0,\dots,0,1)$. These vertices are the vertices of $-\mathcal{P}^\ast$, where $\Pp^\ast$ is
the polar dual of $\mathcal{P}$. In fact, it is easy to check the following lemma.
\begin{lemma}\label{lemma:0}
The face poset (or combinatorial structure) of $\mathcal{D}$ coincides with the face poset of 
$\mathcal{D}'$.
\end{lemma}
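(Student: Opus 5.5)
The plan is to compare the face posets of $\mathcal{D}$ and $\mathcal{D}'$ through the radial projection from the origin of $\R^{n+1}$. Neither polytope contains the origin, and both have vertex sets indexed by the same $d$ facets of $\mathcal{P}$. For each $i$ set $\bm{w}_i = (\bm{\nu}_i, \lambda_i)$ and $\bm{w}'_i = (\bm{\nu}_i, \lambda_i/r)$.

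The first step is to move $\mathcal{D}'$ to a common hyperplane with $\mathcal{D}$. Since $\mathcal{P}$ is reflexive, after translating we may take all $\lambda_i = 1$. The translation making $\bm{0}$ a vertex only changes the normals by the unimodular shear $(\bm{\nu},\lambda)\mapsto(\bm{\nu},\lambda+\bm{\nu}\cdot\bm{a})$, which preserves face posets. Then $\mathcal{D} = -\mathcal{P}^\ast \times\{1\}$ and $\mathcal{D}' = -\mathcal{P}^\ast\times\{1/r\}$. The linear map $\mathrm{diag}(1,\dots,1,r)$ sends $\bm{w}'_i$ to $\bm{w}_i$, so it maps $\mathcal{D}'$ bijectively onto $\mathcal{D}$. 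A linear isomorphism sends convex hulls to convex hulls and faces to faces, so the face posets coincide. In fact, in the shared coordinates both are just copies of $-\mathcal{P}^\ast$.

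If instead one wants to keep the translation with $\bm{0}\in\mathcal{V}(\mathcal{P})$ and work with general $\lambda_i$, the argument is as follows. Both $\mathcal{D}$ and $\mathcal{D}'$ lie in affine hyperplanes not through the origin: $\mathcal{D}$ lies in the image of $x_{n+1}=1$ under the unimodular $T$ of Definition~\ref{def:preq}. Each of them is therefore the section of the cone $\cone(\mathcal{D})$, respectively $\cone(\mathcal{D}')$, by that hyperplane. The map $(\bm{x},y)\mapsto(\bm{x},y/r)$ is linear, sends $\bm{w}_i$ to $\bm{w}'_i$, and hence maps $\mathcal{D}=\conv\{\bm{w}_i\}$ onto $\conv\{\bm{w}'_i\}=\mathcal{D}'$. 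Since it is linear and bijective, it induces an isomorphism of face posets matching the vertex labelled $i$ to the vertex labelled $i$.

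The only point needing care is that the $\bm{w}_i$ really are all vertices of $\mathcal{D}$, i.e.\ that none is redundant in the convex hull. This is the point I expect to be the (mild) obstacle. It follows from linear bijectivity applied to the second description, or from minimality of the defining normals of the good cone (Definition~\ref{def:good}): each $\bm{w}_i$ spans an extremal ray of the dual cone, so it is a vertex of any hyperplane section. The projections $\bm{\nu}_i$ are the vertices of $-\mathcal{P}^\ast$, as already observed before the lemma, which gives the same conclusion.
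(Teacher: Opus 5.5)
Your proof is correct and is essentially the paper's. The paper uses exactly the linear isomorphism $\Phi=\mathrm{diag}(1,\dots,1,1/r)$ from your third paragraph, which sends $(\bm{\nu}_i,\lambda_i)\mapsto(\bm{\nu}_i,\lambda_i/r)$ and hence maps $\mathcal{D}$ onto $\mathcal{D}'$. Your detour through the centered frame is unnecessary, and there the shear relating the two frames for $\mathcal{D}'$ is by $\bm{a}/r$, so it is a linear isomorphism but not necessarily unimodular; that still suffices for face posets.
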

\begin{proof}
From the previous paragraph, we essentially have the "same" set of vertices but located in two 
different hyperplanes of $\mathbb{R}^{n+1}$. More precisely, consider the linear map 
$\Phi: \mathbb{R}^{n+1} \to \mathbb{R}^{n+1}$ defined by the diagonal matrix 
$\text{diag}(1, \dots, 1, 1/r)$. This map is a linear isomorphism that sends $(\bm{\nu}_i, \lambda_i)$ to 
$(\bm{\nu}_i, \lambda_i/r)$ for every $i \in 1, \dots, d$, implying that $\Phi(\mathcal{D}) = \mathcal{D}'$. 
Since linear isomorphisms preserve the face structure of polytopes, $\mathcal{D}$ and $\mathcal{D}'$ 
are combinatorially equivalent.
\end{proof}
\noindent This lemma, together with the fact that $\mathcal{D}$ and $\mathcal{D}'$ are toric diagrams, 
implies that their boundaries will always be Ehrhart-equivalent. This is formalized in the next lemma.

\begin{lemma}\label{lemma:1}
$\#\{\partial t \mathcal{D} \cap \mathbb{Z}^n \} = \#\{\partial t \mathcal{D}' \cap \mathbb{Z}^n \}$ 
for all $t \in \mathbb{Z}_{>0}$.
\end{lemma}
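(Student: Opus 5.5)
The plan is to decompose $\partial t\mathcal{D}$ into relatively open faces and count lattice points face by face, using that every proper face of a toric diagram is a unimodular simplex. Since $\mathcal{D}$ is simplicial with unimodular facets (Definition~\ref{def:diagram}), every proper face $F$ of dimension $k$ is a face of some facet. The vertices of a facet translate to a lattice basis, so any subset of them spans a unimodular $k$-simplex relative to the lattice of its affine span intersected with $\mathbb{Z}^{n}$. This lattice is saturated, because a subset of a basis extends to a basis. Therefore the number of lattice points in the relative interior of $tF$ is $\binom{t-1}{k}$, the formula already used in Subsection~\ref{ssec:determined}. The boundary is the disjoint union of the relative interiors of the proper faces, so
\[
\#\{\partial t\mathcal{D}\cap\mathbb{Z}^n\}=\sum_{k=0}^{n-1} f_k(\mathcal{D})\binom{t-1}{k},
\]
where $f_k$ denotes the number of $k$-dimensional faces. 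The same formula holds for $\mathcal{D}'$.

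By Lemma~\ref{lemma:0}, $\mathcal{D}$ and $\mathcal{D}'$ have isomorphic face posets, so $f_k(\mathcal{D})=f_k(\mathcal{D}')$ for all $k$. The two counts therefore coincide for every $t\in\mathbb{Z}_{>0}$.

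One technical point needs care. In the proof setup the polytopes live in hyperplanes of $\mathbb{R}^{n+1}$, namely $x_{n+1}=1$ after step~2 of Definition~\ref{def:preq}, or the unnormalized position. I would apply the count only after the unimodular transformation $T$, where $\mathcal{D},\mathcal{D}'\subset\mathbb{R}^n$ are genuine toric diagrams. Both are toric diagrams by Theorem~\ref{thm:preq_are_gorenstein} and Definition~\ref{def:preq}. The lattice-point counts are invariant under these unimodular maps.

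The main obstacle is justifying that the relative interior count of a unimodular face is exactly $\binom{t-1}{k}$ in the ambient lattice $\mathbb{Z}^n$. This requires checking that no extra lattice points of $\mathbb{Z}^n$ lie in the affine span of $tF$ beyond those of the face lattice. I would handle it directly: after translating one vertex of a facet to the origin and applying an element of $GL(n,\mathbb{Z})$, the facet becomes $\conv\{\bm{0},\bm{e}_1,\dots,\bm{e}_{n-1}\}$ and $F$ becomes a coordinate simplex $\conv\{\bm{0},\bm{e}_1,\dots,\bm{e}_k\}$. For this simplex the count is the classical stars-and-bars count of positive integer solutions of $x_1+\dots+x_k<t$ with all $x_i>0$, which equals $\binom{t-1}{k}$.
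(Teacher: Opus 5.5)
Your proof is correct and follows essentially the same route as the paper. Both arguments combine the equality of face numbers from Lemma~\ref{lemma:0} with the fact that every proper face of a toric diagram is a unimodular simplex, so the boundary lattice-point count depends only on the $f$-vector. The paper phrases this as inclusion--exclusion over closed faces, whereas your decomposition into relatively open faces makes it explicit as $\sum_{k=0}^{n-1} f_k\binom{t-1}{k}$.
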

\begin{proof}
From Lemma~\ref{lemma:0}, one can conclude that the number of $i$-dimensional faces,
$i=-1, \dots, n$, of $\mathcal{D}$ and $\mathcal{D}'$ coincide. Recalling that $\mathcal{D}$ and $\mathcal{D}'$ 
are toric diagrams, we know that all their faces of dimension $\leq n-1$ are unimodular simplices.
This implies that there are unimodular triangulations (with no new vertices) of $\partial \mathcal{D}$ 
and $\partial \mathcal{D}'$. Since the polytopes share the same face lattice and every corresponding 
face is a unimodular simplex, the Principle of Inclusion-Exclusion yields an identical number of boundary 
lattice points for both $\mathcal{D}$ and $\mathcal{D}'$.
\end{proof}

\begin{proof}[Proof of Theorem~\ref{thm:main}]
It is easy to see that Theorem~\ref{thm:main} is equivalent to
\[
L_{\mathcal{D}}(t)=\sum_{i=0}^{r-1} L_{\mathcal{D}'}(t-i)
\]
so we can finish the proof by induction in $t$. The base case is trivial. For the inductive step we use
the fact that $D$ is reflexive, so we can apply Theorem~\ref{thm:reflexive} to get

\begin{align*}
	L_{\mathcal{D}}(t+1)
	&=\#\{\partial (t+1) \mathcal{D} \cap \mathbb{Z}^n \}+L_{\mathcal{D}}(t) \\
	&=\#\{\partial (t+1) \mathcal{D} \cap \mathbb{Z}^n \}+\sum_{i=0}^{r-1}  L_{\mathcal{D}'}(t-i) 
	\tag{induction step} \\
	&=\#\{\partial (t+1) \mathcal{D}' \cap \mathbb{Z}^n \}+\sum_{i=0}^{r-1} L_{\mathcal{D}'}(t-i)	
	\tag{Lemma~\ref{lemma:1}} \\
	&=L_{\mathcal{D}'}(t+1)+\sum_{i=0}^{r-2} L_{\mathcal{D}'}(t-i) 
	\tag{$\mathcal{D}'$ is Gorenstein of index $r$ + Theorem~\ref{thm:gorenstein}} \\
	&=\sum_{i=0}^{r-1} L_{\mathcal{D}'}(t+1-i) \,.
\end{align*}
\end{proof}

\section{A class of toric diagrams Ehrhart equivalent to cross-polytopes} \label{sec:ppyr}

In this section we construct and classify a class of toric diagrams that share the Ehrhart series of the 
cross-polytope $\diamond_d$. This class consists of the toric diagrams that arise from the 
prequantization of monotone Bott manifolds (cf. Section~\ref{sec:FBm}, in particular Theorem~\ref{thm:S}).

\subsection{Iterative construction via pseudo-bipyramids}
Recall that pseudo-bipyramids where defined in Definition~\ref{def:ppyr}.
Denoting the set of $n$-dimensional polytopes in our class by $\mathcal{C}_n$, we define the next iteration as
\[
\mathcal{C}_{n+1}=\{ \text{Pbipyr} (\mathcal{P}, \bm{s}_1, \bm{s}_2) \mid \mathcal{P} \in \mathcal{C}_{n}, 
\bm{s}_1, \bm{s}_2 \in \mathbb{Z}^n \cap \mathcal{P} \}.
\]
and initialize $\mathcal{C}_1$ with just one element, the toric diagram $[-1,1]$. For any polytope 
$\mathcal{P} \in \mathcal{C}_n$, because it is Ehrhart equivalent to $\diamond_n$, the only lattice points 
it contains are its $2n$ boundary vertices and the origin as its unique interior point. Thus, when choosing 
the points $\bm{s}_1, \bm{s}_2$ for the next iteration, we are restricted to three cases:
\begin{itemize}
\item $\bm{s}_1=\bm{s}_2=\bm{0}$, which yields the standard bipyramid;
\item $\bm{s}_1=\bm{0}$ and $\bm{s}_2 \in \partial \mathcal{P} $;
\item $\bm{s}_1, \bm{s}_2 \in \partial \mathcal{P} $.
\end{itemize}
\begin{lemma}
The third case ($\bm{s}_1, \bm{s}_2 \in \partial \mathcal{P}$) is redundant, as it always produces a polytope 
unimodularly equivalent to one generated by one of the other cases.
\end{lemma}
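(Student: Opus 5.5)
The idea is to exhibit an explicit unimodular map taking $\Pbipyr(\mathcal{P}, \bm{s}_1, \bm{s}_2)$, with $\bm{s}_1, \bm{s}_2 \in \partial\mathcal{P}$, to a pseudo-bipyramid in which one special point is the origin. The map we use is a lattice shear of $\R^{n+1}$ fixing the hyperplane $x_{n+1}=0$:
\[
\Phi(\bm{x}, x_{n+1}) = (\bm{x} - x_{n+1}\bm{a}, x_{n+1}), \qquad \bm{a}\in\Z^n .
\]
Since $\Phi$ is the identity on $\mathcal{P}\times\{0\}$, it sends $(\bm{s}_1,1)$ to $(\bm{s}_1-\bm{a},1)$ and $(\bm{s}_2,-1)$ to $(\bm{s}_2+\bm{a},-1)$. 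Hence
\[
\Phi(\Pbipyr(\mathcal{P},\bm{s}_1,\bm{s}_2)) = \conv\big(\{(\bm{s}_1-\bm{a},1),(\bm{s}_2+\bm{a},-1)\}\cup \mathcal{P}\times\{0\}\big).
\]
With $\bm{a}=\bm{s}_1$ this is $\conv(\{(\bm{0},1),(\bm{s}_1+\bm{s}_2,-1)\}\cup\mathcal{P}\times\{0\})$. It is a genuine pseudo-bipyramid exactly when $\bm{s}_1+\bm{s}_2$ is a lattice point of $\mathcal{P}$. The proof therefore reduces to showing $\bm{s}_1+\bm{s}_2\in\mathcal{P}\cap\Z^n = \{\bm{0}\}\cup\mathrm{Vert}(\mathcal{P})$ whenever the construction produces a toric diagram.

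The key input is Lemma~\ref{lemma:simplicial}: $\bm{s}_1,\bm{s}_2$ are boundary lattice points, hence vertices, and they do not lie on a common facet. This is where the structure of $\mathcal{C}_n$ enters. I will prove by induction the stronger invariant that every $\mathcal{P}\in\mathcal{C}_n$ is the toric diagram of a monotone Bott prequantization. Equivalently, $\mathcal{P}$ is reflexive with $\bm{0}$ its unique interior point, has the combinatorics of $\diamond_n$, and the two vertices of each missing edge (each antipodal pair) $\bm{u},\bm{w}$ satisfy $\bm{u}+\bm{w}\in\mathcal{P}\cap\Z^n$, meaning the sum is either $\bm{0}$ or another vertex.

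This matches Theorem~\ref{thm:S}. The normals come in pairs $\bm{e}_j$ and the $j$-th column of $L(B_n)$, so each pair sums to $\bm{0}$, $\bm{e}_q$, or $-\bm{e}_q+(\text{stuff})$, which is the $q$-th opposite column. In the combinatorics of $\diamond_n$, two vertices not sharing a facet are exactly an antipodal pair. So $\bm{s}_1+\bm{s}_2$ is a lattice point of $\mathcal{P}$ and the reduction applies. The result falls into the first case ($\bm{s}_1+\bm{s}_2=\bm{0}$) or the second case ($\bm{s}_1+\bm{s}_2$ a vertex, possibly after swapping the roles of $x_{n+1}\mapsto -x_{n+1}$).

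The main obstacle is maintaining the invariant through the induction. I must check that in $\Pbipyr(\mathcal{P},\bm{0},\bm{s})$, or in a standard bipyramid, each new antipodal pair again sums to a lattice point. Old pairs $(\bm{u},0),(\bm{w},0)$ are unaffected. The new pair sums to $(\bm{0}+\bm{s},0)\in\mathcal{P}\times\{0\}$. So the invariant propagates, and the combinatorial type stays that of $\diamond_{n+1}$ because the special points lie on no common facet. If this induction becomes messy, the fallback is to cite the correspondence with Theorem~\ref{thm:S} directly.
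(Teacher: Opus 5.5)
Your proposal is correct. Its reduction step is the same as the paper's: the paper uses exactly your shear with $\bm{a}=\bm{s}_1$ and reduces the lemma to showing $\bm{y}=\bm{s}_1+\bm{s}_2\in\mathcal{P}\cap\Z^n$.

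The routes diverge at that membership step. The paper argues directly from reflexivity. Write $\mathcal{P}=\bigcap_F\{\bm{x}\cdot\bm{\nu}_F+1\geq 0\}$; the integers $\bm{s}_i\cdot\bm{\nu}_F$ are all $\geq -1$. Since $\bm{s}_1,\bm{s}_2$ do not share a facet (Lemma~\ref{lemma:simplicial}), for each $F$ at least one of them is $\geq 0$. Hence $(\bm{s}_1+\bm{s}_2)\cdot\bm{\nu}_F\geq -1$ for every $F$. This argument is one line and non-inductive. It uses neither the $\diamond_n$ combinatorics nor any antipodal-sum property, and it works for any reflexive base and any two lattice points not on a common facet.

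Your route instead carries an inductive invariant ($\diamond_n$ combinatorics, with antipodal pairs summing into $\mathcal{P}$) and then identifies $\bm{s}_1,\bm{s}_2$ as an antipodal pair. This is heavier but sound. The boundary of $\Pbipyr(\mathcal{P},\bm{s}_1,\bm{s}_2)$ is the suspension of $\partial\mathcal{P}$, and the new pair sums to $(\bm{s}_1+\bm{s}_2,0)\in\mathcal{P}\times\{0\}$.

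You should state one point explicitly, since $\mathcal{C}_{n+1}$ also contains third-case polytopes. For those, the invariant at level $n+1$ follows either from the level-$n$ invariant directly, or because your $\Phi$ is linear and preserves the invariant. You only checked the first two cases.

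What your approach buys is extra structure. It shows that antipodal pairs in every polytope of $\mathcal{C}_n$ sum to $\bm{0}$ or to another vertex, which is precisely the pairing pattern encoded in Theorem~\ref{thm:S}. That pattern is the link to monotone Bott manifolds, which the paper only asserts.

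The reflection $x_{n+1}\mapsto -x_{n+1}$ you mention is unnecessary. The polytope $\conv(\{(\bm{0},1),(\bm{y},-1)\}\cup\mathcal{P}\times\{0\})$ is already of the second form.
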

\begin{proof}
Let $\Qq = \text{P-bipyr}(\mathcal{P}, \bm{s}_1, \bm{s}_2)$ where $\bm{s}_1, \bm{s}_2 \in \partial \mathcal{P}$. 
We define a mapping $\varphi: \mathbb{R}^{n+1} \to \mathbb{R}^{n+1}$ given by the transformation
\[
\varphi(\bm{x}, x_{n+1}) = (\bm{x} - x_{n+1} \bm{s}_1, x_{n+1})\,.
\]
Because $\bm{s}_1 \in \mathbb{Z}^n$, $\varphi$ is a unimodular transformation. Applying $\varphi$ to the vertices 
of $\Qq$ yields
\begin{itemize}
\item $\varphi(\bm{x}, 0) = (\bm{x}, 0)$ for all $\bm{x} \in \Vv (\mathcal{P})$;
\item $\varphi(\bm{s}_1, 1) = (\bm{s}_1 - \bm{s}_1, 1) = (\bm{0}, 1)$;
\item $\varphi(\bm{s}_2, -1) = (\bm{s}_2 + \bm{s}_1, -1)$.
\end{itemize}
Therefore, $\varphi(\Qq)$ has vertices equivalent to a pseudo-bipyramid with special points at the origin and at 
$\bm{y} = \bm{s}_1 + \bm{s}_2$. We must verify that $\bm{y}$ is a valid lattice point in $\mathcal{P}$. 
Because $\mathcal{P}$ is Ehrhart equivalent to $\diamond_n$ and its unique interior lattice point is the origin $\bm{0}$, 
we know that $\mathcal{P}$ is a reflexive polytope. Therefore $\mathcal{P}$ can be defined as the intersection of 
half-spaces $\bm{x}\cdot\bm{\nu}_F +1 \geq 0$, where  $\bm{\nu}_F \in \mathbb{Z}^n$ is the primitive interior normal 
to the facet $F$. Since $\bm{s}_1, \bm{s}_2 \in \mathbb{Z}^n \cap \mathcal{P}$, their dot products with $\bm{\nu}_F$ 
are integers lower bounded by $-1$. By Lemma~\ref{lemma:simplicial}, to ensure that the resulting pseudo-bipyramid 
is simplicial, the special points $\bm{s}_1$ and $\bm{s}_2$ cannot lie on the same facet of $\mathcal{P}$. Therefore, 
they cannot simultaneously satisfy $\bm{x}\cdot\bm{\nu}_F = -1$ for any given facet $F$. Because the dot products 
are integers, for every facet $F$ at least one of $\bm{s}_1 \cdot \bm{\nu}_F$ or $\bm{s}_2 \cdot \bm{\nu}_F$ must be
strictly greater than $-1$, meaning it must be $\geq 0$. By linearity of the inner product,  
$(\bm{s}_1 + \bm{s}_2)\cdot \bm{\nu}_F + 1 \geq 0$ for all facets $F$. This proves that $\bm{y} = \bm{s}_1 + \bm{s}_2$ 
satisfies all facet inequalities and therefore is in $\mathcal{P}$. Since $\bm{y}$ is the sum of two integer vectors, it is 
a lattice point in $\mathcal{P}$, hence it is either $\bm{0}$ or belongs to the boundary $\partial \mathcal{P}$. 
Thus, $\varphi(\Qq) = \text{P-bipyr}(\mathcal{P}, \bm{0}, \bm{y})$ with $\bm{y} = \bm{0}$ or $\bm{y} \in \partial \mathcal{P}$
as desired.
\end{proof}

\begin{theorem} \label{thm:pbipyr}
The vertex set of any polytope in $\mathcal{C}_{n+1}$ is of the form
\[
\{(0,...,0, 1), (\bm{y}, -1) \} \cup \{ (\bm{x}, 0) \mid \bm{x} \in \mathcal{P} \}\,,
\]
for some $\mathcal{P} \in \mathcal{C}_{n}$ and $\bm{y} \in \mathbb{Z}^n \cap \mathcal{P}$.
\end{theorem}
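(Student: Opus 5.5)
By definition, every element of $\mathcal{C}_{n+1}$ is $\Qq=\Pbipyr(\mathcal{P},\bm{s}_1,\bm{s}_2)$ for some $\mathcal{P}\in\mathcal{C}_n$ and lattice points $\bm{s}_1,\bm{s}_2\in\mathcal{P}$. The plan is to normalize $\Qq$ by a unimodular map so that the upper special point becomes $(\bm{0},1)$, keeping $\mathcal{P}\times\{0\}$ fixed. Since the statement is clearly meant up to unimodular equivalence, it is enough to produce such a representative. The tool is the shear already used in the preceding lemma,
\[
\varphi(\bm{x},x_{n+1})=(\bm{x}-x_{n+1}\bm{s}_1,\,x_{n+1}),
\]
which lies in $GL(n+1,\Z)$ because $\bm{s}_1\in\Z^n$. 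It fixes the hyperplane $x_{n+1}=0$ pointwise, sends $(\bm{s}_1,1)$ to $(\bm{0},1)$, and sends $(\bm{s}_2,-1)$ to $(\bm{s}_1+\bm{s}_2,-1)$.

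The argument then runs through the three cases listed before the lemma. If $\bm{s}_1=\bm{0}$, nothing needs doing and we take $\bm{y}=\bm{s}_2$. If $\bm{s}_2=\bm{0}$ and $\bm{s}_1\neq\bm{0}$, we first apply the reflection $x_{n+1}\mapsto -x_{n+1}$ to swap the roles of the two special points, and so reduce to the previous case. If both special points lie on $\partial\mathcal{P}$, the preceding lemma shows that $\bm{y}:=\bm{s}_1+\bm{s}_2$ is a lattice point of $\mathcal{P}$ and that $\varphi(\Qq)=\Pbipyr(\mathcal{P},\bm{0},\bm{y})$. That lemma uses that $\mathcal{P}$ is reflexive and that $\bm{s}_1,\bm{s}_2$ do not share a facet, by Lemma~\ref{lemma:simplicial}. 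Since $\varphi$ is linear, it maps convex hulls to convex hulls, so in every case
\[
\varphi(\Qq)=\conv\bigl(\{(\bm{0},1),(\bm{y},-1)\}\cup(\mathcal{P}\times\{0\})\bigr).
\]

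It remains to identify the vertex set. The points $(\bm{0},1)$ and $(\bm{y},-1)$ are vertices, since they are the unique points of $\varphi(\Qq)$ on the supporting hyperplanes $x_{n+1}=\pm1$. The slice of $\varphi(\Qq)$ at $x_{n+1}=0$ equals $\mathcal{P}$: this holds because the segment between the two apexes meets height $0$ at $\bm{y}/2\in\mathcal{P}$, so taking the convex hull adds nothing new at height $0$. Every other vertex must lie at height $0$, and hence is a vertex of $\mathcal{P}$. Conversely, each vertex $\bm{v}$ of $\mathcal{P}$ stays a vertex of $\varphi(\Qq)$, because the only way to write $(\bm{v},0)$ as a convex combination that uses the apexes is to weight them equally, and that would force $\bm{v}$ to be a nontrivial convex combination of $\bm{y}/2$ and points of $\mathcal{P}$. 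So the vertex set is as claimed, where $\{(\bm{x},0)\mid \bm{x}\in\mathcal{P}\}$ is read as running over the vertices of $\mathcal{P}$. We also need that $\bm{y}\in\mathcal{P}\cap\Z^n$ and that $\mathcal{P}\in\mathcal{C}_n$, which is unchanged by the construction.

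The main obstacle is this vertex bookkeeping. A vertex $\bm{v}$ of $\mathcal{P}$ would fail to survive only if $\bm{v}=\bm{y}/2$, which is impossible when $\bm{y}\neq 2\bm{v}$. Here we use that the only lattice points of $\mathcal{P}$ are the origin and its vertices, and that $2\bm{v}\notin\mathcal{P}$ for a vertex $\bm{v}\neq\bm{0}$ of a reflexive polytope. With that settled, the remaining steps are the direct computations above.
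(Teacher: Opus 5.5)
Your proposal is correct and follows the paper's route. The paper's proof is a one-line appeal to the preceding lemma, whose shear $(\bm{x},x_{n+1})\mapsto(\bm{x}-x_{n+1}\bm{s}_1,x_{n+1})$ reduces everything to the case where one special point is the origin; your reflection $x_{n+1}\mapsto -x_{n+1}$ for the case $\bm{s}_2=\bm{0}$, and your check that no vertex $\bm{v}$ of $\mathcal{P}$ is lost (because $\bm{y}=2\bm{v}$ would contradict $2\bm{v}\notin\mathcal{P}$), just fill in details the paper leaves implicit.
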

\begin{proof}
By the preceding Lemma, we only need to consider constructions where at least one of the new points is the origin.
\end{proof}
\noindent While this characterizes the coordinates perfectly, enumerating the family remains problematic since 
different choices of $\mathcal{P}$ or $\bm{y}$ can yield toric diagrams that are unimodularly equivalent.

\subsection{Classification via Rooted $3$-Cacti}

To systematically classify the toric diagrams in $\mathcal{C}_n$ up to unimodular equivalence, we encode the 
iterative pseudo-bipyramid construction into a purely combinatorial object, a rooted $3$-cactus graph. There 
are several possible choices of  combinatorial objects to carry out this classification. We chose rooted $3$-cacti 
because they have already been related to monotone Bott manifolds in~\cite{CLMP-2}.

A $3$-cactus (or triangle-cactus) is a connected graph in which every edge belongs to exactly one simple cycle, 
and every such cycle is a triangle ($K_3$). We can naturally construct a bijection between the lattice points in 
a polytope $\mathcal{P} \in \mathcal{C}_n$ and the vertices of a rooted $3$-cactus $T$, where the root 
corresponds to the origin $\bm{0}$, and every other node corresponds to a non-zero boundary lattice point of 
the polytope. Note that whenever a rooted $3$-cactus has $n$ triangles then it has $3n-(n-1)=2n+1$ vertices.

We define this mapping explicitly by building the graph incrementally, alongside the pseudo-bipyramid construction. 
For the base case $n=1$, we start with the polytope $\mathcal{P}_1 = [-1, 1] \in \mathcal{C}_1$. It contains exactly 
three lattice points: the origin $0$, and the boundary points $-1$ and $1$. The corresponding graph $G_1$ is 
initialized as a single triangle ($K_3$), where the root node represents $0$ and the other two vertices represent $-1$ 
and $1$. Suppose we have a $3$-cactus $G_k$ representing a polytope $\mathcal{P}_{k} \in \mathcal{C}_{k}$. 
To construct the next iteration $\mathcal{P}_{k+1} = \text{P-bipyr}(\mathcal{P}_{k}, \bm{0}, \bm{y})$, we first map 
$\bm{x} \mapsto (\bm{x},0)$, and then select an existing lattice point 
$\bm{y} \in \mathbb{Z}^{k+1} \cap \mathcal{P}_{k} \times \{0\}$, 
which corresponds to a node in $G_k$ (where choosing $\bm{y}=0$ corresponds to selecting the root node). 
This operation introduces exactly two new lattice points to the newly formed polytope: $\bm{v}_{k+1} := \bm{e}_{k+1}$ 
and $\bm{w}_{k+1} := \bm{y}-\bm{e}_{k+1}$. In our graph, we represent this geometric addition by attaching a 
new triangle directly to the node corresponding to $\bm{y}$. The two newly added vertices of this triangle naturally 
represent $\bm{v}_{k+1}$ and $\bm{w}_{k+1}$ and we will later refer to $\bm{y}$ as the parent of $\bm{v}_{k+1}$ and 
$\bm{w}_{k+1}$. This procedure can be visualized in Figure~\ref{fig:full_construction}.

Notice the invariant of this construction: the two newly added vertices always perfectly sum to their parent, 
$\bm{v}_k + \bm{w}_k = \bm{y}$. Before proving the main equivalence theorem, we must establish a lemma regarding 
this additive relationship.

\begin{lemma} \label{lemma:triangles}
For any toric diagram $\mathcal{P} \in \mathcal{C}_n$, three distinct points $\bm{a},\bm{b}, \bm{c} \in \mathcal{P} \cap \mathbb{Z}^n$ satisfy 
$\bm{a} + \bm{b} = \bm{c}$ if and only if the corresponding vertices form a triangle in the associated $3$-cactus $G$, with $\bm{c}$ as the parent 
of the others.
\end{lemma}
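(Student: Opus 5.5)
The plan is to prove both directions by induction on $n$, using the explicit coordinates given by the iterative construction and Theorem~\ref{thm:pbipyr}. By that theorem and the cactus construction, the lattice points of $\mathcal{P}_n \in \mathcal{C}_n$ are the origin together with the points $\bm{v}_k=\bm{e}_k$ and $\bm{w}_k=\bm{y}_k-\bm{e}_k$ for $k=1,\dots,n$. Here $\bm{y}_k$ is a lattice point of $\mathcal{P}_{k-1}$, embedded with vanishing coordinates $k,\dots,n$. There are no other lattice points, because $\mathcal{P}_n$ is Ehrhart equivalent to $\diamond_n$ and so has exactly $2n+1$ lattice points. The triangles of $G_n$ are exactly the triples $\{\bm{y}_k,\bm{v}_k,\bm{w}_k\}$, with parent $\bm{y}_k$.

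The ``if'' direction is immediate from the construction. We have $\bm{v}_k+\bm{w}_k=\bm{e}_k+\bm{y}_k-\bm{e}_k=\bm{y}_k$. The three points are distinct, since the $k$-th coordinates of $\bm{v}_k,\bm{w}_k,\bm{y}_k$ are $1,-1,0$.

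For the ``only if'' direction, I would argue by induction on $n$. In the base case $\mathcal{P}_1=[-1,1]$, the only relation among three distinct points of $\{-1,0,1\}$ is $1+(-1)=0$, and this is the single triangle with the root as parent. For the inductive step, suppose $\bm{a}+\bm{b}=\bm{c}$ with $\bm{a},\bm{b},\bm{c}$ distinct lattice points of $\mathcal{P}_n$, and look at the last coordinate. Every lattice point has last coordinate in $\{-1,0,1\}$, and the values $1$ and $-1$ are attained only by $\bm{v}_n$ and $\bm{w}_n$ respectively. The equation $a_n+b_n=c_n$ then leaves the following cases.
\begin{enumerate}
\item[(i)] All three coordinates vanish. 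Then $\bm{a},\bm{b},\bm{c}$ lie in $\mathcal{P}_{n-1}\times\{0\}$, and their lattice points are exactly those of $\mathcal{P}_{n-1}$. The induction hypothesis then gives a triangle of $G_{n-1}\subset G_n$ with $\bm{c}$ as parent.
\item[(ii)] $\{a_n,b_n\}=\{1,-1\}$ and $c_n=0$. Then $\{\bm{a},\bm{b}\}=\{\bm{v}_n,\bm{w}_n\}$, so $\bm{c}=\bm{v}_n+\bm{w}_n=\bm{y}_n$. This is the last attached triangle, with parent $\bm{y}_n$. It includes the case $\bm{y}_n=\bm{0}$, where the parent is the root.
\item[(iii)] One of $a_n,b_n$ is $0$ and the other equals $c_n=\pm1$. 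Say $a_n=c_n=1$; then $\bm{a}=\bm{c}=\bm{v}_n$, which contradicts distinctness. The cases with $-1$ or with $b_n=c_n$ are identical.
\end{enumerate}
Sums equal to $\pm2$ are impossible, which exhausts all cases.

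The only step that needs care, and the one I expect to be the main obstacle, is the claim used in (i): the lattice points of $\mathcal{P}_n$ with $x_n=0$ are exactly the lattice points of $\mathcal{P}_{n-1}\times\{0\}$. This is what lets the induction hypothesis apply, since there must be no ``new'' lattice points in the slice $x_n=0$. I would get it from counting. By Theorem~\ref{teo:6}, the pseudo-bipyramid has $h^*$-polynomial $(1+z)h^*_{\mathcal{P}_{n-1}}$, so inductively it is Ehrhart equivalent to $\diamond_n$. Its lattice point count is therefore $L_{\diamond_n}(1)=2n+1$. The $2n-1$ points of $\mathcal{P}_{n-1}\times\{0\}$ together with $\bm{v}_n$ and $\bm{w}_n$ already account for all of these.
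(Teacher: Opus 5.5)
Your proof is correct and is essentially the paper's argument. The paper takes the largest coordinate index $k$ at which one of $\bm{a},\bm{b},\bm{c}$ is nonzero and runs your case analysis on $c_k$: $c_k\neq 0$ forces a summand to equal $\bm{c}$, and $c_k=0$ forces $\{\bm{a},\bm{b}\}$ to be the pair added at step $k$. That is your induction on the last coordinate, unrolled. Your counting argument showing that no extra lattice points lie in the slice $x_n=0$ makes explicit a point the paper takes for granted.
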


\begin{proof} 

\noindent $(\impliedby)$ This follows directly from the construction.
		
\noindent $(\implies)$ Suppose $\bm{a} + \bm{b} = \bm{c}$ for three distinct lattice points in $\mathcal{P}$. Because they are distinct, 
neither $\bm{a}$ nor $\bm{b}$ can be the origin $\bm{0}$ (otherwise we would have $\bm{a} = \bm{c}$ or $\bm{b} = \bm{c}$).

Let $k \le n$ be the largest coordinate index where at least one of these vertices has a non-zero entry. All coordinates in our 
construction belong to $\{-1, 0, 1\}$.
		
If $c_k \neq 0$, assume without loss of generality $c_k = 1$. To satisfy $a_k + b_k = 1$ with coordinates restricted to $\{-1, 0, 1\}$, 
exactly one of the summands, say $a_k$, must equal $1$. By the pseudo-bipyramid construction, there is a unique point in 
$\mathcal{P}$ with a $k$-th coordinate of $1$ and all subsequent coordinates $0$. This forces $\bm{a} = \bm{c}$, which contradicts 
the assumption that $\bm{a}$ and $\bm{c}$ are distinct points.
		
Thus, we must have $c_k = 0$, forcing $\{a_k, b_k\} = \{1, -1\}$. This uniquely identifies $\bm{a}$ and $\bm{b}$ as the exact pair of 
vertices added at dimension $k$. By Theorem~\ref{thm:pbipyr}, these are $(\bm{0}, 1)$ and $(\bm{y}, -1)$. Their sum is $(\bm{y}, 0)$, 
identifying $\bm{c}$ as their parent vertex embedded in the $(k-1)$-dimensional base.
\end{proof}

	\begin{figure}[H]
		\centering
		\begin{tikzpicture}[
			every label/.append style={font=\scriptsize}, 
			root/.style={circle, draw, thick, fill=black, inner sep=1.5pt}, 
			leafcontext/.style={circle, draw=gray, thick, fill=white, inner sep=1.5pt},
			parentactive/.style={circle, draw=violet, thick, fill=violet!30, inner sep=1.5pt},
			nodesnew/.style={circle, draw=cyan, thick, fill=cyan!10, inner sep=1.5pt},
			scale=0.9,
			>=stealth
			]
			\def\L{0.9} 
			
			\node[font=\bfseries] at (0, 2.0) {Polytope};
			\node[font=\bfseries] at (5.5, 2.0) {$3$-Cactus};
			
			
			\begin{scope}[shift={(0, 0)}]
				\draw[->, gray!50] (-2.2, 0) -- (2.2, 0) node[right, text=black, font=\scriptsize] {$x_1$};
				\draw[thick] (-1.2, 0) -- (1.2, 0);
				
				\node[root, label=above:{$0$}] at (0, 0) {};
				\node[leafcontext, label=below:{$-1$}] at (-1.2, 0) {};
				\node[leafcontext, label=below:{$1$}] at (1.2, 0) {};
			\end{scope}
			
			\begin{scope}[shift={(5.5, 0.4)}]
				\coordinate (R1) at (0,0);
				\coordinate (L1) at ($(R1) + (240:\L)$);
				\coordinate (L2) at ($(R1) + (300:\L)$);
				
				\draw[thick, fill=gray!10] (R1) -- (L1) -- (L2) -- cycle;
				
				\node[root, label=above:{$0$}] at (R1) {};
				\node[leafcontext, label=below left:{$-1$}] at (L1) {};
				\node[leafcontext, label=below right:{$1$}] at (L2) {};
			\end{scope}
			
			\draw[->, thick, dashed, black!80] (2.75, -1.0) -- (2.75, -3.5) 
			node[midway, fill=white, align=center, font=\small] {Attach $e_2$ and $y_1-e_2$\\to $y_1=(-1,0)$};
			
			
			\begin{scope}[shift={(0, -5.5)}]
				\draw[->, gray!50] (-2.2,0) -- (2.2,0) node[right, text=black, font=\scriptsize] {$x_1$};
				\draw[->, gray!50] (0,-2) -- (0,2) node[above, text=black, font=\scriptsize] {$x_2$};
				
				\coordinate (V1) at (1.2,0);    
				\coordinate (V2) at (0,1.2);    
				\coordinate (V3) at (-1.2,0);   
				\coordinate (V4) at (-1.2,-1.2);
				\coordinate (Origin) at (0,0);
				
				\draw[thick, fill=gray!10] (V1) -- (V2) -- (V3) -- (V4) -- cycle;
				
				\node[root, label=below right:{$0$}] at (Origin) {};
				
				\node[leafcontext] at (V1) {};
				
				\node[parentactive, label=left:{$y_1=(-1,0)$}] at (V3) {};
				\node[nodesnew, label=above:{$e_2=(0,1)$}] at (V2) {};
				\node[nodesnew, label=below left:{$y_1-e_2=(-1,-1)$}] at (V4) {};
			\end{scope}
			
			\begin{scope}[shift={(5.5, -4.6)}]
				\coordinate (R2) at (0,0);
				\coordinate (L2A) at ($(R2) + (240:\L)$);
				\coordinate (L2B) at ($(R2) + (300:\L)$);
				
				\coordinate (N1) at ($(L2A) + (210:\L)$);
				\coordinate (N2) at ($(L2A) + (270:\L)$);
				
				\draw[thick, fill=gray!10] (R2) -- (L2A) -- (L2B) -- cycle;
				\draw[thick, draw=cyan, fill=cyan!10] (L2A) -- (N1) -- (N2) -- cycle;
				
				\node[root, label=above:{$0$}] at (R2) {};
				
				\node[leafcontext] at (L2B) {};
				
				\node[parentactive, label=above left:{$y_1=(-1,0)$}] at (L2A) {};
				\node[nodesnew, label=below left:{$y_1-e_2=(-1,-1)$}] at (N1) {};
				\node[nodesnew, label=right:{$e_2=(0,1)$}] at (N2) {};
			\end{scope}
			
			\draw[->, thick, dashed, black!80] (2.75, -7.0) -- (2.75, -9.5) 
			node[midway, fill=white, align=center, font=\small] {Attach $e_3$ and $y_2-e_3$\\to $y_2=(0,1,0)$};
			
			
			\begin{scope}[shift={(0, -11.5)}, x={(-0.5cm,-0.3cm)}, y={(1cm,0cm)}, z={(0cm,1cm)}]
				\draw[->, gray!50] (-2,0,0) -- (2,0,0) node[below left, text=black, font=\scriptsize] {$x_1$};
				\draw[->, gray!50] (0,-1.5,0) -- (0,2,0) node[right, text=black, font=\scriptsize] {$x_2$};
				\draw[->, gray!50] (0,0,-1.5) -- (0,0,2) node[above, text=black, font=\scriptsize] {$x_3$};
				
				\coordinate (B1) at (1.2, 0, 0); 
				\coordinate (B2) at (0, 1.2, 0); 
				\coordinate (B3) at (-1.2, 0, 0);
				\coordinate (B4) at (-1.2, -1.2, 0);
				
				\coordinate (T1) at (0, 0, 1.2);       
				\coordinate (T2) at (0, 1.2, -1.2);    
				\coordinate (Org) at (0,0,0);
				
				\draw[fill=gray!10, draw=none] (T1) -- (B4) -- (B1) -- (T2) -- (B2) -- cycle;
				
				\draw[thick, dashed] (T1) -- (B3);
				\draw[thick, dashed] (B4) -- (B3);
				\draw[thick, dashed] (B3) -- (B2);
				\draw[thick, dashed] (T2) -- (B3);
				\draw[thick, dashed] (T2) -- (B4);
				
				\draw[thick] (T1) -- (B4);
				\draw[thick] (B4) -- (B1);
				\draw[thick] (B1) -- (T2);
				\draw[thick] (T2) -- (B2);
				\draw[thick] (B2) -- (T1);
				\draw[thick] (T1) -- (B1);
				\draw[thick] (B1) -- (B2);
				
				\node[root, label=below right:{$0$}] at (Org) {};
				
				\node[leafcontext] at (B1) {}; 
				\node[leafcontext] at (B3) {}; 
				\node[leafcontext] at (B4) {};
				
				\node[parentactive, label=below right:{$y_2=(0,1,0)$}] at (B2) {};
				\node[nodesnew, label=above:{$e_3=(0,0,1)$}] at (T1) {};
				\node[nodesnew, label=below left:{$y_2-e_3=(0,1,-1)$}] at (T2) {};
			\end{scope}
			
			\begin{scope}[shift={(5.5, -10.2)}]
				\coordinate (R3) at (0,0);
				\coordinate (L3A) at ($(R3) + (240:\L)$);
				\coordinate (L3B) at ($(R3) + (300:\L)$);
				
				\coordinate (N3A) at ($(L3A) + (210:\L)$);
				\coordinate (N3B) at ($(L3A) + (270:\L)$); 
				
				\coordinate (P3A) at ($(N3B) + (240:\L)$);
				\coordinate (P3B) at ($(N3B) + (300:\L)$);
				
				\draw[thick, fill=gray!10] (R3) -- (L3A) -- (L3B) -- cycle;
				\draw[thick, fill=white] (L3A) -- (N3A) -- (N3B) -- cycle;
				\draw[thick, draw=cyan, fill=cyan!10] (N3B) -- (P3A) -- (P3B) -- cycle;
				
				\node[root, label=above:{$0$}] at (R3) {};
				
				\node[leafcontext] at (L3B) {}; 
				\node[leafcontext] at (L3A) {};
				\node[leafcontext] at (N3A) {};
				
				\node[parentactive, label=right:{$y_2=(0,1,0)$}] at (N3B) {};
				\node[nodesnew, label=below left:{$e_3=(0,0,1)$}] at (P3A) {};
				\node[nodesnew, label=below right:{$y_2-e_3=(0,1,-1)$}] at (P3B) {};
			\end{scope}
			
		\end{tikzpicture}
		\caption{Visualizing the iterative construction sequence. At each dimension step $k$, only the active parent vertex $\bm{y}_{k-1}$ (Purple) 
		and the two newly generated integer vertices $\bm{e}_k$ and $\bm{y}_{k-1}-\bm{e}_k$ (Cyan) are labeled, explicitly isolating the newly 
		formed triangle.}
		\label{fig:full_construction}
	\end{figure}
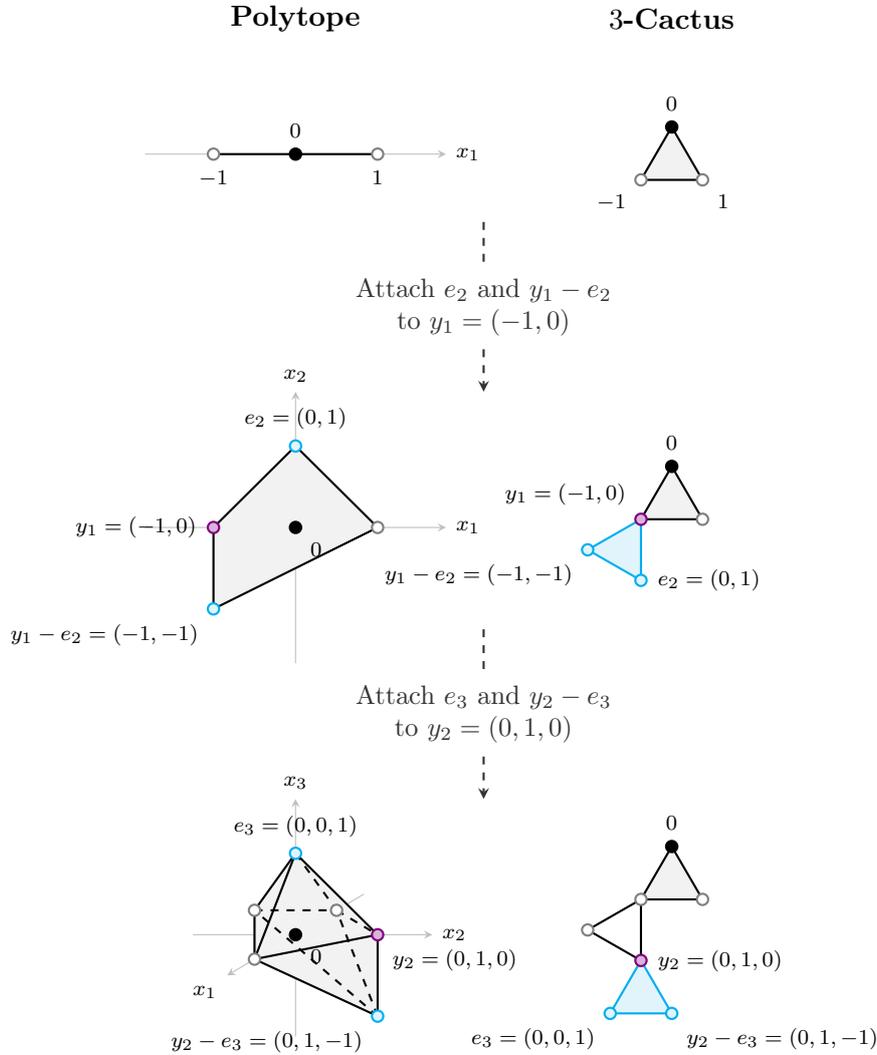

We define a leaf triangle in a rooted $3$-cactus $G$ as a triangle containing exactly two vertices of degree $2$ (the leaves of the graph), 
with the third vertex acting as their parent.
\begin{lemma}\label{lemma:permutation}
The unimodular equivalence class of a polytope $\mathcal{P} \in \mathcal{C}_n$ depends only on its associated $3$-cactus $G$, 
and not on the specific sequence in which its triangles are constructed. Consequently, any leaf triangle of $G$ can be assumed to 
correspond to the final construction step in dimension $n$.
\end{lemma}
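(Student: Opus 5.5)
The plan is to show that the polytope is determined, up to unimodular equivalence, by the cactus $G$ together with the parent relation. The key step is to exhibit a unimodular map between the polytopes obtained from two different construction orders of the same $G$.

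\textbf{Step 1: reduce to one transposition.} Two admissible construction orders of $G$ are sequences of triangles in which each triangle appears after the triangle containing its parent node. Any two such orders are related by a sequence of swaps of consecutive steps $k,k+1$ in which the triangle added at step $k+1$ is not attached to a vertex created at step $k$. So it suffices to treat a single such swap.

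\textbf{Step 2: set up the two polytopes.} Consider an order in which step $k$ adds $\bm{v}_k=\bm{e}_k$ and $\bm{w}_k=\bm{y}-\bm{e}_k$, and step $k+1$ adds $\bm{e}_{k+1}$ and $\bm{y}'-\bm{e}_{k+1}$. The parent $\bm{y}'$ is a point created before step $k$, so it has zero $k$-th and $(k+1)$-st coordinates. The same holds for $\bm{y}$.

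\textbf{Step 3: follow the swap through later steps.} After the swap, later steps attach to the same nodes of $G$. Their coordinates are obtained from the original ones by applying the coordinate transposition $x_k\leftrightarrow x_{k+1}$. This holds because every point's coordinate vector is the sum, along its path in the tree of triangles, of $\pm\bm{e}_j$ terms. More precisely, each node equals its parent plus $\bm{e}_j$, or its parent minus $\bm{e}_j$ (for $\bm{w}$ nodes this is $\bm{y}-\bm{e}_j$). Induction on the construction shows that the lattice point attached to each node of $G$ depends only on the node and the labelling of triangles by coordinate indices.

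\textbf{Step 4: conclude.} Reordering the steps therefore amounts to relabelling coordinates, which is a permutation matrix in $GL(n,\Z)$. Hence the vertex sets, and so the polytopes, are unimodularly equivalent. The polytope is a convex hull of these nodes, using Theorem~\ref{thm:pbipyr}.

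\textbf{The leaf-triangle claim.} If a leaf triangle is attached to node $\bm{y}$, no triangle is attached to its two new vertices. It can therefore be moved to the last position by swaps, each of which is allowed in the sense of Step 1. This gives the consequence stated in the lemma.

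\textbf{Main obstacle.} The delicate point is the path-sum description in Step 3. A $\bm{w}$ node equals its parent minus $\bm{e}_j$, while a $\bm{v}$ node is $\bm{e}_j$ itself rather than parent plus $\bm{e}_j$. So the formula needs care, although it is still determined by the node and its label.

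The cactus alone does not distinguish $\bm{v}$ from $\bm{w}$ among the two children. Swapping their roles is handled by the unimodular reflection-shear $x_k\mapsto \ell(\bm{x})-x_k$, where $\ell$ picks out the parent's coordinates. Verifying that this map fixes all other nodes, using Lemma~\ref{lemma:triangles} to control which coordinates are nonzero, is the step I expect to require the most checking.
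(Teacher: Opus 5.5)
Steps 1--4 and the leaf-triangle consequence follow the paper's own argument. Changing to another admissible order of the same triangles only changes which axis $\bm{e}_k$ each triangle is assigned, so the two vertex sets differ by a permutation matrix. The paper does this in one stroke for an arbitrary linear extension rather than through adjacent transpositions, but that difference is cosmetic. Your first formulation of the ``path-sum'' in Step 3 is wrong, since a $\bm{v}$ node is $\bm{e}_j$, not parent $+\bm{e}_j$, as you notice yourself. It is also not needed: the recursion ``$\bm{v}$ node $=\bm{e}_j$, $\bm{w}$ node $=$ parent $-\bm{e}_j$'' already shows that coordinates depend only on the node, its role, and the triangle labels.

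The real problem is in your extra step. You are right that the abstract cactus does not record which child of a triangle is $\bm{e}_k$ and which is $\bm{y}-\bm{e}_k$. The paper's proof of this lemma also leaves this untreated: it tacitly keeps the roles fixed, and only later, in the proof of Theorem~\ref{thm:3cactus_iso}, does the block matrix with $U(\bm{e}_n)=\phi(\bm{v})$ and $\alpha=\pm1$ absorb the swap. However, the map you propose fails. A map that changes only the $k$-th coordinate, $x_k\mapsto \ell(\bm{x})-x_k$, sends $\bm{e}_k$ to a multiple of $\bm{e}_k$. It therefore cannot send $\bm{e}_k$ to $\bm{y}-\bm{e}_k$ unless $\bm{y}=\bm{0}$.

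The reflection-shear that works is the transpose of yours and acts on the other coordinates:
\[
U(\bm{x})=\bm{x}+x_k(\bm{y}-2\bm{e}_k),
\]
that is, $x_k\mapsto -x_k$ and $x_j\mapsto x_j+y_jx_k$ for $j\neq k$. It is an involution in $GL(n,\Z)$ and trivially fixes every node with $x_k=0$. These include $\bm{y}$ and everything outside the two subtrees hanging from triangle $k$. Every node with $x_k\neq 0$ has the form $\bm{e}_k-\sum\bm{e}_{j_i}$ or $\bm{y}-\bm{e}_k-\sum\bm{e}_{j_i}$ with all $j_i\neq k$. On these, $U$ interchanges the two forms. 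This is exactly the relabelled construction in which the two subtrees trade parents.
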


\begin{proof}
The rooted structure of $G$ imposes a natural partial order on its triangles, as any parent vertex must be generated before its children. 
A valid geometric construction of $\mathcal{P}$ corresponds to choosing a linear extension of this partial order, assigning each of the 
$n$ triangles to a distinct step $k \in \{1, \dots, n\}$.
		
By the pseudo-bipyramid construction, at each step $k \in \{ 2, \dots, n \}$, the operation embeds the $(k-1)$-dimensional base into 
$\mathbb{R}^{k}$ and introduces two new vertices. Specifically, the two vertices added at step $k$ take the values $\pm 1$ exclusively 
in the $k$-th coordinate. As a result, the geometric embedding of each triangle in $G$ is entirely determined by the axis $\bm{e}_{k}$ 
assigned at its creation step.
		
Suppose we generate two polytopes, $\mathcal{P}_1$ and $\mathcal{P}_2$, using two different valid construction sequences for the 
exact same graph $G$. Because the set of triangles and their parent-child adjacencies are identical, the step sequence for 
$\mathcal{P}_2$ is merely a permutation of the sequence for $\mathcal{P}_1$. Geometrically, this simply permutes the assignment 
of the orthogonal basis vectors $\{\bm{e}_1, \dots, \bm{e}_n\}$ to the triangles of $G$. The linear transformation that maps the vertices 
of $\mathcal{P}_1$ to $\mathcal{P}_2$ is exactly this permutation of the coordinate axis. Because permutation matrices have determinant 
$\pm 1$, this transformation is unimodular, therefore, any two valid attachment sequences for $G$ generate polytopes that are unimodularly 
equivalent.
		
This allows us to freely reorder the construction so that any chosen leaf triangle is generated at the final step $n$.
\end{proof}

\begin{theorem} \label{thm:3cactus_iso}
Two polytopes $\mathcal{P}_1, \mathcal{P}_2 \in \mathcal{C}_n$ are unimodularly equivalent if and only if their 
corresponding rooted $3$-cacti $G_1$ and $G_2$ are isomorphic.
\end{theorem}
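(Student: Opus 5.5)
The plan is to prove the two implications separately, using Lemma~\ref{lemma:triangles} as the bridge between the additive structure of lattice points and the graph structure of the cactus.

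\emph{($\Leftarrow$) Isomorphic cacti give equivalent polytopes.} Suppose $\psi: G_1 \to G_2$ is a rooted isomorphism. We induct on $n$, and the base case $n=1$ is trivial. For the inductive step, pick a leaf triangle $\tau_1$ of $G_1$. Its image $\tau_2=\psi(\tau_1)$ is a leaf triangle of $G_2$. By Lemma~\ref{lemma:permutation}, we may assume both $\tau_1$ and $\tau_2$ are built at step $n$. Removing them gives isomorphic rooted cacti for polytopes $\mathcal{P}_1',\mathcal{P}_2'\in\mathcal{C}_{n-1}$. By induction there is a unimodular map $A\in GL(n-1,\Z)$ with $A(\mathcal{P}_1')=\mathcal{P}_2'$. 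We may choose $A$ compatible with $\psi$ on lattice points; this is the delicate point, handled below. In particular $A\bm{y}_1=\bm{y}_2$, where $\bm{y}_i$ is the parent of $\tau_i$. Extend $A$ to $\tilde A=A\oplus 1$. Then $\tilde A$ fixes $\bm{e}_n$ and sends $(\bm{y}_1,-1)$ to $(\bm{y}_2,-1)$, so $\tilde A(\mathcal{P}_1)=\mathcal{P}_2$ by Theorem~\ref{thm:pbipyr}.

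To make the induction carry compatibility, I would strengthen the statement: for every rooted isomorphism $\psi$ there is a unimodular map inducing $\psi$ on lattice points. In the extension step, $\psi$ might send $\bm{v}_n$ to $\bm{w}_n$ rather than to $\bm{v}_n$. In that case compose $\tilde A$ with the shear $(\bm{x},x_n)\mapsto(\bm{x}+x_n\bm{y}_2,-x_n)$, which is unimodular, fixes the base, and swaps $(\bm{0},1)$ with $(\bm{y}_2,-1)$.

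\emph{($\Rightarrow$) Equivalent polytopes give isomorphic cacti.} A unimodular equivalence $\Phi$ between $\mathcal{P}_1$ and $\mathcal{P}_2$ maps lattice points bijectively. It sends the unique interior lattice point to the unique interior lattice point, so after composing with a translation it is linear and fixes the origin. Being linear, $\Phi$ preserves every relation $\bm{a}+\bm{b}=\bm{c}$ among distinct lattice points. By Lemma~\ref{lemma:triangles}, these relations are exactly the triangles of the cactus together with their parent labels. Hence $\Phi$ induces a bijection on vertices that maps triangles to triangles and the root $\bm{0}$ to the root. Since the edges of a $3$-cactus are exactly the edges of its triangles, this bijection is a rooted graph isomorphism $G_1\cong G_2$.

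\emph{Main obstacle.} I expect the delicate step to be the compatibility bookkeeping in ($\Leftarrow$). The automorphism of the cactus restricted to the smaller cactus must be realized by the inductive map while also matching the chosen orientation of the new pair; the strengthened inductive statement is designed to absorb this. A secondary check is that the shear swapping $\bm{v}_n$ and $\bm{w}_n$ fixes the base points $(\bm{x},0)$, which is immediate from its formula.
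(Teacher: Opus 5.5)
Your proposal is correct and follows the paper's proof essentially step for step. In $(\Rightarrow)$ you force the equivalence to fix $\bm{0}$, hence to be linear, and then read off the cactus from additive relations via Lemma~\ref{lemma:triangles}. In $(\Leftarrow)$ you induct on $n$, move a leaf triangle to the last step by Lemma~\ref{lemma:permutation}, and carry compatibility with the given isomorphism through the induction. The paper does the same, building $U$ with $U(\bm{x})=\phi(\bm{x})$ on all lattice points, and your composite of $A\oplus 1$ with the swapping shear is exactly the paper's block matrix whose last column is $\phi(\bm{v})=(\bm{q},\alpha)$.
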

\begin{proof}
$(\implies)$ Suppose $\mathcal{P}_1$ and $\mathcal{P}_2$ are unimodularly equivalent. By definition, there exists an unimodular 
transformation $U: \mathbb{Z}^n \to \mathbb{Z}^n$ mapping $\Vv(\mathcal{P}_1)$ bijectively to $\Vv(\mathcal{P}_2)$. 
		
All polytopes in $\mathcal{C}_n$ are reflexive and the origin $\bm{0}$ is their unique interior lattice point. For $U$ to map 
$\mathcal{P}_1$ to $\mathcal{P}_2$ bijectively, it must map the interior to the interior, forcing $U(\bm{0}) = \bm{0}$. 
This guarantees that $U$ is linear (possessing no translation component).
		
Because $U$ is a linear bijection, the additive relation $\bm{a} + \bm{b} = \bm{c}$ holds for a triplet of vertices in 
$\mathcal{P}_1$ if and only if $U(\bm{a}) + U(\bm{b}) = U(\bm{c})$ holds in $\mathcal{P}_2$. 
By Lemma~\ref{lemma:triangles}, these sums identify the adjacencies of the $3$-cactus.
		
We claim the restriction $U|_{\Vv(\mathcal{P}_1)}$ acts as a strict, root-preserving graph isomorphism between $G_1$ and $G_2$.
First, since $U(\bm{0}) = \bm{0}$, the root of the graph is preserved. Second, two distinct vertices $\bm{x}, \bm{y} \in \Vv (G_1)$ 
share an edge if and only if they belong to the same triangle. By Lemma~\ref{lemma:triangles}, this occurs if and only if there exists 
a third vertex $\bm{z} \in \Vv (G_1)$ such that one of the three vertices is the exact sum of the other two. Because $U$ preserves 
this exact additive relation, this holds if and only if the same equality holds in $\mathcal{P}_2$. This sum guarantees that 
$\{U(\bm{x}), U(\bm{y}), U(\bm{z})\}$ forms a valid triangle in $G_2$. Therefore, $\{\bm{x}, \bm{y}\}$ is an edge in $G_1$ if and only 
if $\{U(\bm{x}), U(\bm{y})\}$ is an edge in $G_2$, proving the isomorphism.
		
\noindent $(\impliedby)$ Suppose $G_1$ and $G_2$ are isomorphic via a root-preserving graph isomorphism 
$\phi: \Vv (G_1) \to \Vv (G_2)$. We will explicitly construct a unimodular matrix $U$ such that 
$U(\bm{x}) = \phi(\bm{x})$ for all $\bm{x} \in \Vv (\mathcal{P}_1)$, proceeding by induction on $n$.
		
For the base case $n=1$ we have that $G_1$ and $G_2$ are single triangles corresponding to $[-1, 1]$. The unimodular map 
$U = [1]$ or $U = [-1]$ trivially satisfies the claim depending on whether the isomorphism $\phi$ fixes or swaps the vertices.
		
For the inductive step let $n>1$ and assume the claim holds for dimension $n-1$. Because $G_1$ is a finite $3$-cactus, 
it contains at least one leaf triangle $\{\bm{y}, \bm{v}, \bm{w}\}$ where $\bm{y}$ is the parent vertex. 
By Lemma~\ref{lemma:permutation}, we may assume up to a unimodular permutation that this leaf triangle was added at the final 
$n$-th step. Thus, $\bm{v} = \bm{e}_n$ and $\bm{w} = \bm{y}-\bm{e}_n$, producing the relation $\bm{v} + \bm{w} = \bm{y}$. 
The isomorphism $\phi$ maps this to a leaf triangle $\{\phi(\bm{y}), \phi(\bm{v}), \phi(\bm{w})\}$ in $G_2$. 
Applying Lemma~\ref{lemma:permutation} to $\mathcal{P}_2$, we can reindex its construction steps so this corresponding 
mapped leaf is also added at the $n$-th step. Because it is attached to the parent $\phi(\bm{y})$, the newly added vertices 
$\{\phi(\bm{v}), \phi(\bm{w})\}$ are exactly $\bm{e}_d$ and $\phi(\bm{y}) -\bm{e}_n$. Consequently, $\phi(\bm{v})$ has $n$-th 
coordinate $\alpha \in \{1, -1\}$ and its first $n-1$ coordinates form some vector $\bm{q} \in \mathbb{Z}^{n-1}$. 
Removing these leaf vertices from $G_1$ and $G_2$ leaves isomorphic $3$-cacti of dimension $n-1$. By the inductive hypothesis, 
there exists a $(n-1) \times (n-1)$ unimodular matrix $U'$ mapping all remaining base vertices perfectly, ensuring $U'(\bm{y})  = \phi(\bm{y})$. 
We extend $U'$ to a $n \times n$ matrix $U$ by defining the action on the new basis vector as $U(\bm{e}_n) = \phi(\bm{v})$. 
Because all base vectors in $\mathcal{P}_{n-1}$ have a $0$ in the $n$-th coordinate, $U$ naturally forms a block matrix given by
\[
U = \begin{bmatrix} 
U' & q \\ 
\mathbf{0}^T & \alpha
\end{bmatrix}.
\]
Its determinant is $\det(U) = \alpha \det(U') = \pm 1$, meaning $U$ is a unimodular map. Finally, we verify that $U$ maps the 
remaining leaf vertex $\bm{w}$ correctly. By linearity and the additive relation of the triangle in $G_1$,
\[
U(\bm{w}) = U(\bm{y} - \bm{v}) = U(\bm{y}) - U(\bm{v}) = \phi(\bm{y}) - \phi(\bm{v})\,.
\]
Because $\{\phi(\bm{y}), \phi(\bm{v}), \phi(\bm{w})\}$ is a triangle in $G_2$, we know from Lemma ~\ref{lemma:triangles} that 
$\phi(\bm{v}) + \phi(\bm{w}) = \phi(\bm{y})$. Substituting this directly gives $U(\bm{w}) = \phi(\bm{w})$. Since $U$ matches 
$\phi$ on the base vertices and correctly maps both new leaves, the polytopes are unimodularly equivalent.
\end{proof}

	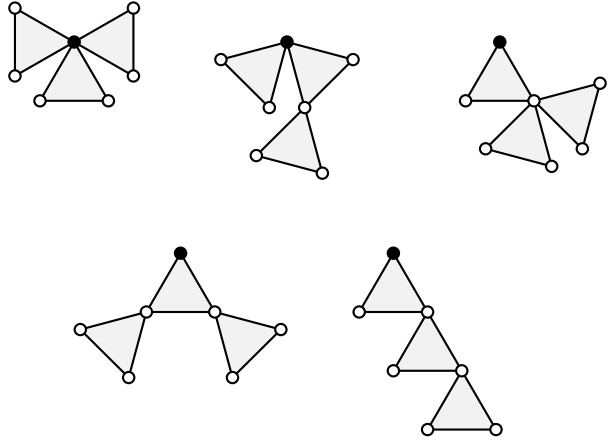
\begin{figure}[H]
		\centering
		\begin{tikzpicture}[
			root/.style={circle, draw, thick, fill=black, inner sep=1.5pt},
			leaf/.style={circle, draw, thick, fill=white, inner sep=1.5pt},
			scale=1.0
			]
			\def\L{0.9} 
			
			\begin{scope}[shift={(0,0)}]
				\coordinate (R1) at (0,0);
				\coordinate (G1T1A) at ($(R1) + (150:\L)$);
				\coordinate (G1T1B) at ($(R1) + (210:\L)$);
				\coordinate (G1T2A) at ($(R1) + (240:\L)$);
				\coordinate (G1T2B) at ($(R1) + (300:\L)$);
				\coordinate (G1T3A) at ($(R1) + (330:\L)$);
				\coordinate (G1T3B) at ($(R1) + (30:\L)$);
				
				\draw[thick, fill=gray!10] (R1) -- (G1T1A) -- (G1T1B) -- cycle;
				\draw[thick, fill=gray!10] (R1) -- (G1T2A) -- (G1T2B) -- cycle;
				\draw[thick, fill=gray!10] (R1) -- (G1T3A) -- (G1T3B) -- cycle;
				
				\node[root] at (R1) {};
				\node[leaf] at (G1T1A) {}; \node[leaf] at (G1T1B) {};
				\node[leaf] at (G1T2A) {}; \node[leaf] at (G1T2B) {};
				\node[leaf] at (G1T3A) {}; \node[leaf] at (G1T3B) {};
			\end{scope}
			
			\begin{scope}[shift={(2.8,0)}]
				\coordinate (R2) at (0,0);
				\coordinate (G2T1A) at ($(R2) + (195:\L)$);
				\coordinate (G2T1B) at ($(R2) + (255:\L)$);
				\coordinate (G2T2A) at ($(R2) + (285:\L)$);
				\coordinate (G2T2B) at ($(R2) + (345:\L)$);
				\coordinate (G2T3A) at ($(G2T2A) + (225:\L)$);
				\coordinate (G2T3B) at ($(G2T2A) + (285:\L)$);
				
				\draw[thick, fill=gray!10] (R2) -- (G2T1A) -- (G2T1B) -- cycle;
				\draw[thick, fill=gray!10] (R2) -- (G2T2A) -- (G2T2B) -- cycle;
				\draw[thick, fill=gray!10] (G2T2A) -- (G2T3A) -- (G2T3B) -- cycle;
				
				\node[root] at (R2) {};
				\node[leaf] at (G2T1A) {}; \node[leaf] at (G2T1B) {};
				\node[leaf] at (G2T2A) {}; \node[leaf] at (G2T2B) {};
				\node[leaf] at (G2T3A) {}; \node[leaf] at (G2T3B) {};
			\end{scope}
			
			\begin{scope}[shift={(5.6,0)}]
				\coordinate (R3) at (0,0);
				\coordinate (G3T1A) at ($(R3) + (240:\L)$);
				\coordinate (G3T1B) at ($(R3) + (300:\L)$);
				\coordinate (G3T2A) at ($(G3T1B) + (225:\L)$);
				\coordinate (G3T2B) at ($(G3T1B) + (285:\L)$);
				\coordinate (G3T3A) at ($(G3T1B) + (315:\L)$);
				\coordinate (G3T3B) at ($(G3T1B) + (15:\L)$);
				
				\draw[thick, fill=gray!10] (R3) -- (G3T1A) -- (G3T1B) -- cycle;
				\draw[thick, fill=gray!10] (G3T1B) -- (G3T2A) -- (G3T2B) -- cycle;
				\draw[thick, fill=gray!10] (G3T1B) -- (G3T3A) -- (G3T3B) -- cycle;
				
				\node[root] at (R3) {};
				\node[leaf] at (G3T1A) {}; \node[leaf] at (G3T1B) {};
				\node[leaf] at (G3T2A) {}; \node[leaf] at (G3T2B) {};
				\node[leaf] at (G3T3A) {}; \node[leaf] at (G3T3B) {};
			\end{scope}
			
			\begin{scope}[shift={(1.4,-2.8)}]
				\coordinate (R4) at (0,0);
				\coordinate (G4T1A) at ($(R4) + (240:\L)$);
				\coordinate (G4T1B) at ($(R4) + (300:\L)$);
				\coordinate (G4T2A) at ($(G4T1A) + (195:\L)$);
				\coordinate (G4T2B) at ($(G4T1A) + (255:\L)$);
				\coordinate (G4T3A) at ($(G4T1B) + (285:\L)$);
				\coordinate (G4T3B) at ($(G4T1B) + (345:\L)$);
				
				\draw[thick, fill=gray!10] (R4) -- (G4T1A) -- (G4T1B) -- cycle;
				\draw[thick, fill=gray!10] (G4T1A) -- (G4T2A) -- (G4T2B) -- cycle;
				\draw[thick, fill=gray!10] (G4T1B) -- (G4T3A) -- (G4T3B) -- cycle;
				
				\node[root] at (R4) {};
				\node[leaf] at (G4T1A) {}; \node[leaf] at (G4T1B) {};
				\node[leaf] at (G4T2A) {}; \node[leaf] at (G4T2B) {};
				\node[leaf] at (G4T3A) {}; \node[leaf] at (G4T3B) {};
			\end{scope}
			
			\begin{scope}[shift={(4.2,-2.8)}]
				\coordinate (R5) at (0,0);
				\coordinate (G5T1A) at ($(R5) + (240:\L)$);
				\coordinate (G5T1B) at ($(R5) + (300:\L)$);
				\coordinate (G5T2A) at ($(G5T1B) + (240:\L)$);
				\coordinate (G5T2B) at ($(G5T1B) + (300:\L)$);
				\coordinate (G5T3A) at ($(G5T2B) + (240:\L)$);
				\coordinate (G5T3B) at ($(G5T2B) + (300:\L)$);
				
				\draw[thick, fill=gray!10] (R5) -- (G5T1A) -- (G5T1B) -- cycle;
				\draw[thick, fill=gray!10] (G5T1B) -- (G5T2A) -- (G5T2B) -- cycle;
				\draw[thick, fill=gray!10] (G5T2B) -- (G5T3A) -- (G5T3B) -- cycle;
				
				\node[root] at (R5) {};
				\node[leaf] at (G5T1A) {}; \node[leaf] at (G5T1B) {};
				\node[leaf] at (G5T2A) {}; \node[leaf] at (G5T2B) {};
				\node[leaf] at (G5T3A) {}; \node[leaf] at (G5T3B) {};
			\end{scope}
			
		\end{tikzpicture}
		\caption{The 5 unimodular equivalence classes of rooted $3$-cacti for $n=3$.}
		\label{fig:3cacti_d3}
	\end{figure}

By viewing these polytopes purely as rooted $3$-cacti, we can bypass geometric checks and efficiently 
calculate the number of unique isomorphism classes. This allows us to compute the values in 
Table~\ref{table:resultados} as well as vertex descriptions of all those toric diagrams. We remark 
that this sequence is known as Entry A003080 in~\cite{OEIS}.
	
	\begin{table}[H]
		\centering
		\begin{tabular}{||p{5cm} p{5cm}||} 
			\hline
			Dimension & Count of P-bipyramids Ehrhart-equivalent to cross-polytopes \\ [0.5ex] 
			\hline\hline
			1 & 1  \\ 
			2 & 2 \\
			3 & 5 \\
			4 & 13 \\
			5 & 37 \\
			6 & 111 \\
			7 & 345 \\
			8 & 1105 \\
			9 & 3624 \\
			10 & 12099 \\
			11 & 41000 \\
			12 & 140647 \\
			13 & 487440 \\
			14 & 1704115 \\ 
			15 & 6002600 \\ [1ex] 
			\hline
		\end{tabular}
		\caption{Number of pseudo-bipyramids Ehrhart equivalent to cross-polytopes for $n \leq 15$}
		\label{table:resultados}
	\end{table}

\section{Toric diagrams Ehrhart equivalent to a small bipyramid over the unimodular simplex} \label{sec:bipysimplex}

Let $n > 1$ be an integer and $\triangle_{n-1}$ denote the standard unimodular simplex in $\mathbb{R}^{n-1}$.
Consider the family of $n$-dimensional polytopes $\mathcal{P}_k$ given by
\[
\mathcal{P}_k := \text{conv}\left( \{(n+k)\triangle_{n-1} - \bm{1}\} \times \{-1\}, \{(n-k)\triangle_{n-1} - \bm{1}\}  \times \{1\} \right)\,,
\]
where $0 \leq k < n$ is an integer parameter. It is easy to see that $\mathcal{P}_k$ is reflexive and Delzant by 
looking at the hyperplane description of $\mathcal{P}_k$, which is given by:
\begin{itemize}
\item $x_i +1 \geq 0, \forall i \in \{1, \dots, n \}$;
\item $- x_n + 1 \geq 0$;
\item $-\sum_{i=1}^{n-1} x_i- k x_n  +1 \geq 0$.
\end{itemize}
The toric diagram $\mathcal{T}_k$ of the prequantization of $\mathcal{P}_k$ has vertex set
\[
\Vv (\mathcal{T}_k) = \{ \bm{e}_1, \dots, \bm{e}_n, -\bm{e}_n, \bm{v}_k \} 
\]
where $\bm{v}_k := (-1, \dots, -1, -k)$.
\begin{lemma} \label{lemma:dependencyreflexive}
For any $0 \le k < n$, the toric diagram $\mathcal{T}_k$ is a bipyramid with apices $\Aa = \{\bm{e}_n, -\bm{e}_n\}$ and 
base $\Ee = \{\bm{e}_1, \dots, \bm{e}_{n-1}, \bm{v}_k\}$.
\end{lemma}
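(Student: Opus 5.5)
To prove the lemma, I would show that the facets of $\mathcal{T}_k$ are exactly the simplices $\conv(\{\pm\bm{e}_n\}\cup G)$ with $G$ a facet of the base simplex $\conv(\Ee)$, which is what being a bipyramid over $\conv(\Ee)$ with apices $\Aa$ means.

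The natural route is polar duality. Since $\mathcal{T}_k = -\mathcal{P}_k^\ast$ and $\mathcal{P}_k$ is reflexive and Delzant, the facets of $\mathcal{T}_k$ correspond to the vertices of $\mathcal{P}_k$. Each vertex of $\mathcal{P}_k$ lies on exactly $n$ of its facets, and those facets give the normals forming the vertex set of the matching facet of $\mathcal{T}_k$.

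I would therefore list the vertices of $\mathcal{P}_k$ from its $\Vv$-description. There are $n$ vertices on the bottom level $x_n=-1$, namely the vertices of $(n+k)\triangle_{n-1}-\bm{1}$, and $n$ on the top level $x_n=1$, the vertices of $(n-k)\triangle_{n-1}-\bm{1}$. The condition $k<n$ keeps the top simplex non-degenerate, so both levels contribute $n$ vertices.

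At a bottom vertex, the active constraints are $x_n+1\ge 0$ (normal $\bm{e}_n$) and $n-1$ of the remaining $n$ constraints. The remaining constraints have normals $\bm{e}_1,\dots,\bm{e}_{n-1}$ and $(-1,\dots,-1,-k)=\bm{v}_k$, the last coming from $-\sum x_i - kx_n+1\ge0$. At a top vertex, the active constraints are $-x_n+1\ge0$ (normal $-\bm{e}_n$) and again $n-1$ of those same $n$ constraints.

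For the bottom vertex $-\bm{1}$, the active constraints are $x_i\ge -1$ for $i<n$. For a bottom vertex with $x_j=n+k-1$, they are $x_i\ge-1$ for $i\ne j$ together with the last constraint. The last constraint is active because $-(n+k-1-(n-2)) + k+1 = 0$. The top level works the same way, since $-(n-k-1-(n-2)) - k + 1 = 0$.

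Consequently, the facets of $\mathcal{T}_k$ are $\conv(\{\pm\bm{e}_n\}\cup(\Ee\setminus\{\bm{u}\}))$ for $\bm{u}\in\Ee$, using $+\bm{e}_n$ for the bottom vertices and $-\bm{e}_n$ for the top ones. The sign matches the normal-to-vertex correspondence in Proposition~\ref{prop: prequant}.

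It remains to confirm this is a genuine bipyramid, meaning the segment $[\bm{e}_n,-\bm{e}_n]$ meets the relative interior of the base simplex $\conv(\Ee)$. Writing $\sum_{i<n}\bm{e}_i + \bm{v}_k = (0,\dots,0,-k)$ gives
\[
\tfrac1n\Bigl(\textstyle\sum_{i<n}\bm{e}_i+\bm{v}_k\Bigr) = -\tfrac kn\,\bm{e}_n,
\]
a barycentric point of $\conv(\Ee)$ with all coefficients positive. Because $0\le k<n$, this point lies on the open segment between $-\bm{e}_n$ and $\bm{e}_n$ (or is its midpoint when $k=0$). So the apices lie on opposite sides of the hyperplane spanned by $\Ee$, the line through them crosses the interior of the base, and the two cones glue into the bipyramid.

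The main obstacle I expect is keeping the sign conventions straight: whether the facet through $-\bm{1}$ pairs with $+\bm{e}_n$ or $-\bm{e}_n$. This does not affect the combinatorial conclusion, so I would only check it against the explicit inequality at one vertex.
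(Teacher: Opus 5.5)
Your proof is correct, but it obtains the bipyramid structure by a different mechanism than the paper. The paper's proof is just the barycentric identity $\frac1n\sum_{\bm{v}\in\Ee}\bm{v}=\frac{n-k}{2n}\,\bm{e}_n+\frac{n+k}{2n}\,(-\bm{e}_n)$, which is your last paragraph. It combines this with Lemma~\ref{lemma:radonpartition}, which turns a positive affine dependency splitting the $n+2$ vertices into an $n$-set and a $2$-set into the full facet structure of a bipyramid.

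You instead read off the facets of $\mathcal{T}_k$ directly from the vertex--facet incidences of the simple polytope $\mathcal{P}_k$ via $\mathcal{T}_k=-\mathcal{P}_k^\ast$. Your incidence computations are right, including the places where $k<n$ matters: the last constraint has value $n-k>0$ at the top vertex $(-\bm{1},1)$, and $x_j+1=n-k>0$ at the other top vertices. Once the $2n$ facets are in hand the combinatorial type is settled, so the barycenter paragraph is only a geometric sanity check.

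Your route is self-contained: it does not rely on Lemma~\ref{lemma:radonpartition}, whose proof in the paper is fairly informal. It also gives an explicit dictionary: facets through $\bm{e}_n$ are dual to the bottom vertices of $\mathcal{P}_k$, and facets through $-\bm{e}_n$ to the top ones. It does use that the $\mathcal{V}$- and $\mathcal{H}$-descriptions of $\mathcal{P}_k$ agree. This is easy to confirm, since the slice $x_n=t$ of the $\mathcal{H}$-polytope is $-\bm{1}+(n-kt)\triangle_{n-1}$.

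The paper's route is shorter and records the coefficients $\frac{n\pm k}{2n}$, which serve as the invariant distinguishing different $k$ in Theorem~\ref{thm:determinedreflexive}. It also carries over verbatim to $\mathcal{D}_k$ in Lemma~\ref{lemma:dependencygorenstein}.
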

\begin{proof}
Let $B(\mathcal{T}_k)$ be the barycenter of $\Ee$. We have the following affine dependency:
\[
B(\mathcal{T}_k) := \frac{1}{n} \sum_{\bm{v} \in \Ee} \bm{v} = 
\left(\frac{n-k}{2n}\right) \bm{e}_n + \left(\frac{n+k}{2n}\right) (-\bm{e}_n)
\]
Since $0 \le k < n$, the coefficients are positive and sum to $1$. 
By the following Lemma~\ref{lemma:radonpartition}, $\mathcal{T}_k$ is a bipyramid.
\end{proof}
\begin{lemma} \label{lemma:radonpartition}
Let $\Vv = \{\bm{v}_1, \dots, \bm{v}_n, \bm{y}_1, \bm{y}_2\}$ be the set of $n+2$ vertices of a $n$-dimensional lattice polytope 
$\Pp$. If there exists an affine dependency of the form
\[
\sum_{i=1}^n \alpha_i \bm{v}_i = \mu_1 \bm{y}_1 + \mu_2 \bm{y}_2 \,,
\]
where $\alpha_i, \mu_j > 0$ for all $i, j$ and $\sum \alpha_i = \sum \mu_j = 1$, then $\Pp$ is a lattice bipyramid with base 
$\Ee = \{\bm{v}_1, \dots, \bm{v}_d\}$ and apices $\Aa = \{\bm{y}_1, \bm{y}_2\}$.
\end{lemma}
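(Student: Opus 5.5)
The plan is to use the Radon-type affine dependency to identify the face structure of $\Pp$ explicitly. Set $\bm{b} := \sum_{i} \alpha_i \bm{v}_i = \mu_1 \bm{y}_1 + \mu_2 \bm{y}_2$. Since $\Pp$ is $n$-dimensional with $n+2$ vertices, the space of affine dependencies among its vertices is one-dimensional. This dependency has all coefficients nonzero, so no proper subset of $\Vv$ is affinely dependent. In particular, $\Ee$ spans a hyperplane $H$ and is a simplex, and $\bm{y}_1,\bm{y}_2 \notin H$. Moreover, $\bm{b}$ lies in the relative interior of $\conv(\Ee)$ and in the relative interior of the segment $[\bm{y}_1,\bm{y}_2]$. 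Hence $\bm{y}_1$ and $\bm{y}_2$ lie on opposite sides of $H$.

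Next I will determine the facets. Recall Radon's theorem for $n+2$ points in general position: the facets of $\conv(\Vv)$ are exactly the sets $\Vv\setminus\{\bm{a},\bm{c}\}$ with $\bm{a}\in\Ee$ and $\bm{c}\in\Aa$. A subset $F\subset\Vv$ spans a supporting hyperplane if and only if its complement does not meet both sides of the Radon partition. Concretely, I will check that each candidate $\conv(\Ee\setminus\{\bm{v}_i\}\cup\{\bm{y}_j\})$ is a facet. The unique (up to scale) affine functional $\ell$ vanishing on those $n$ affinely independent points is nonzero on $\bm{v}_i$ and on $\bm{y}_{j'}$, where $j'\neq j$. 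Applying $\ell$ to the dependency gives $\alpha_i\ell(\bm{v}_i)=\mu_{j'}\ell(\bm{y}_{j'})$. The two values therefore have the same sign, so $\ell$ has constant sign on $\Vv$ and the hyperplane is supporting.

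Conversely, any facet omits at least two vertices. If it omitted both apices, or contained all of $\Ee$, then $\bm{b}$ would force it to contain the whole Radon side, a contradiction. This yields exactly the $2n$ facets $\conv(\Ee\setminus\{\bm{v}_i\})*\bm{y}_j$, which is precisely the face lattice of $\Pp$ as $\Pyr$ over the simplex $\conv(\Ee)$ glued along its base with a second pyramid. Equivalently, $\Pp=\conv(\Ee\cup\{\bm{y}_1\})\cup\conv(\Ee\cup\{\bm{y}_2\})$, two simplices meeting in the common facet $\conv(\Ee)$. The point $\bm{b}$ in the interior of that common facet lies on the segment between the apices. This is the (lattice) bipyramid structure claimed, with base $\Ee$ and apices $\Aa$.

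The main obstacle is purely definitional: making precise what "lattice bipyramid" should mean here. I would take it to mean the combinatorial/geometric decomposition above: two lattice simplices with common base, the apex segment piercing the base's relative interior. It does not require the free-sum normal form, since the apices need not be at lattice distance one symmetric about the base. Since every vertex is already a lattice point, no further integrality needs checking.
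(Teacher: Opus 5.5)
Your proposal is correct and follows essentially the paper's route: the one-dimensional space of affine dependencies yields the Radon partition $\Ee \sqcup \Aa$, and the facets are exactly the vertex sets obtained by deleting one vertex of $\Ee$ and one of $\Aa$. Your sign argument for the functional $\ell$ and your extremality argument through $\bm{b}$ make rigorous what the paper compresses into the criterion that a vertex subset is a face iff its convex hull misses the interior of $\Pp$.

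There are two slips to fix. First, your ``recalled'' criterion is reversed: a proper subset $F\subset\Vv$ is a face iff its complement meets \emph{both} $\Ee$ and $\Aa$, i.e.\ iff $F$ contains neither side. Second, in the converse the cases ``omits both apices'' and ``contains all of $\Ee$'' are the same case, so you still need to exclude a facet that contains both apices. This follows by the symmetric argument: $\bm{b}\in\conv(\Aa)$ lies in the relative interior of $\conv(\Ee)$, which would force all of $\Ee$ into that facet.
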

\begin{proof}
The point $\bm{x} := \sum_{i=1}^n \alpha_i \bm{v}_i = \mu_1 \bm{y}_1 + \mu_2 \bm{y}_2$ lies in the relative interior of both 
$\text{conv}(\Ee)$ and $\text{conv}(\Aa)$. In a $n$-polytope with $n+2$ vertices, the space of affine dependencies is exactly 
one-dimensional. This unique dependency implies that $\Ee$ and $\Aa$ are the only subsets of vertices whose convex hulls
intersect in the interior. A subset of vertices $\Ff \subset \Vv$ forms a face of $\Pp$ if and only if its convex hull does not 
intersect the interior of $\Pp$. Therefore, $\Ff$ is a face if and only if it contains neither $\Ee$ nor $\Aa$. The maximal such 
sets (facets) are obtained by removing exactly one vertex from $\Ee$ and one from $\Aa$. Specifically, the facets of $\Pp$ 
are of the form $\text{conv}((\Ee \setminus \{\bm{v}_i\}) \cup \{\bm{y}_j\})$ for $i \in \{1, \dots, n\}$ and $j \in \{1, 2\}$. 
This is precisely the combinatorial description of a bipyramid with apices $\bm{y}_1$ and $\bm{y}_2$ over a $(n-1)$-simplex 
base with vertices $\bm{v}_1, \dots, \bm{v}_n$.
\end{proof}
\begin{theorem}
The $h^*$-polynomial of $\mathcal{T}_k$ is $h_{\mathcal{T}_k}^*(z) = 1 + 2z + 2z^2 + \dots + 2z^{n-1} + z^n$.
\end{theorem}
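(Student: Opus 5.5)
The plan is to use that $\mathcal{T}_k$ is reflexive, so that its $h^*$-polynomial is determined by the combinatorics of its boundary. Because the boundary faces are unimodular simplices, that combinatorics is already known from Lemma~\ref{lemma:dependencyreflexive}. I avoid going through Theorem~\ref{thm:main} applied to $\mathcal{P}_k/2$, since that would need $k\equiv n \pmod 2$ and we want every $0\le k<n$.

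\textbf{Step 1: reflexivity and the recursion.} Since $\mathcal{P}_k$ is reflexive and Delzant, its prequantization is $\mathcal{T}_k=-\mathcal{P}_k^\ast$ (Proposition~\ref{prop: prequant} and the remark after it). Polar duals of reflexive polytopes are reflexive, so $\mathcal{T}_k$ is reflexive. By Theorem~\ref{thm:reflexive},
\[
L_{\mathcal{T}_k}(t)=\#\{\partial t\mathcal{T}_k\cap\mathbb{Z}^n\}+L_{\mathcal{T}_k}(t-1).
\]

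\textbf{Step 2: counting boundary points from the $f$-vector.} Every proper face of $\mathcal{T}_k$ is a unimodular simplex, as in the proof of Lemma~\ref{lemma:1}. A unimodular $j$-simplex contributes $\binom{t-1}{j}$ relatively interior lattice points to its $t$-th dilate. Hence
\[
\#\{\partial t\mathcal{T}_k\cap\mathbb{Z}^n\}=\sum_{j\ge 0} f_j\binom{t-1}{j},
\]
where $f_j$ is the number of $j$-faces of the boundary complex. Feeding this into the recursion of Step~1 and passing to generating functions gives
\[
\Ehr_{\mathcal{T}_k}(z)=\frac{1}{1-z}\Big(1+\sum_{t\ge1}\#\{\partial t\mathcal{T}_k\cap\mathbb{Z}^n\}\,z^t\Big).
\]
The bracket is the Ehrhart series of the boundary complex, which has the Stanley--Reisner form $h_{\partial}(z)/(1-z)^n$, with $h_\partial$ the $h$-vector of the simplicial $(n-1)$-sphere $\partial\mathcal{T}_k$. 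Therefore $h^*_{\mathcal{T}_k}(z)=h_{\partial}(z)$. This is the standard Betke--McMullen/Stanley identity for reflexive polytopes with a unimodular boundary triangulation, and I would verify it by the routine change of basis from $\binom{t-1}{j}$ to $\binom{t+n-i}{n}$.

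\textbf{Step 3: the $h$-vector of the bipyramid boundary.} By Lemma~\ref{lemma:dependencyreflexive} and Lemma~\ref{lemma:radonpartition}, $\partial\mathcal{T}_k$ is the simplicial join of two complexes: the boundary of the $(n-1)$-simplex on $\Ee$, and the $0$-sphere $\{\bm{e}_n,-\bm{e}_n\}$. The $h$-polynomial is multiplicative under joins. The boundary of an $(n-1)$-simplex has $h$-polynomial $1+z+\dots+z^{n-1}$, and two points have $h$-polynomial $1+z$. So
\[
h^*_{\mathcal{T}_k}(z)=(1+z)(1+z+\dots+z^{n-1})=1+2z+\dots+2z^{n-1}+z^n.
\]
As a sanity check, $h^*_{\mathcal{T}_k}(1)=2n$ should be the normalized volume. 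Splitting $\mathcal{T}_k$ into the two simplices $\conv(\Ee,\pm\bm{e}_n)$ gives normalized volumes $n+k$ and $n-k$, via a short determinant computation.

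\textbf{Main obstacle.} The delicate step is Step~2, the identification $h^*=h_\partial$. One must check carefully that the boundary count equals $\sum_j f_j\binom{t-1}{j}$, including the empty face and the vertex terms, and that the resulting series telescopes correctly to $h_\partial(z)/(1-z)^{n+1}$. If that bookkeeping becomes messy, the fallback is to compute $h^*$ directly. One would use a half-open decomposition $\conv(\Ee,\bm{e}_n)\sqcup\big(\conv(\Ee,-\bm{e}_n)\setminus\conv(\Ee)\big)$ and enumerate the fundamental parallelepiped points of each cyclic simplex, of orders $n+k$ and $n-k$, by height.
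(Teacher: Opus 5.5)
Your proof is correct and follows essentially the same route as the paper. Reflexivity plus the unimodular boundary give $h^*_{\mathcal{T}_k}=h_{\partial\mathcal{T}_k}$: the paper quotes Theorem~10.3 of Beck--Robins for the coned triangulation, while you derive it directly from the recursion $L(t)=\#(\partial t\mathcal{T}_k\cap\mathbb{Z}^n)+L(t-1)$. The paper then writes out the $f$-vector of the bipyramid boundary and sums the binomial expansions by hand, whereas you get $(1+z)(1+z+\dots+z^{n-1})$ in one line from multiplicativity of $h$-polynomials under the join $\partial\Delta_{n-1}*S^0$.
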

\begin{proof}
Since $\mathcal{T}_k$ is a toric diagram we know there is a unimodular triangulation $T$ for $\partial \mathcal{T}_k$. 
Furthermore $\mathcal{T}_k$ is reflexive so taking the convex hull of the origin with each simplex of the boundary 
$\partial \mathcal{T}_k$ induces a unimodular triangulation $K$ of $\mathcal{T}_k$ (see e.g. Exercise 10.3 of~\cite{BR}).
Furthermore, we can use Theorem 10.3 of~\cite{BR} to conclude that 
\[
h_{\mathcal{T}_k}^\ast (z) = h_{\partial \mathcal{T}_k}(z) \,,
\]
where $h_{\partial \mathcal{T}_k}$ is the $h$-polynomial of the boundary, i.e.
\begin{equation} \label{eq:hpol}
h_{\partial \mathcal{T}_k}(z) = \sum_{j=0}^{n} f_{j-1} z^j (1-z)^{n-j} 
\end{equation}
with $f_{j-1} =$ number of $(j-1)$-dimensional faces of $\partial \mathcal{T}_k$ ($f_{-1} = 0$).
		
To compute $h_{\partial \mathcal{T}_k}(z)$, we determine the $f$-vector of the boundary complex. As established, 
$\mathcal{T}_k$ is combinatorially a bipyramid over a $(n-1)$-simplex. For $0 \le j < n$, a $(j-1)$-face is either a 
$(j-1)$-face of the base $(n-1)$-simplex (yielding $\binom{n}{j}$ faces) or the convex hull of a $(j-2)$-face of the base 
with one of the two apices (yielding $2\binom{n}{j-1}$ faces). The facets of $\mathcal{T}_k$ ($j=n$) are exactly the 
convex hulls of the $n$ faces of dimension $(n-2)$ of the base with each of the two apices, yielding $f_{n-1} = 2n$. 
This allows us to express the face numbers for all $0 \le j \le n$ as
\[
f_{j-1} = \binom{n}{j} + 2\binom{n}{j-1} - \delta_{j,n} \,,
\]
where $\delta_{j,n}$ is the Kronecker delta. Substituting this explicit $f$-vector in~(\ref{eq:hpol}), we split the summation 
into three parts:
\[
h_{\partial \mathcal{T}_k}(z) = \sum_{j=0}^{n} \binom{n}{j} z^j (1-z)^{n-j} + 2 \sum_{j=1}^{n} \binom{n}{j-1} z^j (1-z)^{n-j} - z^n\,. 
\]
The first sum is the binomial expansion of $(z + (1-z))^n = 1^n = 1$. For the second sum, we have
\begin{align*}
2z \sum_{i=0}^{n-1} \binom{n}{i} z^i (1-z)^{n-1-i} &= \frac{2z}{1-z} \sum_{i=0}^{n-1} \binom{n}{i} z^i (1-z)^{n-i} \\
&= \frac{2z(1-z^n)}{1-z}\\
&= 2z(1 + z + z^2 + \dots + z^{n-1}).
\end{align*}
Recombining all the evaluated terms, we obtain
\[ 
h_{\partial \mathcal{T}_k}(z) = 1 + 2z + 2z^2 + \dots + 2z^{n-1} + z^n\,. 
\]
Because this polynomial depends only on the dimension $n$, the reflexivity, and the combinatorial bipyramid structure, 
it is invariant across all valid choices of $k$. Since $h_{\mathcal{T}_k}^*(z) = h_{\partial \mathcal{T}_k}(z)$, the result 
follows.
\end{proof}

\begin{remark}
We could also have proved this theorem directly from~(\ref{eq:Betti}), since the monotone toric symplectic manifold
$(M^{2n}_k , [\omega] = c_1 (M^{2n}_k))$ determined by the reflexive Delzant polytope $\Pp_k$, 
cf. Subsection~\ref{ssec:primitive2}, can be easily seen to satisfy
\[
\dim H^{2j} (M^{2n}_k; \Q) = \dim H^{2j} (\CP^{n-1} \times \CP^1 ; \Q) = 2 \ \text{for all} \ j = 1\,, \ \ldots\,, n-1\,.
\]
\end{remark}

\begin{theorem}\label{thm:determinedreflexive}
For integers $0 \le k_1, k_2 < n$, $\mathcal{T}_{k_1} \cong \mathcal{T}_{k_2}$ if and only if $k_1 = k_2$.
\end{theorem}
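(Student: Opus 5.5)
Only the ``only if'' direction needs proof; we will find a unimodular invariant of $\mathcal{T}_k$ that recovers $k$. Each $\mathcal{T}_k$ is reflexive, so the origin is its unique interior lattice point. Hence any unimodular equivalence $U$ between $\mathcal{T}_{k_1}$ and $\mathcal{T}_{k_2}$ fixes $\bm{0}$ and is linear, i.e. $U\in GL(n,\Z)$. Such a $U$ maps vertices to vertices and preserves the combinatorial structure. By Lemma~\ref{lemma:dependencyreflexive} and Lemma~\ref{lemma:radonpartition}, the bipyramid structure is intrinsic: the apex pair $\Aa$ and the base $\Ee$ are determined by the unique affine dependency among the $n+2$ vertices. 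So $U$ must send $\{\bm{e}_n,-\bm{e}_n\}$ to itself and $\Ee_{k_1}$ to $\Ee_{k_2}$. (When $n=2$ both parts have two elements and the roles might swap; we will note that the dependency coefficients still distinguish them.)

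The cleanest invariant comes from the affine dependency itself, which is preserved by linear maps. For $\mathcal{T}_k$ the unique (up to scaling) linear relation among the vertices with the origin singled out is
\[
\bm{e}_1+\cdots+\bm{e}_{n-1}+\bm{v}_k + \tfrac{n+k}{2}\cdot 0 \;=\; \ldots,
\]
or more concretely
\[
\textstyle\sum_{i=1}^{n-1}\bm{e}_i+\bm{v}_k \;=\; -k\,\bm{e}_n \;=\; \tfrac{n-k}{2}\,\bm{e}_n+\tfrac{n+k}{2}\,(-\bm{e}_n).
\]
A linear bijection $U$ transports this relation. Because the relation space among the $n+2$ vertices (linear, not just affine) is two-dimensional, we will be careful here: the linear relations are spanned by $\bm{e}_n+(-\bm{e}_n)=0$ and $\sum_{i<n}\bm{e}_i+\bm{v}_k+k\bm{e}_n=0$. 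The invariant we extract is the following. Up to adding multiples of the apex relation, the base vertices sum to $-k\bm{e}_n$, i.e. to a point on the apex line at lattice distance $|k|$ from the origin, measured in units of the primitive vector $\bm{e}_n$.

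Under $U$, the sum of base vertices is mapped to the sum of base vertices, and the apex line with its primitive generator is mapped to the apex line of the other polytope, with the generator sent to $\pm$ the generator. Thus $-k_1\bm{e}_n\mapsto \pm(-k_1)\bm{e}_n$ must equal $-k_2\bm{e}_n$. This gives $k_1=\pm k_2$, and with $0\le k_i<n$ we get $k_1=k_2$.

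The main obstacle is justifying that $U$ respects the base/apex partition, including the possible swap of the two apices, which is harmless because of the $\pm$. In the small case $n=2$, both the base and the apex set have two elements. There we will argue directly: the apex pair sums to $\bm{0}$, while the base pair sums to $-k\bm{e}_2$. If $k\neq0$, the base sum is nonzero, so the two pairs cannot be exchanged; if $k_1=0$, the sum is zero in both roles and forces $k_2=0$. Finally, all $k_i$ in the range satisfy $0\le k_i<n$, which disposes of sign ambiguity.
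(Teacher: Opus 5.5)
Your proof is correct and is essentially the paper's argument. Both use the unique affine dependency to force apices to apices (the paper via vertex degrees $n$ versus $n+1$, you via the sizes of the two sign classes), and both then recover $k$ up to sign from where the base barycenter sits on the apex segment, which after your linearization through the fixed interior origin is exactly $U(-k_1\bm{e}_n)=-k_2\bm{e}_n$ with $U(\bm{e}_n)=\pm\bm{e}_n$. Your explicit $n=2$ argument (counting the diagonal pairs that sum to $\bm{0}$) fills in the case the paper calls trivial, but you should delete the garbled first display ending in $\tfrac{n+k}{2}\cdot 0=\ldots$.
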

\begin{proof}
The result is trivial for $n=2$ and we will now prove it for $n \ge 3$. Assume there exists an affine unimodular 
transformation $\phi$ mapping $\mathcal{T}_{k_1}$ to $\mathcal{T}_{k_2}$. Since $\phi$ must preserve the 
unique affine dependency from Lemma~\ref{lemma:dependencyreflexive}, it maps the base barycenter 
$B(\mathcal{T}_{k_1})$ to $B(\mathcal{T}_{k_2})$. This requires
\[
\phi \left( \frac{n-k_1}{2n} \bm{e}_n + \frac{n+k_1}{2n} (-\bm{e}_n) \right) = 
\frac{n-k_2}{2n} \bm{e}_n + \frac{n+k_2}{2n} (-\bm{e}_n)\,.
\]
The map $\phi$ must preserve vertex degrees in the $1$-skeleton. The base vertices $\Ee$ have degree $n+1$, 
while the apices $\Aa$ have degree $n$. Thus, $\phi$ maps $\Aa_{k_1}$ to $\Aa_{k_2}$ and 
$\Ee_{k_1}$ to $\Ee_{k_2}$. As $\phi$ restricts to a bijection on the apices $\Aa_{k_1} = \Aa_{k_2} = \{\bm{e}_n, -\bm{e}_n\}$, 
there are two cases. If $\phi$ preserves the apices, then $(n+k_1)/2n = (n+k_2)/2n$, implying $k_1 = k_2$. If $\phi$ 
swaps the apices, then $(n-k_1)/2n = (n+k_2)/2n$, implying $-k_1 = k_2$, which for $k_1, k_2 \ge 0$ forces 
$k_1 = k_2 = 0$.
\end{proof}

To apply the prequantization to the divided polytope $\mathcal{P}_k / 2$, we must first ensure that the resulting 
scaled polytope is integral. Shifting the polytope by $(1,\dots,1)$, we obtain a hyperplane description of the 
shifted polytope $\mathcal{P}'_k := \mathcal{P}_k + (1,\dots,1)$ which is given by:
\begin{itemize}
\item $x_i \geq 0, \forall i \in \{1, \dots, n\}$;
\item $ -x_n + 2\geq 0$;
\item $-\sum_{i=1}^{n-1} x_i - k x_n +n+k \geq 0$.
\end{itemize}
We conclude that $\tilde\Pp_k := \mathcal{P}'_k / 2$ is integral if and only if $k \equiv n \pmod 2$. The inner normal 
vectors of the cone over $\tilde\Pp_k \times \{1\}\subset\R^{n+1}$ are:
\begin{itemize}
\item $(\bm{e}_i, 0), \forall i \in \{1, \dots, n\}$;
\item $(-\bm{e}_n, 1)$;
\item $(-1, \dots, -1, -k, \frac{n+k}{2})$.
\end{itemize}
We use the unimodular transformation in $\mathbb{R}^{n+1}$ given by the matrix $T \in \text{GL}(n+1, \mathbb{Z})$
\[
T = \begin{bmatrix}
		0 & 1 & 0 & \dots  & 0 \\
		0 & 0 & 1 & \ddots  & \vdots \\
		\vdots & \vdots & \ddots & \ddots & 0 \\
		0 & 0 & \dots & 0 & 1 \\
		1 & 1 & \dots & 1  & 2
\end{bmatrix}
\]
to map all these points to the hyperplane $x_{n+1}=1$. We obtain that the vertices of the corresponding toric diagram 
$\mathcal{D}_k$ are:
\begin{itemize}
\item $\bm{a}_1 = \bm{0}$;
\item $\bm{a}_{i+1} = \bm{e}_i, \forall i \in \{1, \dots, n-1\}$;
\item $\bm{a}_{n+1} = - \bm{e}_{n-1} + \bm{e}_n$;
\item $\bm{a}_{n+2} = - \sum_{i=1}^{n-2} \bm{e}_i - k \bm{e}_{n-1} + \frac{n+k}{2} \bm{e}_n$.
\end{itemize}
See Figure~\ref{fig:bpdiagram} with $\Tt_1$ and $\Dd_1$ in dimension $n=3$.

\begin{figure}[]
\centering
\begin{minipage}{.5\textwidth}
	\centering
	\begin{tikzpicture}[scale=2.0, line join=bevel, tdplot_main_coords]
		\coordinate (e1) at (1,0,0);
		\coordinate (e2) at (0,1,0);
		\coordinate (e3) at (0,0,1);
		\coordinate (m1) at (0,0,-1);
		\coordinate (m2) at (-1,-1,-1);
		
		\draw (e1) -- (e3);
		\draw (e1) -- (e2);
		\draw (e2) -- (e3);
		\draw (e3) -- (m2);
		\draw (e1) -- (m2);
		\draw [dashed] (e2) -- (m2);

		\draw (m1) -- (m2);
		\draw (m1) -- (e1);
		\draw (m1) -- (e2);
				
	\end{tikzpicture}
\end{minipage}%
\begin{minipage}{.5\textwidth}
	\centering
\begin{tikzpicture}[scale=1.5, line join=bevel, tdplot_main_coords]
		\coordinate (e1) at (0,0,0);
		\coordinate (e2) at (0,1,0);
		\coordinate (e3) at (-1,-1,2);
		\coordinate (m1) at (1,0,0);
		\coordinate (m2) at (0,-1,1);
		
		\draw [dashed] (e1) -- (e3);
		\draw [dashed] (e1) -- (e2);
		\draw (e2) -- (e3);
		\draw (e3) -- (m2);
		\draw [dashed] (e1) -- (m2);
		\draw (e2) -- (m2);

		\draw (m1) -- (m2);
		\draw [dashed] (m1) -- (e1);
		\draw (m1) -- (e2);
				
	\end{tikzpicture}
\end{minipage}

\caption{$\Tt_1$ and $\Dd_1$ in dimension $n=3$.}
\label{fig:bpdiagram}
\end{figure}

\begin{lemma}\label{lemma:dependencygorenstein}
For any $0 \le k < n$ satisfying $k \equiv n \pmod 2$, the polytope $\mathcal{D}_k$ is a lattice bipyramid 
with apices $\Aa = \{\bm{a}_n, \bm{a}_{n+1}\}$ and base $\Ee =\{\bm{a}_1, \dots, \bm{a}_{n-1}, \bm{a}_{n+2} \}$.
\end{lemma}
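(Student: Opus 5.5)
The plan is to argue exactly as in Lemma~\ref{lemma:dependencyreflexive}: exhibit an explicit affine dependency between the base $\Ee$ and the apices $\Aa$ with strictly positive coefficients, and then invoke Lemma~\ref{lemma:radonpartition}.

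\emph{Setup.} First we note that $\Dd_k$ is an $n$-dimensional polytope with exactly the $n+2$ vertices $\bm{a}_1,\dots,\bm{a}_{n+2}$. This holds because it is the toric diagram of the prequantization of the Delzant polytope $\tilde\Pp_k$, which has exactly $n+2$ facets. Their inner normals are mapped by the unimodular $T$ to the points $(\bm{a}_j,1)$, and each of these is a genuine defining normal of the good cone, hence a vertex of the diagram. So the hypotheses on the vertex set in Lemma~\ref{lemma:radonpartition} are met, and what remains is to produce the dependency.

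\emph{The dependency.} Guided by the reflexive case, we take the barycenter of the base. The sum of the base vertices telescopes nicely:
\[
\bm{a}_1+\sum_{i=2}^{n-1}\bm{a}_i+\bm{a}_{n+2}
= \sum_{i=1}^{n-2}\bm{e}_i-\sum_{i=1}^{n-2}\bm{e}_i-k\bm{e}_{n-1}+\tfrac{n+k}{2}\bm{e}_n
= -k\bm{e}_{n-1}+\tfrac{n+k}{2}\bm{e}_n .
\]
On the apex side,
\[
\mu_1\bm{a}_n+\mu_2\bm{a}_{n+1}=(\mu_1-\mu_2)\bm{e}_{n-1}+\mu_2\bm{e}_n .
\]
Matching this with $\frac1n$ times the base sum gives
\[
\mu_2=\frac{n+k}{2n},\qquad \mu_1=\mu_2-\frac{k}{n}=\frac{n-k}{2n}.
\]
These satisfy $\mu_1+\mu_2=1$. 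So
\[
\frac1n\sum_{\bm{v}\in\Ee}\bm{v}=\frac{n-k}{2n}\,\bm{a}_n+\frac{n+k}{2n}\,\bm{a}_{n+1},
\]
which is the same coefficient pattern as for $\Tt_k$. This is no accident: $\Dd_k$ and $\Tt_k$ are linearly related as in Lemma~\ref{lemma:0}.

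\emph{Conclusion.} Since $0\le k<n$, all coefficients $\alpha_i=\frac1n$, $\mu_1$ and $\mu_2$ are strictly positive, and each side sums to $1$. Lemma~\ref{lemma:radonpartition} then yields that $\Dd_k$ is a lattice bipyramid with base $\Ee$ and apices $\Aa$. The parity condition $k\equiv n \pmod 2$ is used only to guarantee that $\bm{a}_{n+2}$ is integral, so that $\Dd_k$ is a lattice polytope.

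The only step requiring care is confirming that all $n+2$ points really are vertices and that $\Dd_k$ is full-dimensional. I expect this to follow directly from the prequantization construction. Alternatively, the dependency above is the unique one, since $\Ee$ is affinely independent and $\bm{a}_n\notin\mathrm{aff}(\Ee)$. That uniqueness already prevents any of the points from lying in the convex hull of the others.
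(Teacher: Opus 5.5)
Your proof is correct and follows the paper's argument. The paper likewise writes the base barycenter as $\frac{n-k}{2n}\bm{a}_n+\frac{n+k}{2n}\bm{a}_{n+1}$, saying this relation is inherited from $\Tt_k$, and then applies Lemma~\ref{lemma:radonpartition}. Your explicit coordinate computation, and your check that all $n+2$ points are vertices of a full-dimensional polytope, spell out details the paper leaves implicit.
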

\begin{proof}
The polytope $\mathcal{D}_k$ inherits the barycentric relation from the reflexive family $\mathcal{T}_k$:
\[ 
B(\mathcal{D}_k) = \frac{1}{n} \sum_{\bm{v} \in \Ee} \bm{v} = 
\left(\frac{n-k}{2n}\right) \bm{a}_n + \left(\frac{n+k}{2n}\right) \bm{a}_{n+1}\,.
\]
Since $0 \le k < n$, the coefficients are positive and sum to $1$. By Lemma~\ref{lemma:radonpartition}, 
$\mathcal{D}_k$ is a bipyramid.
\end{proof}

\begin{theorem}
The $h^\ast$-polynomial of the toric diagram $\mathcal{D}_k$ is given by 
\[
h^\ast_{\mathcal{D}_k} (z) = 1 + z + z^2 + \dots + z^{n-1}\,.
\]
\end{theorem}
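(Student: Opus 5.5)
The quickest route is Theorem~\ref{thm:main} with $r=2$, combined with the $h^\ast$-polynomial of $\mathcal{T}_k$ computed above. The polytope $\mathcal{P}_k$ is reflexive and Delzant, and for $k\equiv n \pmod 2$ the polytope $\mathcal{P}_k/2$ is integral after the lattice translation by $(1,\dots,1)/2$ used above. Equivalently, the translate $\mathcal{P}'_k=\mathcal{P}_k+(1,\dots,1)$ has $\mathcal{P}'_k/2=\tilde\Pp_k$ integral. Translations do not change the toric diagram up to unimodular equivalence, so the prequantization of $\mathcal{P}_k$ has toric diagram $\mathcal{T}_k$ and that of $\mathcal{P}_k/2$ has toric diagram $\mathcal{D}_k$.

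To justify this, I will track the effect of a lattice translation on the normals. Translating by $\bm{w}$ replaces $\lambda_i$ by $\lambda_i - \bm{w}\cdot\bm{\nu}_i$, and this is a shear of the cone normals $(\bm{\nu}_i,\lambda_i)$, hence unimodular. The same holds for half-integral shifts after scaling, provided the result is integral. The proof of Theorem~\ref{thm:main} itself translates so that a vertex sits at the origin and works with the unscaled normals. So the pair (toric diagram of $\mathcal{P}'_k$, toric diagram of $\mathcal{P}'_k/2$) is exactly the pair $(\mathcal{D},\mathcal{D}')$ of Theorem~\ref{thm:main}, with $\mathcal{D}\cong\mathcal{T}_k$ and $\mathcal{D}'=\mathcal{D}_k$.

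Theorem~\ref{thm:main} then gives
\[
\Ehr_{\mathcal{T}_k}(z)=(1+z)\,\Ehr_{\mathcal{D}_k}(z),
\]
which in terms of $h^\ast$-polynomials (both denominators are $(1-z)^{n+1}$) reads $h^\ast_{\mathcal{T}_k}(z)=(1+z)\,h^\ast_{\mathcal{D}_k}(z)$. By the previous theorem, $h^\ast_{\mathcal{T}_k}(z)=1+2z+\dots+2z^{n-1}+z^n=(1+z)(1+z+\dots+z^{n-1})$. Dividing by $1+z$ in $\Z[z]$, which is legitimate since $\Z[z]$ is an integral domain, yields $h^\ast_{\mathcal{D}_k}(z)=1+z+\dots+z^{n-1}$.

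The only delicate point is the identification step: checking that the vertex list $\bm{a}_1,\dots,\bm{a}_{n+2}$ obtained via $T$ is indeed the diagram $\mathcal{D}'$ in the hypotheses of Theorem~\ref{thm:main}. This amounts to the observation that the normals of the cone over $\tilde\Pp_k\times\{1\}$ are those of $\mathcal{P}'_k$ with last coordinate divided by $2$, as listed above. If that bookkeeping became awkward, I would use a direct check instead. Lemma~\ref{lemma:dependencygorenstein} shows that $\mathcal{D}_k$ is combinatorially a bipyramid over an $(n-1)$-simplex, so its boundary lattice-point counts agree with those of $\mathcal{T}_k$, as in Lemma~\ref{lemma:1}. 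Then Theorem~\ref{thm:gorenstein} (index $2$, since $2\mathcal{D}_k$ is reflexive by Theorem~\ref{thm:preq_are_gorenstein}) together with the same induction on $t$ as in the proof of Theorem~\ref{thm:main} gives $L_{\mathcal{T}_k}(t)=L_{\mathcal{D}_k}(t)+L_{\mathcal{D}_k}(t-1)$, which is the same identity.
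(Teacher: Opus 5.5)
Your proposal is correct and is essentially the paper's own argument: Theorem~\ref{thm:main} with $r=2$ gives $h^\ast_{\mathcal{T}_k}=(1+z)\,h^\ast_{\mathcal{D}_k}$, and dividing the previously computed $h^\ast_{\mathcal{T}_k}=(1+z)(1+z+\dots+z^{n-1})$ by $1+z$ finishes it. Your extra bookkeeping via $\mathcal{P}'_k=\mathcal{P}_k+(1,\dots,1)$, a lattice translation that acts on the cone normals by a unimodular shear, correctly spells out the identification of $\mathcal{D}$ and $\mathcal{D}'$ that the paper leaves implicit; note only that the shift by $(1,\dots,1)/2$ you first mention is not a lattice translation, so keep the $\mathcal{P}'_k$ formulation.
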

\begin{proof}
From Theorem~\ref{thm:main} we know that 
$h^\ast_{\mathcal{T}_k}=(1+z) h^\ast_{\mathcal{D}_k}$. The result follows.
\end{proof}	

\begin{theorem} \label{thm:uniqueness}
For integers $k_1, k_2$ such that $0 \le k_1, k_2 < n$ and $k_1 \equiv k_2 \equiv n \pmod 2$, the prequantized 
toric diagrams $\mathcal{D}_{k_1}$ and $\mathcal{D}_{k_2}$ are unimodularly equivalent if and only if $k_1 = k_2$.
\end{theorem}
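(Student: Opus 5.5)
The plan is to mirror the proof of Theorem~\ref{thm:determinedreflexive}, using the unique affine dependency given by Lemma~\ref{lemma:dependencygorenstein}. The ``if'' direction is trivial. For the converse, suppose an affine unimodular map $\phi$ sends $\mathcal{D}_{k_1}$ onto $\mathcal{D}_{k_2}$. Then $\phi$ maps vertices to vertices and preserves affine combinations.

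First, $\phi$ maps the apex pair $\Aa_{k_1}=\{\bm{a}_n,\bm{a}_{n+1}\}$ onto $\Aa_{k_2}$ and the base $\Ee_{k_1}$ onto $\Ee_{k_2}$. For $n\ge 3$ this follows from vertex degrees in the $1$-skeleton: base vertices have degree $n+1$ and apices have degree $n$, exactly as for $\mathcal{T}_k$, since the combinatorics is that of a bipyramid over an $(n-1)$-simplex. For $n=2$ every vertex has degree $2$, so there I would argue differently. A $2$-dimensional polytope with $4$ vertices is a quadrilateral, and the two classes are its two diagonals. So $\phi$ either preserves the partition $\{\Aa,\Ee\}$ or swaps the two classes. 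In fact, for $n=2$ the constraints $0\le k<2$ and $k\equiv 0 \pmod 2$ force $k_1=k_2=0$, so this case is vacuous. It suffices to treat $n\ge 3$.

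Second, the affine dependency among the $n+2$ vertices is unique up to scaling, as noted in the proof of Lemma~\ref{lemma:radonpartition}. Hence $\phi$ carries the relation
\[
\tfrac{1}{n}\sum_{\bm{v}\in\Ee}\bm{v}=\tfrac{n-k_1}{2n}\bm{a}_n+\tfrac{n+k_1}{2n}\bm{a}_{n+1}
\]
to the corresponding relation for $k_2$. It maps the base barycenter to the base barycenter, because barycenters are preserved by affine bijections of the vertex set. It also sends the apex segment to the apex segment. Comparing the barycentric coordinates of this point on the apex segment then gives two cases. If $\phi$ fixes the labelling of the apices, then $(n+k_1)/2n=(n+k_2)/2n$, so $k_1=k_2$. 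If $\phi$ swaps them, then $n-k_1=n+k_2$, which forces $k_1=k_2=0$. In both cases $k_1=k_2$.

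The only real obstacle is justifying rigorously that $\phi$ cannot mix apices with base vertices. I would settle this with the degree count. An apex is joined to all $n$ base vertices but not to the other apex, because the apex segment meets the interior of the polytope. A base vertex is joined to the other $n-1$ base vertices and to both apices, for degree $n+1$. Since $n\neq n+1$ and graph degree is a combinatorial invariant preserved by $\phi$, the apices are distinguished.
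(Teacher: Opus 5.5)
Your proof is correct and follows essentially the same route as the paper: a degree count in the $1$-skeleton shows that $\phi$ preserves the apex/base partition, and then you compare the barycentric coefficients of the base barycenter on the apex segment, in the two cases where $\phi$ preserves or swaps the apices. Your observation that the $n=2$ case is vacuous, since $k\equiv 0 \pmod 2$ and $0\le k<2$ force $k=0$, makes precise the paper's remark that this case is trivial.
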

\begin{proof}
The proof is mostly the same as the one for Theorem~\ref{thm:determinedreflexive}. The result is trivial for $n=2$ 
and we will prove it for $n \ge 3$. Assume there exists an affine unimodular transformation $\phi$ mapping 
$\mathcal{D}_{k_1}$ to $\mathcal{D}_{k_2}$. Let $\bm{a}_1, \dots, \bm{a}_{n+2}$ be the vertices of $\mathcal{Q}_{k_1}$ 
and $\bm{a'}_1, \dots, \bm{a'}_{n+2}$ the ones of $\mathcal{Q}_{k_2}$. 
Since $\phi$ must preserve the unique affine dependency from Lemma~\ref{lemma:dependencygorenstein}, it maps 
the base barycenter $B(\mathcal{Q}_{k_1})$ to $B(\mathcal{Q}_{k_2})$. This requires
\[
\phi \left( \frac{n-k_1}{2n} \bm{a}_n + \frac{n+k_1}{2n} \bm{a}_{n+1} \right) 
= \frac{n-k_2}{2n} \bm{a'}_n + \frac{n+k_2}{2n} \bm{a'}_{n+1}\,.
\]
The map $\phi$ must preserve vertex degrees. The base vertices in $\Ee$ have degree $n+1$, while the apices in $\Aa$ 
have degree $n$. Thus, $\phi$ maps $\Aa_{k_1}$ to $\Aa_{k_2}$ and $\Ee_{k_1}$ to $\Ee_{k_2}$. 
Since $\phi$ induces a bijection on the apex sets, the coefficients $\{ \frac{n-k_1}{2n}, \frac{n+k_1}{2n} \}$ must coincide 
with $\{ \frac{n-k_2}{2n}, \frac{n+k_2}{2n} \}$ as multiset entries. The result follows.
\end{proof}

\begin{theorem} \label{thm:bipysimplex2}
Let $\Ss$ be a $n$-dimensional toric diagram. If its $h^*$-polynomial is $h_{\Ss}^\ast (z) = 1 + z + z^2 + \dots + z^{n-1}$, 
then $\Ss$ is unimodularly equivalent to $\mathcal{D}_k$ for a unique $k\in\Z$ satisfying $0 \le k < n$ and 
$k \equiv n \pmod 2$.
\end{theorem}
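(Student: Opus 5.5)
The plan is to go back to the Delzant side and use the classification of smooth toric varieties with few facets, instead of reconstructing $\Ss$ by coordinates.

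\emph{Step 1: combinatorics of $\Ss$.} We have $h^\ast_k=h^\ast_{n-1-k}$ and $\deg h^\ast_\Ss=n-1$, so Theorem~\ref{thm:gorenstein} shows that $\Ss$ is Gorenstein of index $2$. In particular $\Ss^\circ\cap\Z^n=\emptyset$.

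Every proper face of $\Ss$ is a unimodular simplex, so the lattice points of $\Ss$ are exactly its vertices. Then $L_\Ss(1)=n+1+h^\ast_1=n+2$ shows that $\Ss$ has $n+2$ vertices.

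Next let $\bm{c}$ be the unique interior lattice point of $2\Ss$. Every facet of $2\Ss$ lies at lattice distance $1$ from $\bm{c}$. Coning the facets from $\bm{c}$, each facet contributes normalized volume $2^{n-1}$. Since the normalized volume of $2\Ss$ is $2^n\sum_k h^\ast_k=2^n n$, we get that $\Ss$ has exactly $2n$ facets.

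A simplicial $n$-polytope with $n+2$ vertices has boundary complex $\partial\Delta_a\ast\partial\Delta_b$ with $a+b=n$, and it has $(a+1)(b+1)$ facets. Setting $(a+1)(b+1)=2n$ gives $ab=n-1$, hence $\{a,b\}=\{1,n-1\}$. So $\Ss$ is combinatorially a bipyramid over an $(n-1)$-simplex, as in Lemma~\ref{lemma:radonpartition}.

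\emph{Step 2: pass to a reflexive Delzant polytope.} By Remark~\ref{rem:diagram} and Theorem~\ref{thm:preq_are_gorenstein}, read in reverse, the good cone with normals $(\bm{v}_j,1)$ is the cone over $\Pp'\times\{1\}$ for a lattice Delzant polytope $\Pp'$, and $2\Pp'$ is reflexive. The facets of $\Pp'$ correspond to the vertices of $\Ss$, and its face poset is dual to that of $\Ss$. Hence $\Pp'$ has $n+2$ facets and is combinatorially $\Delta_1\times\Delta_{n-1}$.

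\emph{Step 3: classify via Kleinschmidt.} Kleinschmidt's classification of smooth complete toric varieties with Picard number $2$ says that a Delzant polytope of type $\Delta_1\times\Delta_{n-1}$ is, up to $GL(n,\Z)$ and translation, the moment polytope of $\CP(\Oo(k)\oplus\C)\to\CP^{n-1}$. Concretely its normals are $\bm{e}_1,\dots,\bm{e}_n$, $-\bm{e}_n$ and $(-1,\dots,-1,-k)$, with $k\geq0$ after a reflection.

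Reflexivity of $2\Pp'$ means every support constant equals $1$. This forces $2\Pp'=\Pp_k$ up to translation. It also forces $\Pp_k$ to be a genuine polytope with these $n+2$ facets, which is exactly the condition $0\le k<n$ read off from the vertex description of $\Pp_k$.

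Integrality of $\Pp'=\Pp_k/2$ was computed above to be equivalent to $k\equiv n\pmod 2$. The prequantization of $\Pp_k/2$ is $\Dd_k$ by construction. Hence $\Ss\cong\Dd_k$, and uniqueness of $k$ is Theorem~\ref{thm:uniqueness}.

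\emph{Main obstacle.} The delicate step is Step 2: making precise that every index-$2$ toric diagram arises as the prequantization of a lattice Delzant polytope $\Pp'$ whose double is reflexive. This requires choosing the last coordinate of the normals $(\bm{\nu}_i,\lambda_i)$ correctly. I would do this by inverting the unimodular change of basis $T$ of Definition~\ref{def:preq}, sending the lattice vector $\bm{c}$ (lifted to height $2$) to the apex direction of the dual cone.

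If this step proves awkward, the fallback is a direct coordinate argument in the spirit of the small cross-polytope proof. Normalize one facet to $\{\bm{0},\bm{e}_1,\dots,\bm{e}_{n-1}\}$ with apex $\bm{e}_n$. Then use unimodularity of all $2n$ facets, together with the single affine dependency, to force the remaining two vertices into the form $\bm{a}_{n+1},\bm{a}_{n+2}$ of $\Dd_k$.
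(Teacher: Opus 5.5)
Your Steps~1 and~2 are sound. Coning the facets of $2\Ss$ from $\bm{c}$ is a clean replacement for the paper's $t^{n-1}$-coefficient count. The step you flag as the main obstacle also goes through. Each primitive ray generator $(\bm{u}_F,\beta_F)$ of the moment cone satisfies $(\bm{c},2)\cdot(\bm{u}_F,\beta_F)=\bm{u}_F\cdot\bm{c}+2\beta_F=1$, because $\bm{c}$ is at lattice distance $1$ from every facet of $2\Ss$. So $(\bm{c},2)$ is primitive, and a unimodular change of basis making it the last coordinate functional puts all rays at height $1$. This gives a lattice Delzant $\Pp'$ with $2\Pp'$ reflexive after a lattice translation.

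The genuine gap is in Step~3: Kleinschmidt's theorem does not say what you quote. It says that a smooth complete toric variety of Picard number $2$ is $\CP(\Oo\oplus\Oo(a_1)\oplus\cdots\oplus\Oo(a_r))\to\CP^s$ with $r+s=n$, and its polytope is combinatorially $\Delta_r\times\Delta_s$. Hence for the type $\Delta_1\times\Delta_{n-1}$ and $n\ge 3$ there is a \emph{second} family besides yours: the $\CP^{n-1}$-bundles over $\CP^1$. Their normals are $\bm{e}_1,\dots,\bm{e}_{n-1}$, $-(\bm{e}_1+\cdots+\bm{e}_{n-1})$, $\bm{e}_n$ and $a_1\bm{e}_1+\cdots+a_{n-1}\bm{e}_{n-1}-\bm{e}_n$, with $a_i\ge 0$. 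For $\bm{a}\neq\bm{0}$ the two normals in the size-two primitive collection do not sum to zero, whereas $\bm{e}_n+(-\bm{e}_n)=\bm{0}$ in your family. So these fans are not $GL(n,\Z)$-equivalent to any of yours, and your argument as written never excludes them.

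The missing case is short to close.
\begin{enumerate}
\item Reflexivity of the polytope with these normals (the Fano condition) forces $\sum a_i\le 1$.
\item The case $\bm{a}=\bm{0}$ is $\CP^{n-1}\times\CP^1$, which is your $k=0$.
\item For $\bm{a}=\bm{e}_{n-1}$ (up to permuting coordinates), the reflexive polytope has the edge from $(-1,\dots,-1,-1)$ to $(-1,\dots,-1,0)$, of lattice length $1$. So it cannot be $2\Pp'$, all of whose edges have even lattice length.
\end{enumerate}

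With this added, your route is correct and explains conceptually where the $\Dd_k$ come from. However, it rests on Kleinschmidt's classification and a cone-duality argument. The paper stays on the diagram side instead. Once $\Ss$ is a bipyramid over a simplex, the $n$ lattice simplices $\conv((\Ee\setminus\{\bm{v}_i\})\cup\Aa)$ have total normalized volume $h^\ast_\Ss(1)=n$, so they are all unimodular. This forces equal base coefficients in the dependency $\sum\bm{v}_i=\mu_1\bm{y}_1+\mu_2\bm{y}_2$ and yields an explicit unimodular map onto $\Dd_k$, with the parity condition coming from integrality of the last vertex.

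Your fallback sketch omits exactly this volume input. Without it, unimodularity of the facets plus the single affine dependency do not suffice: the reflexive diagrams $\Tt_k$ satisfy both but are not equivalent to any $\Dd_{k'}$.
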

\begin{proof}
From the given $h^\ast$-polynomial, the normalized volume of $\Ss$ is $h_{\Ss}^\ast(1) = n$. 
The total number of lattice points is $L_{\Ss}(1) = h_1^\ast + n + 1 = n+2$. Since the degree of 
$h_{\Ss}^\ast$ is $n-1$ and it is palindromic, $\Ss$ is Gorenstein of index $2$ thus contains no interior 
lattice points. Because $\Ss$ is a toric diagram, its facets are primitive, so all $n+2$ lattice points are vertices.
		
Next, we determine the number of facets. The standard expansion of the Ehrhart polynomial dictates that the 
coefficient of $t^{n-1}$ in $L_{\Ss}(t)$ is
\[
\frac{1}{2} \sum_{F \in \mathcal{F}(\Ss)} \text{Vol}_{n-1}(F). 
\]
Because every facet $F$ of a toric diagram is a unimodular simplex, the relative volume 
$\text{Vol}_{n-1}(F) = \frac{1}{(n-1)!}$, making this coefficient exactly $\frac{f_{n-1}}{2(n-1)!}$. Using our specific 
$h^\ast$-polynomial, the Ehrhart polynomial is $L_{\Ss}(t) = \sum_{j=0}^{n-1} \binom{t+n-j}{n}$, where the coefficient 
of $t^{n-1}$ is $\frac{n}{(n-1)!}$. Equating these gives a facet count of $f_{n-1} = 2n$.
		
Any $n$-dimensional polytope with $n+2$ vertices admits a unique (up to scaling) affine dependency 
$\sum \lambda_i \bm{v}_i = 0$ with $\sum \lambda_i = 0$. Because every facet of $\Ss$ is a simplex, no subset of 
$n+1$ vertices can be affinely dependent. This strictly guarantees that $\lambda_i \neq 0$ for all $i$, meaning every vertex 
participates in the dependency. This dependency therefore partitions the vertices into exactly two disjoint sets 
$I = \{\bm{v}_i : \lambda_i > 0\}$ and $J = \{\bm{v}_j : \lambda_j < 0\}$. A subset of vertices forms a facet if and only if it is 
obtained by removing exactly one vertex from $I$ and one from $J$. Thus, the number of facets is given by the product 
$|I| \cdot |J|$. We are led to the system
\[ 
|I| + |J| = n+2 \quad \text{and} \quad |I| \cdot |J| = 2n\,,
\]
whose unique integer solution is $\{|I|, |J|\} = \{n, 2\}$. A polytope whose vertices are partitioned in this way is, 
by definition, a bipyramid over a $(n-1)$-simplex. Let $\Ee = \{\bm{v}_1, \dots, \bm{v}_n\}$ be the base vertices and 
$A = \{\bm{y}_1, \bm{y}_2\}$ be the apices.
		
The bipyramid $\Ss$ is triangulated by $n$ lattice simplices $S_i = \text{conv}((\Ee \setminus \{\bm{v}_i\}) \cup \Aa)$,
therefore the sum of their normalized volumes must be $n$. Since each $S_i$ is a lattice simplex, its normalized volume must 
be an integer greater than or equal to $1$. This forces the normalized volume of $S_i$ to be $1$ for all $1 \le i \le n$. The equality 
of these volumes implies that the coefficients of the base vertices in the affine dependency are identical. The dependency relation 
is thus
\[
\sum_{i=1}^n \bm{v}_i = \mu_1 \bm{y}_1 + \mu_2 \bm{y}_2 \,,
\]
where $\mu_1 + \mu_2 = n$. By setting $k = \mu_2 - \mu_1$, we find $\mu_1 = \frac{n-k}{2}$ and $\mu_2 = \frac{n+k}{2}$. 
Up to swapping the apices $\bm{y}_1$ and $\bm{y}_2$, we may assume $0 \le k < n$.
		
Finally, because $\text{Vol}(S_n) = 1$, the vectors $\{\bm{v}_1 - \bm{y}_1, \dots, \bm{v}_{n-1} - \bm{y}_1, \bm{y}_2 - \bm{y}_1\}$ 
form a $\mathbb{Z}$-basis for the lattice. Let $\phi$ be the affine unimodular transformation that maps this basis to the basis 
formed by the corresponding vertices of $\mathcal{D}_k$:
\[ 
\phi(\bm{y}_1) = \bm{a}_n, \quad \phi(\bm{y}_2) = \bm{a}_{n+1}, \quad \phi(\bm{v}_i) = \bm{a}_i \text{ for } 1 \le i \le n-1\,.
\]
Applying $\phi$ to our dependency relation isolates $\phi(\bm{v}_n)$
\[
\sum_{i=1}^{n-1} \bm{a}_i + \phi(\bm{v}_n) = \frac{n-k}{2} \bm{a}_n + \frac{n+k}{2} \bm{a}_{n+1} \implies 
\phi(\bm{v}_n) = \left(-1, \dots, -1, -k, \frac{n+k}{2}\right) \,.
\]
For $\phi(\bm{v}_n)$ to be an integral lattice point, the coordinate $\frac{n+k}{2}$ must be an integer, which forces the condition 
$k \equiv n \pmod 2$. Therefore, $\Ss$ is unimodularly equivalent to $\mathcal{D}_k$. Uniqueness of $k$ is a consequence
of Theorem~\ref{thm:uniqueness}.
\end{proof}


\begin{thebibliography}{aa}

\bibitem{AM0} M.~Abreu and L.~Macarini, {\em Contact homology of good toric contact manifolds.}
Compositio Mathematica {\bf 148} (2012), 304--334.

\bibitem{AM} M.~Abreu and L.~Macarini, {\em On the mean Euler characteristic of Gorenstein toric contact manifolds.}
International Mathematics Research Notices {\bf 14} (2020), 4465-4495.

\bibitem{AMM0} M.~Abreu, L.~Macarini and M~Moreira, {\em On contact invariants of non-simply connected Gorenstein 
toric contact manifolds.} Mathematical Research Letters {\bf 29} (2022), 1--42.

\bibitem{AMM} M.~Abreu, L.~Macarini and M~Moreira, {\em Contact invariants of $\Q$-Gorenstein toric contact manifolds, the Ehrhart polynomial and Chen-Ruan cohomology.} Advances in Mathematics {\bf 429} (2023), 50 pp.

\bibitem{BR} M.~Beck and S.~Robins, {\em Computing the continuous discretely, 2nd edition.} Undergraduate Texts in Mathematics, 
Springer-Verlag, 2015.

\bibitem{BHW} C.~Bey, M.~Henk, and J.~Wills, {\em Notes on the roots of Ehrhart polynomials.} 
Discrete and Computational Geometry {\bf 38} (2007), 81--98.

\bibitem{B} F.~Bourgeois,  {\em A Morse-Bott approach to contact homology.} PhD thesis, Stanford University, 2002.

\bibitem{BCKV} D.~Bump, K.-K.~Choi, P.~Kurlberg, and J.~Vaaler, {\em A local riemann hypothesis, i.} 
Mathematische Zeitschrift {\bf 233} (2000), 1--18.

\bibitem{CLMP-1} Y.~Cho, E.~Lee, M.~Masuda and S.~Park, {\em Unique toric structure on a Fano Bott manifold.}
Journal of Symplectic Geometry {\bf 21} (2023), 439--462.

\bibitem{CLMP-2} Y.~Cho, E.~Lee, M.~Masuda and S.~Park, {\em On the enumeration of Fano Bott manifolds.}
In {\em Toric Topology and Polyhedral Products}, Fields Institute Communications, Springer, 2024.

\bibitem{D} T.~Delzant, {\em Hamiltoniens p\'eriodiques et images convexes de l'application moment.} 
Bulletin de la Soci\'et\'e Math\'ematique de France {\bf 116} (1988), 315--339.

\bibitem{GK} M.~Grossberg and Y.~Karshon, {\em Bott towers, complete integrability, and the extended character 
of representations.} Duke Mathematical Journal {\bf 76} (1994), 23--58.

\bibitem{HNP} C.~Haase, B.~Nill, and A.~Paffenholz, {\em Lecture notes on lattice polytopes.} Draft of June 28, 2021, available online.

\bibitem{HK} A.~Higashitani and K.~Kurimoto,{\em Cohomological rigidity for Fano Bott manifolds.} 
Mathematische Zeitschrift {\bf 301} (2022), 2369--2391.

\bibitem{KPT} P.~Kirschenhofer, A.~Peth\"o and R.~Tichy, {\em On analytical and diophantine properties of a family of counting polynomials.} 
Acta Scientiarum Mathematicarum {\bf 65} (1999), 47--60.

\bibitem{L} E.~Lerman, {\em Contact toric manifolds.} Journal of Symplectic Geometry {\bf 1} (2003), 785--828.

\bibitem{O} M.~Obro, {\em An algorithm for the classification of smooth Fano polytopes.} Preprint. arXiv:0704.0049v1 (2007).

\bibitem{OEIS} The On-Line Encyclopedia of Integer Sequences, https://oeis.org/A003080, OEIS Foundation Inc. (2026).

\bibitem{R-V} F.~Rodriguez-Villegas, {\em On the zeros of certain polynomials.} 
Proceedings of the American Mathematical Society {\bf 130} (2002), 2251--2254.

\bibitem{S} Y.~Suyama, {\em Fano generalized Bott manifolds.} Manuscripta Mathematica {\bf 163} (2020), 427--435.

\bibitem{Z} G.~Ziegler, {\em Lectures on polytopes.} Springer-Verlag, 1995.

\end{thebibliography}
\end{document}